\newtheorem{theorem}{Theorem}[section]
\newtheorem{lemma}{Lemma}[section]
\newtheorem{observation}{Observation}[section]
\theoremstyle{definition}
\newtheorem{remark}{Remark}[section]
\newcommand{\Z}{\mathbb{Z}}
\newcommand{\x}{\times}
\newcommand{\ttk}[1]{\in \left[#1\right]}
\newcommand{\A}{\mathbf{v}_1}
\newcommand{\B}{\mathbf{v}_2}
\newcommand{\so}{S_{0}}
\newcommand{\ls}{L_{\so}}
\newcommand{\lc}{L_{C}}
\newcommand{\<}[1]{\left<{#1}\right>}
\newcounter{cn}
\newcommand{\tn}{\arabic{cn}\addtocounter{cn}{1}}
\newcommand{\rc}{\rowcolors{2}{black!5}{white}}
\newcommand{\fl}{
\hline
\rowcolor{black!10}
\setcounter{cn}{1} Row & \multicolumn{1}{c|}{$S_0$} & \multicolumn{1}{c|}{$C$} \\
\hline
\endhead}
\newcommand{\flap}{
\hline
\rowcolor{black!10}
\setcounter{cn}{1} $(|\so|,|C|,|\lc|)$ & \multicolumn{1}{c|}{$\lc$, $A$, $B$, $\so$, and $C$} & Reference & Result \\
\hline
\endhead}
\newcommand{\flp}{
\hline
\rowcolor{black!10}
\setcounter{cn}{1} $(|\so|,|C|,|\ls|)$ & \multicolumn{1}{c|}{$\ls$, $X$, $Y$, $\so$, and $C$} & Reference & Result \\
\hline
\endhead}
\newcommand{\Cay}{\mathrm{Cay}}
\newcommand{\Table}{\ref{t.p^3}}
\newcommand{\R}{This gives row \tn\, in Table \Table. }
\newcommand{\row}[1]{(row #1)}
\newcommand{\e}[1]{\mathbf{e}_{#1}}
\newcommand{\cha}[2]{\textcolor{blue}{}\textcolor{black}{#2}}
\begin{document}
\baselineskip 15pt

\title{Perfect codes in Cayley graphs of Haj\'os groups}

\author[1,2]{Yusuf Hafidh\thanks{E-mail: \texttt{yhafidh@student.unimelb.edu.au}}}
\author[1]{Binzhou Xia\thanks{E-mail: \texttt{binzhoux@unimelb.edu.au}}}
\author[1]{Sanming Zhou\thanks{E-mail: \texttt{sanming@unimelb.edu.au}}}

\affil[1]{\small School of Mathematics and Statistics, The University of Melbourne, Parkville, VIC 3010, Australia}
\affil[2]{\small Department of Mathematics, Institut Teknologi Bandung, Indonesia}

\maketitle

\begin{abstract}
A perfect code in a graph $\Gamma$ is a subset $C$ of the vertex set of $\Gamma$ such that every vertex of $\Gamma$ outside $C$ has exactly one neighbour in $C$. A perfect code in a directed graph can be defined similarly by requiring that for every vertex $v$ outside $C$ there exists exactly one vertex $u$ in $C$ such that the arc from $u$ to $v$ exists in $\Gamma$. A subset $X$ of an abelian group $G$ is said to be periodic if there exists a non-identity element $g$ of $G$ such that $g + X = X$. A factorization of $G$ is a pair of nonempty subsets $(A, B)$ of $G$ such that every element $g$ of $G$ can be expressed uniquely as $g = a+b$ with $a \in A$ and $b \in B$. If for every factorization $(A, B)$ of an abelian group $G$ at least one of $A$ and $B$ is periodic, then $G$ is said to be a Haj\'os group. In this paper we classify all Cayley graphs (directed or undirected) of Haj\'os groups which admit perfect codes, and moreover we determine all perfect codes in such Cayley graphs.

\textsf{Key words:} perfect code, efficient dominating set, Cayley graph, Haj\'os group, tiling

\textsf{AMS subject classifications (2020):} 05C25, 05C69
\end{abstract}

\section{Introduction}

All groups considered in this paper are finite, and all graphs considered are finite and simple.
A $q$-ary code of length $n$ is a set of words of length $n$ over an alphabet of size $q$, where $q \ge 2$ and $n \ge 1$ are integers. Equivalently, a $q$-ary code of length $n$ is a subset $C$ of $\Z_q^n$ if the alphabet is set to be $\Z_q = \{0, 1, \dots, q-1\}$. The covering radius $\rho(C)$ of $C$ is the least integer $r$ such that every word of length $n$ is at distance no more than $r$ to at least one codeword of $C$, and the minimum distance $d(C)$ of $C$ is the minimum distance between any two distinct codewords of $C$. In coding theory, the distance used is usually the Hamming distance or Lee distance, which is the graph distance in the Hamming graph $H(n, q)$ or in the graph $L(n, q)$, respectively, where $H(n, q) = K_{q} \Box \cdots \Box K_{q}$ is the $n$-fold Cartesian product of the complete graph $K_q$ of order $q$, and $L(n, q) = C_{q} \Box \cdots \Box C_{q}$ is the $n$-fold Cartesian product of the cycle $C_q$ of length $q \ge 3$. A code $C$ is said to be $t$-error-correctng if $d(C) \ge 2t+1$, and is called a perfect $t$-code if $d(C) = 2t+1$ and $\rho(C) = t$, where $t \ge 1$ is an integer. Equivalently, $C$ is a \emph{perfect $t$-code} if every word in $\Z_q^n$ is at distance no more than $t$ to exactly one codeword of $C$. Ever since the beginning of information theory, perfect codes play an important role in coding theory due to their maximum capacity of error correction without ambiguity. The Hamming codes, the Golay code $G_{23}$, and the Golay code $G_{11}$ are well known examples of perfect codes, and they are the only nontrivial linear perfect codes according to a famous result \cite{Tiet73} which settled a long-standing open problem in coding theory, where a \emph{linear code} is a subspace of a linear space over a finite field. The reader is referred to \cite{vL} for an early survey on perfect codes and to \cite{Heden1} for a more recent survey on binary perfect $1$-codes.  

The notion of perfect $t$-codes can be generalized to graphs \cite{biggs, krat} simply by replacing the Hamming or Lee distance by the distance in a graph. More specifically, a subset $C$ of vertices of a graph $\Gamma$ is called a \emph{perfect $t$-code} if every vertex of $\Gamma$ is at distance no more than $t$ to exactly one vertex in $C$. Equivalently, $C$ is a perfect $t$-code in $\Gamma$ if the closed $t$-neighbourhoods $N_{t}[v], v \in C$, form a partition of the vertex set of $\Gamma$, where $N_{t}[v]$ is the set of vertices of $\Gamma$ at distance no more than $t$ to $v$. This definition applies to directed graphs $\Gamma$ when $N_{t}[v]$ is interpreted as the set of vertices of $\Gamma$ at (directed) distance no more than $t$ from $v$. In graph theory, a perfect $1$-code in $\Gamma$ is also called an efficient dominating set of $\Gamma$ \cite{eds1,eds2,eds3,ed1,ed2}. It is readily seen that perfect $t$-codes in $H(n, q)$ or $L(n, q)$ are precisely perfect $t$-codes under the Hamming distance or Lee distance in the classical setting. 

Perfect codes in Cayley graphs are especially interesting and significant objects of study due to their connections to coding theory and factorizations of groups. See \cite[Section 1]{22} and \cite[Section 1]{subgrup} for short reviews of related results and background information. Given a group $G$ with identity element $e$ and a subset $S$ of $G \setminus \{e\}$, define $\Cay(G,S)$ to be the directed graph with vertex set $G$ such that for $x, y \in G$ there is an arc from $x$ to $y$ if and only if $yx^{-1} \in S$. It is known that $\Cay(G,S)$ is connected if and only if $S$ generates $G$. In the case when $S$ is inverse-closed (that is, $g^{-1} \in S$ whenever $g \in S$), $\Cay(G,S)$ is regarded as an undirected graph as the arc from $x$ to $y$ exists if and only if the arc from $y$ to $x$ exists. For brevity, we call $\Cay(G,S)$ the \emph{Cayley graph} of $G$ with \emph{connection set} $S$ even when it is a directed graph (that is, $S$ is not inverse-closed). 
It can be verified that both $H(n, q)$ and $L(n, q)$ are Cayley graphs of $\Z_q^n$. As such perfect codes in Cayley graphs are natural generalizations of perfect codes in the classical setting. They can also be viewed as factorizations of the underlying group. A \emph{factorization} of $G$ (into two factors) \cite{book1} is a pair of subsets $(A, B)$ of $G$ such that every element of $G$ can be uniquely expressed as $ab$ with $a \in A$ and $b \in B$ (so that $G = AB$). If in addition $e \in A \cap B$, then this factorization a \emph{tiling} of $G$ \cite{Dinitz06}. As noted in \cite{subgrup}, if $(A, B)$ is a tiling of $G$ with $A^{-1} = A$, then $B$ is a perfect $1$-code in $\Cay(G, A \setminus \{e\})$, and conversely any perfect $1$-code $C$ in a Cayley graph $\Cay(G, S)$ gives rise to the tiling $(S_0, C)$ of $G$, where $S_0 = S \cup \{e\}$. A similar statement holds for perfect $t$-codes in Cayley graphs if one considers $S_0^t$ instead of $S_0$. 

Perfect codes in Cayley graphs have received considerable attention in recent years (see, for example, \cite{eds1, subgrup, 20}). In particular, the question about when a given subgroup of a group can be a perfect $1$-code in some Cayley graph of the group has been studied extensively. See \cite{cwx20, kak23, cm13, wwz20, zz21, zz21a} for recent results in this line of research. In view of potential applications, perfect codes in Cayley graphs of abelian groups are the most important. This is because for an abelian group $G = \Z_{q_1} \times \cdots \times \Z_{q_n}$ the codewords of a perfect code in a Cayley graph of $G$ are strings in the usual sense with $i^{\mathrm{th}}$ bit from alphabet $\Z_{q_i}$. Such a perfect code is a mixed perfect code \cite{Heden1} if not all $q_1, \ldots, q_n$ are equal. Unfortunately, even for Cayley graphs of cyclic groups, called \emph{circulant graphs}, we do not have a complete characterization of perfect $1$-codes in them, though a number of partial results exist in the literature (see \cite{eds2,eds3,22,kls22,MBG,ed1,ed2,20,wsxz24}). 

In this paper we study perfect $1$-codes in Cayley graphs of Haj\'os groups, where a group $G$ is \emph{Haj\'os} \cite{gag2} if $G$ is abelian and for every factorization $(A, B)$ of $G$, either $A$ or $B$ is periodic in the sense that it is fixed under the translation by some non-identity element of $G$. The classification of all Haj\'os groups was completed by Sands in 1962 \cite{gag2} (see Lemma \ref{goodabelian}). With the help of this result we classify all Cayley graphs of Haj\'os groups which admit perfect $1$-codes, and moreover we determine all perfect $1$-codes in such Cayley graphs. As far as we know, this is the first large class of abelian groups for which we have complete knowledge of perfect $1$-codes in their Cayley graphs. Since we will only consider perfect $1$-codes, for short we will call them \emph{perfect codes} from now on. Since each connected component of a Cayley graph $\Cay(G,S)$ is isomorphic to a Cayley graph of the subgroup of $G$ generated by $S$, and subgroups of Haj\'os groups are also Haj\'os, it suffices to consider connected Cayley graphs of Haj\'os abelian groups. Also, we do not need to consider complete graphs as each of their perfect codes consists of a single vertex. Our main results can be summarized as follows.

\begin{theorem}
\label{main}
Let $G$ be a Haj\'os group and $\Cay(G,S)$ a connected non-complete Cayley graph of $G$. Then a subset $C$ of $G$ is a perfect code in $\Cay(G,S)$ if and only if the triple $(G, S_0, C)$ lies in Table \ref{tablemain}, where $S_0=S\cup\{0\}$ with $0$ the identity element of $G$, $k\geq 1$ is an integer and $p$, $q$, $r$, and $s$ are different primes.
\end{theorem}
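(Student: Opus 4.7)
The plan is to exploit the correspondence between perfect codes and tilings that the introduction has already set up: $C$ is a perfect code in $\Cay(G,S)$ if and only if $(S_0,C)$ is a tiling of $G$, with $0\in S_0\cap C$ (after translating $C$ if necessary). Since $G$ is Haj\'os, for any such tiling at least one of $S_0$ or $C$ must be periodic, and this dichotomy drives the whole argument. Before entering the case analysis I would record two reduction principles. First, if $X\in\{S_0,C\}$ is periodic with period subgroup $H\le G$, then $X$ is a union of $H$-cosets, and the tiling descends to a tiling $(\overline{S_0},\overline{C})$ of the quotient $G/H$; conversely any such quotient tiling lifts by replacing one factor with a union of cosets. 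Second, connectedness of $\Cay(G,S)$ forces $\<S_0>=G$, while non-completeness excludes the degenerate case $S_0=G$, so we always have $1<|S_0|<|G|$.

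I would then split into the two principal cases. In \emph{Case A} ($S_0$ periodic), let $H$ be the period subgroup; then $H\subseteq S_0$ because $0\in S_0$, and the quotient Cayley graph $\Cay(G/H,\overline{S_0}\setminus\{0\})$ admits $\overline{C}$ as a perfect code. By induction on $|G|$, the quotient triple lies in a row of Table~\ref{tablemain}, and the possible lifts back to $G$ are controlled by how many $H$-cosets must be put into $S_0$ and how $C$ meets the cosets. In \emph{Case B} ($C$ periodic), the analogous reduction applies but with the roles of $S_0$ and $C$ swapped: the period subgroup $H$ of $C$ satisfies $H\subseteq C$, and $S_0$ meets each $H$-coset in a translate of a fixed transversal for $H$ in a larger subgroup. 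In either case, the inductive step reduces us to a smaller Haj\'os group, so the recursion terminates at the base cases dictated by Sands' classification (Lemma~\ref{goodabelian}).

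The bulk of the work, and the main obstacle, is the base-case enumeration over the Haj\'os groups listed in Sands' theorem. Those are certain cyclic groups of restricted order together with a short list of small-rank families such as $\Z_p\times\Z_{p^k}$, $\Z_2\times\Z_2\times\Z_{2^k}$, and so on. For each such $G$, and for each possible $|S_0|$ (which must divide $|G|$), I would determine all tilings $(S_0,C)$ by a direct combinatorial analysis: starting from a periodic factor $X$ of period $H$, I would describe the possible structures of $X$ as unions of $H$-cosets containing $0$, and then for each candidate $X$ identify the complementary factor. The directed case (non-inverse-closed $S$) must be handled in parallel with the undirected case, since the tiling framework is symmetric in $S_0$ and $C$ but the resulting Cayley graph is not. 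At each stage the outcome is matched against the corresponding row of Table~\ref{tablemain}, and I would verify that distinct cases produce distinct rows and that every row is actually realised.

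Finally, I would check the converse direction by confirming, for each row of Table~\ref{tablemain}, that the prescribed pair $(S_0,C)$ really is a tiling of $G$: this is a straightforward verification that $|S_0|\cdot|C|=|G|$ and that $S_0\cap(C-C)=\{0\}$ (or the appropriate asymmetric version in the directed case), so that $0$ has a unique representation and hence every element does. Combined with the forward case analysis this yields the stated biconditional.
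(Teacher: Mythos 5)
Your overall architecture is the same as the paper's: Lemma \ref{lem:factor} converts perfect codes into factorizations $G=S_0\oplus C$; the Haj\'os property gives the periodic-factor dichotomy; Lemma \ref{quotient} descends the factorization to $G/L$, where $L$ is the subgroup of periods of whichever factor is periodic; and recursion through Sands' classification (Lemma \ref{goodabelian}) produces the case tables. Two refinements the paper makes are worth flagging against your sketch. First, the lift back from $G/L$ is not simply ``replace one factor by a union of cosets'': by Lemma \ref{d_i+l_i} the \emph{other} factor lifts only up to an arbitrary, independently chosen shift by an element of $L$ for each of its elements, and these per-element shifts are exactly the free parameters $\alpha_i,\beta_i,\alpha_{ij},\dots$ appearing in every row of the tables. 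Since the theorem classifies all perfect codes (not merely which graphs admit one), an argument that ignores this lifting freedom is incomplete. Second, no parallel directed/undirected analysis is needed: the criterion $G=S_0\oplus C$ never uses $S=-S$, so the two cases are handled uniformly, not ``in parallel'' as you suggest. You should also note the terminal case $L_C=C$ (when $|S_0|$ forces $|L_C|=|C|$ via Lemma \ref{prime deg}(c)), where the quotient factor $(S_0+L_C)/L_C$ is all of $G/L_C$ and the recursion bottoms out directly rather than via a smaller non-complete instance.

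The one step that fails as written is your converse verification. You propose to check $|S_0|\cdot|C|=|G|$ together with $S_0\cap(C-C)=\{0\}$, arguing that uniqueness of the representation of $0$ then propagates to every element. Neither half is right: the correct criterion (Lemma \ref{oplus}) is $(S_0-S_0)\cap(C-C)=\{0\}$, and for general subsets uniqueness at one point does not imply uniqueness everywhere. Concretely, in $G=\Z_9$ take $S_0=\{0,1,3\}$ and $C=\{0,2,4\}$: then $|S_0|\,|C|=9$, $C-C=\{0,2,4,5,7\}$, so $S_0\cap(C-C)=\{0\}$ and $0=0+0$ is the unique representation of $0$, yet $1+2=3+0$ gives $3$ two representations and $6\notin S_0+C$, so $(S_0,C)$ is not a tiling. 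There is also no ``asymmetric version in the directed case''; the criterion is the same. The repair is routine --- either use the symmetric condition $(S_0-S_0)\cap(C-C)=\{0\}$, or do what the paper does and exhibit an explicit representation of each group element for every row (as in the proofs of Theorem \ref{p,q,q} and Lemma \ref{G=so+c}) --- and with that repair your plan coincides with the paper's proof.
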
 

We would like to emphasize that this result holds for both directed and undirected Cayley graphs $\Cay(G,S)$ of Haj\'os groups. 
More details on $(S_0, C)$ will be given in later sections as indicated in Table \ref{tablemain}. Theorem \ref{main} implies that $\Cay(G,S)$ admits a perfect code if and only if there exists a subset $C$ of $G$ such that $(G, S_0, C)$ is as shown in Table \ref{tablemain}, and moreover we obtain all perfect codes $C$ in $\Cay(G,S)$ in this manner. 

\rc\begin{longtable}{|c|c|c|c|}
\caption{\small Perfect codes in Cayley graphs of Haj\'os groups}
\label{tablemain}
\\
\hline
\rowcolor{black!10}
\setcounter{cn}{1} Row & \multicolumn{1}{c|}{$G$} & \multicolumn{1}{c|}{$S_0$ and $C$} & \multicolumn{1}{c|}{Condition} \\ \hline
\tn & $\Z_p\x\Z_p$ & Theorem \ref{as-bs} &  \\
\tn & $\Z_p\x\Z_q$ & Table \ref{t.p.q} &  \\
\tn & $\Z_p\x\Z_q\x\Z_q$ & Table \ref{t.p.q.q} & $p\geq 3\geq q$ \\
\tn & $\Z_{p^k}$ & Theorem \ref{p^k} &  \\
\tn & $\Z_{p^k}\x\Z_q$ & Theorem \ref{p^k.q} & $p,q \ne 2$ \\
\tn & $\Z_{2^k}\x\Z_2$ & Theorem \ref{2^k.2} &  \\
\tn & $\Z_{p^2}$ & Theorem \ref{p^2} &  \\
\tn & $\Z_{p^3}$ & Table \ref{t.p^3} &  \\
\tn & $\Z_{p^2}\x\Z_2$ & Table \ref{t.p^2.2} & $p\ne 2$ \\
\tn & $\Z_2\x\Z_2\x\Z_2$ & Theorem \ref{2.2.2} &  \\
\tn & $\Z_{p^3}\x\Z_2$ & Table \ref{t.p^3.2} & $p\ne 2$ \\
\tn & $\Z_{p^2}\x\Z_2\x\Z_2$ & Table \ref{t.p^2.2.2} & $p\ne 2$ \\
\tn & $\Z_{p}\x\Z_2\x\Z_2\x\Z_2$ & Table \ref{t.p.2.2.2} & $p\ne 2$ \\
\tn & $\Z_2\x\Z_2\x\Z_2\x\Z_2$ & Table \ref{t.2.2.2.2} &  \\
\tn & $\Z_{p^3}\x\Z_2\x\Z_2$ & Table \ref{t.p^3.2.2} & $p\ne 2$ \\
\tn & $\Z_{p^2}\x\Z_2\x\Z_2\x\Z_2$ & Table \ref{t.p^2.2.2.2} & $p\ne 2$ \\
\tn & $\Z_{p}\x\Z_2\x\Z_2\x\Z_2\x\Z_2$ & Table \ref{t.p.2.2.2.2} & $p\ne 2$ \\
\tn & $\Z_p\x\Z_q\x\Z_r$ & Table \ref{t.p.q.2} & \\
\tn & $\Z_p\x\Z_q\x\Z_2\x\Z_2$ & Table \ref{t.p.q.2.2} & $p,q\ne 2$ \\
\tn & $\Z_{p^2}\x\Z_p$ & Table \ref{t.4.2} & $p\leq 3$ \\
\tn & $\Z_p\x\Z_4$ & Table \ref{t.p.4} & $p\ne 2$ \\
\tn & $\Z_p\x\Z_4\x\Z_2$ & Table \ref{t.p.4.2} & $p\ne 2$ \\
\tn & $\Z_{p^2}\x\Z_{q^2}$ & \cite{eds2,eds3} \& Table \ref{t.p^2.q^2} &  \\
\tn & $\Z_{p^2}\x\Z_{q}\x\Z_{r}$ & \cite{eds2,eds3} \& Table \ref{t.p^2.q.r} &  \\
\tn & $\Z_{p}\x\Z_{q}\x\Z_{r}\x\Z_{s}$ & \cite{eds2,eds3} &  \\
\hline
\end{longtable}

There are 14 families of Haj\'os groups, of which four families consist of cyclic groups (items 9-12 in Lemma \ref{goodabelian}). So Cayley graphs of these four families of Haj\'os groups are circulant graphs. Circulant graphs having a perfect codes with size a prime or relative prime to the size of the connection set plus one in these circulant graphs have been classified in \cite{eds2,eds3}. In these two papers, they restrict the Cayley graphs to be undirected graphs, that is requiring $S=-S$. Although they have this restriction, they dont use the property $S=-S$ to get their results. This means the characterization they got also works for perfect codes in directed circulant graphs on Haj\'os groups. 
We will determine perfect codes in these circulant graphs with other sizes. Of course, we will also determine perfect codes in Cayley graphs of the other ten families of Haj\'os abelian groups. 

We divide the proof of Theorem \ref{main} into a number of theorems. In Section \ref{sec:prel}, we provide some background information and lemmas that will be used to prove our theorems. In Section \ref{sec:pqq}, we characterize Cayley graphs of $\Z_p\x\Z_q\x\Z_q$ and its subgroups that admit a perfect code. Section \ref{sec:p^k} focuses on Haj\'os groups that contain $\Z_{p^k}$ as a subgroup for a prime $p$. In Section \ref{sec:p^k2^l}, we consider Haj\'os groups of the form $\Z_{p^k} \times \Z_2^l$. The last section, Section \ref{sec:pq22}, covers the remaining Haj\'os groups, that is, $\Z_p\x\Z_q\x\Z_2\x\Z_2$, $\Z_p\x\Z_4\x\Z_2$, $\Z_4\x\Z_4$, $\Z_p^2\x\Z_q^2$, $\Z_{p^2}\x\Z_q\x\Z_r$ and their subgroups.

\section{Preliminaries}
\label{sec:prel}

\cha{Throughout this paper, all groups are considered finite and abelian. We use addition as the group operation and $0$ as the group identity. We denote $\mathbf{e}_i$ as the vector with $1$ in the $i^\mathrm{th}$ component and $0$ in the other components. For $n\in \Z^+$, let $[n]=\{0,\dots,n-1\}$.}{Additive notation will be used in the rest of the paper: the group operation will be $+$, the identity $0$, the inverse of $g$ will be $-g$, and $mg$ will be used instead of $g^m$. For a positive integer $n$, let $\Z_n$ be the cyclic group of integers modulo $n$ and set 
$$
[n]=\{0,1,\dots,n-1\}.
$$ 
For any abelian group written as a direct sum of cyclic groups, we use $\mathbf{e}_i$ to denote its element with $1$ in the $i^\mathrm{th}$ coordinate and $0$ in all other coordinates.} 

\cha{Let $u$ be a vertex of a connected graph $\Gamma$ and $r$ be a positive integer. The \textit{ball} with radius $r$ centered at $u$ is denoted by $B_r(u)$. This is the set of vertices with distance at most $r$ to $u$, i.e. $B_r(u)=\{v:d(v,u)\leq r\}$. A set of vertices $C$ of $\Gamma$ is called a \textit{perfect $r$-code} in $\Gamma$ if $\{B_r(u):u\in C\}$ is a partition of $V(\Gamma)$. When $r=1$, a perfect $1$-code is simply called a perfect code, and $B_1(u)$ is the closed neighbourhood of $u$, $N[u]$.}{}

\cha{Let $A$ and $B$ be subsets of a group $G$. The sum $A+B$ is the set of elements $a+b$ with $a\in A$ and $b\in B$.}{Let $G$ be a group, and let $A$ and $B$ be nonempty subsets of $G$.  The \textit{sum} of $A$ and $B$ is defined as}
$$
A+B = \{a+b\, | \, a \in A, b \in B\}. 
$$
\cha{We simply use $g+A$ for $\{g\}+A$.}{In particular, we write $g+A, A+g$ in place of $\{g\}+A, A+\{g\}$, respectively.} If every element of $A+B$ can be written uniquely as $a+b$ with $a\in A$ and $b\in B$, we say that the sum is a \textit{direct sum} and denote it by $A\oplus B$. \cha{We call $G=A\oplus B$ a factorization of $G$, and $A$ and $B$ are factors of $G$.}{If $G=A\oplus B$, we call this expression a \textit{factorization} of $G$ into \textit{factors} $A$ and $B$.} \cha{Let $G$ be a group and $S\subseteq G\setminus\{0\}$, the \textit{Cayley graph} $\Gamma=\Cay(G,S)$ is the graph with vertex set $G$ such that $x\in G$ is adjacent to $y\in G$ if and only if $y-x\in S$. The set $S$ is called the connection set of the graph. If $S$ is closed under taking inverse elements (i.e. $S=-S$), then $\Gamma$ is undirected, and if $S$ generates $G$, then $\Gamma$ is connected.}{Since we use additive notation, in a Cayley graph $\Cay(G,S)$ there is an arc from $x$ to $y$ if and only if $y - x \in S$. Set
$$
S_0 = S \cup \{0\}.
$$
The following lemma says that $\Cay(G, S)$ admits a perfect code if and only if $G$ admits a factorization with $S_0$ as one of the factors.}

\cha{Perfect codes on Cayley graphs are strongly related to the factorization of groups because $\Cay(G, S)$ admits a perfect code $C$ is equivalent to $G=C\oplus S_0$ where $S_0=S\cup\{0\}$.}{}

\begin{lemma}[{\cite[Lemma 2.3]{eds2, subgrup}}]
\label{lem:factor} 
\cha{Let $G$ be a group and $S\subseteq G\setminus\{0\}$, then $\Cay(G, S)$ admits a perfect code $C$ if and only if $G = C\oplus S_0$.}{Let $G$ be a group, $S$ an inverse-closed subset of $G\setminus\{0\}$, and $C$ a subset of $G$. Then $C$ is a perfect code in $\Cay(G, S)$ if and only if $G = S_0 \oplus C$.}
\end{lemma}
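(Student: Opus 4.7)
The plan is to reduce both sides of the equivalence to the same combinatorial statement: every element of $G$ has a unique representation as $s + c$ with $s \in S_0$ and $c \in C$. This is really just a translation between the packing/covering condition defining a perfect code in $\Cay(G,S)$ and the factorization condition $G = S_0 \oplus C$, so the argument is essentially a definition chase.

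First I would compute the closed neighbourhood of an arbitrary vertex $c \in G$ in $\Cay(G,S)$. By the definition of the Cayley graph, $y$ is adjacent to $c$ precisely when $y - c \in S$, i.e.\ $y \in c + S$. Including $c$ itself and recalling that $S_0 = S \cup \{0\}$, this gives
\[
N[c] \;=\; c + S_0.
\]
Since $S$ is inverse-closed, $\Cay(G,S)$ is undirected and so $N[c]$ is unambiguous.

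Next I would invoke the definition of a perfect code: $C \subseteq G$ is a perfect code in $\Cay(G,S)$ if and only if $\{N[c] : c \in C\}$ partitions the vertex set $G$. Substituting $N[c] = c + S_0$ from the previous step, this becomes the statement that the translates $\{c + S_0 : c \in C\}$ partition $G$, which is exactly the assertion that every $g \in G$ can be written uniquely as $g = s + c$ with $s \in S_0$ and $c \in C$. By the definition of $\oplus$ given just above the lemma, this is the condition $G = S_0 \oplus C$. Both implications are now immediate from this equivalent reformulation.

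There is essentially no obstacle here, as the lemma is a bookkeeping statement rather than a substantive result; the only thing to be careful about is keeping the two ``uniquenesses'' aligned, namely that a single $g \in G$ lies in exactly one $N[c]$ \emph{and} admits a unique decomposition within that neighbourhood. Both of these are captured simultaneously by the single condition $G = S_0 \oplus C$, so no case analysis is needed. The same argument works verbatim in the directed setting used elsewhere in the paper once $N[c]$ is read as $c + S_0$ via out-arcs, which is why the authors apply this equivalence to directed Cayley graphs too.
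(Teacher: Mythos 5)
Your proof is correct: the identification $N[c] = c + S_0$, together with the observation that uniqueness of the decomposition within a fixed translate is automatic by cancellation in the group while disjointness of the translates handles uniqueness across different codewords, is exactly the standard definition-chasing argument, and your closing remark that it applies verbatim to the directed case (reading $N[c]$ via out-arcs) matches how the paper uses the lemma. Note that the paper itself states this lemma without proof, citing Lemma 2.3 of the referenced works, and your argument coincides with the proof given in those sources, so there is nothing further to compare.
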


\cha{When a perfect code contains zero, we call the perfect code normalized. Note that a translation of any perfect code is still a perfect code because if $G=C\oplus S_0$, then $(C+g)\oplus S_0=G+g=G$. This means every perfect code is a translation of some normalized perfect code. To characterize all perfect codes of a Cayley graph, it is enough to only consider the normalized perfect codes.}{In particular, $\Cay(G, S)$ admits a perfect code if and only if there exists a subset $C$ of $G$ such that $G = S_0 \oplus C$. A perfect code in $\Cay(G, S)$ is said to be \textit{normalized} if it contains $0$. By Lemma \ref{lem:factor}, one can see that any translation of a perfect code in a Cayley graph is also a perfect code. Hence every perfect code in a Cayley graph is a translation of some normalized perfect code. Therefore, to classify all perfect codes in a Cayley graph it suffices to consider normalized perfect codes.}

\begin{lemma}[\cite{book1}]\label{oplus} 
    Let $A$ and $B$ be \cha{}{nonempty} subsets of an abelian group $G$. The following statements are equivalent\cha{.}{:}
    \begin{enumerate}[\rm (a)]
        \item $G=A\oplus B$\cha{.}{;}
        \item $G=A+B$ and $|G|=|A| |B|$\cha{.}{;}
        \item $|G|=|A| |B|$ and $(A-A)\cap(B\cap B)=\{0\}$\cha{.}{;}
        \item $A+B$ is a direct sum and $|G|=|A| |B|$.
    \end{enumerate}
\end{lemma}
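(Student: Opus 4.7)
The plan is to prove the four equivalences by establishing the cycle of implications (a)$\Rightarrow$(b)$\Rightarrow$(a), (a)$\Leftrightarrow$(d), and (a)$\Leftrightarrow$(c), which together yield the result. Throughout, I will use the counting identity that whenever $A+B$ is a direct sum the addition map $A\times B\to A+B$ sending $(a,b)\mapsto a+b$ is a bijection, so $|A+B|=|A||B|$. (I also assume the obvious typo is corrected, so that condition (c) reads $(A-A)\cap(B-B)=\{0\}$.)

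First I would dispense with (a)$\Leftrightarrow$(d), which is essentially a definition chase. By definition $G=A\oplus B$ means $G=A+B$ together with the requirement that the sum is direct, so (a)$\Rightarrow$(d) is immediate. Conversely, if $A+B$ is direct and $|G|=|A||B|$, then $|A+B|=|A||B|=|G|$, and since $A+B\subseteq G$ this forces $A+B=G$, giving (a). The implication (a)$\Rightarrow$(b) is then immediate from the same counting identity.

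For (b)$\Rightarrow$(a), I would use a pigeonhole argument: the addition map $\varphi\colon A\times B\to G$, $(a,b)\mapsto a+b$, has image $A+B=G$ by hypothesis, so it is surjective. Since $|A\times B|=|A||B|=|G|$, $\varphi$ is a surjection between finite sets of equal size and is therefore a bijection; injectivity of $\varphi$ is precisely the statement that the sum is direct, so $G=A\oplus B$.

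Finally I would handle (a)$\Leftrightarrow$(c) by translating uniqueness of representations into the language of difference sets. If $G=A\oplus B$ and $a_1-a_2=b_2-b_1$ for some $a_i\in A$, $b_i\in B$, then $a_1+b_1=a_2+b_2$, so uniqueness forces $a_1=a_2$ and $b_1=b_2$; hence $(A-A)\cap(B-B)=\{0\}$, and the cardinality condition comes from the counting identity. Conversely, assuming (c), any collision $a_1+b_1=a_2+b_2$ gives $a_1-a_2=b_2-b_1\in(A-A)\cap(B-B)=\{0\}$, so $A+B$ is direct, $|A+B|=|A||B|=|G|$, and $A+B=G$ as before.

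No step presents a genuine obstacle; the whole lemma is a combination of a pigeonhole count and the tautology that ``unique representation'' is equivalent to ``no nontrivial coincidences of differences''. The only care needed is to keep the direction of each implication honest about whether $A+B=G$ is given or must be deduced from the cardinality hypothesis.
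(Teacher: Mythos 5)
Your proof is correct. Note, however, that the paper gives no proof of this lemma at all: it is quoted from \cite{book1} (Szab\'o--Sands) as a known fact, so there is no in-paper argument to compare against; your chain of implications (a)$\Rightarrow$(b)$\Rightarrow$(a), (a)$\Leftrightarrow$(d), (a)$\Leftrightarrow$(c) is the standard one. Two small remarks: you are right that the printed condition $(A-A)\cap(B\cap B)=\{0\}$ in (c) is a typo for $(A-A)\cap(B-B)=\{0\}$ (the lemma is false as literally printed), and your pigeonhole steps in (b)$\Rightarrow$(a) and (d)$\Rightarrow$(a) genuinely require $G$ to be finite --- this is legitimate here since the paper declares all groups finite at the outset, but the lemma as you have argued it does not hold for infinite abelian groups, so the finiteness hypothesis should be stated rather than left implicit.
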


Let $A$ be a nonempty subset of an abelian group $G$. Define
$$
L_A = \{g \in G\, | \, g+A=A\}.
$$ 
Then $L_A \ne \emptyset$ as $0 \in L_A$. Note that $L_A$ is the setwise stabilizer of $A$ in $G$ in the action of $G$ on itself by addition and hence is a subgroup of $G$. For any subset $X$ of $G$, set
$$
(X+L_A)/{L_A} = \left\{x+L_A\, | \, x \in X\right\}.
$$

\begin{lemma}[\cite{book1}]\label{period} 
\cha{Let $G$ be a finite abelian group and $A$ a subset of $G$. Let $L_A$ be the set of elements $g\in G$ such that $g+A=A$. Then $L_A$ is a subgroup of $G$ called the subgroups of periods of $A$, $A$ is the union of some cosets of $L_A$ ($A=L_A\oplus B$ for some set $B$), and $P_A=\bigcap_{a\in A}(A-a).$}{Let $G$ be an abelian group and $A$ a nonempty subset of $G$. Then $L_A$ is a subgroup of $G$, $A$ is the union of some cosets of $L_A$ (that is, $A = L_A \oplus D$ for some $D \subseteq G$), and $L_A=\bigcap_{a\in A}(A-a)$.}
\end{lemma}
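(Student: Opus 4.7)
The plan is to verify the three assertions in order, each by a short direct argument.

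First, I would show that $L_A$ is a subgroup of $G$. Clearly $0 \in L_A$ since $0+A=A$. For closure, if $g,h \in L_A$, then $(g+h)+A = g+(h+A) = g+A = A$, so $g+h \in L_A$. For inverses, if $g \in L_A$, translating $g+A = A$ by $-g$ gives $A = -g+A$, so $-g \in L_A$. Since $G$ is abelian and finite, this suffices.

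Second, I would show that $A$ is a union of cosets of $L_A$. For any $a \in A$ and any $\ell \in L_A$, the defining property $\ell+A = A$ gives $a+\ell \in A$. Hence $a+L_A \subseteq A$ for every $a \in A$, which means $A$ is a union of cosets of $L_A$. Choosing one representative from each coset of $L_A$ contained in $A$ yields a set $D$ with $A = L_A \oplus D$ (the sum is direct because distinct cosets are disjoint, and the counts match by construction).

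Third, I would establish the identity $L_A = \bigcap_{a \in A}(A-a)$ by mutual inclusion. If $g \in L_A$, then $g+A = A$, so for every $a \in A$ we have $g+a \in A$, equivalently $g \in A-a$; thus $g \in \bigcap_{a \in A}(A-a)$. Conversely, if $g \in \bigcap_{a \in A}(A-a)$, then $g+a \in A$ for all $a \in A$, i.e.\ $g+A \subseteq A$. Since $G$ is finite, $|g+A| = |A|$ forces $g+A = A$, so $g \in L_A$.

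I do not anticipate any real obstacle here; the statement is a standard structural fact about stabilizers in finite abelian group actions on subsets, and the only mild subtlety is using finiteness of $G$ at the end to upgrade the inclusion $g+A \subseteq A$ to equality.
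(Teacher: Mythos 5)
Your proof is correct, and there is in fact nothing in the paper to compare it against: the paper states this lemma with a citation to \cite{book1} (Szab\'o--Sands) and gives no proof, so your direct verification fills a gap the paper deliberately leaves to the literature. The argument you give is the standard one: the subgroup check, the observation that $a+L_A\subseteq A$ for each $a\in A$ (whence $A$ is a union of $L_A$-cosets and choosing coset representatives yields $A=L_A\oplus D$), and the two inclusions for $L_A=\bigcap_{a\in A}(A-a)$. The one point worth emphasizing is the finiteness you flag at the end: it is genuinely indispensable, not a formality. In $G=\Z$ with $A$ the set of nonnegative integers, one has $\bigcap_{a\in A}(A-a)=A$ while $L_A=\{0\}$, so the inclusion $g+A\subseteq A$ cannot be upgraded to $g+A=A$ in an infinite group. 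The lemma as restated says only ``abelian group,'' but the paper's standing convention (declared in its introduction) is that all groups are finite, so your use of $|g+A|=|A|$ is legitimate and is invoked at exactly the right spot. One cosmetic remark: your closing comment in the subgroup verification that abelianness and finiteness ``suffice'' is superfluous---the computations you already gave establish closure and inverses without either hypothesis---but it is harmless.
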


\cha{}{We call $L_A$ the \textit{subgroup of periods} of $A$ \cite{book1}. If $L_A$ contains at least one non-zero element (called a \textit{period} of $A$ in $G$), then $A$ is called a \textit{periodic} subset of $G$; otherwise, $A$ is called an \textit{aperiodic} subset of $G$. In other words, $A$ is periodic or aperiodic depending on whether $L_A$ is a nontrivial or trivial subgroup of $G$. If in every factorization of an abelian group $G$ at least one factor is periodic, then $G$ is called a \textit{Haj\'os} group.} 

\begin{lemma}
[\cite{gag2}]
\label{goodabelian}
All Haj\'os groups are precisely the following groups and their subgroups (where $p$, $q$, $r$, and $s$ are different primes, and $k$ is a positive integer):
\begin{multicols}{3}
    \begin{enumerate}
        \item $\Z_{p}\x\Z_p$ \cite{gagpp}
        \item $\Z_{p}\x\Z_3\x\Z_3$ \cite{gag2}
        \item $\Z_{p}\x\Z_q\x\Z_2\x\Z_2$ \cite{gag2}
        \item $\Z_{p}\x\Z_4\x\Z_2$ \cite{gag2}
        \item $\Z_{p^3}\x\Z_2\x\Z_2$ \cite{gag2}
        \item $\Z_{p^2}\x\Z_2\x\Z_2\x\Z_2$ \cite{gag2}
        \item $\Z_{p}\x\Z_2\x\Z_2\x\Z_2\x\Z_2$ \cite{gag2}
        \item $\Z_{2^k}\x\Z_2$ \cite{gag3}
        \item $\Z_{p^k}\x\Z_q$ \cite{gag0}
        \item $\Z_{p^2}\x\Z_{q^2}$ \cite{gag1}
        \item $\Z_{p^2}\x\Z_q\x\Z_r$ \cite{gag1}
        \item $\Z_{p}\x\Z_q\x\Z_r\x\Z_s$ \cite{gag1}
        \item $\Z_{9}\x\Z_3$ \cite{gag3}
        \item $\Z_{4}\x\Z_4$ \cite{gag3}
    \end{enumerate}
\end{multicols}
\end{lemma}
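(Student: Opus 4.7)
The plan is a two-direction argument that mirrors the way the classification was built up historically: first verify that each of the 14 listed families (and their subgroups) is Haj\'os, then exhibit a factorization with two aperiodic factors for every abelian group outside this list. Because of the primary decomposition $G=\bigoplus_{p}G_p$, a factorization of any $G_p$ lifts to $G$ with periodicity preserved, so throughout both directions one may reduce to primary components or to a small number of explicit mixed-prime patterns.

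For the positive direction, fix a factorization $G=A\oplus B$; after translating we may assume $0\in A\cap B$, so $L_A$ and $L_B$ are subgroups by Lemma \ref{period} and the task is to show at least one is non-trivial. The most efficient tool is the group ring $\mathbb{C}[G]$: encoding each subset $X\subseteq G$ as $\widehat X=\sum_{x\in X}x$, the equality $\widehat G=\widehat A\,\widehat B$ implies that for every non-trivial character $\chi$ of $G$ one of $\chi(\widehat A),\chi(\widehat B)$ vanishes. One then invokes classical vanishing-sum theorems --- R\'edei's theorem for factors supported in a primary cyclic subgroup, de Bruijn's cyclotomic criterion for rational combinations of roots of unity, and Haj\'os's own argument for $\Z_p\x\Z_p$ --- to force one of the factors to be a union of cosets of some order-$p$ subgroup, i.e.\ to be periodic. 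Low-rank families such as $\Z_{p}\x\Z_3\x\Z_3$, $\Z_{p^2}\x\Z_{q^2}$, and $\Z_{p^2}\x\Z_q\x\Z_r$ require an additional case analysis based on $|A|$, $|B|$, and the sizes of the projections of $A,B$ onto the primary components of $G$; this is exactly what is carried out in \cite{gagpp, gag0, gag1, gag2, gag3}.

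For the negative direction, one must produce, for every abelian $G$ not appearing in the list (and not a subgroup of one that does), an explicit factorization $G=A\oplus B$ with $L_A=L_B=\{0\}$. Haj\'os's own counterexamples to his conjecture provide the template: in groups such as $\Z_{2^k}\x\Z_{2^\ell}$ with $k,\ell\ge 2$ (beyond $\Z_4\x\Z_4$), $\Z_{p^4}\x\Z_p$, or $\Z_{p^k}\x\Z_q\x\Z_r$ with $k$ large, one writes down "staircase" or "simplex" subsets $A,B$, checks the direct-sum condition via Lemma \ref{oplus}(c) by a residue computation, and verifies aperiodicity by showing directly that neither set is a union of cosets of any order-$p$ subgroup. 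Bad factorizations then propagate to larger $G$ by tensoring with an arbitrary factorization of the complementary direct summand.

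The main obstacle is the negative direction in the borderline cases --- pinpointing the exact exponent at which a primary cyclic factor $\Z_{p^k}$, or a two-prime mixed group, first admits an aperiodic factorization. The arguments there are delicate and proceed by lifting bad factorizations through quotient maps $\Z_{p^{k+1}}\to\Z_{p^k}$ while controlling periods; this is the technical heart of \cite{gag0, gag1, gag3}. A self-contained proof would essentially reproduce those papers, so in the present work we simply quote Sands's synthesis in \cite{gag2}.
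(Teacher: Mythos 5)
The paper does not actually prove this statement: it is imported wholesale from the literature (Sands's 1962 synthesis, resting on R\'edei, de Bruijn, Haj\'os, and Sands's earlier papers), and your proposal ultimately does the same thing --- it ends by quoting \cite{gag2} --- so in substance you and the paper take the identical route. Your historical sketch of how the cited proof goes (characters and vanishing sums for the positive direction, explicit aperiodic factorizations for the negative direction) is broadly faithful to how the classification was obtained.

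One caution, though, since your sketch could be mistaken for a workable outline: the propagation step in your negative direction is wrong as stated. If $G=H\oplus K$ and $H=A\oplus B$ with both factors aperiodic, ``tensoring with an arbitrary factorization of the complementary direct summand'' does not preserve aperiodicity --- e.g.\ taking the trivial factorization $K=K\oplus\{0\}$ yields the factor $A\oplus K$ (or $B\oplus K$), which is periodic with every nonzero element of $K$ as a period; more generally $A\oplus C$ is periodic whenever $C$ is, and if $K$ is a Haj\'os group no factorization of $K$ has two aperiodic factors. Moreover, subgroups of abelian groups need not be direct summands at all (e.g.\ $p\Z_{p^2}\le\Z_{p^2}$), so the reduction cannot be phrased via complementary summands. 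The correct tool is the nontrivial lemma that a group containing a ``bad'' subgroup is itself bad, proved by extending the factorization through a transversal that is deliberately distorted by elements of $H$ to destroy periodicity; this is exactly the delicate part of \cite{gag0,gag1,gag3} that your final paragraph rightly declines to reproduce. A smaller slip: $\Z_{p^4}\x\Z_p$ is a valid example of a bad group only for odd $p$, since $\Z_{2^4}\x\Z_2$ appears in the list as an instance of $\Z_{2^k}\x\Z_2$.
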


\cha{We provide some lemmas that we will use to get our main results.}{We now present four lemmas that will be used in the proof of our main results.} The following lemma is a combination of Lemma 2.6 and Corollary 2.2 in \cite{book1} with some new results.

\begin{lemma}\label{quotient}
\cha{Let $G$ be a group and $A$, $B$ subsets of $G$. Let $G=A\oplus B$ be a factorization where $A$ is periodic and $A=L_A\oplus D$ where $L_A$ is the subgroups of periods of $A$. Then,}{Let $G$ be an abelian group, and let $G=A\oplus B$ be a factorization of $G$ such that $A$ is periodic. Then for any subset $D$ of $G$ with $A=L_A\oplus D$,}
$$
\cha{G/{L_A}=(D+L_A)/{L_A}\oplus (B+{L_A})/L_{A}}{}
$$
$$
G/{L_A} = ((D+L_A)/{L_A}) \oplus ((B+{L_A})/L_{A})
$$
is a factorization of $G/{L_A}$\cha{, where $ (H+L_A)/{L_A}=\left\{h+L_A:h\in H\right\}$, $H=B,D$}{}. Moreover, $(D+L_A)/L_A$ is aperiodic in $G/L_A$, $|(D+{L_A})/L_{A}|=|D|$, and $|(B+{L_A})/L_{A}|=|B|$.
\end{lemma}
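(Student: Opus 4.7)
The plan is to derive everything from an intermediate three-fold direct sum decomposition of $G$. Combining the hypothesis $G = A \oplus B$ with $A = L_A \oplus D$ yields
$$G = L_A \oplus D \oplus B,$$
so every $g \in G$ admits a unique expression $g = \ell + d + b$ with $\ell \in L_A$, $d \in D$, $b \in B$. This triple decomposition is the workhorse of the proof.

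First, I would verify the factorization $G/L_A = ((D+L_A)/L_A) \oplus ((B+L_A)/L_A)$. Surjectivity is immediate from the triple decomposition, since $g + L_A = (d + L_A) + (b + L_A)$. For directness, if $(d + L_A) + (b + L_A) = (d' + L_A) + (b' + L_A)$, then $d + b = \ell + d' + b'$ for some $\ell \in L_A$; uniqueness of the $L_A \oplus D \oplus B$ decomposition forces $\ell = 0$, $d = d'$, $b = b'$, so in particular the cosets $d + L_A$ and $b + L_A$ are individually determined.

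Next, for $|(D + L_A)/L_A| = |D|$ and $|(B + L_A)/L_A| = |B|$, I would apply the same uniqueness trick: if $d, d' \in D$ lie in the same coset of $L_A$, pair with any $b \in B$ and compare the two decompositions $0 + d + b$ and $\ell + d' + b$ (where $d = d' + \ell$) to force $d = d'$; the argument for $B$ is symmetric. Alternatively, these size equalities fall out of Lemma \ref{oplus} applied to the factorization just established together with $|G| = |L_A||D||B|$.

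The least routine part is aperiodicity of $(D + L_A)/L_A$ in $G/L_A$, but this too is a direct pullback. If $g + L_A$ is a period, then for each $d \in D$ there exists $d' \in D$ with $g + d + L_A = d' + L_A$, so $g + d \in D + L_A$ for every $d \in D$; taking the union over $d$ gives $g + (D + L_A) \subseteq D + L_A$, and a cardinality comparison upgrades this to equality. Thus $g + A = g + L_A + D = A$, which forces $g \in L_A$ and makes $g + L_A$ the identity coset, as required. The main potential pitfall is simply careful bookkeeping with the three-fold decomposition --- in particular, resisting any temptation to assume $0 \in D$ or $L_A \subseteq A$, neither of which is granted by the hypotheses.
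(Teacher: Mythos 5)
Your proof is correct and takes essentially the same route as the paper's: both rest on the unique three-fold decomposition $G = L_A \oplus D \oplus B$, establish the factorization of $G/L_A$ by comparing coset representatives, prove aperiodicity by pulling a period of $(D+L_A)/L_A$ back to the relation $g + A = g + D + L_A = A$ forcing $g \in L_A$, and derive the cardinality claims from the injectivity of $d \mapsto d + L_A$ on $D$. The only cosmetic divergence is at $|(B+L_A)/L_A| = |B|$, where the paper divides $|G/L_A|$ by $|(D+L_A)/L_A|$ while you give a symmetric uniqueness argument (pairing with some $d \in D$); both are valid, and your inclusion-plus-cardinality step in the aperiodicity argument is harmlessly redundant since a period gives set equality outright.
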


\begin{proof}
\cha{Note that $G=L_A\oplus D\oplus B$, so for any $g+L_A \in G/L_A$,
    $g+L_A=l+d+b+L_A=(d+l+L_A) + (b+L_A)=(d+L_A)+(b+l_A)\in (D+L_A)/{L_A} + (B+{L_A})/L_{A}$ for some $l\in L_A$, $d\in D$, and $b\in B$. Let $g+L_A=(d_1+L_A)+(b_1+L_A)=(d_2+L_A)+(b_2+L_A) \in (D+L_A)/{L_A} + (B+{L_A})/L_{A}$, then $d_1+b_1+l_1=d_2+b_2+l_2$ for some $l_1,l_2\in L$. Since $G=L_A\oplus D\oplus B$, then $b_1=b_2$ and $d_1=d_2$ which means $G/{L_A}=(D+L_A)/{L_A}\oplus (B+{L_A})/L_{A}$.}{Since $G=A \oplus B = L_A\oplus D\oplus B$, for any $g+L_A \in G/L_A$, we have $g+L_A=l+d+b+L_A=(d+l+L_A) + (b+L_A)=(d+L_A)+(b+l_A)\in ((D+L_A)/{L_A}) + ((B+{L_A})/L_{A})$ for some $l\in L_A$, $d\in D$ and $b\in B$. If $g+L_A=(d_1+L_A)+(b_1+L_A)=(d_2+L_A)+(b_2+L_A) \in ((D+L_A)/{L_A}) + ((B+{L_A})/L_{A})$ for some $d_1, d_2 \in D$ and $b_1, b_2 \in B$, then $d_1+b_1+l_1=d_2+b_2+l_2$ for some $l_1, l_2\in L$. Since $G=L_A\oplus D\oplus B$, we must have $b_1=b_2$ and $d_1=d_2$. Since this holds for any $g+L_A \in G/L_A$, it follows that $G/{L_A}=((D+L_A)/{L_A}) \oplus ((B+{L_A})/L_{A})$.}

\cha{To prove $(D+L_A)/L_A$ is aperiodic in $G/L_A$, suppose $(g+L_A)+(D+L_A)/L_A=(D+L_A)/L_A$, we will prove $g\in L_A$ which means $(D+L_A)/L_A$ does not have a non-zero period in $G/L_A$, i.e. aperiodic. Let $D=\{d_1,d_2,\dots,d_k\}$, then $(D+L_A)/L_A=\left\{ d_1+L_A,\dots,d_k+L_A\right\}$ and $(g+L_A)+(D+L_A)/L_A=\left\{ g+d_1+L_A,\dots,g+d_k+L_A\right\}$. Now,}{We now prove $(D+L_A)/L_A$ is aperiodic in $G/L_A$. Suppose $(g+L_A)+(D+L_A)/L_A=(D+L_A)/L_A$. We will prove $g\in L_A$, which means $(D+L_A)/L_A$ has no period in $G/L_A$ and so is aperiodic. Set $D=\{d_1,d_2,\dots,d_k\}$. Then $(D+L_A)/L_A=\left\{ d_1+L_A,\dots,d_k+L_A\right\}$ and $(g+L_A)+(D+L_A)/L_A=\left\{ g+d_1+L_A,\dots,g+d_k+L_A\right\}$. Now,}
$$
\cha{g+A
        =g+D+L_A
        =\bigcup_{i=1}^k g+d_i+L_A
        =\bigcup_{i=1}^k d_i+L_A
        =D+L_A
        =A.}{}
$$
$$
\cha{}{g+A
        =g+D+L_A
        =\bigcup_{i=1}^k (g+d_i+L_A)
        =\bigcup_{i=1}^k (d_i+L_A)
        =D+L_A
        =A.}
$$
\cha{So $g$ is a period of $A$.}{So $g \in L_A$ as desired.}

\cha{We now prove $|(D+{L_A})/L_{A}|=|D|$ and $|(B+{L_A})/L_{A}|=|B|$. Since $A=L_A\oplus D$, then $\left(L_A\cap(D-D)\right) \subseteq \left((L_A-L_A)\cap(D-D)\right)=\{0\}$. Let $d_1,d_2\in D$,}{Since $A=L_A\oplus D$, we have $\left(L_A\cap(D-D)\right) \subseteq \left((L_A-L_A)\cap(D-D)\right)=\{0\}$. For $d_1,d_2\in D$, we have}
    $$
    d_1+L_A=d_2+L_A \Leftrightarrow d_1-d_2\in L_A \Leftrightarrow d_1-d_2\in \left(L_A\cap(D-D)\right)=\{0\} \Leftrightarrow d_1=d_2.
    $$
    So $|(D+{L_A})/L_{A}|=|D|$. Now, 
    $$
    |(B+{L_A})/L_{A}|=\frac{|G/L_A|}{|(D+L_A)/L_A|}=\frac{|G|}{|L_A|\cdot|D|}=\frac{|G|}{|A|}=|B|.
    $$
\cha{}{This completes the proof.}
\end{proof}

The following lemmas \cha{are going to}{will} be used repeatedly throughout this paper.

\begin{lemma}\label{d_i+l_i}
Let $G = A \oplus B$ and $A = L_A \oplus D$ be as in Lemma \ref{quotient}. Let $|(D+L_A)/L_A|=n$ and $|(B+L_A)/L_A|=m$.
If
$$
L_A=\{l_i\mid i\ttk{t}\}, \,
(D+L_A)/L_A=\{d_j+L_A \mid j\ttk{n}\}, \, \text{and} \,\,
(B+L_A)/L_A=\{b_k+L_A \mid k\ttk{m}\},
$$
then
$$
A=\{l_i+d_j\mid i\ttk{t}, j\ttk{n}\} \quad \text{and} \quad
B=\{b_k+\ell_k \mid k\ttk{m}\}
$$
for some $\ell_k\in L_A$.
\end{lemma}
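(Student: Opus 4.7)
The plan is to deduce both assertions directly from Lemma \ref{quotient}. The two facts I will rely on are: (i) $A$ is a disjoint union of cosets of $L_A$, parameterised by $(D+L_A)/L_A$; and (ii) the equality $|(B+L_A)/L_A|=|B|$, which is the last conclusion of that lemma.

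For the expression of $A$, I would observe that $A = L_A\oplus D$ means $A$ splits as the disjoint union $\bigcup_{j\in[n]} (d_j+L_A)$ of the $n$ cosets listed in $(D+L_A)/L_A$. Writing each coset as $d_j+L_A=\{l_i+d_j : i\in[t]\}$ yields $A=\{l_i+d_j: i\in[t],\,j\in[n]\}$. The direct sum structure guarantees that the resulting $tn = |L_A|\cdot|D| = |A|$ elements are distinct, so nothing is lost in this enumeration. This settles the first half of the claim.

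For the expression of $B$, I would consider the natural map $\pi\colon B \to (B+L_A)/L_A$ given by $b \mapsto b+L_A$. This map is surjective onto $\{b_k+L_A : k\in[m]\}$ by definition, and Lemma \ref{quotient} gives $|B|=|(B+L_A)/L_A|=m$; hence $\pi$ must be a bijection. In particular, each coset $b_k+L_A$ meets $B$ in exactly one element, which may be written as $b_k+\ell_k$ for a unique $\ell_k\in L_A$. This yields $B=\{b_k+\ell_k: k\in[m]\}$ as required.

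There is no substantive obstacle here; this lemma is essentially a bookkeeping result that repackages Lemma \ref{quotient} into a convenient indexed form, and the proof amounts to a careful accounting of cosets and cardinalities. Its value is that it will let later arguments pick explicit representatives $d_j$ and $\ell_k$ when analysing concrete factorizations of the Haj\'os groups in Table \ref{tablemain}.
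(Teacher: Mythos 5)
Your proof is correct and takes essentially the same approach as the paper's: both halves rest on the cardinality identities $|(D+L_A)/L_A|=|D|$ and $|(B+L_A)/L_A|=|B|$ from Lemma \ref{quotient}, with your bijection $\pi$ playing exactly the role of the permutation $\sigma$ in the paper's argument for $B$, and your coset decomposition of $A$ merely compressing the paper's intermediate step of writing $D=\{d_j+\ell_j\}$ and absorbing each $\ell_j$ into $L_A$. No changes needed.
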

\begin{proof}
    First we prove the lemma for $B$.
    From Lemma \ref{quotient} we have $|B|=|(B+L_A)/L_A|=m$, so let $B=\{x_k\mid k\ttk{m}\}$.
    By definition $(B+L_A)/L_A=\{x_k + L_A\mid k\ttk{m}\}$.
    This means there is a permutation $\sigma\in \mathrm{sym}(m)$ such that $b_k+L_A=x_{\sigma(k)}+L_A$ for all $k$ in $[m]$.
    Hence $x_{\sigma(k)}=b_k+\ell_k$ for some $\ell_k$ in $L_A$. 
    Therefore $B=\{x_k\mid k\ttk{m}\}$ $=\{x_{\sigma(k)}\mid k\ttk{m}\}$ $=\{b_k+\ell_k \mid k\ttk{m}\}$ for some $\ell_k$ in $L_A$.

    Now we prove the lemma for $A$.
    By following the same arguments for $B$, we will get $D=\{d_j+\ell_j \mid j\ttk{n}\}$ for some $\ell_j$ in $L_A$.
    As $L_A+\ell_j=L_A$, we have 
    \[
        A = L_A \oplus D
        = \bigcup_{j\ttk{n}} (L_A+d_j+\ell_j)
        = \bigcup_{j\ttk{n}} (L_A+d_j)
        = L_A \oplus \{d_j \mid j\ttk{n}\}
        = \{l_i + d_j \mid i\ttk{t}, j\ttk{n}\}. \qedhere
    \]
\end{proof}

\begin{lemma}\label{generate-zero}
Let $G = A \oplus B$ and $A = L_A \oplus D$ be as in Lemma \ref{quotient}. The following two statements hold.
    \begin{enumerate}[{\rm(a)}]
        \item $A$ generates $G$ if and only if $(D+L_A)/L_A$ generates $G/L_A$.
        \item $0\in A$ if and only if $L_A \in (D+L_A)/L_A$.
    \end{enumerate}
\end{lemma}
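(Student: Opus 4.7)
The plan is to exploit the natural projection $\pi \colon G \to G/L_A$ together with the elementary observation that, since $A = L_A \oplus D$, we have $A + L_A = A$, so the image of $A$ under $\pi$ equals $(A+L_A)/L_A = (D+L_A)/L_A$.

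For part (a), the forward direction is immediate: $\pi$ is a surjective homomorphism, so if $\langle A \rangle = G$ then $\langle \pi(A) \rangle = G/L_A$, i.e.\ $(D+L_A)/L_A$ generates $G/L_A$. For the converse I first verify that $L_A \subseteq \langle A \rangle$: since $A$ is nonempty we may fix some $d \in D$, and for every $l \in L_A$ both $d$ and $l+d$ lie in $A$, whence $l = (l+d) - d \in \langle A \rangle$. Once $L_A \subseteq \langle A \rangle$, the quotient correspondence yields $\langle A \rangle / L_A = \langle \pi(A) \rangle = \langle (D+L_A)/L_A \rangle = G/L_A$, which forces $\langle A \rangle = G$.

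For part (b), both directions come straight from the direct-sum decomposition $A = L_A \oplus D$. If $0 \in A$, write $0 = l + d$ with $l \in L_A$ and $d \in D$; then $d = -l \in L_A$, so $d + L_A = L_A$ and therefore $L_A \in (D+L_A)/L_A$. Conversely, if $L_A \in (D+L_A)/L_A$, then some $d \in D$ satisfies $d + L_A = L_A$, i.e.\ $d \in L_A$, and hence $0 = (-d) + d \in L_A + D = A$.

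There is no serious obstacle. The only step that merits a brief check is the inclusion $L_A \subseteq \langle A \rangle$ used in the converse of (a), which is immediate once any element of $D$ is fixed. Everything else is a routine application of the quotient map combined with the fact that $L_A$-periodicity makes $A$ a union of cosets of $L_A$.
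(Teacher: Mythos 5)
Your proof is correct. Part (b) and the forward direction of (a) coincide with the paper's argument (the paper writes $g=\sum a_i$, splits each $a_i=l_i+d_i$, and projects; this is your observation that $\pi(A)=(D+L_A)/L_A$ in element-wise form). The converse of (a) is where you take a genuinely different route. The paper avoids any appeal to the correspondence theorem: given $g+L_A=\sum_{i=1}^m d_i+L_A$, it sets $l=g-\sum_{i=1}^m d_i\in L_A$ and folds the correction into the last summand, writing $g=\sum_{i=1}^{m-1}(d_i+0)+(d_m+l)$ with every summand visibly in $D\oplus L_A=A$, so $g$ is directly a sum of elements of $A$. You instead first establish $L_A\subseteq\langle A\rangle$ via the differences $(l+d)-d$ with a fixed $d\in D$, and then deduce $\langle A\rangle=G$ from $\langle A\rangle/L_A=G/L_A$ by the lattice correspondence for subgroups containing $L_A$. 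Both are sound; the paper's version is more elementary and self-contained (it never needs to know that $L_A$ lies in the generated subgroup, and it even avoids inverses in the final expression, though in a finite group that distinction is immaterial), while your version isolates the structural fact $L_A\subseteq\langle A\rangle$, which is a clean reusable observation, at the mild cost of invoking the quotient correspondence. Your one flagged step — the inclusion $L_A\subseteq\langle A\rangle$ — is indeed the only point needing care, and your verification of it is correct since $0\in L_A$ gives $d\in A$ and $l+d\in L_A+D=A$ for every $l\in L_A$.
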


\begin{proof}
Suppose $A$ generates $G$ and let $g+L_A \in G/L_A$. Since $A$ generates $G$, \cha{then}{we have} $g=\sum_{i=1}^{n} a_i$ for some integer $n$ and \cha{}{all} $a_i\in A$. For every $i$, write $a_i=l_i+d_i$ where $l_i\in L_A$ and $d_i\in D$. Then $g+L_A=\sum_{i=1}^n (l_i+d_i) +L_A = \sum_{i=1}^n d_i +L_A$. This means $(D+L_A)/L_A$ generates $G/L_A$.

Now suppose $(D+L_A)/L_A$ generates $G/L_A$. Let $g\in G$ and write $g+L_A=\sum_{i=1}^m d_i+L_A$ for some integer $m$ and $d_i\in D$. Let $l=g-\sum_{i=1}^m d_i \in L_A$. Then $g=\sum_{i=1}^{m-1} (d_i+0) + (d_m+l) $. \cha{Note that $(d_i+0),(d_m+l)\in D\oplus L_A=A$. This means $A$ generates $G$.}{Since both $d_i+0$ and $d_m+l$ belong to $D\oplus L_A=A$ and $g$ is an arbitrary element of $G$, it follows that $A$ generates $G$.} This proves (a).

Suppose $0\in A$. Then $0 \in A = D\oplus L_A$ and hence $0=d+l$ for some $d\in D$ and $l\in L_A$. So $d=-l\in L_A$ and hence $L_A=d+L_A\in (D+L_A)/L_A$. Now suppose $L_A \in (D+L_A)/L_A$. This implies $L_A=d+L_A$ for some $d$ in $D$, and hence $d\in L_A$. As $L_A$ is a subgroup of $G$ by Lemma \ref{period}, $-d$ is in $L_A$. It follows that $0=d+(-d)\in D\oplus L_A=A$. This proves (b).
\end{proof}

Part (c) in the following lemma is also Theorem 3.1 part (b) in \cite{camyapzhou}. They also prove that for every Haj\'os group $G$, every coset with order $|G|/p$ where $p$ is a prime is a perfect code for some Cayley graph of $G$.

\begin{lemma}\label{ls<so}\label{prime deg}
Let $G$ be a Haj\'os group. Let $\Cay(G,S)$ be a connected but non-complete Cayley graph of $G$ which admits a perfect code $C$.
    \begin{enumerate}[{\rm(a)}]
        \item If $G$ is not a cyclic group, then $|\so|>2$.
        \item If $\so$ is periodic, then \cha{$|\ls|$ divides $|\so|$ and $|\ls|<|\so|$}{$1 < |\ls| < |\so|$ and $|\ls|$ divides $|\so|$}.
        \item If $|\so|$ is a prime, then $\so$ is aperiodic and \cha{$C=H+g$ where $H$ is a subgroup of $G$ and $g\in G$}{$C = g + L_C$ for some element $g$ of $G$}.
    \end{enumerate}
\end{lemma}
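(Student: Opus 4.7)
The plan has three parts, each leveraging that $\Cay(G,S)$ is connected (so $\so$ generates $G$) and non-complete (so $\so\neq G$), with part (c) additionally exploiting the Haj\'os property.

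For part (a), I would argue by contraposition. If $|\so|\le 2$, then either $S=\emptyset$, in which case connectedness forces $|G|=1$ and $G$ is trivially cyclic, or $S=\{s\}$ for a single nonzero element $s$, and connectedness gives $G=\langle s\rangle$, again cyclic. Hence a non-cyclic $G$ forces $|\so|\ge 3$.

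For part (b), the divisibility $|\ls|\mid|\so|$ is immediate from Lemma \ref{period}, because $\so$ is a union of cosets of $\ls$. The lower bound $|\ls|>1$ is just the hypothesis that $\so$ is periodic. To rule out $|\ls|=|\so|$, I would note that such an equality forces $\so$ to consist of a single coset of $\ls$; since $0\in\so$, that coset is $\ls$ itself, so $\so=\ls$ is a subgroup of $G$. Combined with $\so$ generating $G$, this yields $\so=G$, contradicting non-completeness.

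For part (c), I would begin with aperiodicity of $\so$: if $\so$ were periodic, part (b) would demand $1<|\ls|<p$ with $|\ls|$ dividing $p$, which is impossible for $p$ prime. Next, Lemma \ref{lem:factor} gives the factorization $G=\so\oplus C$, and the Haj\'os property forces $C$ to be periodic since $\so$ is not. Writing $C=\lc\oplus D$ and applying Lemma \ref{quotient}, I obtain a factorization $G/\lc=\bar{\so}\oplus\bar{D}$, where $\bar{\so}$ and $\bar{D}$ are the images of $\so$ and $D$, $|\bar{\so}|=p$, $|\bar{D}|=|D|$, and $\bar{D}$ is aperiodic. Since $G/\lc$ is again a finite abelian Haj\'os group (every finite abelian quotient is isomorphic to one of its subgroups, and subgroups of Haj\'os groups are Haj\'os by Lemma \ref{goodabelian}), the factor $\bar{\so}$ must be periodic. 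Repeating the divisibility argument of (b) inside $G/\lc$ forces $\bar{\so}$ to coincide with its own period subgroup, making $\bar{\so}$ a subgroup of $G/\lc$ of order $p$. Finally, since $\so$ generates $G$, its image $\bar{\so}$ generates $G/\lc$; a generating subgroup must be the whole group, so $\bar{\so}=G/\lc$. This gives $|G/\lc|=p$, whence $|D|=1$ and $C=g+\lc$ for the unique $g\in D$.

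The main subtlety will be justifying that $G/\lc$ inherits the Haj\'os property. This is not an immediate consequence of the lemmas stated so far; it rests on the standard fact that every quotient of a finite abelian group is isomorphic to one of its subgroups, together with the subgroup closure of the Haj\'os class built into Lemma \ref{goodabelian}. The remaining steps reduce cleanly to Lemmas \ref{lem:factor}, \ref{period}, and \ref{quotient}, together with the connectedness and non-completeness assumptions.
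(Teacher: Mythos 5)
Your proposal is correct and follows essentially the same route as the paper's proof: part (b) rules out $|\ls|=|\so|$ by noting $\so$ would be a generating subgroup, forcing $\so=G$ against non-completeness, and part (c) uses the Haj\'os property to make $C$ periodic, passes to the factorization of $G/\lc$ from Lemma \ref{quotient}, observes that $G/\lc$ is Haj\'os because a quotient of a finite abelian group is isomorphic to a subgroup, and concludes $(\so+\lc)/\lc=G/\lc$, hence $|D|=1$ and $C=g+\lc$ — exactly the paper's chain of reasoning. The "subtlety" you flag about $G/\lc$ inheriting the Haj\'os property is handled in the paper by the same standard fact, so nothing is missing.
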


\begin{proof}
\cha{If $|\so|=2$, then $|S|=1$, which means $G$ is generated by a single element, i.e. a cyclic group. If $\so$ is periodic, then $\so=\ls\oplus D$ for some subset $D$, so $|\ls|$ divide $|\so|$. If $|\ls|=|\so|$, then $\so$ is a coset of $\ls$. Since both $\so$ and $\ls$ contains $0$, then $\so=\ls$. This means, $\so$ is a subgroup of $G$ that generates $G$, a contradiction, therefore $|\ls|<|\so|$.}{Since $\Cay(G,S)$ is connected, $S$ is a generating set of $G$. If $|\so|=2$, then $|S|=1$ and hence $G$ is a cyclic group. This proves part (a). Suppose $\so$ is periodic. Then $|\ls| > 1$ and $\so=\ls\oplus D$ for some subset $D$. So $|\ls|$ divides $|\so|$. If $|\ls|=|\so|$, then $\so$ is a coset of $\ls$. Since both $\so$ and $\ls$ contain $0$, we must have $\so=\ls$. Hence $\so$ is a subgroup of $G$ that generates $G$, which means $\so = G$, but this contradicts the assumption that $\Cay(G,S)$ is not a complete graph. Therefore, $|\ls|<|\so|$ and (b) is proved.}

\cha{Let $|\so|=p$, a prime. Suppose $S_0$ is periodic, from Lemma \ref{period} we have $|L_{S_0}|$ is greater than 1 and divides $|S_0|$, a prime, so $|L_{S_0}|=|S_0|$. Since $0\in L_{S_0}$, $0\in S_0$, and $S_0=L_{S_0}+B$ from Lemma \ref{period}, then $L_{S_0}=S_0$. This means ${S_0}$ is a proper subgroup of $G$ that generates $G$, a contradiction. So $S_0$ is aperiodic and since $G$ is a Haj\'os group, $C$ has to be periodic. Consider the factorization of the quotient group $G/L_C$ as in Lemma \ref{quotient}, $G/L_C=(D+L_C)/L_C\oplus (S_0+L_C)/L_C$ where $C=D\oplus L_C$. The quotient group $G/L_C$ is isomorphic to a subgroup of $G$, so it is also a Haj\'os group. From Lemma \ref{quotient}, we know that $(D+L_C)/L_C$ is aperiodic, so $(S_0+L_C)/L_C$ is periodic, and since $|(S_0+L_C)/L_C|=|S_0|$, $(S_0+L_C)/L_C$ is equal to its subgroup of periods in $G/L_C$. Since $S_0$ generates $G$, then $(S_0+L_C)/L_C$ generates $G/L_C$. So $(S_0+L_C)/L_C$ is a subgroup of $G/L_C$ that also generates $G/L_C$, this implies $(S_0+L_C)/L_C=G/L_C$.}{Now assume $|\so|=p$ is a prime. Suppose $S_0$ is periodic. Then by Lemma \ref{period} we have $|L_{S_0}| > 1$ and $|L_{S_0}|$ divides $|S_0| = p$. So $|L_{S_0}|=|S_0|$. Since by Lemma \ref{period}, $S_0$ is a union of some cosets of $L_{S_0}$, it follows that $S_0 = L_{S_0}$ and hence $S_0$ is a subgroup of $G$. Since $S_0$ also generates $G$, we have $S_0 = G$ and so $\Cay(G,S)$ is a complete graph, but this contradicts our assumption. Therefore, $S_0$ is aperiodic. Since $G$ is a Haj\'os group and $G = C \oplus S_0$ (as $C$ is a perfect code in $\Cay(G,S)$), $C$ must be periodic. Since $G = C \oplus S_0$ and $C$ is periodic, by Lemma \ref{quotient}, for any subset $D$ of $G$ with $C=L_C \oplus D$ we have}
$$
G/L_C=((D+L_C)/L_C) \oplus ((S_0+L_C)/L_C).
$$ 
\cha{}{Since $G$ is abelian, $G/L_C$ is isomorphic to a subgroup of $G$. Thus $G/L_C$ is also a Haj\'os group as $G$ is Haj\'os. Since, by Lemma \ref{quotient}, $(D+L_C)/L_C$ is aperiodic in $G/L_C$, $(S_0+L_C)/L_C$ in the factorization above must be periodic in the Haj\'os abelian group $G/L_C$. Since $|(S_0+L_C)/L_C|=|S_0|$, $(S_0+L_C)/L_C$ is equal to its subgroup of periods in $G/L_C$. Since $S_0$ generates $G$, $(S_0+L_C)/L_C$ generates $G/L_C$. Now that $(S_0+L_C)/L_C$ is a subgroup of $G/L_C$ which generates $G/L_C$, we must have $(S_0+L_C)/L_C=G/L_C$.} Hence
$$
|L_C|=\frac{|G|}{|G/L_C|}=\frac{|G|}{|(S_0+L_C)/L_C|}=\frac{|G|}{|S_0|}=|C|.
$$
\cha{So $C = L_C + g$}{Therefore, $|D| = 1$ and so $C = g + L_C$} for some $g\in G$.
\end{proof}

\section{Perfect codes in Cayley graphs of $\Z_p\x\Z_q\x\Z_q$ and their subgroups}
\label{sec:pqq}

In this section, we characterize Cayley graphs on Haj\'os groups of the form $\Z_p\x\Z_q\x\Z_q$ and their subgroups, where $p$ and $q$ are distinct primes. These Haj\'os groups are $\Z_p\x\Z_p$ with $p\geq3$, $\Z_p\x\Z_q$ with $p \ne q$, and $\Z_p\x\Z_q\x\Z_q$ with $p \geq 3 \geq q$ and $p\ne q$. The main result in this section are the following four theorems.

\begin{theorem}\label{as-bs}\label{p,p}
    Let $G=\Z_p\x\Z_p$ for an odd prime $p$, let $S$ be a proper subset of $G\setminus \{0\}$ that generates $G$, and let $S_0=S\cup\{(0,0)\}$. Then a subset $C$ of $G$ is a perfect code in $\Cay(G,S)$ if and only if $|S|=p-1$ and there exist $a,b\in\Z_p$ such that $\{as_1-bs_2\mid (s_1,s_2)\in S_0\}=\Z_p$ and $C=\{(bn+c,an+d)\mid n\in \Z_p\}$ for some $c,d\in\Z_p$.
\end{theorem}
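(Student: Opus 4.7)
The plan is to use Lemma~\ref{lem:factor} and analyze factorizations $G=S_0\oplus C$ of $G=\Z_p\x\Z_p$. First, since $|G|=p^2$ and $|S_0|\cdot|C|=|G|$, we must have $|S_0|\in\{1,p,p^2\}$. The case $|S_0|=1$ forces $S=\emptyset$, which cannot generate $G$, while $|S_0|=p^2$ is excluded by the hypothesis that $S$ is a proper subset of $G\setminus\{0\}$. Hence $|S_0|=p$ and $|C|=p$, so $|S|=p-1$ as claimed. Because $|S_0|=p$ is prime, Lemma~\ref{prime deg}(c) gives $C=g+L_C$ for some $g\in G$, and since $C$ is a coset of $L_C$ of size $p$ we have $|L_C|=p$. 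Every subgroup of $\Z_p\x\Z_p$ of order $p$ is cyclic, generated by a nonzero element, so $L_C=\{(bn,an):n\in\Z_p\}$ for some $(a,b)\in\Z_p\x\Z_p\setminus\{(0,0)\}$. Writing $g=(c,d)$ yields $C=\{(bn+c,an+d):n\in\Z_p\}$ of the required shape.

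To extract the condition on $S_0$, I would introduce the linear form $\phi\colon G\to\Z_p$ defined by $\phi(x,y)=ax-by$, whose kernel is exactly $L_C$. For the forward direction, Lemma~\ref{oplus}(c) gives $(S_0-S_0)\cap(C-C)=\{0\}$; since $C$ is a coset of $L_C$, we have $C-C=L_C$, so $\phi$ is injective on $S_0$. As $|S_0|=p=|\Z_p|$, injectivity forces $\phi(S_0)=\Z_p$, which is precisely the condition $\{as_1-bs_2:(s_1,s_2)\in S_0\}=\Z_p$.

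For the converse, the stated condition $\phi(S_0)=\Z_p$ already rules out $(a,b)=(0,0)$, so $L:=\{(bn,an):n\in\Z_p\}$ really has order $p$, and $\phi$ restricted to $S_0$ is a bijection onto $\Z_p\cong G/L$. Hence $S_0$ meets each coset of $L$ in exactly one element, giving $G=S_0\oplus L$. Translating, $S_0+C=S_0+g+L=g+G=G$, and since $|S_0|\cdot|C|=p^2=|G|$, Lemma~\ref{oplus}(b) upgrades this to $G=S_0\oplus C$. Lemma~\ref{lem:factor} then concludes that $C$ is a perfect code in $\Cay(G,S)$. The entire argument is driven by Lemma~\ref{prime deg}(c); once that pins down the shape of $C$, the rest is a transparent linear-algebraic computation over $\F_p$, so I do not foresee any genuine obstacle.
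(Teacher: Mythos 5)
Your proof is correct, and its structural backbone coincides with the paper's: both pass through Lemma \ref{lem:factor} to the factorization $G=S_0\oplus C$, count to get $|S_0|=|C|=p$ (hence $|S|=p-1$), and invoke Lemma \ref{prime deg}(c) to force $C=g+L_C$ with $|L_C|=p$, which yields the parametrization $C=\{(bn+c,an+d)\mid n\in\Z_p\}$. Where you genuinely diverge is in how the condition $\{as_1-bs_2\mid (s_1,s_2)\in S_0\}=\Z_p$ is tied to the direct sum. The paper does this by raw element computations: for necessity it assumes $as_1-bs_2=as_3-bs_4$ and explicitly exhibits two representations of $(s_1+c,s_2+d)$ as a sum of an element of $S_0$ and an element of $C$, splitting into the cases $a\ne 0$ and $a=0$ (the latter as printed contains a slip, $a^{-1}s_3$ where $b^{-1}s_3$ is intended); for sufficiency it verifies directness of $S_0+C$ by a similar explicit collision argument. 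You instead package both implications through the linear form $\phi(x,y)=ax-by$ with $\ker\phi=L_C$: necessity follows in one line from Lemma \ref{oplus}(c), since $C-C=L_C$ makes injectivity of $\phi$ on $S_0$ immediate (note that part (c) as printed reads $(A-A)\cap(B\cap B)=\{0\}$, an evident typo for $(A-A)\cap(B-B)=\{0\}$, which is the form you correctly use), and sufficiency follows because surjectivity of $\phi|_{S_0}$ between sets of size $p$ makes $S_0$ a transversal of $L_C$, whence $S_0+C=G$ and Lemma \ref{oplus}(b) upgrades this to $G=S_0\oplus C$. Your route buys a case-free argument that makes the role of $L_C$ as a kernel transparent and sidesteps the paper's computational slip; the paper's route buys explicitness, directly displaying the double representation that witnesses failure of the code property. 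Both arguments are complete.
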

\begin{proof}
    Suppose that $|S|=p-1$ and there exist $a,b\in\Z_p$ such that $\{as_1-bs_2\mid (s_1,s_2)\in S_0\}=\Z_p$ and $C=\{(bn+c,an+d)\mid n\in \Z_p\}$. 
    Then $|\{as_1-bs_2\mid (s_1,s_2)\in S_0\}|=|S_0|$.    
    To prove that $C$ is a perfect code in $\Cay(G,S)$, suppose for a contradiction that $(bn_1+c,an_1+d)+(s_1,s_2)=(bn_2+c,an_2+d)+(s_3,s_4)$ for distinct $(s_1,s_2),(s_3,s_4)\in S$.
    Then $b(n_1-n_2)=s_3-s_1$ and $a(n_1-n_2)=s_4-s_2$. It follows that    
    $a(s_3-s_1)=ab(n_2-n_1)=b(s_4-s_2)$, which implies $as_3-bs_4=as_1-bs_2$, contradicting $|\{as_1-bs_2\mid (s_1,s_2)\in S_0\}|=|S_0|$.
    Hence $C+S_0$ is a direct sum. Since $|S_0|\cdot|C|=p^2=|G|$, we conclude that $G=S_0\oplus C$.

    Conversely, suppose that $\Cay(G,S)$ admits a perfect code $C$, then $G=\so\oplus C$. From Lemma \ref{oplus}, we have $|S|={p-1}$. 
    Since $|S_0|=p$ is a prime, Lemma \ref{prime deg} implies that $C=L_C+g$ for some $g\in G$.
    Note that $L_C$ is a proper subgroup of $G$, so $C=\left<(b,a)\right>+(c,d)=\{(bn+c,an+d)\mid n\in \Z_p\}$ for some non-zero $(b,a)\in G$ and $c,d\in\Z_p$.
    It remains to prove that $\{as_1-bs_2\mid (s_1,s_2)\in S_0\}=\Z_p$, which is equivalent to that $as_1-bs_2$ is all different for all $(s_1,s_2)\in S_0$.
    Suppose there are $(s_1,s_2),(s_3,s_4)\in S_0$ such that $as_1-bs_2=as_3-bs_4$, so that $a(s_1-s_3)=b(s_2-s_4)$.
    If $a\ne 0$, then 
    \begin{align*}
        (s_1,s_2)+(c,d)=(s_3,s_4)+(s_1-s_3+c,s_2-s_4+d)
        &=\left(b(a^{-1}s_2-a^{-1}s_4)+c,a(a^{-1}s_2-a^{-1}s_4)+d\right)
    \end{align*}
    are two representations of $(s_1+c,s_2+d)$ as the sum of an elements of $S_0$ and an element of $C$.
    If $a=0$, then $b\ne 0$ and 
    \begin{align*}
        (s_1,s_2)+(c,d)=(s_3,s_4)+(s_1-s_3+c,s_2-s_4+d)
        &=\left(b(b^{-1}s_1-a^{-1}s_3)+c,a(b^{-1}s_1-a^{-1}s_3)+d\right)
    \end{align*}
    are two representations of $(s_1+c,s_2+d)$ as the sum of an elements of $S_0$ and an element of $C$.
    This contradicts $G=S_0\oplus C$, so $\{as_1-bs_2\mid (s_1,s_2)\in S_0\}=\Z_p$, which completes the proof.
\end{proof}

We give an equivalent form of Theorem \ref{as-bs} that requires less calculation to check whether $\Cay(G,S)$ admits a perfect code.

\begin{theorem}\label{as-s}
    Let $G=\Z_p\x\Z_p$ for an odd prime $p$ and let $S$ be a proper subset of $G\setminus \{0\}$ that generates $G$. Then a subset $C$ of $G$ is a perfect code in $\Cay(G,S)$ if and only if $|S|=p-1$ and $S$ satisfies: (1) $\{s_1\mid (s_1,s_2)\in S_0\}=\Z_p$, or (2) there is an element $a\in\Z_p$ such that $\{as_1-s_2\mid (s_1,s_2)\in S_0\}=\Z_p$.
    Moreover, the perfect codes are $\{(b,n)\mid n\in\Z_p\}$ for any $b\in\Z_p$ if $S$ satisfies (1), or $\{(n,an+b)\mid n\in \Z_p\}$ for any $b\in\Z_p$ if $S$ satisfies (2).
\end{theorem}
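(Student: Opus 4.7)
The plan is to derive Theorem \ref{as-s} as a direct reformulation of Theorem \ref{as-bs}, by analyzing the condition of the latter according to whether the parameter $b$ is zero, and in the nonzero case absorbing it via a scalar change of variables on $\Z_p$.

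First I would observe that in Theorem \ref{as-bs} the pair $(a,b)$ must be non-zero, since otherwise $\{as_1-bs_2\mid (s_1,s_2)\in S_0\}=\{0\}\neq \Z_p$. If $b=0$, then $a\neq 0$, and applying the bijection $x\mapsto a^{-1}x$ of $\Z_p$ turns the condition $\{as_1\mid (s_1,s_2)\in S_0\}=\Z_p$ into $\{s_1\mid (s_1,s_2)\in S_0\}=\Z_p$, which is exactly condition (1). The associated perfect code $\{(c,an+d)\mid n\in\Z_p\}$ collapses to $\{(c,m)\mid m\in\Z_p\}$ since $n\mapsto an+d$ is a bijection on $\Z_p$, giving precisely the stated form with $c$ playing the role of the free parameter $b$ in Theorem \ref{as-s}.

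If $b\neq 0$, I would set $a'=ab^{-1}$ and scale by $b^{-1}$ to convert the condition $\{as_1-bs_2\mid (s_1,s_2)\in S_0\}=\Z_p$ into $\{a's_1-s_2\mid (s_1,s_2)\in S_0\}=\Z_p$, which is condition (2). For the code, the substitution $m=bn+c$ yields $n=b^{-1}(m-c)$ and $an+d=a'm+(d-a'c)$, and after setting $b'=d-a'c$ the code takes the form $\{(m,a'm+b')\mid m\in\Z_p\}$, matching the statement.

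The converse direction is immediate: any $(S,C)$ satisfying (1) arises as the $(a,b)=(1,0)$ instance of Theorem \ref{as-bs}, and any $(S,C)$ satisfying (2) arises as the $b=1$ instance, so Theorem \ref{as-bs} already certifies that such $C$ are perfect codes. I expect no real obstacle; the argument is essentially a change-of-variables bookkeeping exercise, and no new combinatorial content is required beyond Theorem \ref{as-bs}.
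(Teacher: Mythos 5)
Your proposal is correct and follows essentially the same route as the paper: the paper also reduces Theorem \ref{as-s} to Theorem \ref{as-bs} by observing that $\{as_1-bs_2\mid (s_1,s_2)\in S_0\}=\Z_p$ is invariant under multiplying $(a,b)$ by any non-zero $\lambda\in\Z_p$, so one may normalize to $(a,b)=(1,0)$ or $b=1$. Your write-up is in fact slightly more complete, since you carry the change of variables through to the code sets $\{(bn+c,an+d)\}$ explicitly, a bookkeeping step the paper leaves implicit.
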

\begin{proof}
    This theorem follows from Theorem \ref{as-bs} and the fact that $\{as_1-bs_2\mid (s_1,s_2)\in S_0\}=\Z_p$ if and only if $\{\lambda as_1-\lambda bs_2\mid (s_1,s_2)\in S_0\}=\Z_p$ for any non-zero $\lambda \in \Z_p$. So, instead of checking if $(a,b)$ satisfies $\{as_1-bs_2\mid (s_1,s_2)\in S_0\}=\Z_p$ for all non-zero $(a,b)\in{\Z_p^2}$, we only need to check $(a,b)$ for $b=1$ and for $(a,b)=(1,0)$.
\end{proof}

\begin{remark}
    In Theorem \ref{as-s}, the two conditions of $S$ for $\Cay(\Z_p^2, S)$ to have a perfect code are not mutually exclusive. One example is $\Gamma=\Cay\left(\Z_5^2,\{(1,0),(2,2),(3,2),(4,1)\}\right)$, as $\Gamma$ satisfies both (1) $\{s_1\mid (s_1,s_2)\in S_0\}=\Z_5$, and (2) $\{3s_1-s_2\mid (s_1,s_2)\in S_0\}=\Z_5$. In this example, both (1) $\{(b,n)\mid n\in\Z_5\}$ and (2) $\{(n,3n+b)\mid n\in \Z_5\}$ are perfect codes in $\Gamma$ for any $b\in\Z_5$.
\end{remark}

Theorem \ref{as-bs} seems simpler compared to Theorem \ref{as-s} for characterizing $\Gamma=\Cay(\Z_p^2, S)$ that admits a perfect code, as there is only one condition for $S$. But the condition in Theorem \ref{as-bs} requires $O(p^2)$ computing time, and the condition in Theorem \ref{as-s} only requires $O(p)$ computing time. For this reason, we argue that Theorem \ref{as-s} is a better characterization for $\Gamma$ to have a perfect code.

Let $\A$ and $\B$ be two generators of $G=\Z_p\x\Z_p$.
If $C$ is a normalized perfect code in Theorem \ref{as-bs}, then $C=\lc=\<{(a,b)}=\<\A$. 
By setting $D=\{(0,0)\}$ in the factorization $G/\lc=((\so+\lc)/\lc) \oplus ((D+\lc)/\lc)$ given by Lemma \ref{quotient}, we have $(\so+\lc)/\lc=G/\lc=\{i\B+\lc \mid i\ttk{p}\}$. This implies $\so=\{\alpha_i\A+i\B \mid i\ttk{p}\}$, for some $\alpha_i\in \Z$.


\begin{theorem}\label{p.q}
Let $G=\Z_p\times\Z_q$ for distinct primes $p$ and $q$, and let $S$ be a proper subset of $G\setminus \{0\}$ that generates $G$. Let $0\in C\subseteq G$. Then $C$ is a perfect code in $\Cay(G,S)$ if and only if $(\so,C)$ is one of the pairs in Table \ref{t.p.q} where $\alpha_0=0$ and ${\alpha_i}$'s are in $\Z$.
\end{theorem}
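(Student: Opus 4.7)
The plan is to translate the perfect code question into a factorization problem via Lemma \ref{lem:factor}, and then use the arithmetic of $|G|=pq$ together with Lemma \ref{prime deg}(c) to pin down both $C$ and $\so$ with very little work.

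First I would observe that since $S$ generates $G$ and is a proper subset of $G\setminus\{0\}$, we have $2\leq|\so|\leq pq-1$, and since $|\so|$ must divide $|G|=pq$, the only possibilities are $|\so|\in\{p,q\}$. The two cases are symmetric under swapping the coordinates, so I treat $|\so|=p$ (whence $|C|=q$) in detail and then invoke symmetry. Because $|\so|=p$ is prime, Lemma \ref{prime deg}(c) yields that $\so$ is aperiodic and $C=g+L_C$ for some $g\in G$; the hypothesis $0\in C$ forces $C=L_C$, which is therefore a subgroup of $G$ of order $q$. In $G=\Z_p\x\Z_q$ there is a unique such subgroup, namely $\<{\e{2}}$, so $C=\<{\e{2}}$. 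The cosets of $C$ are parameterized by the first coordinate modulo $p$, so $G=\so\oplus C$ is equivalent to $\so$ being a complete transversal with one representative per coset. Thus
$$
\so=\{i\e{1}+\alpha_i\e{2}\mid i\ttk p\}
$$
for some integers $\alpha_i$, and $0\in\so$ forces $\alpha_0=0$. The symmetric case $|\so|=q$ gives $C=\<{\e{1}}$ and $\so=\{\alpha_i\e{1}+i\e{2}\mid i\ttk q\}$ with $\alpha_0=0$. These are precisely the two rows that should appear in Table \ref{t.p.q}.

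For the converse direction I would take any pair $(\so,C)$ of one of the two shapes above with $S$ generating $G$, and verify $G=\so\oplus C$ via Lemma \ref{oplus}(b): clearly $|\so|\cdot|C|=pq=|G|$, and $\so+C=G$ because the first coordinates of $\so$ already cover $\Z_p$ while $C=\<{\e{2}}$ absorbs the second coordinate (or symmetrically in the other case). Lemma \ref{lem:factor} then gives that $C$ is a perfect code in $\Cay(G,S)$.

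The whole proof is essentially forced by divisibility and Lemma \ref{prime deg}(c); there is no serious obstacle. The only care needed is the bookkeeping of the two symmetric cases and the observation that a normalized translate of the subgroup $L_C$ is $L_C$ itself, which collapses the classification to the two order-$p$ and order-$q$ subgroups of $G$ and reduces the remaining task to listing their transversals.
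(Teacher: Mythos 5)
Your proposal is correct and follows essentially the same route as the paper: both reduce to $|\so|\in\{p,q\}$ by divisibility, invoke Lemma \ref{prime deg}(c) to force $C$ to be the (unique) subgroup of order $q$ (resp.\ $p$) once normalized by $0\in C$, and then identify $\so$ as a transversal of the cosets of $C$ — your transversal phrasing is just the paper's application of Lemma \ref{quotient} with $D=\{0\}$ stated directly. The converse verification via $|\so|\cdot|C|=|G|$ and $\so+C=G$ matches the paper's explicit computation $(x,y)=(x,\alpha_x)+(0,y-\alpha_x)$.
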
 
\rc\begin{longtable}{|c|l|l|}
\caption{Perfect codes in $\Cay(\Z_p\x\Z_q,\,S)$}\label{t.p.q}\\
\fl
\tn & $\{(i,\alpha_i) \mid i\ttk{p}\}$
& $\{(0,i) \mid i\ttk{q}\}$ \\
\tn & $\{(\alpha_i,i) \mid i\ttk{q}\}$
& $\{(i,0) \mid i\ttk{p}\}$ \\
\hline
\end{longtable}

\begin{proof}
    Suppose $\Cay(G,S)$ has a perfect code $C$. Since $\Cay(G,S)$ is not a trivial graph, we have $|\so|=p$ or $q$.
    If $|\so|=p$, then $C$ is a subgroup of order $q$ from Lemma \ref{prime deg}, so $C=\lc=\<{(0,1)}$.
    Recall the factorization $G/\lc=((\so+\lc)/\lc) \oplus ((D+\lc)/\lc)$ given in Lemma \ref{quotient}, where $C=D\oplus \lc$.
    By setting $D=\{(0,0)\}$, we have $C=D\oplus \lc$ and     $(\so+\lc)/\lc=G/\lc=(\Z_{p}\times\Z_q)/\<{(0,1)}=\{(i,0)+\lc \mid i\ttk{p}\}$. This implies $\so=\{(i,\alpha_i) \mid i\ttk{p}\}$, for some $\alpha_i\in \Z$, which gives the first pair in Table \ref{t.p.q}. Now if $(S_0,C)$ is the first pair in Table \ref{t.p.q}, then $|S_0|\cdot|C|=|G|$, and for any $(x,y)\in G$, $(x,y)=(x,\alpha_x)+(0,y-\alpha_x)\in S_0+C$. Therefore, $G=S_0\oplus C$. The same reason applies for $|\so|=q$ to get the second pair in Table \ref{t.p.q}.
\end{proof}

\begin{theorem}\label{p,q,q}\label{p.2.2}
Let $G=\Z_p\x\Z_q\x\Z_q$ for distinct primes $p$ and $q$, where $p\geq3\geq q$, and let $S$ be a proper subset of $G\setminus \{0\}$ that generates $G$. Let $0\in C\subseteq G$. Then $C$ is a perfect code in $\Cay(G,S)$ if and only if $(\so,C)$ is one of the pairs in Table \ref{t.p.q.q} where $\alpha_0=\beta_0=\alpha_{00}=0$ and $\alpha_i$'s, $\beta_i$'s, and $\alpha_{ij}$'s are in $\Z$.
\end{theorem}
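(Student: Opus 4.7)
The plan is to follow the same template as the proof of Theorem \ref{p.q}, extended to handle the larger number of divisors of $|G|=pq^2$. Let $C$ be a normalized perfect code in $\Cay(G,S)$, so $G = S_0 \oplus C$ and $|S_0|\cdot|C| = pq^2$ by Lemma \ref{oplus}. Since $G$ is not cyclic and $\Cay(G,S)$ is non-complete, Lemma \ref{ls<so}(a) together with $|S_0| < |G|$ forces $2 < |S_0| < pq^2$, so $|S_0| \in \{p,\, q,\, q^2,\, pq\}$ (the case $|S_0|=q=2$ being automatically excluded). I would split the analysis according to these four cases.

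First I would dispose of the cases where $|S_0|$ is prime, namely $|S_0|=p$, and $|S_0|=q$ when $q=3$. Here Lemma \ref{prime deg}(c) yields $C = g + L_C$ for some $g$, and the normalization $0 \in C$ forces $C = L_C$, so $C$ is a subgroup of $G$ of order $|G|/|S_0|$. When $|S_0|=p$ the only such subgroup is the Sylow $q$-subgroup $\{0\}\times\Z_q\times\Z_q$; when $|S_0|=q$ the subgroups of order $pq$ are exactly $\Z_p\times H$, where $H$ ranges over the $q+1$ subgroups of order $q$ in $\Z_q^2$. In Lemma \ref{quotient} with $A=C$ and $D=\{0\}$, the complementary factor $(S_0+L_C)/L_C$ must equal $G/L_C$, so $S_0$ is a transversal of $C$ through $0$, and this directly produces the corresponding rows of Table \ref{t.p.q.q} in parametric form involving the $\alpha_i$ (and $\beta_i$, $\alpha_{ij}$) in the statement.

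The remaining cases have $|C|$ prime (either $|C|=p$ with $|S_0|=q^2$, or $|C|=q$ with $|S_0|=pq$), and Lemma \ref{prime deg} no longer applies directly to $S_0$. Here I would invoke the Haj\'os property: one of $S_0$ and $C$ must be periodic. If $C$ is periodic, then $|L_C|$ is a nontrivial divisor of the prime $|C|$, forcing $C=L_C$ to be the unique Sylow subgroup of that order. If instead $C$ is aperiodic then $S_0$ is periodic, and by Lemma \ref{ls<so}(b) the order $|L_{S_0}|$ is a proper nontrivial divisor of $|S_0|$, which in each sub-case leaves only $|L_{S_0}|\in\{p,q\}$, with $L_{S_0}$ lying inside a Sylow subgroup of $G$. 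For each admissible $L_{S_0}$, Lemma \ref{quotient} produces a factorization of the strictly smaller Haj\'os group $G/L_{S_0}$ (isomorphic to $\Z_p\times\Z_q$, $\Z_p$, or $\Z_q^2$), whose perfect-code structure is already described by Theorem \ref{p.q} or Theorem \ref{p,p} (or is trivial). Lifting back via Lemma \ref{d_i+l_i} produces the remaining rows of Table \ref{t.p.q.q}.

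For the converse direction, verifying that each pair $(S_0, C)$ listed in Table \ref{t.p.q.q} actually satisfies $|S_0|\cdot|C|=|G|$ and $G = S_0 + C$ (so that $G = S_0 \oplus C$ by Lemma \ref{oplus}) is a direct computation paralleling the sufficient-direction argument in Theorem \ref{as-bs}. The main obstacle I anticipate is the aperiodic-$C$ sub-case: correctly enumerating all admissible subgroups $L_{S_0}$ (several subgroups of order $q$ sit inside $\Z_q^2$, and when $|L_{S_0}|=p$ the Sylow $p$-subgroup provides another option), combining these enumerations with the perfect-code characterization in the quotient group, and packaging the resulting data into distinct rows of Table \ref{t.p.q.q} without overcounting or omission.
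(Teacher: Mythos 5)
Your proposal matches the paper's proof essentially step for step: the same case split on $|\so|\in\{p,\,q,\,pq,\,q^2\}$, Lemma \ref{prime deg} for prime $|\so|$ (giving $C=L_C$ after normalization), the Haj\'os periodicity dichotomy with Lemma \ref{quotient} reducing the composite cases to factorizations of the quotients $\Z_p\x\Z_q$ and $\Z_q\x\Z_q$ already handled by Theorems \ref{p.q} and \ref{p,p} (with the sub-case $q=2$, $|\ls|=p$ excluded exactly as in the paper because no two-element aperiodic set generates $\Z_2\x\Z_2$), lifting back via Lemma \ref{d_i+l_i}, and a direct computation for sufficiency. Your only slips are cosmetic: when $|C|=q$ the periodic code $C=L_C$ is one of the $q+1$ order-$q$ subgroups inside the Sylow $q$-subgroup rather than ``the unique Sylow subgroup of that order,'' and this multiplicity (your anticipated obstacle, which also affects the choices of $\ls$) is absorbed in the paper not by extra rows but by the closing remark that $\e2$ and $\e3$ may be replaced by any two generators of $\{0\}\x\Z_q\x\Z_q$.
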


\rc\begin{longtable}{|c|l|l|}
\caption{Perfect codes in $\Cay(\Z_p\x\Z_q\x\Z_q,S)$}\label{t.p.q.q}\\
\fl
\tn & $\{i\e1+\alpha_i\e2+\beta_i\e3 \mid i\ttk{p}\}$
& $\{i\e2+j\e3 \mid i,j\ttk{q}\}$ \\
\tn & $\{\alpha_i\e1 +\alpha_i\e2 + i\e3 \mid i\ttk{3}\}$
& $\{i\e1 + j\e2 \mid i\ttk{p},\, j\ttk{3}\}$ \\
\tn & $\{j\e1 + \alpha_i \e2 + i\e3 \mid i\ttk{3}, j\ttk{p}\}$
& $\{i\e2+\alpha_i \e3 \mid i\ttk{3}\}$ \\
\tn & $\{i\e1+j\e2+\alpha_i\e3 \mid i\ttk{p},\,j\ttk{q}\}$
& $\{\alpha_i\e2+i\e3 \mid i\ttk{q}\}$ \\
\tn & $\{i\e1+\alpha_{ij}\e2+j\e3 \mid i\ttk{p},\,j\ttk{q}\}$
& $\{i\e2 \mid i\ttk{q}\}$ \\
\tn & $\{\alpha_i\e1+j\e2+i\e3 \mid i,j\ttk{q}\}$
& $\{i\e1+\alpha_i \e2 \mid i\ttk{p}\}$ \\
\tn & $\{\alpha_{ij}\e1+i\e2+j\e3 \mid i,j\ttk{q}\}$
& $\{i\e1 \mid i\ttk{p}\}$ \\
\hline
\end{longtable}

\begin{proof}
    Suppose $\Cay(G,S)$ has a perfect code $C$. From Lemma \ref{ls<so}, we have $|\so|>2$.

    If $|\so|=p$, then from Lemma \ref{prime deg}, $\so$ is aperiodic and $C$ is a subgroup of order $q^2$, so $C=\lc=\{0\}\times\Z_q\times\Z_q$.
    From the factorization of $G/\lc$ given in Lemma \ref{quotient}, $(\so+\lc)/\lc=G/\lc=(\Z_{p}\times\Z_q\times\Z_q)/(\{0\}\times\Z_q\times\Z_q)=\{i\e1+\lc \mid i\ttk{p}\}$, and hence $\so=\{i\e1+\alpha_i\e2+\beta_i\e3 \mid i\ttk{p}\}$, $\alpha_i,\beta_i\in \Z$, which is pair 1 in Table \ref{t.p.q.q}.

    If $|\so|=q=3$, then from Lemma \ref{prime deg}, $\so$ is aperiodic and $C$ is a subgroup of order $pq$, so $C=\lc=\<{\e1+\e2}$.
    From the factorization of $G/\lc$ given in Lemma \ref{quotient}, $(\so+\lc)/\lc=G/\lc=\{i\e3+\lc \mid i\ttk{q}\}$, and hence $\so=\{\alpha_i\e1 +\alpha_i\e2 + i\e3 \mid i\ttk{q}\}$, $\alpha_i\in \Z$, which is pair 2 in Table \ref{t.p.q.q}.
    If $q=2$, then $S_0$ does not generate $G$.

    If $|\so|=pq$ and $\so$ is periodic, then $|\ls|=q$ or $p$ by Lemma \ref{ls<so}.
    Let $\so=D\oplus\ls$ for some subset $D$. From the factorization of $G/\ls$ given in Lemma \ref{quotient}, we have $G/\ls=((D+\ls)/\ls)\oplus ((C+\ls)/\ls)$.
    If $|\ls|=p$, then $\ls=\left<\e1\right>$.
    Note that $(D+\ls)/\ls$ is a aperiodic set of order $q$ and $G/\ls\cong \Z_q\times\Z_q$.
    If $q=2$, then from Lemmas \ref{generate-zero}, $(D+\ls)/\ls$ is a aperiodic set of order $2$ that generates $\Z_2\times\Z_2$, which is not possible by Lemma \ref{ls<so}.
    If $q=3$, from Theorem \ref{p,p}, we have $(C+\ls)/\ls=\{i\e2+\ls\mid i\ttk{3}\}$, and hence $(D+\ls)/\ls=\{\alpha_i \e2 + i\e3 \mid i\ttk{3}\}$. 
    Therefore $\so=D\oplus\ls=\{j\e1 + \alpha_i \e2 + i\e3 \mid i\ttk{3}, j\ttk{p}\}$ and $C=\{i\e2+\alpha_i \e3 \mid i\ttk{3}\}$, which is pair 3 in Table \ref{t.p.q.q}.

    If $|\so|=pq$, $\so$ is periodic, and $|\ls|=q$, then $\ls=\left<\e2\right>$.
    Note that $(D+\ls)/\ls$ is a aperiodic set of order $p$ and $G/\ls\cong \Z_{p}\times\Z_q$.
    From the case of $|\so|=p$ in Theorem \ref{p.q} (pair 1), we have $(D+\ls)/\ls=\{i\e1+\alpha_i\e3+\ls \mid i\ttk{p}\}$, where $\alpha_i\in \Z$ and  $(C+\ls)/\ls=\{i\e3+\ls \mid i\ttk{q}\}$.
    Therefore $\so=D\oplus\ls=\{i\e1+j\e2+\alpha_i\e3 \mid i\ttk{p},\,j\ttk{q}\}$ and $C=\{\alpha_i\e2+i\e3 \mid i\ttk{q}\}$, where $\alpha_i\in \Z$, which is pair 4 in Table \ref{t.p.q.q}.

    If $|\so|=pq$ and $\so$ is aperiodic, then $|C|=q$ and $C$ is periodic, and hence $C=\lc=\<{\e2}$.
    From the factorization of $G/\lc$ given in Lemma \ref{quotient}, $(\so+\lc)/\lc=G/\lc=\{i\e1+j\e3+\lc \mid i\ttk{p},\,j\ttk{q}\}$, and hence $\so=\{i\e1+\alpha_{ij}\e2+j\e3 \mid i\ttk{p},\,j\ttk{q}\}$, $\alpha_{ij}\in \Z$, which is pair 5 in Table \ref{t.p.q.q}.

    If $|\so|=q^2$ and $\so$ is periodic, then $|\ls|=q$ from Lemma \ref{ls<so} and hence $\ls=\left<\e2\right>$.
    Let $\so=D\oplus\ls$ for some subset $D$. From the factorization of $G/\ls$ given in Lemma \ref{quotient}, we have $G/\ls=((D+\ls)/\ls)\oplus ((C+\ls)/\ls)$.
    Note that $(D+\ls)/\ls$ is a aperiodic set of order $q$ and $G/\ls\cong \Z_p\times\Z_q$.
    From the case of $|S_0|=q$ in Theorem \ref{p.q} (pair 2), we have $(D+\ls)/\ls=\{\alpha_i\e1+i\e3+\ls \mid i\ttk{q}\}$, where $\alpha_i\in \Z$ and $(C+\ls)/\ls=\{i\e1+\ls \mid i\ttk{p}\}$.
    Therefore $\so=D\oplus\ls=\{\alpha_i\e1+j\e2+i\e3 \mid i,j\ttk{q}\}$ and $C=\{i\e1+\alpha_i \e2 \mid i\ttk{p}\}$ where $\alpha_i\in \Z$, which is pair 6 in Table \ref{t.p.q.q}.

    If $|\so|=q^2$ and $\so$ is aperiodic, then $|C|=p$ and $C$ is periodic, and hence $C=\lc=\<{\e1}$.
    From the factorization of $G/\lc$ given in Lemma \ref{quotient}, we have $(\so+\lc)/\lc=G/\lc=\{i\e2+j\e3+\lc \mid i,j\ttk{q}\}$, and hence $\so=\{\alpha_{ij}\e1+i\e2+j\e3 \mid i,j\ttk{q}\}$, $\alpha_{ij}\in \Z$, which is pair 7 in Table \ref{t.p.q.q}.

    Now, suppose $(S_0,C)$ is one of the pairs in Table \ref{t.p.q.q}. For every pair $(S_0,C)$, we have $|S_0|\cdot|C|=pq^2=|G|$. In every row in the table, we can see that for every $i$, exactly one of $S_0$ or $C$ has an independent variable ($i$ or $j$) before $\mathbf{e}_i$, and possibly a fixed integer ($\alpha_i$, $\beta_i$, or $\alpha_{ij}$) in the other set. This means that for every $\mathbf{x}=(x,y,z)\in G$, if we want $\mathbf{x}=\mathbf{s}+\mathbf{c}$ for $\mathbf{s}\in S_0$ and $\mathbf{c}\in C$,
    we can set the independent variable to be $x$ or $x-\alpha_{i'}$ (similarly for $y$ and $z$).
    For example, take the fourth row, we have
    $$(x,y,z)=\left(x\e1+(y-\alpha'_{z-\alpha_x})\e2+\alpha_x\e3\right)+\left(\alpha'_{z-\alpha_x}\e2+(z-\alpha_x)\e3\right).$$
    This means $G=S_0\oplus C$ and we are done.
\end{proof}

\begin{remark}
    In Theorem \ref{p,q,q}, we can change $\e2$ and $\e3$ into any two generators of $\{0\}\x\Z_q\x\Z_q$.
\end{remark}
\section{Perfect codes in Cayley graphs of good abelian groups containing $\Z_{p^k}$}
\label{sec:p^k}

In this section, we will focus on Haj\'os groups that contain $\Z_{p^k}$ as a subgroup, where $p$ is a prime and $k$ is a positive integer. These Haj\'os groups are $\Z_{p^k}$, $\Z_{p^k}\x\Z_q$, and $\Z_{2^k}\x\Z_2$.
From this section onward, we will only give the normalized form for the perfect codes. Recall that any perfect code is a translation of a normalized perfect code.

\begin{theorem}\label{p^k}
    Let $G=\Z_{p^k}$ for a prime number $p$ and a positive integer $k$. Let $S$ be a proper subset of $G\setminus \{0\}$ that generates $G$. Let $0\in C\subseteq G$. Then $C$ is a perfect code in $\Cay(G,S)$ if and only if there are integers $0=m_0<m_1<\cdots<m_n=k$
    and $1\leq l_t\leq m_{t+1}-m_t$ for $t\in[n]$, such that
        \begin{align}
            \label{S0form}
            \so=\left\{\sum_{t=0}^{n-1}\left(i_tp^{m_t}+\alpha_{i_0,\dots,i_t}p^{m_t+l_t}\right)\, \middle| \, i_t\in\left[p^{l_{t}}\right]\right\}
        \end{align}
        for some $\alpha_{i_0,\dots,i_t}$'s in $\Z_{p^k}$ and $\alpha_{0,\dots,0}=0$.
    Moreover, the normalized perfect codes in $\Cay(G,S)$ are in the following form
    \begin{align}
        \label{Cform}
        C = \left\{\sum_{t=0}^{n-1} \left( i_tp^{m_t+l_t}+\beta_{i_0,\dots,i_t}p^{m_{t+1}}\right) \, \middle| \, i_t\in\left[p^{m_{t+1}-m_t-l_t}\right] \right\}
    \end{align}
    for some $\beta_{i_0,\dots,i_t}\in \Z_{p^k}$ and $\beta_{0,\dots,0}=0$.
\end{theorem}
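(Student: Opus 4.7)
The plan is to proceed by strong induction on $k$, combining the Haj\'os property of $\Z_{p^k}$ (which is Haj\'os as a subgroup of the Haj\'os group $\Z_{p^k}\times\Z_q$ in Lemma \ref{goodabelian}) with the quotient reduction of Lemma \ref{quotient}. The base case $k=1$ is vacuous: any factorization $\Z_p=S_0\oplus C$ forces $S_0\in\{\{0\},\Z_p\}$, and neither of these is compatible with $S$ being both a proper subset of $\Z_p\setminus\{0\}$ and a generating set of $\Z_p$.

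For the inductive step, the Haj\'os property yields that at least one of $S_0,C$ is periodic, with period subgroup of the form $p^{e}\Z_{p^k}$ for some $1\le e\le k-1$. Suppose first that $C$ is periodic with $L_C=p^{l_0}\Z_{p^k}$. If $C=L_C$ (that is, $|D|=1$ in the decomposition $C=L_C\oplus D$), then $S_0$ is a transversal of $L_C$, and the theorem follows immediately with $n=1$ and $m_1=k$. Otherwise Lemma \ref{quotient} yields the factorization $G/L_C\cong\Z_{p^{l_0}}=\pi(C)\oplus\pi(S_0)$ with $\pi(C)$ aperiodic, while $\pi(S_0)$ still generates $G/L_C$, contains $0$, and is a proper subset. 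Applying the inductive hypothesis in $\Z_{p^{l_0}}$ with $\pi(S_0)$ playing the role of the $S_0$-factor yields parameters $\tilde m_t,\tilde l_t$ and coefficients $\tilde\alpha,\tilde\beta$; lifting back to $\Z_{p^k}$ via canonical integer representatives together with $L_C$-offsets recovers the claimed forms of $S_0$ and $C$, with $n=\tilde n$ and the final coordinate $m_{\tilde n}$ extended from $l_0$ to $k$.

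Suppose instead that $S_0$ is periodic with $L_{S_0}=p^j\Z_{p^k}$. The sub-case $S_0=L_{S_0}$ is impossible, since otherwise $S\subseteq p^j\Z_{p^k}\setminus\{0\}$ would fail to generate $\Z_{p^k}$. Hence $|S_0|>|L_{S_0}|$; passing to $G/L_{S_0}\cong\Z_{p^j}$ by Lemma \ref{quotient} and applying induction, the high coordinates $j,\ldots,k-1$ of $S_0$ are completely free (since $L_{S_0}$ is precisely the high-digit subgroup), which translates into the claimed form with $n=\tilde n+1$ and a new final block of length $k-j$ occupied entirely by $S_0$-positions (so $l_{\tilde n}=k-j$). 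For the backward direction, the product of the block sizes gives $|S_0|\cdot|C|=p^k$, and uniqueness of the decomposition $g=s+c$ is established by a $p$-adic digit-by-digit recursion through the blocks; Lemma \ref{lem:factor} then completes the implication.

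The main technical obstacle will be the bookkeeping in the lift of Case $1$: the combined effect of (i) reducing modulo $p^{l_0}$ when taking canonical representatives of $\pi(S_0)$ and $\pi(C)$, and (ii) adjoining an $L_C$-offset for each element, must both be absorbed into a single outer coefficient $\alpha_{i_0,\ldots,i_{\tilde n-1}}$ for $S_0$ (and analogously $\beta_{i_0,\ldots,i_{\tilde n-1}}$ for $C$). This absorption succeeds because the coefficients $\alpha,\beta$ are permitted to range over all of $\Z_{p^k}$ (rather than only the quotient), and because the outer index range at the extended last block has cardinality $p^{l_0-\tilde m_{\tilde n-1}-\tilde l_{\tilde n-1}}\cdot p^{k-l_0}=p^{k-\tilde m_{\tilde n-1}-\tilde l_{\tilde n-1}}$, matching exactly the product of the inductive and lifting ranges. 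An analogous bookkeeping is required in Case $2$.
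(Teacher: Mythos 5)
Your proposal is correct and follows essentially the same route as the paper: the Haj\'os property forces one factor to be periodic, Lemma \ref{quotient} passes to the quotient $\Z_{p^{l}}$, and the digit-block forms (\ref{S0form}) and (\ref{Cform}) are recovered by lifting canonical representatives with the $L_C$- (or $L_{S_0}$-) offsets absorbed into the top coefficients, while sufficiency is the same $p$-adic digit-by-digit argument as in Lemma \ref{G=so+c}. The only organizational difference is that you induct on $k$, so each case invokes the full theorem in a strictly smaller cyclic group, whereas the paper inducts on $|S_0|=p^r$ and, in its aperiodic-$S_0$ case, reuses the periodic case at the same size of $S_0$ inside the quotient $\Z_{p^{k-s}}$.
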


\begin{theorem}\label{p^k.q}
    Let $G=\Z_{p^k}\x\Z_q$ for distinct primes $p$, $q$, and a positive integer $k$. Let $S$ be a proper subset of $G\setminus \{0\}$ that generates $G$. Let $0\in C\subseteq G$. Then $C$ is a perfect code in $\Cay(G,S)$ if and only if there are integers $r\in[n]$, $0=m_0<m_1<\cdots<m_n=k$,
    and $1\leq l_t\leq m_{t+1}-m_t$ for $t\in[n]\setminus\{r\}$ and $0\leq l_r\leq m_{r+1}-m_r$ such that $(\so,C)$ is one of the following pairs where $\alpha_{i_0,\dots,i_t}$'s and $\beta_{i_0,\dots,i_t}$'s are in $\Z$ and $\alpha_{0,\dots,0}=\beta_{0,\dots,0}=0$.
    \begin{enumerate}[{\rm(a)}]
        \item $\begin{aligned}[t]
            \so &= \left\{ \left(\sum_{t=0}^{n-1} i_tp^{m_t}+ \sum_{t\in[n]\setminus\{r\}}\alpha_{i_0,\dots,i_t}p^{m_t+l_t},\, \alpha_{i_0,\dots,i_r}\right) \, \middle| \, i_t\in\left[p^{l_t}\right]\right\} \\
            C &= \left\{ \left(\sum_{t\in[n]\setminus\{r\}} i_tp^{m_t+l_t}+ \sum_{t=0}^{n-1}\beta_{i_0,\dots,i_t}p^{m_{t+1}},\, i_r\right) \, \middle| \, i_t\in\left[p^{m_{t+1}-m_t-l_t}\right] \text{ for } t\ne r,\ i_r\in[q]\right\}
        \end{aligned}$
        \vspace{0.2 cm}\\
        where $m_r+l_r=m_{r+1}$.\vspace{0.2 cm}
        
        \item $\begin{aligned}[t]
            \so &= \left\{\left(\sum_{t=0}^{n-1} \left( i_tp^{m_t}+\alpha_{i_0,\dots,i_t}p^{m_t+l_t}\right),\, \alpha_{i_0,\dots,i_r}\right) \, \middle| \, i_t\in\left[p^{l_t}\right]\right\} \\
            C &= \left\{ \left(\sum_{t=0}^{n-1} i_tp^{m_t+l_t} + \sum_{t=0}^{r-1} \beta_{i_0,\dots,i_t}p^{m_{t+1}} + \sum_{t=r}^{n-1} \beta_{i_0,\dots,i_t,j}\,p^{m_{t+1}},\,j\right) \, \middle| \, i_t\in\left[p^{m_{t+1}-m_t-l_t}\right],\,j\in[q] \right\}.
        \end{aligned}$\vspace{0.2 cm}\\
        
        \item $\begin{aligned}[t]
            \so &= \left\{ \left(\sum_{t\in[n]\setminus\{r\}} i_tp^{m_t}+ \sum_{t=0}^{n-1}\alpha_{i_0,\dots,i_t}p^{m_t+l_t},\, i_r\right) \, \middle| \, i_t\in\left[p^{l_t}\right] \text{ for } t\ne r,\, i_r\in[q]\right\} \\
            C &= \left\{ \left(\sum_{t=0}^{n-1} i_tp^{m_t+l_t}+ \sum_{t\in[n]\setminus\{r-1\}}\beta_{i_0,\dots,i_t}p^{m_{t+1}},\, \beta_{i_0,\dots,i_{r-1}} \right) \, \middle| \, i_t\in\left[p^{m_{t+1}-m_t-l_t}\right]\right\}
        \end{aligned}$\vspace{0.2 cm}\\
        where $l_r=0$. \vspace{0.2 cm}

        \item $\begin{aligned}[t]
            \so &= \left\{ \left(\sum_{t=0}^{n-1} i_tp^{m_t} + \sum_{t=0}^{r-1}\alpha_{i_0,\dots,i_t}p^{m_t+l_t} + \sum_{t=r}^{n-1}\alpha_{i_0,\dots,i_t,j}\,p^{m_t+l_t},\, j\right) \, \middle| \, i_t\in\left[p^{l_t}\right],\,j\in[q] \right\} \\
            C &= \left\{ \left(\sum_{t=0}^{n-1} \left(i_tp^{m_t+l_t}+\beta_{i_0,\dots,i_t}p^{m_{t+1}}\right),\, \beta_{i_0,\dots,i_{r-1}} \right) \, \middle| \, i_t\in\left[p^{m_{t+1}-m_t-l_t}\right] \right\}.
        \end{aligned}$

    \end{enumerate}
\end{theorem}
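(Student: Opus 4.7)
The plan is to handle sufficiency and necessity separately, with necessity proved by strong induction on $k$; the base case $k=1$ is Theorem \ref{p.q}, whose two pairs correspond to the forms with $n=1$. For sufficiency, for each of the four forms (a)--(d) I would verify directly that $|S_0|\cdot|C|=p^kq=|G|$ and that $S_0+C=G$, which by Lemma \ref{oplus} yields $G=S_0\oplus C$ and hence, by Lemma \ref{lem:factor}, that $C$ is a perfect code in $\Cay(G,S)$. An arbitrary $(x,j)\in G$ is decomposed by splitting $x$ according to the block structure $m_0<m_1<\cdots<m_n$ of base-$p$ digit positions: at each level $t\ne r$, exactly one of $S_0$ or $C$ supplies a free variable $i_t$ while the other supplies an $\alpha$- or $\beta$-shift; the $\Z_q$-coordinate $j$ is then absorbed at level $r$ in the manner dictated by the specific form. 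This is a direct generalization of the verification at the end of the proof of Theorem \ref{p,q,q}.

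For necessity, assume $G=S_0\oplus C$ with $0\in C$. Since $G$ is Haj\'os by Lemma \ref{goodabelian}, at least one of $S_0,C$ is periodic. Every subgroup of $G$ has the form $p^m\Z_{p^k}\times Q$ with $Q\in\{\{0\},\Z_q\}$, giving four subcases according to which of $S_0,C$ is periodic and whether $Q=\Z_q$. In each subcase, let $L$ denote the corresponding period subgroup; Lemma \ref{quotient} then produces a factorization of $G/L$ in which one factor is aperiodic. If $Q=\Z_q$ then $G/L\cong\Z_{p^{k-m}}$ and Theorem \ref{p^k} describes the quotient factorization; otherwise $G/L\cong\Z_{p^{k-m}}\times\Z_q$ and the induction hypothesis applies. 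In either situation, Lemma \ref{d_i+l_i} lifts the quotient representatives back to $G$ and combines them with the elements of $L$ to recover the full structure of $S_0$ and $C$. The four subcases match the four forms (a)--(d) respectively, with $r$ being the level at which the $\Z_q$-factor is first absorbed into a period subgroup; the boundary conditions $m_r+l_r=m_{r+1}$ in (a) and $l_r=0$ in (c) emerge naturally when the $\Z_q$-factor enters at the outermost level of the current inductive step.

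The main obstacle is the bookkeeping of parameters across the recursive reductions: the outer step contributes $m_1=m$ and a corresponding $l_0$, while $m_2,\dots,m_n$ and $l_1,\dots,l_{n-1}$ come from the quotient's representation with indices shifted by one, and the nested $\alpha$- and $\beta$-families must be assembled consistently with the claimed tuple-indexed form. A further subtlety is the simultaneous-periodicity case: when both $S_0$ and $C$ are periodic, a canonical choice has to be fixed (for instance, always reducing by the period subgroup containing $\Z_q$ whenever available) so that the resulting triple $(r,\{m_t\},\{l_t\})$ is well defined and uniquely associated with the given factorization.
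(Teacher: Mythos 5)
Your skeleton coincides with the paper's: sufficiency via Lemma \ref{oplus} with $|\so|\cdot|C|=|G|$ (you verify $\so+C=G$ by a digitwise decomposition; the paper's Lemma \ref{G=so+c} instead verifies directness of the sum by reducing modulo $p^{m_t+l_t}$ and $p^{m_{t+1}}$ — equivalent by Lemma \ref{oplus}), and necessity via the Haj\'os property, the quotient factorization of Lemma \ref{quotient}, Theorem \ref{p^k} for cyclic quotients, and lifting with Lemma \ref{d_i+l_i}. Your strong induction on $k$, in place of the paper's induction on $e+f$ where $|\so|=p^eq^f$, is a workable variant, since each reduction either produces a cyclic quotient (handled by Theorem \ref{p^k}) or strictly decreases the exponent of $p$; it even avoids the paper's internal cross-referencing between cases, at the cost of needing the finer fact $m_n+l_n=m_{n+1}$ from the proof (not just the statement) of Theorem \ref{p^k} during the lifts, which your bookkeeping paragraph only gestures at.

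The concrete gap is the degenerate case $C=\lc$, equivalently $D=\{0\}$ in $C=\lc\oplus D$. There the quotient factorization collapses to $G/\lc=(\so+\lc)/\lc$, so the induced connection set is all of $(G/\lc)\setminus\{0\}$: the quotient Cayley graph is complete, and both Theorem \ref{p^k} and your induction hypothesis assume a \emph{proper} generating connection set, so your step ``if $Q=\Z_q$ then Theorem \ref{p^k} describes the quotient factorization; otherwise the induction hypothesis applies'' fails exactly here. Under your induction scheme this is not marginal: by Lemma \ref{prime deg}(c), whenever $|\so|$ is prime the normalized code satisfies $C=\lc$, so for $k\geq 2$ the cases $|\so|=p$ and $|\so|=q$ are not absorbed by your base case (Theorem \ref{p.q} only covers $k=1$); the same degeneracy also occurs for composite $|\so|$, namely $\lc=\<{(p^m,1)}$ and $\lc=\<{(p^{m-1},0)}$ (the paper's Subcases 2.3 and 4.1). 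The repair is short and is exactly what the paper does in those subcases: take $D=\{0\}$, so that $(\so+\lc)/\lc=G/\lc$, read $\so$ off directly as a shifted transversal of $\lc$ via Lemma \ref{d_i+l_i}, and check that the result is one of pairs (a)--(d) with degenerate parameters (e.g.\ $n=1$, $r=0$). A final remark: your worry about fixing a canonical reduction when both $\so$ and $C$ are periodic is unnecessary, since the theorem asserts existence, not uniqueness, of the data $(r,\{m_t\},\{l_t\})$; one simply runs the ``$\so$ periodic'' branch whenever it is available, as the paper does.
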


\begin{theorem}\label{2^k.2}
    Let $G=\Z_{2^k}\x\Z_2$ for a positive integer $k$ and $S$ be a proper subset of $G\setminus \{0\}$ that generates $G$. Let $0\in C\subseteq G$. Then $C$ is a perfect code in $\Cay(G,S)$ if and only if there are integers $r\in[n]$, $0=m_0<m_1<\cdots<m_n=k$, and $1\leq l_t\leq m_{t+1}-m_t$ for $t\in[n]\setminus\{r\}$ and $0\leq l_r\leq m_{r+1}-m_r$, such that $(\so,C)$ is one of the following pairs where $\alpha_{i_0,\dots,i_t}$'s, $\beta_{i_0,\dots,i_t}$'s, $\alpha_{i_0,\dots,i_t,j}$'s, and $\beta_{i_0,\dots,i_t,j}$'s are in $\Z$ and $\alpha_{0,\dots,0}=\beta_{0,\dots,0}=0$.
    \begin{enumerate}[\rm(a)]
        \item $\begin{aligned}[t]
            \so &= \left\{ \left(\sum_{t=0}^{n-1} i_t2^{m_t}+ \sum_{t\in[n]\setminus\{r\}}\alpha_{i_0,\dots,i_t}2^{m_t+l_t},\, \alpha_{i_0,\dots,i_r}\right) \, \middle| \, i_t\in\left[2^{l_t}\right]\right\} \\
            C &= \left\{ \left(\sum_{t\in[n]\setminus\{r\}} i_t2^{m_t+l_t}+ \sum_{t=0}^{n-1}\beta_{i_0,\dots,i_t}2^{m_{t+1}},\, i_r\right) \, \middle| \, i_t\in\left[2^{m_{t+1}-m_t-l_t}\right] \text{ for } t\ne r,\ i_r\in[2]\right\}
        \end{aligned}$\vspace{0.2 cm}\\
        where $m_r+l_r=m_{r+1}$.\vspace{0.2 cm}
        
        \item $\begin{aligned}[t]
            \so &= \left\{\left(\sum_{t=0}^{n-1} i_t2^{m_t}+\sum_{t\in[n]\setminus\{r\}} \alpha_{i_0,\dots,i_t}2^{m_t+l_t}+\alpha_{i_0,\dots,i_r}2^{m_r+l_r-1},\, \alpha_{i_0,\dots,i_r}\right) \, \middle| \, i_t\in\left[2^{l_t}\right]\right\} \\
            C &= \left\{ \left(\sum_{t\in[n]\setminus\{r\}}i_t2^{m_t+l_t} +\sum_{t=0}^{n-1}\beta_{i_0,\dots,i_t}2^{m_{t+1}}+i_r2^{m_r+l_r-1},\,i_r\right) \, \middle| \, i_t\in\left[2^{m_{t+1}-m_t-l_t}\right] \text{ for } t\ne r,\right.\\
            & \qquad \left. i_{r}\in\left[2^{m_{r+1}-m_r-l_r+1}\right] \right\}
        \end{aligned}$\vspace{0.2 cm}\\
        
        \item $\begin{aligned}[t]
            \so &= \left\{\left(\sum_{t=0}^{n-1} \left( i_t2^{m_t}+\alpha_{i_0,\dots,i_t}2^{m_t+l_t}\right),\, \gamma_{i_0,\dots,i_r}\right) \, \middle| \, i_t\in\left[2^{l_t}\right]\right\} \\
            C &= \left\{ \left(\sum_{t=0}^{n-1} i_t2^{m_t+l_t} + \sum_{t=0}^{r-1} \beta_{i_0,\dots,i_t}2^{m_{t+1}} + \sum_{t=r}^{n-1} \beta_{i_0,\dots,i_t,j}\,2^{m_{t+1}},\,j\right) \, \middle| \, i_t\in\left[2^{m_{t+1}-m_t-l_t}\right],\,j\in[2] \right\}
        \end{aligned}$\vspace{0.2 cm}\\
        
        \item $\begin{aligned}[t]
            \so &= \left\{ \left(\sum_{t\in[n]\setminus\{r\}} i_t2^{m_t}+ \sum_{t=0}^{n-1}\alpha_{i_0,\dots,i_t}2^{m_t+l_t},\, i_r\right) \, \middle| \, i_t\in\left[2^{l_t}\right] \text{ for } t\ne r,\, i_r\in[2]\right\} \\
            C &= \left\{ \left(\sum_{t=0}^{n-1} i_t2^{m_t+l_t}+ \sum_{t\in[n]\setminus\{r-1\}}\beta_{i_0,\dots,i_t}2^{m_{t+1}},\, \beta_{i_0,\dots,i_{r-1}} \right) \, \middle| \, i_t\in\left[2^{m_{t+1}-m_t-l_t}\right]\right\}
        \end{aligned}$\vspace{0.2 cm}\\
        where $l_r=0$.\vspace{0.2 cm}
        
        \item $\begin{aligned}[t]
            \so &= \left\{ \left(\sum_{t\in[n]\setminus\{r\}}i_t2^{m_t}+\sum_{t=0}^{n-1}\alpha_{i_0,\dots,i_t}2^{m_t+l_t}+i_r2^{m_r-1},i_r\right) \, \middle| \, i_t\in\left[2^{l_{t}}\right] \text{ for } t\ne r,\, i_r\in\left[2^{l_r+1}\right]\right\} \\
            C &= \left\{ \left(\sum_{t=0}^{n-1}i_t2^{m_t+l_t}+\sum_{t\in[n]\setminus\{r-1\}}\beta_{i_0,\dots,i_t}2^{m_{t+1}}+\beta_{i_0,\dots,i_{r-1}}2^{m_r-1},\beta_{i_0,\dots,i_{r-1}}\right) \, \middle| \, i_t\in\left[2^{m_{t+1}-m_t-l_t}\right] \right\}
        \end{aligned}$\vspace{0.2 cm}
        
        \item $\begin{aligned}[t]
            \so &= \left\{ \left(\sum_{t=0}^{n-1} i_t2^{m_t} + \sum_{t=0}^{r-1}\alpha_{i_0,\dots,i_t}2^{m_t+l_t} + \sum_{t=r}^{n-1}\alpha_{i_0,\dots,i_t,j}\,2^{m_t+l_t},\, j\right) \, \middle| \, i_t\in\left[2^{l_t}\right],\,j\in[2] \right\} \\
            C &= \left\{ \left(\sum_{t=0}^{n-1} \left(i_t2^{m_t+l_t}+\beta_{i_0,\dots,i_t}2^{m_{t+1}}\right),\, \gamma_{i_0,\dots,i_{r-1}} \right) \, \middle| \, i_t\in\left[2^{m_{t+1}-m_t-l_t}\right] \right\}
        \end{aligned}$
    \end{enumerate}
\end{theorem}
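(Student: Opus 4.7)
The plan is to reduce classifying factorizations $G = \so \oplus C$ of $G = \Z_{2^k} \x \Z_2$ to factorizations of quotient groups, using the Haj\'os property (item 8 of Lemma \ref{goodabelian}) together with Lemma \ref{quotient}, and then to conclude by induction on $k$, invoking Theorem \ref{p^k} whenever the quotient is cyclic and the inductive hypothesis whenever the quotient is again of the form $\Z_{2^{k'}} \x \Z_2$ with $k' < k$. By Lemma \ref{lem:factor}, $C$ is a normalized perfect code in $\Cay(G,S)$ if and only if $G = \so \oplus C$ with $0 \in \so \cap C$. For the ``if'' direction, for each of the six listed pairs $(\so, C)$ I verify that $|\so|\cdot|C| = 2^{k+1} = |G|$ and that $(\so - \so) \cap (C - C) = \{0\}$, applying Lemma \ref{oplus}(c); this is routine bookkeeping in the nested indices $i_0, \dots, i_{n-1}, j$.

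For the ``only if'' direction, the Haj\'os property forces at least one of $\so, C$ to be periodic. Let $L$ be the stabilizer of the periodic factor. The proper nontrivial subgroups of $G$ fall into three families: the horizontal subgroups $\langle 2^j \e1 \rangle$, the full-slab subgroups $\langle 2^j \e1, \e2 \rangle$, and the diagonal subgroups $\langle 2^j \e1 + \e2 \rangle$ with $j < k$. In each case, Lemma \ref{quotient} produces a factorization of $G/L$, which is isomorphic to $\Z_{2^{k'}} \x \Z_2$ with $k' < k$ (apply induction), $\Z_{2^{k'}}$ (apply Theorem \ref{p^k}), or cyclic of order $2^{j+1}$ when $L$ is diagonal (again apply Theorem \ref{p^k}). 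Lemma \ref{d_i+l_i} then lifts coset representatives from $G/L$ back to $G$ with arbitrary shifts in $L$, and these free choices produce the parameters $\alpha_{i_0, \dots, i_t}$, $\beta_{i_0, \dots, i_t}$, and $\gamma_{i_0, \dots, i_{r-1}}$ appearing in the statement. A parallel analysis with the roles of $\so$ and $C$ exchanged handles the case where $C$ (rather than $\so$) carries the nontrivial stabilizer, and standard reductions using Lemmas \ref{ls<so} and \ref{generate-zero} ensure $\so$ generates $G$ and $0 \in \so \cap C$ at every stage.

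The main obstacle lies in the diagonal subgroups $\langle 2^j \e1 + \e2 \rangle$, which have no counterpart in Theorem \ref{p^k.q} because the primes there are distinct. When $L$ is diagonal, a lift of $\e2 + L$ back to $G$ can be taken equally as $\e2$ or as $-2^j \e1$, and this ambiguity forces a half-shift of size $2^{m_r + l_r - 1}$ (respectively $2^{m_r - 1}$) in the $\Z_{2^k}$-coordinate that is rigidly coupled to the $\Z_2$-coordinate. Propagating this half-shift through the lift and matching it to the theorem's parametrization yields precisely the twisted cases (b) and (e). Cases (a), (c), (d), (f) arise from horizontal or full-slab stabilizers and mirror the four cases of Theorem \ref{p^k.q}; combining them with (b) and (e) from the diagonal stabilizers exhausts the six-case list. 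The hardest bookkeeping step will be confirming that the half-shift in cases (b) and (e) is consistent across all choices of $r$ and all choices of coset representative, so that the resulting pairs $(\so, C)$ remain in direct-sum position in $G$.
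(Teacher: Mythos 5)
Your proposal is correct and takes essentially the same route as the paper: the paper also proves sufficiency by directly verifying $G=\so\oplus C$ (Lemma \ref{G=so+c}) and proves necessity by induction on $|\so|$, quotienting by the stabilizer of the periodic factor, which by Observation \ref{obs} belongs to exactly your three families — horizontal $\langle(2^{k-s},0)\rangle$ (recursive case), full-slab $\langle(2^{k-s+1},0),(0,1)\rangle$ and diagonal $\langle(2^{k-s},1)\rangle$ (both with cyclic quotient, handled by Theorem \ref{p^k}). In particular, your identification of the diagonal subgroups as the source of the half-shifts $2^{m_r+l_r-1}$ and $2^{m_r-1}$ yielding the twisted pairs (b) and (e) is precisely how the paper's Subcases 1.2 and 2.3 proceed.
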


The rest of this section is devoted to the proof of Theorems \ref{p^k}, \ref{p^k.q}, and \ref{2^k.2}

\subsection{Sufficiency of Theorems \ref{p^k}, \ref{p^k.q}, and \ref{2^k.2}}

\begin{lemma}\label{G=so+c}
    Let $(\so,C)$ be one of the pairs in Theorem \ref{p^k}, \ref{p^k.q} or \ref{2^k.2} and $G$ the corresponding group. Then $G=\so\oplus C$.
\end{lemma}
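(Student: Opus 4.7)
The plan is to verify, for each of the pairs $(S_0,C)$ appearing in Theorems \ref{p^k}, \ref{p^k.q}, and \ref{2^k.2}, that $|S_0|\cdot|C|=|G|$ and that every $g\in G$ can be written as $g=s+c$ with $s\in S_0$ and $c\in C$; Lemma \ref{oplus}(b) then immediately gives $G=S_0\oplus C$. Although there are many sub-cases, all of them admit a uniform treatment exploiting the base-$p$ block structure built into the formulas, so the proof reduces to digit-bookkeeping plus a small amount of extra care in the two coupled cases of Theorem \ref{2^k.2}.

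The cardinality count will be handled first. In every displayed form the parameters $i_t$ contributing to $S_0$ range over $[p^{l_t}]$ and those contributing to $C$ range over $[p^{m_{t+1}-m_t-l_t}]$, possibly together with an extra $[q]$ or $[2]$ index coming from the second coordinate. Multiplying these through yields $\prod_{t=0}^{n-1}p^{m_{t+1}-m_t}=p^{k}$ for the $\Z_{p^k}$-part, times the order of the second-coordinate group, which equals $|G|$. In Theorem \ref{2^k.2}(b) and (e) the range of $i_r$ on one side is enlarged by a factor of $2$ and the range on the other side shortened by a factor of $2$, so the product is still $|G|$.

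For the existence step, the key observation is that every $g\in \Z_{p^k}$ admits a unique expansion $g=\sum_{t=0}^{n-1}\bigl(a_t p^{m_t}+b_t p^{m_t+l_t}\bigr)$ with $a_t\in[p^{l_t}]$ and $b_t\in[p^{m_{t+1}-m_t-l_t}]$. The plan is to choose the $S_0$-indices and the $C$-indices of a decomposition $g=s+c$ by induction on $t$. At stage $t$, the constants $\alpha_{i_0,\dots,i_{t-1}}$ and $\beta_{i_0,\dots,i_{t-1}}$ are already fixed and their effect on positions below $m_{t+1}$ has been tracked; the $S_0$-index $i_t$ is then uniquely forced in $[p^{l_t}]$ by matching the $l_t$ digits of block $t$ starting at position $m_t$, and once $i_t$ is fixed the value $\alpha_{i_0,\dots,i_t}$ is determined, so the $C$-index $i_t$ is uniquely forced in $[p^{m_{t+1}-m_t-l_t}]$ by matching the remaining high digits of block $t$. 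What makes the induction clean is that $\alpha\, p^{m_t+l_t}$ affects only positions $\geq m_t+l_t$ and $\beta\, p^{m_{t+1}}$ only positions $\geq m_{t+1}$, so no lower block is retroactively disturbed.

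Incorporating the second coordinate splits into sub-cases. In the decoupled cases (Theorem \ref{p^k.q}(a)--(d) and Theorem \ref{2^k.2}(a), (c), (d), (f)) the $\Z_q$ or $\Z_2$ entry of $s+c$ is a free index on one side and a function of the already-determined indices on the other side, so matching it is an additional independent step. The hard part will be Theorem \ref{2^k.2}(b) and (e), where a single index $i_r$ simultaneously controls the $\Z_2$ component and the bit at position $m_r+l_r-1$ (respectively $m_r-1$) in the $\Z_{2^k}$ component. In these coupled cases the plan is to match the $\Z_2$ coordinate first, which pins down $i_r\bmod 2$ on the relevant side, and then use the remaining higher bits of $i_r$, together with the carry from the already-constructed lower block in the $\Z_{2^k}$ coordinate, to match the corresponding block of the first coordinate. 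The enlarged index range $[2^{m_{r+1}-m_r-l_r+1}]$ (or $[2^{l_r+1}]$) is exactly one bit larger than in the decoupled case, and that single extra bit of freedom is precisely what is absorbed by this parity constraint, which is also consistent with the cardinality count. Once this coupling is resolved the inductive digit-peeling above completes the proof for all fourteen sub-cases.
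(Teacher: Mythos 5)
Your proposal is correct, but it runs in the opposite logical direction from the paper's proof. The paper assumes $s_1+c_1=s_2+c_2$ for $s_1,s_2\in\so$, $c_1,c_2\in C$ and deduces $s_1=s_2$, $c_1=c_2$ by reducing modulo $p^{m_t+l_t}$ and $p^{m_{t+1}}$ in increasing order of $t$; that is, it establishes that $\so+C$ is a \emph{direct} sum and then invokes part (d) of Lemma \ref{oplus} together with the count $|\so|\cdot|C|=|G|$. You instead prove \emph{surjectivity} ($G=\so+C$) via the mixed-radix expansion and greedy digit matching, and invoke part (b) of the same lemma. The modular bookkeeping is identical in the two arguments, and since your construction determines all indices uniquely at each stage it in fact builds the inverse of the addition map, so it delivers uniqueness as a by-product. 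The paper's uniqueness direction is marginally lighter on bookkeeping: when two representations are compared, the constants $\alpha_{i_0,\dots,i_t}$ and $\beta_{i_0,\dots,i_t}$ cancel as soon as the corresponding indices are shown equal, whereas in your construction they persist as known offsets that must be tracked through every block; this is also why the paper can dispatch the second-coordinate cases (its Cases 2--4) in a few lines, while your existence argument must treat them head-on.

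One spot in your sketch needs tightening, namely the coupled pairs (b) and (e) of Theorem \ref{2^k.2}. You cannot literally ``match the $\Z_2$ coordinate first'': that coordinate equals $\alpha_{i_0,\dots,i_r}+i_r$ (resp.\ $i_r+\beta_{i_0,\dots,i_{r-1}}$), and the constant depends on an index bit that is itself only pinned down by the shared position $m_r+l_r-1$ (resp.\ $m_r-1$) of the first coordinate, so the order of operations you describe is circular. The repair is immediate: solve the two $\Z_2$-constraints simultaneously. Writing $d$ for the target bit at the shared position and $\epsilon$ for the target $\Z_2$-coordinate, adding the parity equation to the bit equation cancels the unknown constant and forces the remaining undetermined bit outright (in pair (b), the top bit $u$ of the $\so$-index $i_r$ satisfies $u\equiv d-\epsilon \pmod 2$ independently of $\alpha_{i_0,\dots,i_r}$; pair (e) behaves the same way with the roles of $\so$ and $C$ exchanged), after which the constant is known, the other side's parity is determined, and your digit-peeling induction resumes. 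With that local adjustment the proposal is a complete and valid proof of Lemma \ref{G=so+c}.
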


\begin{proof}
    First, note that \begin{align*}
        \prod_{t=0}^{n-1} p^{l_t} \cdot \prod_{t=0}^{n-1}p^{m_{t+1}-m_t-l_t}= p^{m_n-m_0}=p^k,
    \end{align*}
    and hence $|\so|\cdot|C|=|G|$ for every pair.
    Let $s_1,s_2\in \so$ and $c_1,c_2\in C$ such that $s_1+c_1=s_2+c_2$. We will prove $s_1=s_2$ and $c_1=c_2$.

\smallskip
    \textsf{Case 1.} $(\so,C)$ is the pair in Theorem \ref{p^k}. 
\smallskip    

    Let $s_1=\sum_{t=0}^{n-1}\left(i_tp^{m_t}+\alpha_{i_0,\dots,i_t}p^{m_t+l_t}\right)$,  $s_2=\sum_{t=0}^{n-1}\left(j_tp^{m_t}+\alpha_{j_0,\dots,j_t}p^{m_t+l_t}\right)$ for one $i_t,j_t\in[p^{l_t}]$, and $c_1=\sum_{t=0}^{n-1}\left(m_tp^{m_t+l_t}+\beta_{m_0,\dots,m_t}p^{m_{t+1}}\right)$, $c_2=\sum_{t=0}^{n-1}\left(n_tp^{m_t+l_t}+\beta_{n_0,\dots,n_t}p^{m_{t+1}}\right)$ for one $m_t,n_t\in[p^{m_{t+1}-m_t-l_t}]$.
    Since $s_1+c_1=s_2+c_2$, 
    \begin{align*}
        &\ i_0+\alpha_{i_0}p^{l_0}+m_0p^{l_0}+\beta_{m_0}p^{m_{1}}+\sum_{t=1}^{n-1}\left(i_tp^{m_t}+\alpha_{i_0,\dots,i_t}p^{m_t+l_t}\right)+\sum_{t=1}^{n-1}\left(m_tp^{m_t+l_t}+\beta_{m_0,\dots,m_t}p^{m_{t+1}}\right)\\
        =\ &j_0+\alpha_{j_0}p^{l_0}+n_0p^{l_0}+\beta_{n_0}p^{m_{1}}+\sum_{t=1}^{n-1}\left(j_tp^{m_t}+\alpha_{j_0,\dots,j_t}p^{m_t+l_t}\right)+\sum_{t=1}^{n-1}\left(n_tp^{m_t+l_t}+\beta_{n_0,\dots,n_t}p^{m_{t+1}}\right).
    \end{align*}
    Note that $0=m_0<m_0+l_0\leq m_1<m_1+l_1\leq m_2<m_2+l_2\leq\cdots\leq m_n=k$.
    By taking the last equation in modulo $p^{l_0}$, we have $i_0 \equiv j_0 \pmod{p^{l_0}}$. Since $i_0,j_0\in[p^{l_0}]$, $i_0=j_0$ in $\Z_{p^k}$.
    If $l_0<m_1$, take the equation in modulo $p^{m_1}$ to get $m_0p^{l_0}\equiv n_0p^{l_0} \pmod{p^{m_1}}$.
    Since $m_0,n_0\in[p^{m_1-l_0}]$, $m_0=n_0$ in $\Z_{p^k}$. If $l_0=m_1$, then $m_0,n_0\in[p^{m_1-l_0}]=\{0\}$. Therefore, we have $i_0=j_0$ and $m_0=n_0$. Using a similar argument, by taking the equation in modulo $p^{m_x+l_x}$ and $p^{m_{x+1}}$ for $x=0,\dots,t$ in order, we will get $i_t=j_t$ and $m_t=n_t$, and hence $s_1=s_2$ and $c_1=c_2$.

\smallskip
    \textsf{Case 2.} $(\so,C)$ is pair (a) in Theorem \ref{p^k.q}, or pair (a) or (b) in Theorem \ref{2^k.2}.
\smallskip
    
    We use the same argument in Case 1 to get that $i_t$ in $s_1$ and $s_2$ are the same for $t\in[r+1]$ and $i_t$ in $c_1$ and $c_2$ are the same for $t\in[r]$.
    By looking at the second component of $s_1+c_1=s_2+c_2$, we get $i_r$ in $c_1$ and $c_2$ are equal.
    Since we already have $i_t$ in $s_1$ and $s_2$ and $i_t$ in $c_1$ and $c_2$ are the same for $t\in[r+1]$, the remaining equation in $s_1+c_1=s_2+c_2$ is the same as in Case 1, and this leads to $s_1=s_2$ and $c_1=c_2$.

\smallskip
    \textsf{Case 3.} $(\so,C)$ is pair (c) in Theorem \ref{p^k.q}, or pair (d) or (e) in Theorem \ref{2^k.2}.
\smallskip
    
    Similarly to Case 2, but $i_t$ in $s_1$ and $s_2$ and $i_t$ in $c_1$ and $c_2$ are the same for $t\in[r]$, and from the second component of $s_1+c_1=s_2+c_2$, we get $i_r$ in $s_1$ and $s_2$ are equal. The remaining arguments follow.

\smallskip
    \textsf{Case 4.} $(\so,C)$ is pair (b) or (d) in Theorem \ref{p^k.q}, or pair (c) or (f) in Theorem \ref{2^k.2}.
\smallskip
    
    Similarly to Cases 2 and 3, but from the second component of $s_1+c_1=s_2+c_2$, we get $j$ in $s_1$ and $s_2$ or $j$ in $c_1$ and $c_2$ are equal. The remaining arguments follow.
\end{proof}

\subsection{Proof of Theorem \ref{p^k}}

\begin{proof}[\textbf{Proof of Theorem \ref{p^k}}]
    Suppose $\so$ and $C$ are as in (\ref{S0form}) and (\ref{Cform}), respectively. Then $G=\so\oplus C$ by Lemma \ref{G=so+c}.
    Suppose $\Cay(G,S)$ has a perfect code $C$. Then $G=\Z_{p^k}=\so\oplus C$ and $|\so|=p^r$. We will prove (\ref{S0form}) and (\ref{Cform}) using induction on $r$. First, for $|\so|=p$, by Lemma \ref{prime deg}, $\so$ is aperiodic and $C=p\Z_{p^k}=\{ip+\beta_ip^k\mid i \in[p^{k-1}]\}$.
    In this case, $s_1\not\equiv s_2\pmod{p}$ for $s_1,s_2\in \so$, which means we can write $\so$ as $\left\{ i+\alpha_i p \, \middle| \, i\in[p] \right\}$ as desired.

    Now, suppose (\ref{S0form}) and (\ref{Cform}) are true for $|\so|<p^r$.

\smallskip
    \textsf{Case 1.} $\so$ is periodic. 
\smallskip
    
    Let $\so=L_{\so}\oplus D$ and $|L_{\so}|=p^s$. Then $0<s<r$ because $\so$ generates $G$.
    From Lemma \ref{quotient}, $G/{L_{\so}}=((D+L_{\so})/L_{\so})\oplus ((C+L_{\so})/L_{\so})$ and $|(D+L_{\so})/L_{\so}|=|D|<|\so|$.
    We also have $G/L_{\so}\cong \Z_{p^{k-s}}$.
    From the induction hypothesis, there are integers $0=m_0<m_1<\cdots<m_n=\log_p(|G/L_{\so}|)=k-s$
    and $1\leq l_t\leq m_{t+1}-m_t$ for $t\in[n]$ such that
        \begin{align*}
            (D+L_{\so})/L_{\so} &= \left\{ \sum_{t=0}^{n-1} \left(i_tp^{m_t}+\alpha_{i_0,\dots,i_t}p^{m_t+l_t}\right) +L_{\so} \, \middle| \, i_t\in\left[p^{l_t}\right]\right\},\ \text{and}\\
            (C+L_{\so})/L_{\so} &= \left\{ \sum_{t=0}^{n-1} \left( i_tp^{m_t+l_t}+\beta_{i_0,\dots,i_t}p^{m_{t+1}}\right) +L_{\so} \, \middle| \, i_t\in\left[p^{m_{t+1}-m_t-l_t}\right] \right\},
        \end{align*}
        where $\alpha_{i_0,\dots,i_t},\beta_{i_0,\dots,i_t}\in \Z_{p^k}$.
    Now $L_{\so}=p^{k-s}\Z_{p^k}=\{hp^{k-s}\mid h\in[p^s]\}$.
    Therefore, 
    \begin{align*}
        \so &= L_{\so}\oplus D\\
        &= \left\{ \sum_{t=0}^{n-1} \left(i_tp^{m_t}+\alpha_{i_0,\dots,i_t}p^{m_t+l_t}\right) + hp^{k-s} \, \middle| \, i_t\in \left[p^{l_t}\right], h\in[p^s] \right\} \\
        &= \left\{ \sum_{t=0}^{n-1} \left(i_tp^{m_t}+\alpha_{i_0,\dots,i_t}p^{m_t+l_t}\right) + hp^{k-s} + \alpha_{i_0,\dots,i_{n-1},h}\,p^{k} \, \middle| \, i_t\in\left[p^{l_t}\right], h\in[p^s] \right\} \\
        &= \left\{ \sum_{t=0}^{n} \left(i_tp^{m_t}+\alpha_{i_0,\dots,i_t}p^{m_t+l_t}\right) \, \middle| \, i_t\in\left[p^{l_t}\right]\right\},
    \end{align*}
    where $m_{n}=k-s$, $l_{n}=s$, and $i_{n}=h$. Moreover, $m_{n+1}=k>m_{n}$ and $1\leq l_{n}=m_{n+1}-m_{n}$.
    Since $G/L_{\so}\cong \Z_{p^{k-s}}$, $p^{k-s}+L_{\so}=L_{\so}$, so we have
    \begin{align*}
        & (C+L_{\so})/L_{\so} \\
        = & \left\{ \sum_{t=0}^{n-2} \left( i_tp^{m_t+l_t}+\beta_{i_0,\dots,i_t}p^{m_{t+1}}\right)+i_{n-1}p^{m_{n-1}+l_{n-1}}+\beta_{i_0,\dots,i_{n-1}}p^{m_{n}}+L_{\so} \, \middle| \, i_t\in\left[p^{m_{t+1}-m_t-l_t}\right] \right\}\\
        = & \left\{ \sum_{t=0}^{n-2} \left( i_tp^{m_t+l_t}+\beta_{i_0,\dots,i_t}p^{m_{t+1}}\right) +i_{n-1}p^{m_{n-1}+l_{n-1}}+L_{\so} \, \middle| \, i_t\in\left[p^{m_{t+1}-m_t-l_t}\right] \right\}.
    \end{align*}
    Therefore,
    \begin{align*}
        C &= \left\{ \sum_{t=0}^{n-2} \left( i_tp^{m_t+l_t}+\beta_{i_0,\dots,i_t}p^{m_{t+1}}\right) +i_{n-1}p^{m_{n-1}+l_{n-1}}+\beta_{i_0,\dots,i_{n-1}}p^{k-s} \, \middle| \, i_t\in\left[p^{m_{t+1}-m_t-l_t}\right] \right\}\\
        &= \left\{ \sum_{t=0}^{n-1} \left( i_tp^{m_t+l_t}+\beta_{i_0,\dots,i_t}p^{m_{t+1}}\right) \, \middle| \, i_t\in\left[p^{m_{t+1}-m_t-l_t}\right] \right\}.
    \end{align*}
    Note that in $C$ we can also set the sum from $t=0$ to $n$ because $m_n+l_n=m_{n+1}=k$.

\smallskip
    \textsf{Case 2.} $\so$ is aperiodic.
\smallskip
    
    In this case $C$ is periodic, so let $C=L_C\oplus D$.
    Consider the direct sum $G/{L_C}=((\so+L_C)/L_C)\oplus ((D+L_C)/L_C)$.
    If $|L_C|=|C|$, then $L_C=C=p^r\Z_{p^k}=\{ip^r \mid i\in[p^{k-r}]\}$.
    Since $C=L_C$, we can take $D=\{0\}$ and we have $G/{L_C}=(\so+L_C)/L_C$. This means $(\so+L_C)/L_C=\{i+L_C \mid i\in[p^r]\}$ and hence $\so=\{i+\alpha_ip^r \mid i\in[p^r]\}$.
    
    If $|L_C|<|C|$, then by Lemma \ref{quotient}, $(D+L_C)/L_C$ is aperiodic.
    Therefore $(\so+L_C)/L_C$ is periodic and $|(\so+L_C)/L_C|=|\so|$. Let $|L_C|=p^s$.
    From Case 1, there are integers $0=m_0<m_1<\cdots<m_{n+1}=k-s$ and $1\leq l_t\leq m_{t+1}-m_t$ for $t\in[n+1]$ such that
    \begin{align*}
        (\so+L_C)/L_C
        &= \left\{ \sum_{t=0}^{n} \left(i_tp^{m_t}+\alpha_{i_0,\dots,i_t}p^{m_t+l_t}\right) + L_C \, \middle| \, i_t\in\left[p^{l_t}\right]\right\},\ \text{and}\\
        (D+L_C)/L_C &= \left\{ \sum_{t=0}^{n-1} \left( i_tp^{m_t+l_t}+\beta_{i_0,\dots,i_t}p^{m_{t+1}}\right) +L_C \, \middle| \, i_t\in\left[p^{m_{t+1}-m_t-l_t}\right] \right\}.
    \end{align*}
    From the previous case, we also have $m_{n}+l_{n}=m_{n+1}=k-s$. Since $|G/L_C|=p^{k-s}$, $p^{k-s}+L_C=L_C$, so we have
    \begin{align*}
        (\so+L_C)/L_C
        &= \left\{ \sum_{t=0}^{n-1} \left(i_tp^{m_t}+\alpha_{i_0,\dots,i_t}p^{m_t+l_t}\right) + i_{n}p^{m_{n}} + L_C \, \middle| \, i_t\in\left[p^{l_t}\right] \right\}.
    \end{align*}
    Since $|L_C|=p^s$, $L_C=p^{k-s}\Z_{p^k}=\{hp^{k-s}\mid h\in\left[p^s\right]\}$. Therefore,
    \begin{align*}
        \so
        &= \left\{ \sum_{t=0}^{n-1} \left(i_tp^{m_t}+\alpha_{i_0,\dots,i_t}p^{m_t+l_t}\right) + i_{n}p^{m_{n}} + \alpha_{i_0,\dots,i_{n}}p^{k-s} \, \middle| \, i_t\in\left[p^{l_t}\right] \right\}\\
        &= \left\{ \sum_{t=0}^{n} \left(i_tp^{m_t}+\alpha_{i_0,\dots,i_t}p^{m_t+l_t}\right) \, \middle| \, i_t\in\left[p^{l_t}\right] \right\}\ \text{and}
    \end{align*}
    \begin{align*}
        C &= D\oplus L_C \\
        &= \left\{ \sum_{t=0}^{n-1} \left( i_tp^{m_t+l_t}+\beta_{i_0,\dots,i_t}p^{m_{t+1}}\right) + hp^{k-s} \, \middle| \, i_t\in\left[p^{m_{t+1}-m_t-l_t}\right], h\in\left[p^s\right] \right\}\\
        &= \left\{ \sum_{t=0}^{n-1} \left( i_tp^{m_t+l_t}+\beta_{i_0,\dots,i_t}p^{m_{t+1}}\right) + hp^{k-s} + \beta_{i_0,\dots,i_{n-1},h}p^{k} \, \middle| \, i_t\in\left[p^{m_{t+1}-m_t-l_t}\right], h\in\left[p^s\right] \right\}.
    \end{align*}
    By setting $i_n=h$ and a new value for $m_{n+1}$ to be equal to $k$, we have
    \begin{align*}
        C &= \left\{ \sum_{t=0}^{n} \left( i_tp^{m_t+l_t}+\beta_{i_0,\dots,i_t}p^{m_{t+1}}\right) \, \middle| \, i_t\in\left[p^{m_{t+1}-m_t-l_t}\right] \right\}.
    \qedhere
    \end{align*}

\end{proof}

\subsection{Proof of Theorem \ref{p^k.q}}

\begin{proof}[\textbf{Proof of Theorem \ref{p^k.q}}]

    Suppose $(\so,C)$ is one of the pairs in Theorem \ref{p^k.q}.
    Then $G=\so\oplus C$ by Lemma \ref{G=so+c}.    
    Suppose $\Cay(G,S)$ has a perfect code $C$.
    Then $G=\Z_{p^k}\x\Z_q=\so\oplus C$.
    We may write $|\so|=p^eq^f$ as $|\so|$ divides $|G|=p^kq$, where $f=0$ or $1$ and $0<e+f<k+1$.
    We will prove $(\so,C)$ is one of the four pairs in Theorem \ref{p^k.q} using induction on $e+f$.

    First, consider the case $|\so|=p$. From Lemma \ref{prime deg}, $\so$ is aperiodic and $C=p\Z_{p^k}\times\Z_q=\{\left(ip,\,j\right) \mid i \in[p^{k-1}],\,j\in[q]\}$.
    Write $C=L_C\oplus D$ where $D=\{0\}$, by Lemma \ref{quotient}, we have a direct sum $G/{L_C}=((\so+L_C)/L_C)\oplus ((D+L_C)/L_C)$.
    Since $D=\{0\}$, we have $G/{L_C}=(\so+L_C)/L_C$.
    This means $(\so+L_C)/L_C=\{(i,0)+L_C \mid i\in[p]\}$ and hence $\so=\{(i,0)+\alpha_i(p,1) \mid i\in[p]\}=\{(i+\alpha_ip,\alpha_i) \mid i\in[p]\}$, where $\alpha_i\in\Z$.
    Note that the pair $(\so,C)$ here is pair (b) in Theorem \ref{p^k.q} where $n=1$ and $r=0$.

    Now, for $|\so|=q$. From Lemma \ref{prime deg}, $\so$ is aperiodic and $C=\Z_{p^k}\times\{0\}=\{\left(i,\,0\right) \mid i \in[p^k]\}$. Using the same arguments as in the case of $|\so|=p$, $(\so+L_C)/L_C=G/L_C=\{(0,i)+L_C \mid i\in[q]\}$ and hence $\so=\{(0,i)+\alpha_i(1,0) \mid i\in[q]\}=\{(\alpha_i,i) \mid i\in[q]\}$, where $\alpha_i\in\Z$. Note that the pair $(\so,C)$ here is pair (c) in Theorem \ref{p^k.q} where $n=1$ and $r=0$.
    
    Suppose Theorem \ref{p^k.q} is true for $e+f<m$.
    We will prove that $(\so,C)$ is one of the four pairs in Theorem \ref{p^k.q} for $|\so|=p^m$ and $|\so|=p^{m-1}q$.

    \smallskip
    \textsf{Case 1.} $|\so|=p^m$ and $\so$ is periodic.
\smallskip
    
    Let $\so=L_{\so}\oplus D$ and $|L_{\so}|=p^s$, then $0<s<m$ because $\so$ generates $G$. 
    From Lemma \ref{quotient}, $G/{L_{\so}}=((D+L_{\so})/L_{\so})\oplus ((C+L_{\so})/L_{\so})$. Since $G/{L_{\so}}\cong\Z_{p^{k-s}}\times\Z_q$ and $|(D+L_{\so})/L_{\so}|=|D|=p^{m-t}$, from the induction hypothesis, there are integers $0=m_0<m_1<\cdots<m_n=k-s$
    and $1\leq l_t\leq m_{t+1}-m_t$ for $t\in[n]$, such that $(D+L_{\so})/L_{\so}$ and $(C+L_{\so})/L_{\so}$ are one of the four pairs in Theorem \ref{p^k.q}. Since $|D|=p^{r-t}$, they can only be pair (a) or (b).

    If $(D+L_{\so})/L_{\so}$ and $(C+L_{\so})/L_{\so}$ are pair (b), then
    \begin{align*}
        (D+L_{\so})/L_{\so} = & \left\{\left(\sum_{t=0}^{n-1} \left( i_tp^{m_t}+\alpha_{i_0,\dots,i_t}p^{m_t+l_t}\right),\, \alpha_{i_0,\dots,i_r}\right) +L_{\so} \, \middle| \, i_t\in\left[p^{l_t}\right]\right\},\ \text{and}\\
        (C+L_{\so})/L_{\so} = & \left\{ \left(\sum_{t=0}^{n-1} i_tp^{m_t+l_t} + \sum_{t=0}^{r-1} \beta_{i_0,\dots,i_t}p^{m_{t+1}} + \sum_{t=r}^{n-1} \beta_{i_0,\dots,i_t,j}\,p^{m_{t+1}},\,j\right) +L_{\so}\, \middle| \, \right.\\ 
        & \quad \left.  i_t\in\left[p^{m_{t+1}-m_t-l_t}\right],\,j\in[q] \right\}.
    \end{align*}
    Now, $L_{\so}=p^{k-s}\Z_{p^k}\times\{0\}=\left\{\left(hp^{k-s},\,0\right)\, \middle| \, h\in[p^s]\right\}$. 
    Therefore,
    \begin{align*}
        \so &= L_{\so}\oplus D\\
        &= \left\{ \left(\sum_{t=0}^{n-1} \left(i_tp^{m_t}+\alpha_{i_0,\dots,i_t}p^{m_t+l_t}\right),\, \alpha_{i_0,\dots,i_r}\right) + \left(hp^{k-s},\,0\right) \, \middle| \, i_t\in \left[p^{l_t}\right], h\in[p^s] \right\} \\
        &= \left\{ \left(\sum_{t=0}^{n-1} \left(i_tp^{m_t}+\alpha_{i_0,\dots,i_t}p^{m_t+l_t}\right),\, \alpha_{i_0,\dots,i_r}\right) + \left(hp^{k-s}+ \alpha_{i_0,\dots,i_{n-1},h}\,p^{k},\,0\right) \, \middle| \, i_t\in\left[p^{l_t}\right], h\in[p^s] \right\} \\
        &= \left\{ \left(\sum_{t=0}^{n} \left(i_tp^{m_t}+\alpha_{i_0,\dots,i_t}p^{m_t+l_t}\right),\, \alpha_{i_0,\dots,i_r}\right) \, \middle| \, i_t\in\left[p^{l_t}\right], h\in[p^s]\right\}
    \end{align*}
    where $m_{n}=k-s$, $l_{n}=s$, and $i_{n}=h$. Moreover, $m_{n+1}=k>m_{n}$ and $1\leq l_{n}=m_{n+1}-m_{n}$.
    Since $G/L_{\so}\cong \Z_{p^{k-s}}\times\Z_q$, $(p^{k-s},0)+L_{\so}=L_{\so}$, so we have
    \begin{align*}
        (C+L_{\so})/L_{\so}
        = &\left\{ \left(\sum_{t=0}^{n-2} i_tp^{m_t+l_t} + \sum_{t=0}^{r-1} \beta_{i_0,\dots,i_t}p^{m_{t+1}} + \sum_{t=r}^{n-2} \beta_{i_0,\dots,i_t,j}\,p^{m_{t+1}},\,j\right) \right.\\
        & +\left. \left(i_{n-1}p^{m_{n-1}+l_{n-1}},0\right)+L_{\so} \middle| \, i_t\in\left[p^{m_{t+1}-m_t-l_t}\right],\ j\in[q] \right\}.
    \end{align*}
    Therefore,
    \begin{align*}
        C = & \left\{ \left(\sum_{t=0}^{n-2} i_tp^{m_t+l_t} + \sum_{t=0}^{r-1} \beta_{i_0,\dots,i_t}p^{m_{t+1}} + \sum_{t=r}^{n-2} \beta_{i_0,\dots,i_t,j}\,p^{m_{t+1}},\,j\right) + \left(i_{n-1}p^{m_{n-1}+l_{n-1}}+ \beta_{i_0,\dots,i_{n-1},j}p^{k-s},0\right) \right.\\
        & \quad \left.\, \middle| \, i_t\in\left[p^{m_{t+1}-m_t-l_t}\right], j\in[q] \right\}\\
        = & \left\{ \left(\sum_{t=0}^{n-1} i_tp^{m_t+l_t} + \sum_{t=0}^{r-1} \beta_{i_0,\dots,i_t}p^{m_{t+1}} + \sum_{t=r}^{n-1} \beta_{i_0,\dots,i_t,j}\,p^{m_{t+1}},\,j\right) \, \middle| \, i_t\in\left[p^{m_{t+1}-m_t-l_t}\right], j\in[q] \right\}.
    \end{align*}
    Note that in $C$ we can also set the sum from $t=0$ to $n$ because $m_n+l_n=m_{n+1}=k$. The pair $(\so,C)$ here is also in the form of pair (b).

    If $(D+L_{\so})/L_{\so}$ and $(C+L_{\so})/L_{\so}$ are pair (a), the same algebraic operation we did for pair (b) is still applicable and we will get $(\so,C)$ to be another pair in the form of pair (a).

\smallskip
    \textsf{Case 2.} $|\so|=p^m$ and $\so$ is aperiodic.
\smallskip
    
    In this case, $C$ is periodic and $|C|=p^{k-m}q$. Let $C=L_C\oplus D$. Then $|L_C|=q$, $p^s$, or $p^sq$, for $0<s\leq k-m$. Consider the factorization given in Lemma \ref{quotient}, we have $G/{L_C}=((\so+L_C)/L_C)\oplus ((D+L_C)/L_C)$.

    \smallskip
    \textsf{Subcase 2.1.} $|L_C|=q$.
    In this case, $G/{L_C}\cong \Z_{p^k}$ and $(\so+L_C)/L_C$ is periodic. 
    From the case when $\so$ is periodic in Theorem \ref{p^k}, we have
    \begin{align*}
        (\so+L_C)/L_C
        &= \left\{ \left(\sum_{t=0}^{n} \left(i_tp^{m_t}+\alpha_{i_0,\dots,i_t}p^{m_t+l_t}\right),0\right) + L_C \, \middle| \, i_t\in\left[p^{l_t}\right]\right\},\ \text{and}\\
        (D+L_C)/L_C &= \left\{ \left(\sum_{t=0}^{n-1} \left( i_tp^{m_t+l_t}+\beta_{i_0,\dots,i_t}p^{m_{t+1}}\right),0 \right) +L_C \, \middle| \, i_t\in\left[p^{m_{t+1}-m_t-l_t}\right] \right\}.
    \end{align*}
    From the proof of Theorem \ref{p^k}, we also have $m_{n}+l_{n}=m_{n+1}=k$. Since $(p^{k},0)+L_C=L_C$,
    \begin{align*}
        (\so+L_C)/L_C
        &= \left\{ \left(\sum_{t=0}^{n-1} \left(i_tp^{m_t}+\alpha_{i_0,\dots,i_t}p^{m_t+l_t}\right),0\right) + \left(i_{n}p^{m_{n}},0\right) + L_C \, \middle| \, i_t\in\left[p^{l_t}\right] \right\}.
    \end{align*}
    Since $|L_C|=q$, $L_C=\{(0,j)\mid j\in[q]\}=\<{(0,1)}$.
    Therefore,
    \begin{align*}
        \so
        &= \left\{ \left(\sum_{t=0}^{n-1} \left(i_tp^{m_t}+\alpha_{i_0,\dots,i_t}p^{m_t+l_t}\right),0\right) + \left(i_{n}p^{m_{n}},0\right) + \alpha_{i_0,\dots,i_{n}}\left(0,1\right) \, \middle| \, i_t\in\left[p^{l_t}\right] \right\}\\
        &= \left\{ \left(\sum_{t=0}^{n} i_tp^{m_t}+\sum_{t=0}^{n-1}\alpha_{i_0,\dots,i_t}p^{m_t+l_t},\alpha_{i_0,\dots,i_{n}}\right) \, \middle| \, i_t\in\left[p^{l_t}\right] \right\}.
    \end{align*}
    By setting $r=n$, we obtain
    \begin{align*}
        \so 
        &= \left\{ \left(\sum_{t=0}^{n} i_tp^{m_t}+\sum_{t\in[n+1]\setminus \{r\}}\alpha_{i_0,\dots,i_t}p^{m_t+l_t},\alpha_{i_0,\dots,i_{r}}\right) \, \middle| \, i_t\in\left[p^{l_t}\right] \right\}.
    \end{align*}
    Now, we will prove that $C$ is also in the form of $C$ in pair (a).
    We have
    \begin{align*}
        C &= D\oplus L_C \\
        &= \left\{ \left(\sum_{t=0}^{n-1} \left( i_tp^{m_t+l_t}+\beta_{i_0,\dots,i_t}p^{m_{t+1}}\right),0\right) + (0,j) \, \middle| \, i_t\in\left[p^{m_{t+1}-m_t-l_t}\right], j\in\left[q\right] \right\}\\
        &= \left\{ \left(\sum_{t=0}^{n-1} \left( i_tp^{m_t+l_t}+\beta_{i_0,\dots,i_t}p^{m_{t+1}}\right),0\right) + \left(\beta_{i_0,\dots,i_{n-1},j}p^{k},j\right) \, \middle| \, i_t\in\left[p^{m_{t+1}-m_t-l_t}\right], j\in\left[q\right] \right\}.
    \end{align*}
    Recall that we set $r=n$. Let $j=i_r$ and set a new value for $m_{n+1}$ to be equal to $k$.
    Then
    \begin{align*}
        C &= \left\{ \left(\sum_{t\in[n+1]\setminus\{r\}} i_tp^{m_t+l_t}+\sum_{t=0}^{n}\beta_{i_0,\dots,i_t}p^{m_{t+1}},i_r\right) \, \middle| \, i_t\in\left[p^{m_{t+1}-m_t-l_t}\right], \text{ for } t\ne r,\ i_r\in[q] \right\}.
    \end{align*}
    The pair $(\so,C)$ here is in the form of pair (a).

    \smallskip
    \textsf{Subcase 2.2.} $|L_C|=p^{s}$.
    In this case, $G/{L_C}\cong \Z_{p^{k-s}}\times\Z_q$, $|(\so+L_C)/L_C|=|\so|=p^m$, and $(\so+L_C)/L_C$ is periodic. 
    From Case 1, $(\so+L_C)/L_C$ is either in the form of pair (a) or (b) in Theorem \ref{p^k.q}. We calculate $(\so+L_C)/L_C$ in the form of pair (a). The calculation of $(\so+L_C)/L_C$ in the form of pair (b) will follow similarly.

    From Case 1, there are integers $0=m_0<m_1<\cdots<m_{n+1}=k-s$
    and $1\leq l_t\leq m_{t+1}-m_t$ for $t\in[n]$, such that
    \begin{align*}
        (\so+L_C)/L_C
        = & \left\{ \left(\sum_{t=0}^{n} i_tp^{m_t}+ \sum_{t\in[n+1]\setminus\{r\}}\alpha_{i_0,\dots,i_t}p^{m_t+l_t},\, \alpha_{i_0,\dots,i_r}\right) + L_C \, \middle| \, i_t\in\left[p^{l_t}\right]\right\},\ \text{and}\\
        (D+L_C)/L_C
        = & \left\{ \left(\sum_{t\in[n]\setminus\{r\}} i_tp^{m_t+l_t}+ \sum_{t=0}^{n-1}\beta_{i_0,\dots,i_t}p^{m_{t+1}},\, i_r\right) +L_C \, \middle| \, i_t\in\left[p^{m_{t+1}-m_t-l_t}\right] \right. \\
        & \quad \left. \text{ for } t\ne r,\ i_r\in[q] \right\}.
    \end{align*}
    We also have $m_{n}+l_{n}=m_{n+1}=k-s$ from Case 1.
    Since $G/{L_C}\cong \Z_{p^{k-s}}\times\Z_q$, we have $(p^{k-s},0)+L_C=L_C$ and
    \begin{align*}
        (\so+L_C)/L_C
        &= \left\{ \left(\sum_{t=0}^{n-1} i_tp^{m_t}+ \sum_{t\in[n]\setminus\{r\}}\alpha_{i_0,\dots,i_t}p^{m_t+l_t},\, \alpha_{i_0,\dots,i_r}\right) + \left(i_{n}p^{m_{n}},0\right) + L_C \, \middle| \, i_t\in\left[p^{l_t}\right] \right\}.
    \end{align*}
    Since $|L_C|=p^s$, we have $L_C=p^{k-s}\Z_{p^k}=\left\{\left(hp^{k-s},0\right)\, \middle| \, h\in\left[p^s\right]\right\}$.
    Therefore,
    \begin{align*}
        \so
        &= \left\{ \left(\sum_{t=0}^{n-1} i_tp^{m_t}+ \sum_{t\in[n]\setminus\{r\}}\alpha_{i_0,\dots,i_t}p^{m_t+l_t},\, \alpha_{i_0,\dots,i_r}\right) + \left(i_{n}p^{m_{n}} + \alpha_{i_0,\dots,i_{n}}p^{k-s},0\right) \, \middle| \, i_t\in\left[p^{l_t}\right] \right\}\\
        &= \left\{ \left(\sum_{t=0}^{n} i_tp^{m_t}+ \sum_{t\in[n+1]\setminus\{r\}}\alpha_{i_0,\dots,i_t}p^{m_t+l_t},\, \alpha_{i_0,\dots,i_r}\right) \, \middle| \, i_t\in\left[p^{l_t}\right] \right\},\ \text{and}
    \end{align*}
    \begin{align*}
        C = & D\oplus L_C \\
        = & \left\{ \left(\sum_{t\in[n]\setminus\{r\}} i_tp^{m_t+l_t}+ \sum_{t=0}^{n-1}\beta_{i_0,\dots,i_t}p^{m_{t+1}},\, i_r\right) + \left(hp^{k-s},0\right) \, \middle| \, i_t\in\left[p^{m_{t+1}-m_t-l_t}\right] \text{ for } t\ne r,\right.\\
        & \quad \left. i_r\in[q], h\in\left[p^s\right] \right\}\\
        = & \left\{ \left(\sum_{t\in[n]\setminus\{r\}} i_tp^{m_t+l_t}+ \sum_{t=0}^{n-1}\beta_{i_0,\dots,i_t}p^{m_{t+1}},\, i_r\right) + \left(hp^{k-s} + \beta_{i_0,\dots,i_{n-1},h}p^{k},0\right) \, \middle| \, i_t\in\left[p^{m_{t+1}-m_t-l_t}\right]  \right.\\
        & \quad \left. \text{ for } t\ne r,\ i_r\in[q], h\in\left[p^s\right] \right\}.
    \end{align*}
    By setting $i_n=h$ and a new value for $m_{n+1}$ to be equal to $k$, we have
    \begin{align*}
        C &= \left\{ \left(\sum_{t\in[n+1]\setminus\{r\}} i_tp^{m_t+l_t}+ \sum_{t=0}^{n}\beta_{i_0,\dots,i_t}p^{m_{t+1}},\, i_r\right) \, \middle| \, i_t\in\left[p^{m_{t+1}-m_t-l_t}\right] \text{ for } t\ne r,\ i_r\in[q] \right\}.
    \end{align*}
    The pair $(\so,C)$ here is in the form of pair (a).

\smallskip
    \textsf{Subcase 2.3.} $|L_C|=p^{k-m}q$.
    In this case, $C=L_C=\{\left(ip^{m},j\right) \mid i\in\left[p^{k-m}\right], j\in[q]\}=\left<(p^m,1)\right>$ and $G/{L_C}\cong \Z_{p^{m}}$.
    Since $C=L_C$, we can take $D=\{0\}$ and we have $G/{L_C}=(\so+L_C)/L_C$.
    This means $(\so+L_C)/L_C=\{(i,0)+L_C \mid i\in[p^{m}]\}$ and hence $\so=\{(i,0)+\alpha_i(p^{m},1) \mid i\in[p^{k-m}]\}=\{(i+\alpha_ip^{m},\alpha_i) \mid i\in[p^{k-m}]\}$. The pair $(\so,C)$ here is in the form of pair (b).

\smallskip
    \textsf{Subcase 2.4.} $|L_C|=p^{s}q$ for $t<k-m$.
    In this case, $G/{L_C}\cong \Z_{p^{k-s}}$, $|(\so+L_C)/L_C|=|\so|=p^m$, and $(\so+L_C)/L_C$ is periodic. 
    From the case when $\so$ is periodic in Theorem \ref{p^k}, we have
    \begin{align*}
        (\so+L_C)/L_C
        &= \left\{ \left(\sum_{t=0}^{n} \left(i_tp^{m_t}+\alpha_{i_0,\dots,i_t}p^{m_t+l_t}\right),0\right) + L_C \, \middle| \, i_t\in\left[p^{l_t}\right]\right\},\ \text{and}\\
        (D+L_C)/L_C &= \left\{ \left(\sum_{t=0}^{n-1} \left( i_tp^{m_t+l_t}+\beta_{i_0,\dots,i_t}p^{m_{t+1}}\right),0 \right) +L_C \, \middle| \, i_t\in\left[p^{m_{t+1}-m_t-l_t}\right] \right\}.
    \end{align*}
    We also have $m_{n}+l_{n}=m_{n+1}=k-s$ from the same case.
    Since $(p^{k-s},0)+L_C=L_C$,
    \begin{align*}
        (\so+L_C)/L_C
        &= \left\{ \left(\sum_{t=0}^{n-1} \left(i_tp^{m_t}+\alpha_{i_0,\dots,i_t}p^{m_t+l_t}\right),0\right) + \left(i_{n}p^{m_{n}},0\right) + L_C \, \middle| \, i_t\in\left[p^{l_t}\right] \right\}.
    \end{align*}
    Since $|L_C|=p^sq$, $L_C=p^{k-s}\Z_{p^k}\times\Z_q=\{(hp^{k-s},j)\mid h\in\left[p^s\right], j\in[q]\}=\left<(p^{k-s},1)\right>$. Therefore,
    \begin{align*}
        \so
        &= \left\{ \left(\sum_{t=0}^{n-1} \left(i_tp^{m_t}+\alpha_{i_0,\dots,i_t}p^{m_t+l_t}\right),0\right) + \left(i_{n}p^{m_{n}},0\right) + \alpha_{i_0,\dots,i_{n}}\left(p^{k-s},1\right) \, \middle| \, i_t\in\left[p^{l_t}\right] \right\}\\
        &= \left\{ \left(\sum_{t=0}^{n} \left(i_tp^{m_t}+\alpha_{i_0,\dots,i_t}p^{m_t+l_t}\right),\alpha_{i_0,\dots,i_{n}}\right) \, \middle| \, i_t\in\left[p^{l_t}\right] \right\}\ \text{and}
    \end{align*}
    \begin{align*}
        C &= D\oplus L_C \\
        &= \left\{ \left(\sum_{t=0}^{n-1} \left( i_tp^{m_t+l_t}+\beta_{i_0,\dots,i_t}p^{m_{t+1}}\right),0\right) + (hp^{k-s},j) \, \middle| \, i_t\in\left[p^{m_{t+1}-m_t-l_t}\right], h\in\left[p^s\right], j\in\left[q\right] \right\}\\
        &= \left\{ \left(\sum_{t=0}^{n-1} \left( i_tp^{m_t+l_t}+\beta_{i_0,\dots,i_t}p^{m_{t+1}}\right),0\right) + \left(hp^{k-s} + \beta_{i_0,\dots,i_{n-1},h,j}p^{k},j\right) \, \middle| \, i_t\in\left[p^{m_{t+1}-m_t-l_t}\right],\right.\\
        & \quad \left. h\in\left[p^s\right], j\in\left[q\right] \right\}.
    \end{align*}
    By setting $i_n=h$, $r=n$, and a new value for $m_{n+1}$ to be equal to $k$, we have
    \begin{align*}
        C &= \left\{ \left(\sum_{t=0}^{n} i_tp^{m_t+l_t}+\sum_{t=0}^{n-1} \beta_{i_0,\dots,i_t}p^{m_{t+1}}+\sum_{t=r}^{n}\beta_{i_0,\dots,i_t,j}p^{m_{t+1}} ,j\right) \, \middle| \, i_t\in\left[p^{m_{t+1}-m_t-l_t}\right], j\in\left[q\right] \right\}.
    \end{align*}
    The pair $(\so,C)$ here is in the form of pair (b).

\smallskip
    \textsf{Case 3.} $|\so|=p^{m-1}q$ ($m>1$) and $\so$ is periodic.
\smallskip
    
    Let $\so=L_{\so}\oplus D$.
    Then $|L_{\so}|=q$, $p^s$ ($0<s\leq m-1$), or $p^sq$ ($0<s<m-1$). Consider the factorization $G/{L_{\so}}=((D+L_{\so})/L_{\so})\oplus ((C+L_{\so})/L_{\so})$ given in Lemma \ref{quotient}.

\smallskip
    \textsf{Subcase 3.1.} $|\ls|=q$.
    In this case, $G/\ls\cong\Z_{p^{k}}$, $(D+\ls)/\ls$ is aperiodic, and $|(D+\ls)/\ls|=|D|=p^{m-1}$.
    From Theorem \ref{p^k}, there are integers $0=m_0<m_1<\cdots<m_n=k$
    and $1\leq l_t\leq m_{t+1}-m_t$ for $t=0,\dots,n$ such that
    \begin{align*}
        (D+\ls)/\ls &= \left\{ \left(\sum_{t=0}^{n-1}\left(i_tp^{m_t}+\alpha_{i_0,\dots,i_t}p^{m_t+l_t}\right),0\right)+\ls \, \middle| \, i_t\in\left[p^{l_{t}}\right]\right\},\ \text{and}\\
        (C+\ls)/\ls &= \left\{ \left(\sum_{t=0}^{n-1} \left( i_tp^{m_t+l_t}+\beta_{i_0,\dots,i_t}p^{m_{t+1}}\right),0\right)+\ls \, \middle| \, i_t\in\left[p^{m_{t+1}-m_t-l_t}\right] \right\}.
    \end{align*}
    Now $L_{\so}=\{(0,j)\mid j\in[q]\}$. Therefore,
    \begin{align*}
        \so &= L_{\so}\oplus D\\
        &= \left\{ \left(\sum_{t=0}^{n-1}\left(i_tp^{m_t}+\alpha_{i_0,\dots,i_t}p^{m_t+l_t}\right),0\right)+(0,j) \, \middle| \, i_t\in\left[p^{l_{t}}\right],j\in[q]\right\} \\
        &= \left\{ \left(\sum_{t=0}^{n-1}\left(i_tp^{m_t}+\alpha_{i_0,\dots,i_t}p^{m_t+l_t}\right),j\right) \, \middle| \, i_t\in\left[p^{l_{t}}\right],j\in[q]\right\}.
    \end{align*}
    By setting $r=n$, $i_n=j$, and $m_r=m_r+l_r=m_{r+1}=k$, we have
    \begin{align*}
        \so
        &= \left\{ \left(\sum_{t\in[n+1]\setminus\{r\}}i_tp^{m_t}+\sum_{t=0}^{n}\alpha_{i_0,\dots,i_t}p^{m_t+l_t},i_r\right) \, \middle| \, i_t\in\left[p^{l_{t}}\right] \text{ for } t\ne r,i_r\in[q]\right\}.
    \end{align*}
    Now,
    \begin{align*}
        (C+L_{\so})/L_{\so} 
        &= \left\{ \left(\sum_{t=0}^{n-2} \left( i_tp^{m_t+l_t}+\beta_{i_0,\dots,i_t}p^{m_{t+1}}\right)+i_{n-1}p^{m_{n-1}+l_{n-1}},0\right) +\ls \, \middle| \, i_t\in\left[p^{m_{t+1}-m_t-l_t}\right] \right\}.
    \end{align*}
    Therefore,
    \begin{align*}
        C &= \left\{ \left(\sum_{t=0}^{n-2} \left( i_tp^{m_t+l_t}+\beta_{i_0,\dots,i_t}p^{m_{t+1}}\right)+i_{n-1}p^{m_{n-1}+l_{n-1}},0\right) + \beta_{i_0,\dots,i_{n-1}}(0,1) \, \middle| \, i_t\in\left[p^{m_{t+1}-m_t-l_t}\right] \right\}\\
        &= \left\{ \left(\sum_{t=0}^{n-1} i_tp^{m_t+l_t}+\sum_{t=0}^{n-2}\beta_{i_0,\dots,i_t}p^{m_{t+1}},\beta_{i_0,\dots,i_{n-1}}\right) \, \middle| \, i_t\in\left[p^{m_{t+1}-m_t-l_t}\right] \right\}.
    \end{align*}
    Recall that $r=n$, $i_n=j$, and $m_r=m_r+l_r=m_{r+1}=k$.
    So we have
    \begin{align*}
        C
        &= \left\{ \left(\sum_{t=0}^{n} i_tp^{m_t+l_t}+\sum_{t\in[n+1]\setminus\{r-1\}}\beta_{i_0,\dots,i_t}p^{m_{t+1}},\beta_{i_0,\dots,i_{r-1}}\right) \, \middle| \, i_t\in\left[p^{m_{t+1}-m_t-l_t}\right] \right\}.
    \end{align*}
    The pair $(\so,C)$ here is in the form of pair (c).

\smallskip
    \textsf{Subcase 3.2.} $|\ls|=p^s$ ($0<s\leq m-1$).
    In this case, $G/\ls\cong\Z_{p^{k-s}}\times\Z_q$, $(D+\ls)/\ls$ is aperiodic, and $|(D+\ls)/\ls|=|D|=p^{m-t-1}q$.
    From the induction hypothesis, there are integers $0=m_0<m_1<\cdots<m_n=k-s$
    and $1\leq l_t\leq m_{t+1}-m_t$ for $t=0,\dots,n$ such that $(D+L_{\so})/L_{\so}$ and $(C+L_{\so})/L_{\so}$ are one of the four pairs in Theorem \ref{p^k.q}. Since $|(D+\ls)/\ls|=p^{m-t-1}q$, they can only be pair (c) or (d).

    If $(D+L_{\so})/L_{\so}$ and $(C+L_{\so})/L_{\so}$ are pair (c), then
    \begin{align*}
        (D+L_{\so})/L_{\so} &= \left\{ \left(\sum_{t\in[n]\setminus\{r\}} i_tp^{m_t}+ \sum_{t=0}^{n-1}\alpha_{i_0,\dots,i_t}p^{m_t+l_t},\, i_r\right) +\ls \, \middle| \, i_t\in\left[p^{l_t}\right] \text{ for } t\ne r,\, i_r\in[q]\right\},\ \text{and}\\
        (C+L_{\so})/L_{\so} &= \left\{ \left(\sum_{t=0}^{n-1} i_tp^{m_t+l_t}+ \sum_{t\in[n]\setminus\{r-1\}}\beta_{i_0,\dots,i_t}p^{m_{t+1}},\, \beta_{i_0,\dots,i_{r-1}} \right) +\ls \, \middle| \, i_t\in\left[p^{m_{t+1}-m_t-l_t}\right]\right\}.
    \end{align*}
    Now, $L_{\so}=p^{k-s}\Z_{p^k}\times\{0\}=\left\{\left(hp^{k-s},\,0\right)\, \middle| \, h\in[p^s]\right\}$.
    Therefore,
    \begin{align*}
        \so = & L_{\so}\oplus D\\
        = & \left\{ \left(\sum_{t\in[n]\setminus\{r\}} i_tp^{m_t}+ \sum_{t=0}^{n-1}\alpha_{i_0,\dots,i_t}p^{m_t+l_t},\, i_r\right) + \left(hp^{k-s},\,0\right) \, \middle| \, i_t\in\left[p^{l_t}\right] \text{ for } t\ne r,\, i_r\in[q], h\in[p^s] \right\} \\
        = & \left\{ \left(\sum_{t\in[n]\setminus\{r\}} i_tp^{m_t}+ \sum_{t=0}^{n-1}\alpha_{i_0,\dots,i_t}p^{m_t+l_t},\, i_r\right) + \left(hp^{k-s}+ \alpha_{i_0,\dots,i_{n-1},h}\,p^{k},\,0\right) \, \middle| \, i_t\in\left[p^{l_t}\right] \text{ for } t\ne r,\right.\\ 
        & \quad \left.  i_r\in[q], h\in[p^s] \right\} \\
        = & \left\{ \left(\sum_{t\in[n+1]\setminus\{r\}} i_tp^{m_t}+ \sum_{t=0}^{n}\alpha_{i_0,\dots,i_t}p^{m_t+l_t},\, i_r\right) \, \middle| \, i_t\in\left[p^{l_t}\right] \text{ for } t\ne r,\, i_r\in[q]\right\},
    \end{align*}
    where $m_{n}=k-s$, $l_{n}=s$, and $i_{n}=h$.
    Since $G/L_{\so}\cong \Z_{p^{k-s}}\times\Z_q$, $(p^{k-s},0)+L_{\so}=L_{\so}$, so we have
    \begin{align*}
        (C+L_{\so})/L_{\so}
        = &\left\{ \left(\sum_{t=0}^{n-2} i_tp^{m_t+l_t}+ \sum_{t\in[n-1]\setminus\{r-1\}}\beta_{i_0,\dots,i_t}p^{m_{t+1}},\, \beta_{i_0,\dots,i_{r-1}} \right)+\left(i_{n-1}p^{m_{n-1}+l_{n-1}},0\right)+L_{\so} \, \middle| \, \right.\\
        & \quad \left. i_t\in\left[p^{m_{t+1}-m_t-l_t}\right]\right\}.
    \end{align*}
    Therefore,
    \begin{align*}
        C = & \left\{ \left(\sum_{t=0}^{n-2} i_tp^{m_t+l_t}+ \sum_{t\in[n-1]\setminus\{r-1\}}\beta_{i_0,\dots,i_t}p^{m_{t+1}},\, \beta_{i_0,\dots,i_{r-1}} \right) + \left(i_{n-1}p^{m_{n-1}+l_{n-1}}+ \beta_{i_0,\dots,i_{n-1}}p^{k-s},0\right) \right.\\
        & \quad \left.\, \middle| \, i_t\in\left[p^{m_{t+1}-m_t-l_t}\right] \right\}\\
        = & \left\{ \left(\sum_{t=0}^{n-1} i_tp^{m_t+l_t}+ \sum_{t\in[n]\setminus\{r-1\}}\beta_{i_0,\dots,i_t}p^{m_{t+1}},\, \beta_{i_0,\dots,i_{r-1}} \right) \, \middle| \, i_t\in\left[p^{m_{t+1}-m_t-l_t}\right] \right\}.
    \end{align*}
    Note that in $C$ we can also set the sum from $t=0$ to $n$ because $m_n+l_n=m_{n+1}=k$. The pair $(\so,C)$ here is also in the form of pair (c).

    If $(D+L_{\so})/L_{\so}$ and $(C+L_{\so})/L_{\so}$ are pair (d), the same algebraic operation we did for pair (c) is still applicable and we will get $(\so,C)$ to be another pair in the form of pair (d).

\smallskip
    \textsf{Subcase 3.3.} $|\ls|=p^sq$ ($0<s<m-1$).
    In this case, $G/\ls\cong\Z_{p^{k-s}}$, $(D+\ls)/\ls$ is aperiodic, and $|(D+\ls)/\ls|=|D|=p^{m-t-1}$.
    From Theorem \ref{p^k}, there are integers $0=m_0<m_1<\cdots<m_n=\log_p(|G/L_{\so}|)=k-s$
    and $1\leq l_t\leq m_{t+1}-m_t$ for $t\in[n]$ such that
    \begin{align*}
        (D+\ls)/\ls &= \left\{ \left(\sum_{t=0}^{n-1}\left(i_tp^{m_t}+\alpha_{i_0,\dots,i_t}p^{m_t+l_t}\right),0\right)+\ls \, \middle| \, i_t\in\left[p^{l_{t}}\right]\right\},\ \text{and}\\
        (C+\ls)/\ls &= \left\{ \left(\sum_{t=0}^{n-1} \left( i_tp^{m_t+l_t}+\beta_{i_0,\dots,i_t}p^{m_{t+1}}\right),0\right)+\ls \, \middle| \, i_t\in\left[p^{m_{t+1}-m_t-l_t}\right] \right\}.
    \end{align*}
    Now $L_{\so}=\left\{\left(hp^{k-s},j\right)\, \middle| \, h\in[p^s], j\in[q]\right\}=\<{(p^{k-s},1)}$.
    Therefore,
    \begin{align*}
        \so &= L_{\so}\oplus D\\
        &= \left\{ \left(\sum_{t=0}^{n-1}\left(i_tp^{m_t}+\alpha_{i_0,\dots,i_t}p^{m_t+l_t}\right),0\right)+(hp^{k-s},j) \, \middle| \, i_t\in\left[p^{l_{t}}\right], h\in[p^s], j\in[q]\right\} \\
        &= \left\{ \left(\sum_{t=0}^{n-1}\left(i_tp^{m_t}+\alpha_{i_0,\dots,i_t}p^{m_t+l_t}\right),0\right)+(hp^{k-s}+\alpha_{i_0,\dots,i_{n-1},h,j}p^k,j) \, \middle| \, i_t\in\left[p^{l_{t}}\right], h\in[p^s], j\in[q]\right\}.
    \end{align*}
    By setting $i_n=h$, $r=n$, $m_n=k-s$, and $l_r=t$, we have
    \begin{align*}
        \so
        &= \left\{ \left(\sum_{t=0}^{n}i_tp^{m_t}+\sum_{t=0}^{r-1}\alpha_{i_0,\dots,i_t}p^{m_t+l_t}+\sum_{t=r}^{n}\alpha_{i_0,\dots,i_t,j}p^{m_t+l_t},j\right) \, \middle| \, i_t\in\left[p^{l_{t}}\right], j\in[q]\right\}.
    \end{align*}
    Since $G/L_{\so}\cong \Z_{p^{k-s}}$, $(p^{k-s},0)+L_{\so}=L_{\so}$, so we have
    \begin{align*}
        & (C+L_{\so})/L_{\so} \\
        = & \left\{ \left(\sum_{t=0}^{n-2} \left( i_tp^{m_t+l_t}+\beta_{i_0,\dots,i_t}p^{m_{t+1}}\right),0\right)+\left(i_{n-1}p^{m_{n-1}+l_{n-1}},0\right)+\ls \, \middle| \, i_t\in\left[p^{m_{t+1}-m_t-l_t}\right] \right\}.
    \end{align*}
    Therefore,
    \begin{align*}
        C &= \left\{ \left(\sum_{t=0}^{n-2} \left( i_tp^{m_t+l_t}+\beta_{i_0,\dots,i_t}p^{m_{t+1}}\right),0\right) +\left(i_{n-1}p^{m_{n-1}+l_{n-1}},0\right)+\beta_{i_0,\dots,i_{n-1}}\left(p^{k-s},1\right) \, \middle| \,\right.\\ 
        & \quad \left. i_t\in\left[p^{m_{t+1}-m_t-l_t}\right] \right\}\\
        &= \left\{ \left(\sum_{t=0}^{n-1} \left( i_tp^{m_t+l_t}+\beta_{i_0,\dots,i_t}p^{m_{t+1}}\right),\beta_{i_0,\dots,i_{r-1}}\right) \, \middle| \, i_t\in\left[p^{m_{t+1}-m_t-l_t}\right] \right\}.
    \end{align*}
    Note that in $C$ we can also set the sum from $t=0$ to $n$ because $m_n+l_n=m_{n+1}=k$.
    The pair $(\so,C)$ here is in the form of pair (d).

\smallskip
    \textsf{Case 4.} $|\so|=p^{m-1}q$ and $\so$ is aperiodic.
\smallskip
    
    In this case, $C$ is periodic and $|C|=p^{k-m+1}$. Let $C=L_C\oplus D$. Then $L_C=p^s$, for $0<s\leq k-m+1$. Consider the factorization $G/{L_C}=((\so+L_C)/L_C)\oplus ((D+L_C)/L_C)$ given in Lemma \ref{quotient}.

\smallskip
    \textsf{Subcase 4.1.} $|L_C|=p^{k-m+1}$.
    In this case, $C=L_C=\{\left(ip^{m-1},0\right) \mid i\in\left[p^{k-m+1}\right]\}=\left<(p^{m-1},0)\right>$ and $G/{L_C}\cong \Z_{p^{m-1}}\times\Z_q$.
    Since $C=L_C$, we can take $D=\{0\}$ and we have $G/{L_C}=(\so+L_C)/L_C$.
    This means $(\so+L_C)/L_C=\{(i,j)+L_C \mid i\in[p^{m-1}], j\in[q]\}$ and hence $\so=\{(i,j)+\alpha_{ij}(p^{m-1},0) \mid i\in[p^{k-m}],j\in[q]\}=\{(i+\alpha_{ij}p^{m},j) \mid i\in[p^{k-m}],j\in[q]\}$. The pair $(\so,C)$ here is in the form of pair (d).

\smallskip
    \textsf{Subcase 4.2.} $|L_C|=p^{s}$ for $t<k-m+1$.
    In this case, $G/{L_C}\cong \Z_{p^{k-s}}\times\Z_q$, $|(\so+L_C)/L_C|=|\so|=p^{m-1}q$, and $(\so+L_C)/L_C$ is periodic. 
    From Case 3, $(\so+L_C)/L_C$ is either in the form of pair (c) or (d) in Theorem \ref{p^k.q}. We will give the calculation for $(\so+L_C)/L_C$ in the form of pair (c). The calculation when $(\so+L_C)/L_C$ is in the form of pair (d) will follow similarly.

    From Cases 3.1 and 3.2, 
    \begin{align*}
        (\so+L_C)/L_C
        &= \left\{ \left(\sum_{t\in[n+1]\setminus\{r\}} i_tp^{m_t}+ \sum_{t=0}^{n}\alpha_{i_0,\dots,i_t}p^{m_t+l_t},\, i_r\right) +\lc \, \middle| \, i_t\in\left[p^{l_t}\right] \text{ for } t\ne r,\, i_r\in[q]\right\}, \\
        (D+L_C)/L_C 
        &= \left\{ \left(\sum_{t=0}^{n-1} i_tp^{m_t+l_t}+ \sum_{t\in[n]\setminus\{r-1\}}\beta_{i_0,\dots,i_t}p^{m_{t+1}},\, \beta_{i_0,\dots,i_{r-1}} \right) +\lc \, \middle| \, i_t\in\left[p^{m_{t+1}-m_t-l_t}\right] \right\}.
    \end{align*}
    We also have $m_{n}+l_{n}=m_{n+1}=k-s$ from Subcase 3.1 and 3.2.
    Since $G/{L_C}\cong \Z_{p^{k-s}}\times\Z_q$, $(p^{k-s},0)+L_C=L_C$, and
    \begin{align*}
        (\so+L_C)/L_C
        &= \left\{ \left(\sum_{t\in[n]\setminus\{r\}} i_tp^{m_t}+ \sum_{t=0}^{n-1}\alpha_{i_0,\dots,i_t}p^{m_t+l_t},\, i_r\right) + \left(i_{n}p^{m_{n}},0\right) + L_C \, \middle| \, i_t\in\left[p^{l_t}\right] \text{ for } t\ne r,\, i_r\in[q] \right\}.
    \end{align*}
    Since $|L_C|=p^s$, $L_C=p^{k-s}\Z_{p^k}=\left\{\left(hp^{k-s},0\right)\, \middle| \, h\in\left[p^s\right]\right\}$. Therefore,
    \begin{align*}
        \so
        &= \left\{\left(\sum_{t\in[n]\setminus\{r\}} i_tp^{m_t}+ \sum_{t=0}^{n-1}\alpha_{i_0,\dots,i_t}p^{m_t+l_t},\, i_r\right) + \left(i_{n}p^{m_{n}} + \alpha_{i_0,\dots,i_{n}}p^{k-s},0\right) \, \middle| \, i_t\in\left[p^{l_t}\right] \text{ for } t\ne r,\, i_r\in[q] \right\}\\
        &= \left\{ \left(\sum_{t\in[n+1]\setminus\{r\}} i_tp^{m_t}+ \sum_{t=0}^{n}\alpha_{i_0,\dots,i_t}p^{m_t+l_t},\, i_r\right) \, \middle| \, i_t\in\left[p^{l_t}\right] \text{ for } t\ne r,\, i_r\in[q] \right\},\ \text{and}
    \end{align*}
    \begin{align*}
        C &= D\oplus L_C \\
        &= \left\{ \left(\sum_{t=0}^{n-1} i_tp^{m_t+l_t}+ \sum_{t\in[n]\setminus\{r-1\}}\beta_{i_0,\dots,i_t}p^{m_{t+1}},\, \beta_{i_0,\dots,i_{r-1}} \right) + \left(hp^{k-s},0\right) \, \middle| \, i_t\in\left[p^{m_{t+1}-m_t-l_t}\right] \right\}\\
        &= \left\{ \left(\sum_{t=0}^{n-1} i_tp^{m_t+l_t}+ \sum_{t\in[n]\setminus\{r-1\}}\beta_{i_0,\dots,i_t}p^{m_{t+1}},\, \beta_{i_0,\dots,i_{r-1}} \right) + \left(hp^{k-s} + \beta_{i_0,\dots,i_{n-1},h}p^{k},0\right) \, \middle| \, i_t\in\left[p^{m_{t+1}-m_t-l_t}\right] \right\}.
    \end{align*}
    By setting $i_n=h$ and a new value for $m_{n+1}$ to be equal to $k$, we have
    \begin{align*}
        C &= \left\{ \left(\sum_{t=0}^{n} i_tp^{m_t+l_t}+ \sum_{t\in[n+1]\setminus\{r-1\}}\beta_{i_0,\dots,i_t}p^{m_{t+1}},\, \beta_{i_0,\dots,i_{r-1}} \right) \, \middle| \, i_t\in\left[p^{m_{t+1}-m_t-l_t}\right] \right\}.
    \end{align*}
    The pair $(\so,C)$ here is in the form of pair (c).
\end{proof}

\subsection{Proof of Theorem \ref{2^k.2}}

\begin{observation}\label{obs}
    There are three subgroups of $G=\Z_{2^k}\times\Z_2$ of order $2^s$, namely $\<{(2^{k-s},0)}$, $\<{(2^{k-s},1)}$, and $\<{(2^{k-s+1}),(0,1)}$.
    Their quotient groups are $G/\<{(2^{k-s},0)}\cong\Z_{2^{k-s}}\times\Z_2$, $G/\<{(2^{k-s},1)}\cong\Z_{2^{k-s+1}}$, and $G/\<{(2^{k-s+1},0),(0,1)}\cong\Z_{2^{k-s+1}}$.
\end{observation}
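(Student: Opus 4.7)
The plan is to enumerate all subgroups of $G=\Z_{2^k}\times\Z_2$ of order $2^s$ (with $1\le s\le k$) by splitting into cyclic and non-cyclic cases, and then identify each quotient directly. Since $G$ has rank at most $2$, every subgroup is either cyclic or isomorphic to $\Z_{2^{s-1}}\times\Z_2$, so these two cases are exhaustive.

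First I would list the elements of order $2^s$ in $G$. Using $\mathrm{ord}(a,b)=\mathrm{lcm}(\mathrm{ord}(a),\mathrm{ord}(b))$, one finds that for $s\ge 1$ these are exactly the elements $(u\cdot 2^{k-s},0)$ and $(u\cdot 2^{k-s},1)$ with $u$ odd, together with the extra element $(0,1)$ when $s=1$. Since odd $u$ is a unit modulo $2^s$, a short computation yields $\langle(u\cdot 2^{k-s},b)\rangle=\langle(2^{k-s},b)\rangle$ for $b\in\{0,1\}$, so in general there are exactly two cyclic subgroups of order $2^s$, plus the third cyclic subgroup $\langle(0,1)\rangle$ in the edge case $s=1$. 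For the non-cyclic case with $s\ge 2$, any such $H\cong\Z_{2^{s-1}}\times\Z_2$ contains the full $2$-torsion subgroup $\{(0,0),(2^{k-1},0),(0,1),(2^{k-1},1)\}$ of $G$, so $(0,1)\in H$. Passing to $G/\langle(0,1)\rangle\cong\Z_{2^k}$, the image of $H$ is the unique subgroup of order $2^{s-1}$, and pulling back gives $H=\langle(2^{k-s+1},0),(0,1)\rangle$. For $s=1$ this formula collapses to $\langle(0,1)\rangle$, so in all cases there are exactly three subgroups of order $2^s$.

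Finally, the quotients follow. The first and third come directly from the direct product structure, giving $G/\langle(2^{k-s},0)\rangle\cong(\Z_{2^k}/\langle 2^{k-s}\rangle)\times\Z_2\cong\Z_{2^{k-s}}\times\Z_2$ and $G/\langle(2^{k-s+1},0),(0,1)\rangle\cong(\Z_{2^k}/\langle 2^{k-s+1}\rangle)\times(\Z_2/\Z_2)\cong\Z_{2^{k-s+1}}$. The only step requiring care is $G/\langle(2^{k-s},1)\rangle$, whose order is $2^{k-s+1}$; I would show that the coset of $(1,0)$ has order exactly $2^{k-s+1}$, forcing the quotient to be cyclic. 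This reduces to the observation that $(m,0)\in\langle(2^{k-s},1)\rangle$ forces the index $i$ with $i(2^{k-s},1)=(m,0)$ to be even, so $m\in 2^{k-s+1}\Z_{2^k}$, and the smallest positive such $m$ is $2^{k-s+1}$. This is the only mildly nontrivial step; the rest is routine bookkeeping.
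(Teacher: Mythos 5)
Your proof is correct, but note that the paper offers no proof at all to compare against: Observation 4.1 is stated bare, as a piece of routine bookkeeping feeding into the proof of Theorem 4.3, so your write-up fills a gap the authors deliberately left. Your enumeration is sound and handles the two points where one could actually slip: the degenerate case $s=1$, where the non-cyclic formula $\langle(2^{k-s+1},0),(0,1)\rangle$ collapses to $\langle(0,1)\rangle$ and the count of three is preserved by the extra involution $(0,1)$; and the cyclicity of $G/\langle(2^{k-s},1)\rangle$, which you correctly reduce to computing that $(m,0)\in\langle(2^{k-s},1)\rangle$ forces $m\in 2^{k-s+1}\Z_{2^k}$, so the coset of $(1,0)$ attains the full group order $2^{k-s+1}$. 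One small gloss worth tightening: the claim that every non-cyclic subgroup $H$ of order $2^s$ is isomorphic to $\Z_{2^{s-1}}\times\Z_2$ does not follow from rank alone (rank $\le 2$ only gives $H\cong\Z_{2^a}\times\Z_{2^b}$); you should add that $H\cap(\Z_{2^k}\times\{0\})$ is a cyclic subgroup of index at most $2$ in $H$, which forces $b\le 1$. Alternatively, your argument needs only that $(0,1)\in H$, and that follows more cheaply: a finite abelian $2$-group is non-cyclic precisely when it contains more than one involution, so the $2$-torsion of $H$ has order at least $4$ and must equal the full $2$-torsion $\{(0,0),(2^{k-1},0),(0,1),(2^{k-1},1)\}$ of $G$. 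With that one line added, the proof is complete.
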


\begin{proof}[\textbf{Proof of Theorem \ref{2^k.2}}]

    Suppose $(\so,C)$ is one of the pairs in Theorem \ref{p^k.q}.
    Then $G=\so\oplus C$ by Lemma \ref{G=so+c}.
    Suppose $\Cay(G,S)$ has a perfect code $C$. Then $|\so|=2^m$ for some positive integer $m$ and $G=\Z_{p^k}=\so\oplus C$. We will prove $(\so,C)$ is one of the six pairs in Theorem \ref{p^k.q} using induction on $m$.

    The group $G=\Z_{2^k}\times\Z_2$ is not a cyclic group, and since $\so$ generates $G$, there are at least two non-zero elements in $\so$. So we start the basis step at $m=2$. First, we give an outline of the method to be used. When $\so$ is periodic, we will prove the basis step at $m=2$, and for $m>2$, we will use induction by using Theorem \ref{p^k} and when $\so$ is aperiodic and $|\so|<2^m$. When $\so$ is aperiodic, we will use Theorem \ref{p^k} and the result when $\so$ is periodic and $|\so|=2^m$.

    Suppose $|\so|=4$ and $\so$ is periodic. From Lemma \ref{ls<so}, $|\ls|=2$, so $\ls=\<{(2^{k-1},0)}$, $\<{(2^{k-1},1)}$, or $\<{(0,1)}$.
    Consider the factorization $G/{L_{\so}}=((D+L_{\so})/L_{\so})\oplus ((C+L_{\so})/L_{\so})$ given by Lemma \ref{quotient}.
    If $\ls=\<{(2^{k-1},0)}$, then $G/{L_{\so}}\cong\Z_{2^{k-1}}\times\Z_2$ and $|(D+L_{\so})/L_{\so}|=2$. Since $\Z_{2^{k-1}}\times\Z_2$ is not a cyclic group, we know that no set of order $2$ generates $\Z_{2^{k-1}}\times\Z_2$.

    If $\ls=\<{(2^{k-1},1)}$, then $G/{L_{\so}}\cong\Z_{2^k}$ and $(D+L_{\so})/L_{\so}$ is a aperiodic set of order $2$.
    From Lemma \ref{prime deg} and Theorem \ref{p^k}, $(C+L_{\so})/L_{\so}=\{(2i,0)+\ls \mid i \in [2^{k-1} ] \}$ and $(D+L_{\so})/L_{\so}=\left\{ (i+2\alpha_i,0) + \ls \mid i\in[2] \right\}$.
    This means $\so=D\oplus \ls=\left\{ (i+2\alpha_i,0) + j(2^{k-1},1) \mid i,j\in[2] \right\}=\left\{ (i+2\alpha_i+j2^{k-1},j) \mid i,j\in[2] \right\}$.
    By setting $i_0=i$ and $i_1=j$, we have \linebreak $\so=\left\{ (i_0+2\alpha_{i_0}+i_12^{k-1},i_1) \mid i_0,i_1\in[2] \right\}$.
    Note that $C=\{(2i,0)+\beta_i(2^{k-1},1) \mid i \in [2^{k-1}] \}=\{(2i_0+\beta_{i_0} 2^{k-1},\beta_{i_0}) \mid i \in [2^{k-1}] \}$.
    The pair $(\so,C)$ here is pair (e) in Theorem \ref{2^k.2} where $r=1$, $m_0=0$, $m_1=k$, $m_2=k$, $l_0=1$, and $l_1=0$.

    Similarly, if $\ls=\<{(0,1)}$, then $G/{L_{\so}}\cong\Z_{2^k}$, and hence $(C+L_{\so})/L_{\so}=\{(2i,0)+\ls \mid i \in [2^{k-1} ] \}$ and $(D+L_{\so})/L_{\so}=\left\{ (i+2\alpha_i,0) + \ls \mid i\in[2] \right\}$.
    This means $\so=D\oplus \ls=\left\{ (i_0+2\alpha_{i_0},i_1) \mid i_0,i_1\in[2] \right\}$ and $C=\{(2i_0,\beta_{i_0}) \mid i \in [2^{k-1}] \}$.
    The pair $(\so,C)$ here is pair (d) in Theorem \ref{2^k.2} where $r=1$, $m_0=0$, $m_1=k$, $m_2=k$, $l_0=1$, and $l_1=0$.

    Suppose Theorem \ref{2^k.2} is true for $|\so|<2^m$. We will prove that $(\so,C)$ is one of the six pairs in Theorem \ref{2^k.2} for $|\so|=2^m$.

\smallskip
    \textsf{Case 1.} $\so$ is periodic.
\smallskip
    
    Let $\so=L_{\so}\oplus D$ and $|L_{\so}|=2^s$. Then $0<s<m$ because $\so$ generates $G$, and $\ls=\<{(2^{k-s},0)}$, $\<{(2^{k-s},1)}$, or $\<{(2^{k-s+1}),(0,1)}$. Consider the factorization $G/{L_{\so}}=((D+L_{\so})/L_{\so})\oplus ((C+L_{\so})/L_{\so})$ given by Lemma \ref{quotient}.

\smallskip
    \textsf{Subcase 1.1.} $\ls=\<{(2^{k-s},0)}$.
    In this case, $G/{L_{\so}}\cong\Z_{2^{k-s}}\times\Z_2$ and $|(D+L_{\so})/L_{\so}|=|D|=2^{m-t}$. From the induction hypothesis, there are integers $0=m_0<m_1<\cdots<m_n=k-s$
    and $1\leq l_t\leq m_{t+1}-m_t$ for $t=0\in[n]$, such that $(D+L_{\so})/L_{\so}$ and $(C+L_{\so})/L_{\so}$ are one of the six pairs in Theorem \ref{2^k.2}.

    We will give the calculation for $(D+L_{\so})/L_{\so}$ and $(C+L_{\so})/L_{\so}$ in the form of pair (c). The calculation for the other pairs will follow similarly.
    Since $(D+L_{\so})/L_{\so}$ and $(C+L_{\so})/L_{\so}$ are pair (c), we have
    \begin{align*}
        (D+L_{\so})/L_{\so} = & \left\{\left(\sum_{t=0}^{n-1} \left( i_t2^{m_t}+\alpha_{i_0,\dots,i_t}2^{m_t+l_t}\right),\, \gamma_{i_0,\dots,i_r}\right) +L_{\so} \, \middle| \, i_t\in\left[2^{l_t}\right]\right\},\ \text{and}\\
        (C+L_{\so})/L_{\so} = & \left\{ \left(\sum_{t=0}^{n-1} i_t2^{m_t+l_t} + \sum_{t=0}^{r-1} \beta_{i_0,\dots,i_t}2^{m_{t+1}} + \sum_{t=r}^{n-1} \beta_{i_0,\dots,i_t,j}\,2^{m_{t+1}},\,j\right) +L_{\so} \right.\\ 
        & \quad \left. \, \middle| \, i_t\in\left[2^{m_{t+1}-m_t-l_t}\right],\,j\in[2] \right\}.
    \end{align*}

    \noindent Now, $L_{\so}=2^{k-s}\Z_{2^k}\times\{0\}=\left\{\left(h2^{k-s},\,0\right)\, \middle| \, h\in[2^s]\right\}$. Therefore,
    \begin{align*}
        \so &= L_{\so}\oplus D\\
        &= \left\{ \left(\sum_{t=0}^{n-1} \left(i_t2^{m_t}+\alpha_{i_0,\dots,i_t}2^{m_t+l_t}\right),\, \gamma_{i_0,\dots,i_r}\right) + \left(h2^{k-s},\,0\right) \, \middle| \, i_t\in \left[2^{l_t}\right], h\in[2^s] \right\} \\
        &= \left\{ \left(\sum_{t=0}^{n-1} \left(i_t2^{m_t}+\alpha_{i_0,\dots,i_t}2^{m_t+l_t}\right),\, \gamma_{i_0,\dots,i_r}\right) + \left(h2^{k-s}+ \alpha_{i_0,\dots,i_{n-1},h}\,2^{k},\,0\right) \, \middle| \, i_t\in\left[2^{l_t}\right], h\in[2^s] \right\} \\
        &= \left\{ \left(\sum_{t=0}^{n} \left(i_t2^{m_t}+\alpha_{i_0,\dots,i_t}2^{m_t+l_t}\right),\, \gamma_{i_0,\dots,i_r}\right) \, \middle| \, i_t\in\left[2^{l_t}\right], h\in[2^s]\right\}
    \end{align*}
    where $m_{n}=k-s$, $l_{n}=s$, and $i_{n}=h$. Moreover, $m_{n+1}=k>m_{n}$ and $1\leq l_{n}=m_{n+1}-m_{n}$.
    Since $G/L_{\so}\cong \Z_{2^{k-s}}\times\Z_2$, we have $(2^{k-s},0)+L_{\so}=L_{\so}$ and
    \begin{align*}
        (C+L_{\so})/L_{\so}
        = &\left\{ \left(\sum_{t=0}^{n-2} i_t2^{m_t+l_t} + \sum_{t=0}^{r-1} \beta_{i_0,\dots,i_t}2^{m_{t+1}} + \sum_{t=r}^{n-2} \beta_{i_0,\dots,i_t,j}\,2^{m_{t+1}},\,j\right)+\left(i_{n-1}2^{m_{n-1}+l_{n-1}},0\right)+L_{\so} \right.\\
        & \quad \left.\, \middle| \, i_t\in\left[2^{m_{t+1}-m_t-l_t}\right],\ j\in[2] \right\}.
    \end{align*}
    Therefore,
    \begin{align*}
        C = & \left\{ \left(\sum_{t=0}^{n-2} i_t2^{m_t+l_t} + \sum_{t=0}^{r-1} \beta_{i_0,\dots,i_t}2^{m_{t+1}} + \sum_{t=r}^{n-2} \beta_{i_0,\dots,i_t,j}\,2^{m_{t+1}},\,j\right) + \left(i_{n-1}2^{m_{n-1}+l_{n-1}}+ \beta_{i_0,\dots,i_{n-1},j}2^{k-s},0\right) \right. \\
        &\quad \left.\, \middle| \, i_t\in\left[2^{m_{t+1}-m_t-l_t}\right], j\in[q] \right\}\\
        = & \left\{ \left(\sum_{t=0}^{n-1} i_t2^{m_t+l_t} + \sum_{t=0}^{r-1} \beta_{i_0,\dots,i_t}2^{m_{t+1}} + \sum_{t=r}^{n-1} \beta_{i_0,\dots,i_t,j}\,2^{m_{t+1}},\,j\right) \, \middle| \, i_t\in\left[2^{m_{t+1}-m_t-l_t}\right], j\in[2] \right\}.
    \end{align*}
    Note that in $C$ we can also set the sum from $t=0$ to $n$ because $m_n+l_n=m_{n+1}=k$. The pair $(\so,C)$ here is also in the form of pair (c).

\smallskip
    \textsf{Subcase 1.2.} $\ls=\<{(2^{k-s},1)}$.
    In this case, $G/{L_{\so}}\cong\Z_{2^{k-s+1}}$ and $|(D+L_{\so})/L_{\so}|=|D|=2^{m-t}$.
    
    From Theorem \ref{p^k}, there are integers $0=m_0<m_1<\cdots<m_n=k-s+1$
    and $1\leq l_t\leq m_{t+1}-m_t$ for $t\in[n]$ such that
    \begin{align*}
        (D+\ls)/\ls &= \left\{ \left(\sum_{t=0}^{n-1}\left(i_t2^{m_t}+\alpha_{i_0,\dots,i_t}2^{m_t+l_t}\right),0\right)+\ls \, \middle| \, i_t\in\left[2^{l_{t}}\right]\right\},\ \text{and}\\
        (C+\ls)/\ls &= \left\{ \left(\sum_{t=0}^{n-1} \left( i_t2^{m_t+l_t}+\beta_{i_0,\dots,i_t}2^{m_{t+1}}\right),0\right)+\ls \, \middle| \, i_t\in\left[2^{m_{t+1}-m_t-l_t}\right] \right\}.
    \end{align*}
    Now $L_{\so}=\<{(2^{k-s},1)}=\left\{h\left(2^{k-s},1\right)\, \middle| \, h\in[2^s]\right\}$. Therefore,
    \begin{align*}
        \so &= L_{\so}\oplus D\\
        &= \left\{ \left(\sum_{t=0}^{n-1}\left(i_t2^{m_t}+\alpha_{i_0,\dots,i_t}2^{m_t+l_t}\right),0\right)+(h2^{k-s},h) \, \middle| \, i_t\in\left[2^{l_{t}}\right], h\in[2^s]\right\} \\
        &= \left\{ \left(\sum_{t=0}^{n-1}\left(i_t2^{m_t}+\alpha_{i_0,\dots,i_t}2^{m_t+l_t}\right),0\right)+(h2^{k-s}+\alpha_{i_0,\dots,i_{n-1},h}2^k,h) \, \middle| \, i_t\in\left[2^{l_{t}}\right], h\in[2^s]\right\}.
    \end{align*}
    By setting $i_n=h$, $r=n$, $m_n=k-s+1$, and $l_n=s-1$, and $m_{n+1}=m_n+l_n=k$, we have
    \begin{align*}
        \so
        &= \left\{ \left(\sum_{t\in[n+1]\setminus\{r\}}i_t2^{m_t}+\sum_{t=0}^{n}\alpha_{i_0,\dots,i_t}2^{m_t+l_t}+i_r2^{m_r-1},i_r\right) \, \middle| \, i_t\in\left[2^{l_{t}}\right] \text{ for } t\ne r, i_r\in\left[2^{l_r+1}\right]\right\}.
    \end{align*}
    Since $G/L_{\so}\cong \Z_{2^{k-s+1}}$, we have $(2^{m_n},0)+L_{\so}=L_{\so}$ and hence
    \begin{align*}
        (C+L_{\so})/L_{\so} &= \left\{ \left(\sum_{t=0}^{n-2} \left( i_t2^{m_t+l_t}+\beta_{i_0,\dots,i_t}2^{m_{t+1}}\right),0\right)+\left(i_{n-1}2^{m_{n-1}+l_{n-1}},0\right)+\ls \, \middle| \, i_t\in\left[2^{m_{t+1}-m_t-l_t}\right] \right\}.
    \end{align*}
    Therefore,
    \begin{align*}
        C &= \left\{ \left(\sum_{t=0}^{n-2} \left( i_t2^{m_t+l_t}+\beta_{i_0,\dots,i_t}2^{m_{t+1}}\right),0\right) +\left(i_{n-1}2^{m_{n-1}+l_{n-1}},0\right)+\beta_{i_0,\dots,i_{n-1}}\left(2^{k-s},1\right) \, \middle| \, i_t\in\left[2^{m_{t+1}-m_t-l_t}\right] \right\}\\
        &= \left\{ \left(\sum_{t=0}^{n-1}i_t2^{m_t+l_t}+\sum_{t\in[n]\setminus\{r-1\}}\beta_{i_0,\dots,i_t}2^{m_{t+1}}+\beta_{i_0,\dots,i_{r-1}}2^{m_r-1},\beta_{i_0,\dots,i_{r-1}}\right) \, \middle| \, i_t\in\left[2^{m_{t+1}-m_t-l_t}\right] \right\}.
    \end{align*}
    Note that in $C$ we can also set the sum from $t=0$ to $n$ because $m_n+l_n=m_{n+1}=k$.
    The pair $(\so,C)$ here is in the form of pair (e).

\smallskip
    \textsf{Subcase 1.3.} $\ls=\<{(2^{k-s+1},0),(0,1)}$ for $1<s<m$.
    In this case, $G/{L_{\so}}\cong\Z_{2^{k-s+1}}$ and $|(D+L_{\so})/L_{\so}|=|D|=2^{m-t}$.
    From Theorem \ref{p^k}, there are integers $0=m_0<m_1<\cdots<m_n=k-s+1$
    and $1\leq l_t\leq m_{t+1}-m_t$ for $t\in[n]$ such that
    \begin{align*}
        (D+\ls)/\ls &= \left\{ \left(\sum_{t=0}^{n-1}\left(i_t2^{m_t}+\alpha_{i_0,\dots,i_t}2^{m_t+l_t}\right),0\right)+\ls \, \middle| \, i_t\in\left[2^{l_{t}}\right]\right\},\ \text{and}\\
        (C+\ls)/\ls &= \left\{ \left(\sum_{t=0}^{n-1} \left( i_t2^{m_t+l_t}+\beta_{i_0,\dots,i_t}2^{m_{t+1}}\right),0\right)+\ls \, \middle| \, i_t\in\left[2^{m_{t+1}-m_t-l_t}\right] \right\}.
    \end{align*}
    Now $L_{\so}=\left\{\left(h2^{k-s+1},j\right)\, \middle| \, h\in[2^{s-1}], j\in[2]\right\}$. Therefore,
    \begin{align*}
        \so &= L_{\so}\oplus D\\
        &= \left\{ \left(\sum_{t=0}^{n-1}\left(i_t2^{m_t}+\alpha_{i_0,\dots,i_t}2^{m_t+l_t}\right),0\right)+(h2^{k-s+1},j) \, \middle| \, i_t\in\left[2^{l_{t}}\right], h\in[2^{s-1}], j\in[2]\right\} \\
        &= \left\{ \left(\sum_{t=0}^{n-1}\left(i_t2^{m_t}+\alpha_{i_0,\dots,i_t}2^{m_t+l_t}\right),0\right)+(h2^{k-s+1}+\alpha_{i_0,\dots,i_{n-1},h,j}2^k,j) \, \middle| \, i_t\in\left[2^{l_{t}}\right], h\in[2^{s-1}], j\in[2]\right\}.
    \end{align*}
    By setting $i_n=h$, $r=n$, $m_n=k-s+1$, and $l_n=s-1$, and $m_{n+1}=m_n+l_n=k$, we have
    \begin{align*}
        \so
        &= \left\{ \left(\sum_{t=0}^{n}i_t2^{m_t}+\sum_{t=0}^{r-1}\alpha_{i_0,\dots,i_t}2^{m_t+l_t}+\sum_{t=r}^{n}\alpha_{i_0,\dots,i_t,j}2^{m_t+l_t},j\right) \, \middle| \, i_t\in\left[2^{l_{t}}\right], j\in[2]\right\}.
    \end{align*}
    
    \noindent Since $G/L_{\so}\cong \Z_{2^{k-s+1}}$, we have $(2^{m_n},0)+L_{\so}=L_{\so}$ and hence
    \begin{align*}
        (C+L_{\so})/L_{\so} &= \left\{ \left(\sum_{t=0}^{n-2} \left( i_t2^{m_t+l_t}+\beta_{i_0,\dots,i_t}2^{m_{t+1}}\right),0\right)+\left(i_{n-1}2^{m_{n-1}+l_{n-1}},0\right)+\ls \, \middle| \, i_t\in\left[2^{m_{t+1}-m_t-l_t}\right] \right\}.
    \end{align*}
    Therefore,
    \begin{align*}
        C = & \left\{ \left(\sum_{t=0}^{n-2} \left( i_t2^{m_t+l_t}+\beta_{i_0,\dots,i_t}2^{m_{t+1}}\right),0\right) +\left(i_{n-1}2^{m_{n-1}+l_{n-1}},0\right)+\beta_{i_0,\dots,i_{n-1}}\left(2^{k-s+1},0\right)+\gamma_{i_0,\dots,i_{n-1}}\left(0,1\right) \right.\\
        &\quad \left.\, \middle| \, i_t\in\left[2^{m_{t+1}-m_t-l_t}\right] \right\}\\
        = & \left\{ \left(\sum_{t=0}^{n-1} \left( i_t2^{m_t+l_t}+\beta_{i_0,\dots,i_t}2^{m_{t+1}}\right),\gamma_{i_0,\dots,i_{r-1}}\right) \, \middle| \, i_t\in\left[2^{m_{t+1}-m_t-l_t}\right] \right\}.
    \end{align*}
    Note that in $C$ we can also set the sum from $t=0$ to $n$ because $m_n+l_n=m_{n+1}=k$.
    The pair $(\so,C)$ here is in the form of pair (f).

\smallskip
    \textsf{Subcase 1.4.} $\ls=\<{(2^{k-s+1},0),(0,1)}$ for $t=1$.
    In this case, $\ls=\<{(0,1)}$ and $G/\ls\cong\Z_{2^{k}}$, $(D+\ls)/\ls$ is aperiodic, and $|(D+\ls)/\ls|=|D|=2^{m-1}$.
    From Theorem \ref{p^k}, there are integers $0=m_0<m_1<\cdots<m_n=k$
    and $1\leq l_t\leq m_{t+1}-m_t$ for $t=0,\dots,n$ such that
    \begin{align*}
        (D+\ls)/\ls &= \left\{ \left(\sum_{t=0}^{n-1}\left(i_t2^{m_t}+\alpha_{i_0,\dots,i_t}2^{m_t+l_t}\right),0\right)+\ls \, \middle| \, i_t\in\left[2^{l_{t}}\right]\right\},\ \text{and}\\
        (C+\ls)/\ls &= \left\{ \left(\sum_{t=0}^{n-1} \left( i_t2^{m_t+l_t}+\beta_{i_0,\dots,i_t}2^{m_{t+1}}\right),0\right)+\ls \, \middle| \, i_t\in\left[2^{m_{t+1}-m_t-l_t}\right] \right\}.
    \end{align*}

    \noindent Now $L_{\so}=\{(0,j)\mid j\in[2]\}$. Therefore,
    \begin{align*}
        \so &= L_{\so}\oplus D\\
        &= \left\{ \left(\sum_{t=0}^{n-1}\left(i_t2^{m_t}+\alpha_{i_0,\dots,i_t}2^{m_t+l_t}\right),0\right)+(0,j) \, \middle| \, i_t\in\left[2^{l_{t}}\right],j\in[2]\right\} \\
        &= \left\{ \left(\sum_{t=0}^{n-1}\left(i_t2^{m_t}+\alpha_{i_0,\dots,i_t}2^{m_t+l_t}\right),j\right) \, \middle| \, i_t\in\left[2^{l_{t}}\right],j\in[2]\right\}.
    \end{align*}
    By setting $r=n$, $i_n=j$, and $m_r=m_r+l_r=m_{r+1}=k$, we have
    \begin{align*}
        \so
        &= \left\{ \left(\sum_{t\in[n+1]\setminus\{r\}}i_t2^{m_t}+\sum_{t=0}^{n}\alpha_{i_0,\dots,i_t}2^{m_t+l_t},i_r\right) \, \middle| \, i_t\in\left[2^{l_{t}}\right] \text{ for } t\ne r,i_r\in[2]\right\}.
    \end{align*}
    Now,
    \begin{align*}
        (C+L_{\so})/L_{\so} 
        &= \left\{ \left(\sum_{t=0}^{n-2} \left( i_t2^{m_t+l_t}+\beta_{i_0,\dots,i_t}2^{m_{t+1}}\right)+i_{n-1}2^{m_{n-1}+l_{n-1}},0\right) +\ls \, \middle| \, i_t\in\left[2^{m_{t+1}-m_t-l_t}\right] \right\}.
    \end{align*}
    Therefore,
    \begin{align*}
        C &= \left\{ \left(\sum_{t=0}^{n-2} \left( i_t2^{m_t+l_t}+\beta_{i_0,\dots,i_t}2^{m_{t+1}}\right)+i_{n-1}2^{m_{n-1}+l_{n-1}},0\right) + \beta_{i_0,\dots,i_{n-1}}(0,1) \, \middle| \, i_t\in\left[2^{m_{t+1}-m_t-l_t}\right] \right\}\\
        &= \left\{ \left(\sum_{t=0}^{n-1} i_t2^{m_t+l_t}+\sum_{t=0}^{n-2}\beta_{i_0,\dots,i_t}2^{m_{t+1}},\beta_{i_0,\dots,i_{n-1}}\right) \, \middle| \, i_t\in\left[2^{m_{t+1}-m_t-l_t}\right] \right\}.
    \end{align*}
    Recall that $r=n$, $i_n=j$, and $m_r=m_r+l_r=m_{r+1}=k$. So we have
    \begin{align*}
        C
        &= \left\{ \left(\sum_{t=0}^{n} i_t2^{m_t+l_t}+\sum_{t\in[n+1]\setminus\{r-1\}}\beta_{i_0,\dots,i_t}2^{m_{t+1}},\beta_{i_0,\dots,i_{r-1}}\right) \, \middle| \, i_t\in\left[2^{m_{t+1}-m_t-l_t}\right] \right\}.
    \end{align*}
    The pair $(\so,C)$ here is in the form of pair (d).

\smallskip
    \textsf{Case 2.} $\so$ is aperiodic.
\smallskip
    
    In this case, $C$ is periodic and $|C|=2^{k-m+1}$. Let $C=L_C\oplus D$, $|L_C|=2^s$ for $0<s\leq k-m+1$. Consider the factorization $G/{L_C}=((\so+L_C)/L_C)\oplus ((D+L_C)/L_C)$ given by Lemma \ref{quotient}.

\smallskip
    \textsf{Subcase 2.1.} $|L_C|=2^{k-m+1}$.
    In this case $C=L_C$, and by taking $D=\{0\}$ we have $G/{L_C}=(\so+L_C)/L_C$.

    If $L_C=\<{(2^{m-1},0)}=\left\{\left(i2^{m-1},0\right)\mid i\in\left[2^{k-m+1}\right]\right\}$, then $(\so+L_C)/L_C=\{(i,j)+L_C \mid i\in\left[2^{m-1}\right],j\in[2]\}$.
    This means $\so=\{(i,j)+\alpha_{ij}(2^{m-1},0) \mid i\in\left[2^{m-1}\right],j\in[2]\}=\{(i+\alpha_{ij}2^{m-1},j) \mid i\in\left[2^{m-1}\right],j\in[2]\}$.
    The pair $(\so,C)$ here is in the form of pair (f) where $n=1$ and $r=0$.

    If $L_C=\<{(2^{m-1},1)}=\left\{\left(i2^{m-1},i\right)\mid i\in\left[2^{k-m+1}\right]\right\}$, then $(\so+L_C)/L_C=\{(i,0)+L_C \mid i\in\left[2^{m}\right]\}$.
    This means $\so=\{(i,0)+\alpha_{i}(2^{m-1},1) \mid i\in\left[2^{m}\right]\}=\{(i+\alpha_{i}2^{m-1},\alpha_i) \mid i\in\left[2^{m}\right]\}$.
    The pair $(\so,C)$ here is in the form of pair (b) where $n=1$ and $r=0$.

    If $L_C=\<{(2^{m},0),(0,1)}$ for $m<k$, then $L_C=\left\{\left(i2^{m},j\right)\mid i\in\left[2^{k-m}\right],\,j\in[2]\right\}$ and $(\so+L_C)/L_C=\{(i,0)+L_C \mid i\in\left[2^{m}\right]\}$.
    This means $\so=\{(i,0)+\alpha_{i}(2^{m},0)+\gamma_i(0,1) \mid i\in\left[2^{m}\right]\}=\{(i+\alpha_{i}2^{m},\gamma_i) \mid i\in\left[2^{m}\right]\}$.
    The pair $(\so,C)$ here is in the form of pair (c) where $n=1$ and $r=0$.
    
    If $L_C=\<{(2^{m},0),(0,1)}$ for $m=k$, then $L_C=\<{(0,1)}=\left\{\left(0,i\right)\mid i\in[2]\right\}$ and $(\so+L_C)/L_C=\{(i,0)+L_C \mid i\in\left[2^{k}\right]\}$.
    This means $\so=\{(i,0)+\alpha_{i}(0,1) \mid i\in\left[2^{k}\right]\}=\{(i,\alpha_{i}) \mid i\in\left[2^{k}\right]\}$.
    The pair $(\so,C)$ here is in the form of pair (a) where $n=1$ and $r=0$.

\smallskip
    \textsf{Subcase 2.2.} $L_C=\<{(2^{k-s},0)}$ for $t<k-m+1$.
    In this case, $G/{L_C}\cong \Z_{2^{k-s}}\times\Z_2$, $|(\so+L_C)/L_C|=|\so|=2^m$, and $(\so+L_C)/L_C$ is periodic. 
    From Case 1, $(\so+L_C)/L_C$ is one of the six pairs in Theorem \ref{2^k.2}. We will calculate $(\so+L_C)/L_C$ and $(D+L_C)/L_C$ in the form of pair (a). The calculation for the other forms will follow similarly.

    From Case 1, there are integers $0=m_0<m_1<\cdots<m_{n+1}=k-s$
    and $1\leq l_t\leq m_{t+1}-m_t$ for $t\in[n]$, such that
    \begin{align*}
        (\so+L_C)/L_C
        &= \left\{ \left(\sum_{t=0}^{n} i_t2^{m_t}+ \sum_{t\in[n+1]\setminus\{r\}}\alpha_{i_0,\dots,i_t}2^{m_t+l_t},\, \alpha_{i_0,\dots,i_r}\right) + L_C \, \middle| \, i_t\in\left[2^{l_t}\right]\right\},\ \text{and}\\
        (D+L_C)/L_C &= \left\{ \left(\sum_{t\in[n]\setminus\{r\}} i_t2^{m_t+l_t}+ \sum_{t=0}^{n-1}\beta_{i_0,\dots,i_t}2^{m_{t+1}},\, i_r\right) +L_C \, \middle| \, i_t\in\left[2^{m_{t+1}-m_t-l_t}\right] \text{ for } t\ne r,\ i_r\in[2] \right\}.
    \end{align*}
    We also have $m_{n}+l_{n}=m_{n+1}=k-s$ from Case 1.
    Since $G/{L_C}\cong \Z_{2^{k-s}}\times\Z_2$, $(2^{m_n+l_n},0)+L_C=L_C$, and
    \begin{align*}
        (\so+L_C)/L_C
        &= \left\{ \left(\sum_{t=0}^{n-1} i_t2^{m_t}+ \sum_{t\in[n]\setminus\{r\}}\alpha_{i_0,\dots,i_t}2^{m_t+l_t},\, \alpha_{i_0,\dots,i_r}\right) + \left(i_{n}2^{m_{n}},0\right) + L_C \, \middle| \, i_t\in\left[2^{l_t}\right] \right\}.
    \end{align*}
    Since $L_C=\<{(2^{k-s},0)}=\left\{\left(h2^{k-s},0\right)\, \middle| \, h\in\left[2^s\right]\right\}$, we have
    \begin{align*}
        \so
        &= \left\{ \left(\sum_{t=0}^{n-1} i_t2^{m_t}+ \sum_{t\in[n]\setminus\{r\}}\alpha_{i_0,\dots,i_t}2^{m_t+l_t},\, \alpha_{i_0,\dots,i_r}\right) + \left(i_{n}2^{m_{n}} + \alpha_{i_0,\dots,i_{n}}2^{k-s},0\right) \, \middle| \, i_t\in\left[2^{l_t}\right] \right\}\\
        &= \left\{ \left(\sum_{t=0}^{n} i_t2^{m_t}+ \sum_{t\in[n+1]\setminus\{r\}}\alpha_{i_0,\dots,i_t}2^{m_t+l_t},\, \alpha_{i_0,\dots,i_r}\right) \, \middle| \, i_t\in\left[2^{l_t}\right] \right\}\ \text{and}
    \end{align*}
    \begin{align*}
        C = & D\oplus L_C \\
        = & \left\{ \left(\sum_{t\in[n]\setminus\{r\}} i_t2^{m_t+l_t}+ \sum_{t=0}^{n-1}\beta_{i_0,\dots,i_t}2^{m_{t+1}},\, i_r\right) + \left(h2^{k-s},0\right) \, \middle| \, i_t\in\left[2^{m_{t+1}-m_t-l_t}\right] \text{ for } t\ne r,\right.\\ 
        & \quad \left.  i_r\in[2], h\in\left[2^s\right] \right\}\\
        = & \left\{ \left(\sum_{t\in[n]\setminus\{r\}} i_t2^{m_t+l_t}+ \sum_{t=0}^{n-1}\beta_{i_0,\dots,i_t}2^{m_{t+1}},\, i_r\right) + \left(h2^{k-s} + \beta_{i_0,\dots,i_{n-1},h}2^{k},0\right) \, \middle| \, i_t\in\left[2^{m_{t+1}-m_t-l_t}\right] \text{ for } t\ne r,\right.\\
        & \quad \left.\ i_r\in[2], h\in\left[2^s\right] \right\}.
    \end{align*}
    By setting $i_n=h$ and a new value for $m_{n+1}$ to be equal to $k$, we have
    \begin{align*}
        C &= \left\{ \left(\sum_{t\in[n+1]\setminus\{r\}} i_t2^{m_t+l_t}+ \sum_{t=0}^{n}\beta_{i_0,\dots,i_t}2^{m_{t+1}},\, i_r\right) \, \middle| \, i_t\in\left[2^{m_{t+1}-m_t-l_t}\right] \text{ for } t\ne r,\ i_r\in[2] \right\}.
    \end{align*}
    The pair $(\so,C)$ here is in the form of pair (a).

\smallskip
    \textsf{Subcase 2.3.} $L_C=\<{(2^{k-s},1)}$ for $t<k-m+1$.
    In this case, $G/{L_C}\cong \Z_{2^{k-s+1}}$, $|(\so+L_C)/L_C|=|\so|=2^m$, and $(\so+L_C)/L_C$ is periodic. 
    From the case when $\so$ is periodic in Theorem \ref{p^k}, we have
    \begin{align*}
        (\so+L_C)/L_C
        &= \left\{ \left(\sum_{t=0}^{n} \left(i_t2^{m_t}+\alpha_{i_0,\dots,i_t}2^{m_t+l_t}\right),0\right) + L_C \, \middle| \, i_t\in\left[2^{l_t}\right]\right\},\ \text{and}\\
        (D+L_C)/L_C &= \left\{ \left(\sum_{t=0}^{n-1} \left( i_tp^{m_t+l_t}+\beta_{i_0,\dots,i_t}2^{m_{t+1}}\right),0 \right) +L_C \, \middle| \, i_t\in\left[2^{m_{t+1}-m_t-l_t}\right] \right\}.
    \end{align*}
    We also have $m_{n}+l_{n}=m_{n+1}=k-s+1$ from the same case.
    Since $(2^{m_n+l_n},0)+L_C=L_C$,
    \begin{align*}
        (\so+L_C)/L_C
        &= \left\{ \left(\sum_{t=0}^{n} i_t2^{m_t}+\sum_{t=0}^{n-1}\alpha_{i_0,\dots,i_t}2^{m_t+l_t},0\right)+ L_C \, \middle| \, i_t\in\left[2^{l_t}\right] \right\}.
    \end{align*}
    Since $L_C=\<{(2^{k-s},1)}$, 
    \begin{align*}
        \so
        &= \left\{ \left(\sum_{t=0}^{n} i_t2^{m_t}+\sum_{t=0}^{n-1}\alpha_{i_0,\dots,i_t}2^{m_t+l_t},0\right) + \alpha_{i_0,\dots,i_{n}}\left(2^{k-s},1\right) \, \middle| \, i_t\in\left[2^{l_t}\right] \right\}.
    \end{align*}
    By setting $r=n$, we have
    \begin{align*}
        \so
        &= \left\{ \left(\sum_{t=0}^{n} i_t2^{m_t}+\sum_{t\in[n+1]\setminus\{r\}}\alpha_{i_0,\dots,i_t}2^{m_t+l_t}+\alpha_{i_0,\dots,i_r}2^{m_r+l_r-1},\alpha_{i_0,\dots,i_r}\right) \, \middle| \, i_t\in\left[2^{l_t}\right] \right\}.
    \end{align*}

    \noindent Now $L_C=\{(h2^{k-s},h)\mid h\in\left[2^s\right]\}$. Therefore,
    \begin{align*}
        C = & D\oplus L_C \\
        = & \left\{ \left(\sum_{t=0}^{n-1} \left( i_t2^{m_t+l_t}+\beta_{i_0,\dots,i_t}2^{m_{t+1}}\right),0\right) + (h2^{k-s},h) \, \middle| \, i_t\in\left[2^{m_{t+1}-m_t-l_t}\right], h\in\left[2^s\right] \right\}\\
        = & \left\{ \left(\sum_{t=0}^{n-1} \left( i_t2^{m_t+l_t}+\beta_{i_0,\dots,i_t}2^{m_{t+1}}\right),0\right) + \left(h2^{m_r+l_r-1} + \beta_{i_0,\dots,i_{n-1},h}2^{k},h\right) \, \middle| \,\; i_t\in\left[2^{m_{t+1}-m_t-l_t}\right], \right.\\ 
        & \quad \left. h\in\left[2^{m_{r+1}-m_r-l_r+1}\right] \right\}.
    \end{align*}
    By setting $i_r=h$, and a new value for $m_{n+1}$ to be equal to $k$, we have
    \begin{align*}
        C &= \left\{ \left(\sum_{t\in[n+1]\setminus\{r\}} i_t2^{m_t+l_t}+\sum_{t=0}^{n} \beta_{i_0,\dots,i_t}2^{m_{t+1}}+ i_r2^{m_r+l_r-1},i_r\right) \, \middle| \, i_t\in\left[2^{m_{t+1}-m_t-l_t}\right] \text{ for } t\ne r,\right.\\ 
        & \quad \left. i_r\in\left[2^{m_{r+1}-m_r-l_r+1}\right]\right\}.
    \end{align*}
    The pair $(\so,C)$ here is in the form of pair (b).

\smallskip
    \textsf{Subcase 2.4.} $L_C=\<{(2^{k-s+1},0),(0,1)}$ for $1<s<k-m+1$.
    In this case, $G/{L_C}\cong \Z_{2^{k-s+1}}$ and $(\so+L_C)/L_C$ is periodic. 
    From the case when $\so$ is periodic in Theorem \ref{p^k}, we have
    \begin{align*}
        (\so+L_C)/L_C
        &= \left\{ \left(\sum_{t=0}^{n} \left(i_t2^{m_t}+\alpha_{i_0,\dots,i_t}2^{m_t+l_t}\right),0\right) + L_C \, \middle| \, i_t\in\left[2^{l_t}\right]\right\},\ \text{and}\\
        (D+L_C)/L_C &= \left\{ \left(\sum_{t=0}^{n-1} \left( i_tp^{m_t+l_t}+\beta_{i_0,\dots,i_t}2^{m_{t+1}}\right),0 \right) +L_C \, \middle| \, i_t\in\left[2^{m_{t+1}-m_t-l_t}\right] \right\}.
    \end{align*}
    We also have $m_{n}+l_{n}=m_{n+1}=k-s+1$ from the same case.
    Since $(2^{m_n+l_n},0)+L_C=L_C$,
    \begin{align*}
        (\so+L_C)/L_C
        &= \left\{ \left(\sum_{t=0}^{n} i_t2^{m_t}+\sum_{t=0}^{n-1}\alpha_{i_0,\dots,i_t}2^{m_t+l_t},0\right)+ L_C \, \middle| \, i_t\in\left[2^{l_t}\right] \right\}.
    \end{align*}
    Since $L_C=\<{(2^{k-s+1},0),(0,1)}$, 
    \begin{align*}
        \so
        &= \left\{ \left(\sum_{t=0}^{n} i_t2^{m_t}+\sum_{t=0}^{n-1}\alpha_{i_0,\dots,i_t}2^{m_t+l_t},0\right) + \alpha_{i_0,\dots,i_{n}}\left(2^{k-s+1},0\right) + \gamma_{i_0,\dots,i_{n}}\left(0,1\right) \, \middle| \, i_t\in\left[2^{l_t}\right] \right\}\\
        &= \left\{ \left(\sum_{t=0}^{n} \left(i_t2^{m_t}+\alpha_{i_0,\dots,i_t}2^{m_t+l_t}\right),\gamma_{i_0,\dots,i_n}\right) \, \middle| \, i_t\in\left[2^{l_t}\right] \right\}.
    \end{align*}

    \noindent Now $L_C=\{(h2^{k-s+1},j)\mid h\in\left[2^{s-1}\right],\,j\in[2]\}$. Therefore,
    \begin{align*}
        C = & D\oplus L_C \\
        = & \left\{ \left(\sum_{t=0}^{n-1} \left( i_t2^{m_t+l_t}+\beta_{i_0,\dots,i_t}2^{m_{t+1}}\right),0\right) + (h2^{k-s+1},j) \, \middle| \, i_t\in\left[2^{m_{t+1}-m_t-l_t}\right], h\in\left[2^{s-1}\right], j\in[2] \right\}\\
        = & \left\{ \left(\sum_{t=0}^{n-1} \left( i_t2^{m_t+l_t}+\beta_{i_0,\dots,i_t}2^{m_{t+1}}\right),0\right) + \left(h2^{m_r+l_r} + \beta_{i_0,\dots,i_{n-1},h,j}2^{k},j\right) \, \middle| \, i_t\in\left[2^{m_{t+1}-m_t-l_t}\right], \right.\\ 
        & \quad \left. h\in\left[2^{s-1}\right], ,j\in[2] \right\}.
    \end{align*}
    By setting $r=n$, $i_n=h$, and a new value for $m_{n+1}$ to be equal to $k$, we have
    \begin{align*}
        C &= \left\{ \left(\sum_{t=0}^{n}i_t2^{m_t+l_t}+ \sum_{t=0}^{r-1}\beta_{i_0,\dots,i_t}2^{m_{t+1}}+\sum_{t=r}^{n}\beta_{i_0,\dots,i_t,j}2^{m_{t+1}},j\right) \, \middle| \, i_t\in\left[2^{m_{t+1}-m_t-l_t}\right], j\in[2]\right\}.
    \end{align*}
    The pair $(\so,C)$ here is in the form of pair (c).

\smallskip
    \textsf{Subcase 2.5.} $L_C=\<{(2^{k-s+1},0),(0,1)}$ for $t=1$.
    In this case $L_C=\<{(0,1)}$, $G/{L_C}\cong \Z_{2^k}$, and $(\so+L_C)/L_C$ is periodic. 
    From the case when $\so$ is periodic in Theorem \ref{p^k}, we have
    \begin{align*}
        (\so+L_C)/L_C
        &= \left\{ \left(\sum_{t=0}^{n} \left(i_t2^{m_t}+\alpha_{i_0,\dots,i_t}2^{m_t+l_t}\right),0\right) + L_C \, \middle| \, i_t\in\left[2^{l_t}\right]\right\},\ \text{and}\\
        (D+L_C)/L_C &= \left\{ \left(\sum_{t=0}^{n-1} \left( i_t2^{m_t+l_t}+\beta_{i_0,\dots,i_t}2^{m_{t+1}}\right),0 \right) +L_C \, \middle| \, i_t\in\left[2^{m_{t+1}-m_t-l_t}\right] \right\}.
    \end{align*}
    From the proof of Theorem \ref{p^k}, we also have $m_{n}+l_{n}=m_{n+1}=k$.
    Since $(2^{m_n+l_n},0)+L_C=L_C$,
    \begin{align*}
        (\so+L_C)/L_C
        &= \left\{ \left(\sum_{t=0}^{n-1} \left(i_t2^{m_t}+\alpha_{i_0,\dots,i_t}2^{m_t+l_t}\right),0\right) + \left(i_{n}2^{m_{n}},0\right) + L_C \, \middle| \, i_t\in\left[2^{l_t}\right] \right\}.
    \end{align*}
    Since $L_C=\<{(0,1)}=\{(0,j)\mid j\in[2]\}$,
    \begin{align*}
        \so
        &= \left\{ \left(\sum_{t=0}^{n-1} \left(i_t2^{m_t}+\alpha_{i_0,\dots,i_t}2^{m_t+l_t}\right),0\right) + \left(i_{n}2^{m_{n}},0\right) + \alpha_{i_0,\dots,i_{n}}\left(0,1\right) \, \middle| \, i_t\in\left[2^{l_t}\right] \right\}\\
        &= \left\{ \left(\sum_{t=0}^{n} i_t2^{m_t}+\sum_{t=0}^{n-1}\alpha_{i_0,\dots,i_t}2^{m_t+l_t},\alpha_{i_0,\dots,i_{n}}\right) \, \middle| \, i_t\in\left[2^{l_t}\right] \right\}.
    \end{align*}
    By setting $r=n$, we get
    \begin{align*}
        \so 
        &= \left\{ \left(\sum_{t=0}^{n} i_t2^{m_t}+\sum_{t\in[n+1]\setminus \{r\}}\alpha_{i_0,\dots,i_t}2^{m_t+l_t},\alpha_{i_0,\dots,i_{r}}\right) \, \middle| \, i_t\in\left[2^{l_t}\right] \right\}.
    \end{align*}
    Now, we will prove that $C$ is also in the form of $C$ in pair (a). We have
    \begin{align*}
        C &= D\oplus L_C \\
        &= \left\{ \left(\sum_{t=0}^{n-1} \left( i_t2^{m_t+l_t}+\beta_{i_0,\dots,i_t}2^{m_{t+1}}\right),0\right) + (0,j) \, \middle| \, i_t\in\left[2^{m_{t+1}-m_t-l_t}\right], j\in\left[2\right] \right\}\\
        &= \left\{ \left(\sum_{t=0}^{n-1} \left( i_t2^{m_t+l_t}+\beta_{i_0,\dots,i_t}2^{m_{t+1}}\right),0\right) + \left(\beta_{i_0,\dots,i_{n-1},j}2^{k},j\right) \, \middle| \, i_t\in\left[2^{m_{t+1}-m_t-l_t}\right], j\in\left[2\right] \right\}.
    \end{align*}
    Recall that we set $r=n$. Let $j=i_r$ and set a new value for $m_{n+1}$ to be equal to $k$.
    Then we have
    \begin{align*}
        C &= \left\{ \left(\sum_{t\in[n+1]\setminus\{r\}} i_t2^{m_t+l_t}+\sum_{t=0}^{n}\beta_{i_0,\dots,i_t}2^{m_{t+1}},i_r\right) \, \middle| \, i_t\in\left[2^{m_{t+1}-m_t-l_t}\right], \text{ for } t\ne r,\ i_r\in[2] \right\}.
    \end{align*}
    The pair $(\so,C)$ here is in the form of pair (a).
\end{proof}

\section{Perfect codes in Cayley graphs of $\Z_{p^k} \times \Z_2^l$}
\label{sec:p^k2^l}

In this section, we characterize Cayley graphs $\Cay(G, S)$ of Haj\'os groups of the form $G=\Z_{p^k} \times \Z_2^l$. These Haj\'os groups are $\Z_{p^k}$, $\Z_{p^k} \times \Z_2$, $\Z_{p^3} \times \Z_2^2$, $\Z_{p^2} \times \Z_2^3$, $\Z_{p} \times \Z_2^4$ and all their subgroups.
The first two groups, $\Z_{p^k}$, $\Z_{p^k} \times \Z_2$, and their subgroups are covered in the previous section. We are left to consider $\Z_{p^3} \times \Z_2^2$, $\Z_{p^2} \times \Z_2^3$, $\Z_{p} \times \Z_2^4$ and their subgroups.

The calculation required to prove $G=\so\oplus C$ for given $(\so, C)$ in this section is similar to the calculation in the proof of Theorem \ref{p,q,q}. So from this point onward, to prove that $\Cay(G, S)$ admits a perfect code, we will provide the perfect code $C$ but omit the calculation leading to $G=\so\oplus C$.

\begin{theorem}\label{p^2}
Let $G=\Z_{p^2}$ for a prime $p$ and let $S$ be a proper subset of $G\setminus \{0\}$ that generates $G$. Let $0\in C\subseteq G$. Then $C$ is a perfect code in $\Cay(G,S)$ if and only if $\so=\{i+\alpha_i p \mid i\ttk{p}\}$ for some $\alpha_i$'s in $\Z_{p^2}$ with $\alpha_0=0$ and $C=\{ip \mid i\ttk{p}\}$.
\end{theorem}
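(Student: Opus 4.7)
The strategy is to reduce everything to the prime-order case already handled by Lemma \ref{prime deg}(c). First I would note that $G=\so\oplus C$ by Lemma \ref{lem:factor}, so $|\so|$ divides $p^2$; since $S$ generates the nontrivial group $G$ we have $|\so|\geq 2$, and since $\Cay(G,S)$ is non-complete we have $|\so|<p^2$. Together these force $|\so|=p$.

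Because $|\so|$ is prime, Lemma \ref{prime deg}(c) applies and yields that $\so$ is aperiodic and $C=g+\lc$ for some $g\in G$. The normalization $0\in C$ forces $g\in\lc$, hence $C=\lc$, and this is a subgroup of order $|C|=p^2/|\so|=p$. The cyclic group $\Z_{p^2}$ contains a unique subgroup of order $p$, namely $p\Z_{p^2}=\{ip\mid i\ttk{p}\}$, so $C=\{ip\mid i\ttk{p}\}$.

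The description of $\so$ is then forced by coset enumeration: the factorization $G=\so\oplus C$ means that $\so$ is a complete system of distinct coset representatives for $C=p\Z_{p^2}$ in $\Z_{p^2}$. The $p$ cosets are $i+p\Z_{p^2}$ for $i\ttk{p}$, so $\so$ contains exactly one element of the form $i+\alpha_i p$ from each coset, giving $\so=\{i+\alpha_i p\mid i\ttk{p}\}$ for some $\alpha_i\in\Z_{p^2}$; the normalization $0\in\so$ pins down $\alpha_0=0$.

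For the converse, given $\so$ and $C$ of the stated form, I would verify $|\so|\cdot|C|=p\cdot p=p^2=|G|$ and that every $x=i+jp\in G$ with $i,j\ttk{p}$ can be written as $(i+\alpha_i p)+((j-\alpha_i)\bmod p)\cdot p\in\so+C$; Lemma \ref{oplus}(b) then delivers $G=\so\oplus C$. There is no genuine obstacle here—this is essentially the $k=2$ instance of Theorem \ref{p^k}, and once Lemma \ref{prime deg}(c) identifies $C$ as the unique subgroup of order $p$, the form of $\so$ is immediate from coset representatives.
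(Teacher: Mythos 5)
Your proof is correct and follows essentially the same route as the paper: force $|\so|=p$ by divisibility and non-completeness, invoke Lemma \ref{prime deg}(c) to get $C=\lc=p\Z_{p^2}$ after normalization, and read off $\so$ as a complete set of coset representatives of $p\Z_{p^2}$. The only cosmetic differences are that the paper phrases the coset-representative step through the quotient factorization of Lemma \ref{quotient} and delegates the sufficiency direction to Lemma \ref{G=so+c} (via the $k=2$ case of Theorem \ref{p^k}), whereas you verify both directly; your explicit decomposition $i+jp=(i+\alpha_i p)+((j-\alpha_i)\bmod p)\,p$ together with Lemma \ref{oplus}(b) is a valid substitute.
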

\begin{proof}
    Suppose $\Cay(G,S)$ admits a perfect code $C$. Since $|G|=p^2$ and $S$ is a proper subset of $G\setminus \{0\}$, we have $|\so|=p$. From Lemma \ref{prime deg}, $C$ is a subgroup of order $p$, so $C=\lc=p\Z_{p^2}$.
    From the factorization of $G/\lc$ given in Lemma \ref{quotient}, $(\so+\lc)/\lc=G/\lc=\Z_{p^2}/p\Z_{p^2}=\{i+\lc \mid i\ttk{p}\}$, and hence $\so=\{i+\alpha_i p \mid i\ttk{p}\}$, $\alpha_i\in \Z$. Since $0\in \so$, we have $\alpha_0=0$.
\end{proof}

\begin{theorem}\label{2.2}
Let $G=\Z_2\times\Z_2$ and let $S$ be a proper subset of $G\setminus \{0\}$ that generates $G$. Then $\Cay(G,S)$ does not admit a perfect code.
\end{theorem}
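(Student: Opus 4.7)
The plan is to argue by a simple divisibility obstruction coming from Lemma \ref{oplus}(b). The group $G=\Z_2\times\Z_2$ has order $4$, so $G\setminus\{0\}$ has exactly three elements, namely $(1,0)$, $(0,1)$, and $(1,1)$. Since $G$ is not cyclic, no single element generates $G$, so any generating subset of $G\setminus\{0\}$ has size at least $2$.

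First, I would pin down $|S_0|$. Because $S$ generates $G$, we have $|S|\ge 2$; because $S$ is a proper subset of the $3$-element set $G\setminus\{0\}$, we have $|S|\le 2$. Hence $|S|=2$ and $|S_0|=3$.

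Next, suppose for contradiction that $\Cay(G,S)$ admits a perfect code $C$. By Lemma \ref{lem:factor} we have $G=S_0\oplus C$, and then Lemma \ref{oplus}(b) forces $|G|=|S_0|\cdot|C|$, that is, $4=3\cdot|C|$. This is impossible for an integer $|C|$, giving the desired contradiction.

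I expect no real obstacle here: the whole argument is a two-line counting/divisibility check once $|S_0|=3$ is established, and there is no need to enumerate the three possible choices of $S$ individually.
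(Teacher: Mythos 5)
Your proof is correct and follows essentially the same route as the paper: both arguments combine $|S_0|\ge 3$ (from $S$ generating the non-cyclic group $G$), the properness of $S$, and the size identity $|G|=|S_0|\cdot|C|$ from the factorization $G=S_0\oplus C$. You merely arrange the ingredients in the opposite order — pinning down $|S_0|=3$ first and contradicting $3\mid 4$, whereas the paper lets divisibility force $|S_0|=4$ and contradicts properness — which is a trivial rearrangement of the same argument.
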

\begin{proof}
    Suppose $\Cay(G,S)$ admits a perfect code $C$. From Lemma \ref{ls<so}, $|\so|\geq3$.
    Since $G=\so\oplus C$, $|\so|$ divides $|G|=4$.
    This means $\so=G$, contradicting the assumption that $S$ a proper subset of $G\setminus \{0\}$.
\end{proof}

\begin{theorem}\label{p^3}
Let $G=\Z_{p^3}$ for a prime $p$ and let $S$ be a proper subset of $G\setminus \{0\}$ that generates $G$. Let $0\in C\subseteq G$. Then $C$ is a perfect code in $\Cay(G,S)$ if and only if $(\so, C)$ is one of the pairs in Table \ref{t.p^3} for some $\alpha_i$'s in $G$ with $\alpha_0=0$.
\end{theorem}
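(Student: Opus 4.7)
The plan is to proceed by a direct case analysis on $|S_0|$, using the machinery from Section 2 rather than invoking the full force of Theorem \ref{p^k}, though the proof is essentially a specialization of the latter. Since $S$ generates $G=\Z_{p^3}$ and $\Cay(G,S)$ is non-complete, $|S_0|$ is a proper divisor of $p^3$ strictly greater than $1$, so $|S_0|\in\{p,p^2\}$. In each case I apply Lemma \ref{lem:factor} to get $G=S_0\oplus C$ and then use Lemmas \ref{prime deg}, \ref{ls<so} and \ref{quotient} to pin down the shape of $(S_0,C)$.

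First I would handle $|S_0|=p$. By Lemma \ref{prime deg}(c), $S_0$ is aperiodic and $C=g+L_C$ for some $g\in G$; since $|L_C|=|C|=p^2$ and $p\Z_{p^3}$ is the unique subgroup of $\Z_{p^3}$ of order $p^2$, normalization forces $C=p\Z_{p^3}$. Then Lemma \ref{quotient} gives $G/L_C\cong\Z_p=(S_0+L_C)/L_C$, so choosing coset representatives yields $S_0=\{i+\alpha_i p\mid i\ttk{p}\}$ with $\alpha_0=0$. For $|S_0|=p^2$ I split on whether $S_0$ is periodic. If $S_0$ is aperiodic, then since $G$ is Haj\'os, $C$ must be periodic, but $|C|=p$ forces $L_C=C=p^2\Z_{p^3}$; the same quotient argument then gives $S_0=\{i+\alpha_i p^2\mid i\ttk{p^2}\}$. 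If $S_0$ is periodic, Lemma \ref{ls<so}(b) gives $|L_{S_0}|=p$, hence $L_{S_0}=p^2\Z_{p^3}$. Writing $S_0=L_{S_0}\oplus D$ and passing to $G/L_{S_0}\cong\Z_{p^2}$, Lemma \ref{quotient} yields a factorization $G/L_{S_0}=((D+L_{S_0})/L_{S_0})\oplus((C+L_{S_0})/L_{S_0})$ with the first factor aperiodic of size $p$. Theorem \ref{p^2} applied to this factorization determines both factors in $\Z_{p^2}$, and lifting coset representatives back to $\Z_{p^3}$ produces the forms $S_0=\{i+\alpha_i p+kp^2\mid i,k\ttk{p}\}$ and $C=\{ip+\beta_i p^2\mid i\ttk{p}\}$ with $\alpha_0=\beta_0=0$.

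For the converse, in each of the three cases the pair $(S_0,C)$ satisfies $|S_0|\cdot|C|=p^3=|G|$, so by Lemma \ref{oplus} it suffices to show $S_0+C=G$, or equivalently that $s_1+c_1=s_2+c_2$ with $s_i\in S_0$, $c_i\in C$ forces $s_1=s_2$ and $c_1=c_2$; this is a short residue computation modulo $p$, $p^2$, $p^3$ in turn, exactly in the spirit of Lemma \ref{G=so+c}. The main obstacle is the periodic subcase $|S_0|=p^2$: one must check that the ambiguity in the lift of $D$ from $G/L_{S_0}$ to $G$ is absorbed by $L_{S_0}=p^2\Z_{p^3}$ and re-parameterized as the free $k$-index, so that the final description of $S_0$ is genuinely given by $p$ free parameters $\alpha_i\in[p]$ together with the coset range $k\in[p]$; this is where one must be careful to normalize $\alpha_0=0$ and to argue that the $\beta_i$'s in $C$ indeed run independently over $[p]$.
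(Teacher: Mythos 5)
Your proposal is correct and follows essentially the same route as the paper: the paper's proof of Theorem \ref{p^3} likewise splits into $|\so|=p$ (Lemma \ref{prime deg} forcing $C=p\Z_{p^3}$), $|\so|=p^2$ periodic (Lemma \ref{ls<so} giving $\ls=p^2\Z_{p^3}$, then Lemma \ref{quotient} and Theorem \ref{p^2} on $G/\ls\cong\Z_{p^2}$, lifted via Lemma \ref{d_i+l_i}), and $|\so|=p^2$ aperiodic ($C=\lc=p^2\Z_{p^3}$), with sufficiency handled by the same direct-sum residue computation the paper delegates to its earlier remark and Lemma \ref{G=so+c}. Your care about the lift from $G/\ls$ being absorbed into the free index $j\in[p]$ is exactly what Lemma \ref{d_i+l_i} formalizes, so no gap remains.
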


\rc\begin{longtable}{|c|l|l|}
\caption{Perfect codes in Cayley graphs of $\Z_{p^3}$}\label{t.p^3}\\
\fl
\tn & $\{i+\alpha_ip \mid i\ttk{p}\}$  & $\{ip^2 \mid i\ttk{p^2}\}$ \\
\tn & $\{i+\alpha_ip+jp^2 \mid i,j\ttk{p}\}$  & $\{ip+\alpha_i p^2 \mid i\ttk{p}\}$ \\
\tn & $\{i+\alpha_ip^2 \mid i\ttk{p^2}\}$  & $\{ip^2 \mid i\ttk{p}\}$ \\
\hline
\end{longtable}

\setcounter{cn}{1}\renewcommand{\Table}{\ref{t.p^3}}
\begin{proof}
    Suppose $\Cay(G,S)$ admits a perfect code $C$.
    Then $G=\so\oplus C$ and so $|\so|\cdot |C|=|G|=p^3$. Since $S$ is a perfect subset of $G\setminus \{0\}$, $\Cay(G,S)$ is not a complete graph. Hence $|C|\geq 2$ and therefore $2\leq |\so|, |C|<p^3$.
    
    If $|\so|=p$, then from Lemma \ref{prime deg}, $\so$ is aperiodic and $C$ is a subgroup of $G$ with order $p^2$, so $C=\lc=p\Z_{p^3}$.
    From the factorization of $G/\lc$ given in Lemma \ref{quotient}, $(\so+\lc)/\lc=G/\lc=\Z_{p^3}/p\Z_{p^3}=\{i+\lc \mid i\ttk{p}\}$, and hence $\so=\{i+\alpha_i p \mid i\ttk{p}\}$, $\alpha_i\in \Z$. \R

    If $|\so|=p^2$ and $\so$ is periodic, then $|\ls|=p$ by Lemma \ref{ls<so}, so $\ls=p^2\Z_{p^3}$.
    Let $\so=D\oplus\ls$ for some subset $D$. From the factorization of $G/\ls$ given in Lemma \ref{quotient}, we have $G/\ls=((D+\ls)/\ls)\oplus ((C+\ls)/\ls)$. Note that $(D+\ls)/\ls$ is aperiodic and $G/\ls\cong \Z_{p^2}$. From Theorem \ref{p^2}, we have $(D+\ls)/\ls=\{i+\alpha_i p +\ls \mid i\ttk{p}\}$ and $(C+\ls)/\ls=\{ip+\ls \mid i\ttk{p}\}$. Therefore, $\so=D\oplus\ls=\{i+\alpha_i p + jp^2 \mid i,j\ttk{p}\}$ and $C=\{ip+\alpha_i p^2 \mid i\ttk{p}\}$ where $\alpha_i\in \Z$. \R

    If $|\so|=p^2$ and $\so$ is aperiodic, then $|C|=p$ and $C$ is periodic. Hence $C=\lc=p^2\Z_{p^3}=\{ip^2 \mid i\ttk{p}\}$. From the factorization of $G/\lc$ given in Lemma \ref{quotient}, $(\so+\lc)/\lc=G/\lc=\Z_{p^3}/p^2\Z_{p^3}=\{i+\lc \mid i\ttk{p^2}\}$, and hence $\so=\{i+\alpha_i p^2 \mid i\ttk{p^2}\}$, $\alpha_i\in \Z$. \R
\end{proof}

\begin{theorem}\label{p^2.2}
Let $G=\Z_{p^2}\x\Z_q$ for distinct primes $p$ and $q$, and let $S$ be a proper subset of $G\setminus \{0\}$ that generates $G$. Let $0\in C\subseteq G$. Then $C$ is a perfect code in $\Cay(G,S)$ if and only if $(\so, C)$ is one of the pairs in Table \ref{t.p^2.2} for some $\alpha_i$'s and $\alpha_{ij}$'s in $\Z$ where $\alpha_0=\alpha_{00}=0$.
\end{theorem}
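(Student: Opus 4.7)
The plan is to analyze the possible values of $|S_0|$, which must be a proper divisor of $|G|=p^2q$ greater than $1$. Since $\Cay(G,S)$ is connected and non-complete, we have $|S_0|\in\{p,q,pq,p^2\}$. For each value I would use the Haj\'os property of $G$ together with Lemma \ref{prime deg} to decide whether $S_0$ or $C$ must be periodic, then invoke Lemma \ref{quotient} to reduce to a factorization of a smaller Haj\'os group whose perfect codes are classified by an earlier theorem (Theorem \ref{p^2} for $\Z_{p^2}$, Theorem \ref{p.q} for $\Z_p\times\Z_q$). Note that $G\cong \Z_{p^2q}$ is cyclic, so there is a unique subgroup of each order dividing $p^2q$, which simplifies the identification of $L_C$ and $L_{S_0}$.

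First I would dispatch the prime cases $|S_0|=p$ and $|S_0|=q$. By Lemma \ref{prime deg}(c), $S_0$ is aperiodic and $C=g+L_C$ with $|L_C|=|C|$. After normalizing so that $0\in C$, the set $C$ equals the unique subgroup of $G$ of the appropriate order, namely $p\Z_{p^2}\times\Z_q$ when $|C|=pq$ and $\Z_{p^2}\times\{0\}$ when $|C|=p^2$. The factorization $G/L_C=(S_0+L_C)/L_C$ given by Lemma \ref{quotient} then has $(S_0+L_C)/L_C=G/L_C$, forcing $S_0$ to be a system of coset representatives of $L_C$, which yields the corresponding rows of Table \ref{t.p^2.2}.

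Next, for $|S_0|=pq$ we have $|C|=p$, so $L_C$ is a nontrivial subgroup whose order divides $p$, giving $L_C=p\Z_{p^2}\times\{0\}$ and $C=L_C$. If $S_0$ is aperiodic, then $G/L_C\cong\Z_p\times\Z_q$ and $(S_0+L_C)/L_C=G/L_C$ determines $S_0$ up to a choice of coset representatives in $L_C$. If $S_0$ is periodic, then by Lemma \ref{ls<so} we have $|L_{S_0}|\in\{p,q\}$, and writing $S_0=L_{S_0}\oplus D$ with $D$ aperiodic, I would apply Theorem \ref{p.q} or Theorem \ref{p^2} to $G/L_{S_0}\cong\Z_p\times\Z_q$ or $\Z_{p^2}$ to describe $(D+L_{S_0})/L_{S_0}$ and $(C+L_{S_0})/L_{S_0}$, then lift these to explicit descriptions of $S_0$ and $C$ in $G$. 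The case $|S_0|=p^2$ is handled symmetrically: if $S_0$ is aperiodic then $C=L_C=\{0\}\times\Z_q$ and $(S_0+L_C)/L_C=G/L_C\cong\Z_{p^2}$ determines $S_0$; if $S_0$ is periodic then $|L_{S_0}|=p$ (since $|L_{S_0}|=p^2$ would force $S_0=L_{S_0}=G$), and we lift a factorization of $\Z_p\times\Z_q$ given by Theorem \ref{p.q} back to $G$.

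For the converse direction, for each row of Table \ref{t.p^2.2} I would verify that $|S_0|\cdot|C|=p^2q=|G|$ and that any element of $G$ is represented uniquely as $s+c$ with $s\in S_0$, $c\in C$, using the same componentwise argument already employed in the proofs of Theorems \ref{p,q,q} and \ref{p^3}. The main obstacle will be the bookkeeping: in the periodic subcases one must carefully track which coset representatives of $L_{S_0}$ (respectively $L_C$) are fixed and which are free, and then check that the union of pairs $(S_0,C)$ produced across all subcases matches the rows of Table \ref{t.p^2.2} without duplication or omission.
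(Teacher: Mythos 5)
Your route is essentially the paper's: you organize the case analysis by $|\so|\in\{p,q,pq,p^2\}$ where the paper splits first on whether $\so$ is periodic, but the substance is identical in every case --- Lemma \ref{prime deg}(c) for the prime sizes, the quotient factorization of Lemma \ref{quotient} by $\lc$ (respectively $\ls$), reduction to Theorems \ref{p^2} and \ref{p.q}, lifting coset representatives back to $G$, and a direct componentwise verification of $G=\so\oplus C$ for the converse (which the paper omits, pointing to the same computation in Theorem \ref{p,q,q}).

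There is, however, one concrete error in your $|\so|=pq$ case. You assert, \emph{before} splitting on the periodicity of $\so$, that ``$\lc$ is a nontrivial subgroup whose order divides $p$, giving $\lc=p\Z_{p^2}\x\{0\}$ and $C=\lc$.'' This is justified only when $\so$ is aperiodic: in that subcase the Haj\'os property forces $C$ to be periodic, and $|C|=p$ prime then yields $C=\lc$. When $\so$ is periodic, the Haj\'os property gives no information about $C$, and indeed $C$ is typically aperiodic there: rows 5 and 6 of Table \ref{t.p^2.2} have $|C|=p$ with $C=\{(ip,\alpha_i) \mid i\ttk{p}\}$ and $C=\{(i+\alpha_i p,0) \mid i\ttk{p}\}$, which for generic $\alpha_i$ are not cosets of any subgroup. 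Taken literally, your blanket claim would restrict $C$ to the subgroup $\{(ip,0)\mid i\ttk{p}\}$ and erase those two rows. The damage is localized: your periodic subcase correctly recomputes $C$ by passing to $G/\ls$, applying Theorem \ref{p.q} or \ref{p^2}, and lifting, so the fix is simply to condition the claim $C=\lc$ on $\so$ being aperiodic --- exactly as you already do, correctly, in your $|\so|=p^2$ case.
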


\rc\begin{longtable}{|c|l|l|}
\caption{Perfect codes in Cayley graphs of $\Z_{p^2}\x\Z_q$}
\label{t.p^2.2}
\\
\fl
 \tn & $\{(i+\alpha_ip,\alpha_i) \mid i\ttk{p}\}$ & $\{(ip,j) \mid i\ttk{p},\,j\ttk{q}\}$ \\
 \tn & $\{(\alpha_i,i) \mid i\ttk{q}\}$ & $\{(i,0) \mid i\ttk{p^2}\}$ \\
 \tn & $\{(i+jp,\alpha_i) \mid i,j\ttk{p}\}$ & $\{(\alpha_i p,i) \mid i\ttk{q}\}$ \\
 \tn & $\{(i,\alpha_i) \mid i\ttk{p^2}\}$ & $\{(0,i) \mid i\ttk{q}\}$ \\
 \tn & $\{(i+\alpha_i p,j) \mid i\ttk{p},\,j\ttk{q}\}$ & $\{(ip,\alpha_i) \mid i\ttk{p}\}$ \\
 \tn & $\{(\alpha_i+jp,i) \mid i\ttk{q}, \,j\ttk{p}\}$ & $\{(i+\alpha_i p,0) \mid i\ttk{p}\}$ \\
 \tn & $\{(i+\alpha_{ij}p,j) \mid i\ttk{p},\,j\ttk{q}\}$ & $\{(ip,0) \mid i\ttk{p}\}$ \\
\hline
\end{longtable}

\begin{proof}
    Suppose $C$ is a perfect code in $\Cay(G,S)$. Then $G=\so\oplus C$ and so $|\so|\cdot |C|=|G|=p^2q$. Since $S$ is a perfect subset of $G\setminus \{0\}$, $\Cay(G,S)$ is not a complete graph. Hence $|C|\geq 2$ and therefore $2\leq |\so|, |C|<p^2q$.

\smallskip
\textsf{Case 1.} $\so$ is aperiodic.
\smallskip

    Since $G$ is Haj\'os and $G=\so\oplus C$, in this case $C$ must be periodic, that is, $\lc$ is nontrivial.
    Write $C=\lc\oplus D$ for some subset $D$ of $G$ whose existence is ensured by Lemma \ref{period}.
    Then $|\lc|\cdot |D|=|C|$ and $2\leq |\lc|\leq |C|$.
    If $|\so|$ is a prime, then $|\lc|=|C|$ by Lemma \ref{prime deg} part (c).
    If $|\so|$ is not a prime, then $|\so|$ must be a multiple of two primes (either $p^2$ or $pq$). This means that $|C|=p^2q/|\so|$ is a prime, and hence $|\lc|= |C|$.
    Set $$ A=(\so+\lc)/\lc\quad\text{and}\quad B=(D+\lc)/\lc.$$
    Since $C$ is periodic, by Lemma \ref{quotient}, we have
    $$G/\lc=A\oplus B.$$
    Since $|\lc|=|C|$, we have $C=\lc$ as $0\in C$.
    This means we can take $D=\{0\}$ and hence $B=\lc/\lc$ and $G/\lc=A$.
    We use $C$ to get $A$ and then use Lemma \ref{d_i+l_i} to obtain $\so$.

    All possible values for $\left(|\so|,|C|,|\lc|\right)$ are given in the first column of Table \ref{tap.p^2.2}. 
    In the second column, we list $C$, $A$, and $\so$.
    The last column indicates which row in Table \ref{t.p^2.2} the result corresponds to.
    
\begin{longtable}{|c|l|c|}
\caption{The case when $G=\Z_{p^2}\x\Z_q$ and $\so$ is aperiodic}
\label{tap.p^2.2}
\\
\hline
\rowcolor{black!10}
\setcounter{cn}{1} $(|\so|,|C|,|\lc|)$ & \multicolumn{1}{c|}{$C$, $A$, and $\so$} & Result \\
\hline
\endhead
& $C=\{(ip,j) \mid i\ttk{p},\,j\ttk{q}\} $ & \\
$(p,pq,pq)$ 
& $A=\{(i,0)+\lc \mid i\ttk{p}\}$ 
& row 1 \\
& $\so=\{(i+\alpha_ip,\alpha_i) \mid i\ttk{p}\} $ & \\ \hline

& $C=\{(i,0) \mid i\ttk{p^2}\} $ & \\
$(q,p^2,p^2)$ 
& $A=\{(0,i)+\lc \mid i\ttk{q}\} $ 
& row 2 \\
& $\so=\{(\alpha_i,i) \mid i\ttk{q}\} $ & \\ \hline

& $C=\{(0,i) \mid i\ttk{q}\} $ & \\
$(p^2,q,q)$ 
& $A=\{(i,0)+\lc \mid i\ttk{p^2}\} $ 
& row 4 \\
& $\so=\{(i,\alpha_i) \mid i\ttk{p^2}\} $ & \\ \hline

& $C=\{(ip,0) \mid i\ttk{p}\} $ & \\
$(pq,p,p)$ 
& $A=\{(i,j)+\lc \mid i\ttk{p},\,j\ttk{q}\} $ 
& row 7 \\
& $\so=\{(i+\alpha_{ij}p,j) \mid i\ttk{p},\,j\ttk{q}\} $ & \\ \hline



\end{longtable}

\smallskip
\textsf{Case 2.} $\so$ is periodic.
\smallskip

    In this case we have $|\ls|\geq 2$ and $\so=D\oplus \ls$ for some subset $D$ of $G$ by Lemma \ref{period}.
    So $|D|\cdot|\ls|=|\so|$ and $1<|\ls|<|\so|$ by Lemma \ref{prime deg} part (b).
    Set $$X=(D+\ls)/\ls\quad\text{and}\quad Y=(C+\ls)/\ls.$$
    Since $\so$ is periodic, by Lemma \ref{quotient}, we have
    $$G/\ls=X\oplus Y$$ and $X$ is aperiodic in $G/\ls$.
    We also know that $X$ contains $\ls$ and $X$ generates $G/\ls$ by Lemma \ref{generate-zero}.
    This factorization enables us to make use of Lemma \ref{d_i+l_i} and previously established theorems to obtain the pair $(\so, C)$ via $(X,Y)$.
    
    All possible values for $\left(|\so|,|C|,|\ls|\right)$ are given in the first column of Table \ref{tp.p^2.2}. 
    In the second column we list $\ls$, $X$, $Y$, $\so$, and $C$.
    The third column shows which previous theorem is used to obtain $(X,Y)$.
    The last column indicates which row in Table \ref{t.p^2.2} the result corresponds to.

\begin{longtable}{|c|l|c|c|}
\caption{The case when $G=\Z_{p^2}\x\Z_q$ and $\so$ is periodic}
\label{tp.p^2.2}
\\
\flp

& $\ls=\<{(p,0)} $ & & \\
& $X=\{(i,\alpha_i)+\ls \mid i\ttk{p}\} $ & & \\
$(p^2,q,p)$ 
& $Y=\{(0,i)+\ls \mid i\ttk{q}\} $ 
& \ref{p.q} \row1 & row 3 \\
& $\so=\{(i+jp,\alpha_i) \mid i,j\ttk{p}\} $ & & \\
& $C=\{(\alpha_i p,i) \mid i\ttk{q}\} $ & & \\ \hline

& $\ls=\<{(0,1)} $ & & \\
& $X=\{(i+\alpha_i p,0)+\ls \mid i\ttk{p}\} $ & & \\
$(pq,p,q)$ 
& $Y=\{(ip,0)+\ls \mid i\ttk{p}\} $ 
& \ref{p^2} & row 5 \\
& $\so=\{(i+\alpha_i p,j) \mid i\ttk{p},\,j\ttk{q}\} $ & & \\
& $C=\{(ip,0)+\alpha_i(0,1) \mid i\ttk{p}\}=\{(ip,\alpha_i) \mid i\ttk{p}\} $ & & \\ \hline

& $\ls=\<{(p,0)} $ & & \\
& $X=\{(\alpha_i,i)+\ls \mid i\ttk{q}\} $ & & \\
$(pq,p,p)$ 
& $Y=\{(i,0)+\ls \mid i\ttk{p}\} $ 
& \ref{p.q} \row2 & row 6 \\
& $\so=\{(\alpha_i+jp,i) \mid i\ttk{q}, \,j\ttk{p}\} $ & & \\
& $C=\{(i+\alpha_i p,0) \mid i\ttk{p}\} $ & & \\ \hline

\end{longtable}
\end{proof}

\begin{theorem}\label{2.2.2}
Let $G=\Z_2\x\Z_2\x\Z_2$ and let $S$ be a proper subset of $G\setminus \{0\}$ that generates $G$. Let $0\in C\subseteq G$. Then $C$ is a perfect code in $\Cay(G,S)$ if and only if $\so=\{\alpha_{ij}\e1+i\e2+j\e3 \mid i,j\ttk{2}\}$ for some $\alpha_{ij}$'s in $\Z$ where $\alpha_{00}=0$ and $C=\{i\e1 \mid i\ttk{2}\}$.
\end{theorem}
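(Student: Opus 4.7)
The plan is to combine the size/periodicity constraints from Lemmas \ref{ls<so} and \ref{quotient} with the fact that every nonzero element of $\Z_2^3$ has order $2$.

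For the forward direction (sufficiency), given $(S_0, C)$ of the stated form, I would check $G = S_0 \oplus C$ directly. Since $|S_0| \cdot |C| = 4 \cdot 2 = 8 = |G|$, by Lemma \ref{oplus} it suffices to verify $G = S_0 + C$. For any $(a,b,c) \in \Z_2^3$ set $i = b$, $j = c$; then
$$
(a, b, c) = \bigl(\alpha_{ij}\e1 + i\e2 + j\e3\bigr) + \bigl((a - \alpha_{ij})\e1\bigr),
$$
where the first summand is in $S_0$ and the second lies in $C = \{0, \e1\}$ (reducing $a - \alpha_{ij}$ modulo $2$).

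For the converse, I would first pin down the sizes. From $G = S_0 \oplus C$ we have $|S_0| \cdot |C| = 8$. Since $\Cay(G,S)$ is not complete, $|C| \geq 2$. Since $G = \Z_2^3$ is not cyclic, Lemma \ref{ls<so}(a) gives $|S_0| > 2$. Because $|S_0|$ divides $8$, we must have $|S_0| = 4$ and $|C| = 2$. Writing $C = \{0, c\}$ with $c \neq 0$, we see that $C$ is a subgroup of $G$ (every element of $\Z_2^3$ has order $\leq 2$), so $C = L_C$ and $C$ is periodic. By applying an automorphism of $\Z_2^3$ that sends $c$ to $\e1$ (as in the analogous remark following Theorem \ref{p,q,q}, allowing a choice of basis), I would assume $C = \langle \e1 \rangle$.

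Now I would apply Lemma \ref{quotient} with the periodic factor $A = C = L_C$, taking $D = \{0\}$, to obtain $G/L_C = (S_0 + L_C)/L_C$, so $S_0$ is a full transversal of the cosets of $L_C = \langle \e1 \rangle$. Since the cosets of $\langle \e1 \rangle$ in $G$ are represented by $i\e2 + j\e3$ with $i, j \in [2]$, this forces
$$
S_0 = \{\alpha_{ij}\e1 + i\e2 + j\e3 \mid i, j \in [2]\}
$$
for some integers $\alpha_{ij}$. The condition $0 \in S_0$ forces the $(i,j) = (0,0)$ element to be $0$, giving $\alpha_{00} \equiv 0 \pmod 2$, so WLOG $\alpha_{00} = 0$.

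The only mildly subtle point is the basis relabeling needed to write $c$ as $\e1$; once that convention is adopted (as elsewhere in the paper), the argument is essentially a routine application of the size bound from Lemma \ref{ls<so}(a) together with Lemma \ref{quotient}. The generation hypothesis on $S$ ensures no degenerate $\alpha_{ij}$ configuration arises, but does not need to be used explicitly in deriving the form.
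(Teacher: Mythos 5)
Your proof is correct, and your converse takes a genuinely shorter route than the paper's. The paper follows its general template: it splits into cases according to whether $\so$ is periodic or aperiodic, eliminates the periodic case by passing to $G/\ls\cong\Z_2\times\Z_2$ and appealing to Theorem \ref{2.2} (a two-element set containing $0$ cannot generate that non-cyclic group), and in the aperiodic case invokes the Haj\'os property of $G$ to conclude that $C$ must be periodic, whence $C=\lc=\langle\e1\rangle$ and the quotient argument gives the form of $\so$. You bypass this dichotomy entirely: since $|C|=2$, $0\in C$, and every element of $\Z_2^3$ has order at most $2$, the set $C=\{0,c\}$ is automatically a subgroup, hence periodic with $\lc=C$, regardless of the status of $\so$; so neither the Haj\'os hypothesis nor Theorem \ref{2.2} is needed, and one application of Lemma \ref{quotient} with $D=\{0\}$ forces $\so$ to be a transversal of the $\lc$-cosets. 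This is a more elementary argument exploiting the exponent-$2$ structure, while the paper's longer version is the price of a uniform template reused across all its groups. You also verify sufficiency explicitly via Lemma \ref{oplus}, which the paper omits by the convention announced at the start of Section \ref{sec:p^k2^l}; your computation is correct, and your normalization of $C$ to $\langle\e1\rangle$ by an automorphism matches the paper's implicit reliance on the remark following the theorem. One small wording caveat: your closing sentence says the generation hypothesis is not used explicitly, but it does enter through Lemma \ref{ls<so}(a), whose hypothesis is connectivity of $\Cay(G,S)$ --- that is where $|\so|>2$ comes from; and indeed some parameter choices (e.g., all $\alpha_{ij}$ even, giving $\so=\langle\e2,\e3\rangle$) correspond to non-generating $S$, so the derived form is a necessary condition under the standing hypothesis, which is all the theorem asserts.
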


\begin{proof}
    \setcounter{cn}{1}\renewcommand{\Table}{\ref{p^3}}
    Suppose $\Cay(G,S)$ admits a perfect code $C$.
    Then $G=\so\oplus C$ and so $|\so|\cdot |C|=|G|=8$.
    From lemma \ref{ls<so}, we have $|\so|>2$ and hence $(|\so|,|C|)=(4,2)$ as $|\so|\cdot|C|=8$ and $|C|\geq 2$.

    If $\so$ is periodic, then $|\ls|=2$ and $\ls=\left<\e1\right>$.    
    Let $\so=D\oplus\ls$ for some subset $D$. From the factorization of $G/\ls$ given in Lemma \ref{quotient}, we have $G/\ls=((D+\ls)/\ls)\oplus ((C+\ls)/\ls)$.
    From Lemma \ref{generate-zero}, we know that $(D+\ls)/\ls)$ is a generator of $G/\ls\cong\Z_2\times\Z_2$ and $\ls\in(D+\ls)/\ls)$.
    This case is not possible by Theorem \ref{2.2}.

    If $\so$ is aperiodic, then $|C|=2$ and $C$ is periodic, and hence $C=\lc=\<{\e1}$.
    From the factorization of $G/\lc$ given in Lemma \ref{quotient}, $(\so+\lc)/\lc=G/\lc=\{i\e2+j\e3+\lc \mid i,j\ttk{2}\}$, and therefore $\so=\{i\e2+j\e3+\alpha_{ij}\e1 \mid i,j\ttk{2}\}$, $\alpha_{ij}\in \Z$.
\end{proof}

\begin{remark}
    In Theorem \ref{2.2.2}, we can change $\e1$, $\e2$, and $\e3$ to any three generators of $\Z_2\x\Z_2\x\Z_2$.
\end{remark}

\begin{theorem}\label{p^3.2} 
Let $G=\Z_{p^3}\x\Z_2$ for an odd prime $p$ and let $S$ be a proper subset of $G\setminus \{0\}$ that generates $G$. Let $0\in C\subseteq G$. Then $C$ is a perfect code in $\Cay(G,S)$ if and only if $(\so, C)$ is one of the pairs in Table \ref{t.p^3.2} for some $\alpha_i$'s, $\alpha_{ij}$'s, and $\alpha_{ik}$'s in $\Z$ where $\alpha_0=\alpha_{00}=0$.
\end{theorem}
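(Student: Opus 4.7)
My plan is to follow the case-analysis template established in the proof of Theorem \ref{p^2.2}. Suppose $\Cay(G, S)$ admits a perfect code $C$, so that $G = \so \oplus C$ with $|\so|\cdot|C| = 2p^3$. Since $\so$ generates $G$, contains $0$, and is a proper subset of $G$, we have $|\so| \in \{2, p, 2p, p^2, 2p^2, p^3\}$ with $|C| = 2p^3/|\so|$. I will split the analysis according to whether $\so$ is aperiodic or periodic, using the Haj\'os property: whenever $\so$ is aperiodic, $C$ must be periodic.

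In the aperiodic case I will write $C = \lc \oplus D$, apply Lemma \ref{quotient} to obtain the factorization $G/\lc = A \oplus B$ with $A = (\so + \lc)/\lc$ and $B = (D + \lc)/\lc$, and for each admissible triple $(|\so|, |C|, |\lc|)$ identify $G/\lc$ with a suitable quotient and apply an earlier theorem to determine $A$. When $|\so|$ is prime, Lemma \ref{prime deg}(c) forces $C = \lc$, so we can take $D=\{0\}$ and $A = G/\lc$. In the composite cases $|\so| \in \{2p, p^2, 2p^2\}$, the quotient $G/\lc$ is isomorphic to one of $\Z_{p^3}$, $\Z_{p^2}\x\Z_2$, $\Z_{p^2}$, $\Z_p\x\Z_2$, or $\Z_p$, and $A$ is a periodic subset of the correct order whose structure follows from Theorem \ref{p^3}, \ref{p^2.2}, \ref{p^2}, or \ref{p.q}. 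Lemma \ref{d_i+l_i} then lifts the quotient data back to explicit expressions for $\so$ and $C$.

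In the periodic case I will write $\so = D \oplus \ls$ with $1 < |\ls| < |\so|$ (by Lemma \ref{prime deg}(b)), and again apply Lemma \ref{quotient} to obtain $G/\ls = X \oplus Y$ with $X = (D+\ls)/\ls$ aperiodic and $Y = (C+\ls)/\ls$. By Lemma \ref{generate-zero}, $X$ generates $G/\ls$ and contains the identity coset $\ls$. For each admissible triple $(|\so|, |C|, |\ls|)$, the quotient $G/\ls$ is one of $\Z_{p^3}$, $\Z_{p^2}\x\Z_2$, $\Z_{p^2}$, $\Z_p\x\Z_2$, $\Z_p$, or $\Z_2$, and the pair $(X, Y)$ is determined up to the free parameters $\alpha_i$, $\alpha_{ij}$ by Theorem \ref{p^3}, \ref{p^2.2}, \ref{p^2}, \ref{p.q}, \ref{p^k}, or (trivially) the absence of a perfect code on $\Z_2\x\Z_2$. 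Lemma \ref{d_i+l_i} then recovers $(\so, C)$.

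The results will be presented in two tables paralleling Tables \ref{tap.p^2.2} and \ref{tp.p^2.2}, each row recording a triple $(|\so|, |C|, |\lc|)$ or $(|\so|, |C|, |\ls|)$, the explicit sets $\so$ and $C$, the reference theorem invoked, and the resulting row of Table \ref{t.p^3.2}. The converse direction reduces to verifying $|\so|\cdot|C| = 2p^3$ and $\so + C = G$, which follows by the coordinate-chase argument used at the end of the proof of Theorem \ref{p,q,q}. The main obstacle will be the combinatorial bookkeeping: about a dozen subcases arise, and in each one the correct representative subgroup $\ls$ or $\lc$ of $G = \Z_{p^3}\x\Z_2$ has to be chosen carefully. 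Fortunately $G$ is cyclic of order $2p^3$, so each subgroup of a given order is unique, which keeps the enumeration tractable and much simpler than it would be for the non-cyclic Haj\'os groups handled later in the paper.
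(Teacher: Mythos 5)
Your proposal is correct and follows essentially the same route as the paper: the same aperiodic/periodic dichotomy via the Haj\'os property, the same quotient factorizations from Lemma \ref{quotient} combined with Lemmas \ref{prime deg}, \ref{generate-zero} and \ref{d_i+l_i}, reduction to Theorems \ref{p^3}, \ref{p^2.2}, \ref{p^2} and \ref{p.q}, tabular enumeration of the triples $(|\so|,|C|,|\lc|)$ and $(|\so|,|C|,|\ls|)$ exactly as in Tables \ref{tap.p^3.2} and \ref{tp.p^3.2}, and the same deferral of the sufficiency check to the coordinate-chase of Theorem \ref{p,q,q} per the section's convention. The only blemish is your passing appeal to the nonexistence of perfect codes in $\Cay(\Z_2\x\Z_2,S)$: since $G\cong\Z_{2p^3}$ is cyclic (as you yourself note), neither $\Z_2\x\Z_2$ nor a $\Z_2$ quotient requiring it ever arises here, so that subcase is vacuous and the paper correspondingly never invokes Theorem \ref{2.2} in this proof.
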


\rc\begin{longtable}{|c|l|l|}
\caption{Perfect codes in Cayley graphs of $\Z_{p^3}\x\Z_2$}
\label{t.p^3.2}
\\
\fl
\tn & $\{(i+\alpha_ip,\alpha_i) \mid i\ttk{p}\}$
& $\{(ip,j) \mid i\ttk{p^2},\,j\ttk{2}\}$ \\
\tn & $\{(\alpha_i,i)\mid i\ttk{2}\}$ 
& $\{(i,0) \mid i\ttk{p^3}\}$ \\
\tn & $\{(i+\alpha_ip+jp^2,\alpha_i) \mid i,j\ttk{p}\}$
& $\{(ip+\alpha_{ij}p^2,j) \mid i\ttk{p},\,j\ttk{2}\}$ \\
\tn & $\{(i+\alpha_ip+jp^2,\alpha_{ij}) \mid i,j\ttk{p}\}$
& $\{(ip+\alpha_i p^2,j) \mid i\ttk{p},\,j\ttk{2}\}$ \\
\tn & $\{(i+jp+\alpha_{ij}p^2,\alpha_i) \mid i,j\ttk{p}\}$
& $\{(\alpha_i p+jp^2,i) \mid i\ttk{2}, \,j\ttk{p}\}$ \\
\tn & $\{(i+\alpha_ip^2,\alpha_i) \mid i\ttk{p^2}\}$
& $\{(ip^2,j) \mid i\ttk{p},\,j\ttk{2}\}$ \\
\tn & $\{(i+\alpha_ip,j) \mid i\ttk{p},\,j\ttk{2}\}$
& $\{(ip^2,\alpha_i) \mid i\ttk{p^2}\}$ \\
\tn & $\{(\alpha_i+jp^2,i) \mid i\ttk{2},\,j\ttk{p}\}$
& $\{(i+\alpha_ip^2,0) \mid i\ttk{p^2}\}$ \\
\tn & $\{(i+\alpha_i p+\alpha_{ij}p^2,j) \mid i\ttk{p},\,j\ttk{2}\}$
& $\{(ip+jp^2,\alpha_i) \mid i,j\ttk{p}\}$ \\
\tn & $\{(\alpha_i+jp+\alpha_{ij}p^2,i) \mid i\ttk{2}, \,j\ttk{p}\}$
& $\{(i+\alpha_i p + j p^2,0) \mid i,j\ttk{p}\}$ \\
\tn & $\{(i + \alpha_{ij} p,j) \mid i\ttk{p},\,j\ttk{2}\}$
& $\{(ip,0) \mid i\ttk{p^2}\}$ \\
\tn & $\{(i+jp^2,\alpha_i) \mid i\ttk{p^2},\,j\ttk{p}\}$
& $\{(\alpha_i p^2,i) \mid i\ttk{2}\}$ \\
\tn & $\{(i+jp,\alpha_i) \mid i\ttk{p},\,j\ttk{p^2}\}$
& $\{(\alpha_i p,i) \mid i\ttk{2}\}$ \\
\tn & $\{(i,\alpha_i) \mid i\ttk{p^3}\}$
& $\{(0,i) \mid i\ttk{2}\}$ \\
\tn & $\{(i+\alpha_ip^2,j) \mid i\ttk{p^2},\,j\ttk{2}\}$
& $\{(i p^2,\alpha_i) \mid i\ttk{p}\}$ \\
\tn & $\{(i+\alpha_{ik}p+jp^2,k) \mid i,j\ttk{p},k\ttk{2}\}$
& $\{(ip + \alpha_i p^2,0) \mid i\ttk{p}\}$ \\
\tn & $\{(i+\alpha_ip+jp^2,k) \mid i,j\ttk{p},\,k\ttk{2}\}$
& $\{(ip + \alpha_i p^2,\alpha_i) \mid i\ttk{p}\}$ \\
\tn & $\{(\alpha_i+jp,i) \mid i\ttk{2},\,j\ttk{p^2}\}$
& $\{(i + \alpha_i p,0) \mid i\ttk{p}\}$ \\
\tn & $\{(i+\alpha_{ij}p^2,j) \mid i\ttk{p^2},\,j\ttk{2}\}$
& $\{(ip^2,0) \mid i\ttk{p}\}$ \\
\hline
\end{longtable}

\begin{proof}
    Suppose $C$ is a perfect code in $\Cay(G,S)$. Then $G=\so\oplus C$ and so $|\so|\cdot |C|=|G|=2p^3$. Since $S$ is a perfect subset of $G\setminus \{0\}$, $\Cay(G,S)$ is not a complete graph. Hence $|C|\geq 2$ and therefore $2\leq |\so|, |C|<2p^3$.

\smallskip
\textsf{Case 1.} $\so$ is aperiodic.
\smallskip

    Since $G$ is Haj\'os and $G=\so\oplus C$, in this case $C$ must be periodic, that is, $\lc$ is nontrivial.
    Write $C=\lc\oplus D$ for some subset $D$ of $G$ whose existence is ensured by Lemma \ref{period}.
    Then $|\lc|\cdot |D|=|C|$ and $2\leq |\lc|\leq |C|$.
    If $|\so|$ is a prime, then $|\lc|=|C|$ by Lemma \ref{prime deg} part (c).
    If $|\so|$ is not a prime, then $|\lc|$ could be any divisor of $|C|$.
    Set $$ A=(\so+\lc)/\lc\quad\text{and}\quad B=(D+\lc)/\lc.$$
    Since $C$ is periodic, by Lemma \ref{quotient}, we have
    $$G/\lc=A\oplus B$$ and $B$ is aperiodic in $G/\lc$.
    Since $G/\lc$ is isomorphic to a subgroup of $G$ and subgroups of Haj\'os group are also Haj\'os groups, $A$ is periodic in $G/\lc$.
    We also know that $A$ contains $\lc$ and $A$ generates $G/\lc$ by Lemma \ref{generate-zero}.
    Furthermore, $|A|=|\so|$ and $|B|=|D|$.
    Note that if $|\lc|=|C|$, then $C=\lc$ as $0\in C$.
    This means we can take $D=\{0\}$ and hence $B=\lc/\lc$ and $G/\lc=A$.
    We use $C$ to get $A$ and then use Lemma \ref{d_i+l_i} to obtain $\so$.
    If $|\lc|<|C|$, then this factorization enables us to make use of previously established theorems to obtain the pair $(\so, C)$ via $(A,B)$.

    The first columns of Table \ref{tap.p^3.2} represents all possible values for $\left(|\so|,|C|,|\lc|\right)$. 
    In the second column, we list $C$, $A$, and $\so$ when $|\lc|=|C|$ and we list $L_C$, $A$, $B$, $\so$, and $C$ when $|\lc|<|C|$.
    The third column shows which previous theorem is used to obtain $(A,B)$ when $|\lc|<|C|$.
    The last column indicates which row in Table \ref{t.p^3.2} the result corresponds to.
    
\begin{longtable}{|c|l|c|c|}
\caption{The case when $G=\Z_{p^3}\x\Z_2$ and $\so$ is aperiodic}
\label{tap.p^3.2}
\\
\flap

& $C=\{(ip,j) \mid i\ttk{p^2},\,j\ttk{2}\}$ & & \\
$(p,2p^2,2p^2)$ 
& $A=\{(i,0)+\lc \mid i\ttk{p}\}$ 
& & row 1 \\
& $\so=\{(i+\alpha_ip,\alpha_i) \mid i\ttk{p}\}$ & & \\ \hline

& $C=\{(i,0) \mid i\ttk{p^3}\}$ & & \\
$(2,p^3,p^3)$ 
& $A=\{(0,i)+\lc \mid i\ttk{2}\}$ 
& & row 2 \\
& $\so=\{(\alpha_i,i) \mid i\ttk{2}\}$ & & \\ \hline

& $\lc=\<{(0,1)}$ & & \\
& $A=\{(i+\alpha_ip+jp^2,0) + \lc \mid i,j\ttk{p}\}$ & & \\
$(p^2,2p,2)$ 
& $B=\{(ip+\alpha_i p^2,0) +\lc \mid i\ttk{p}\}$ 
& \ref{p^3} \row2 & row 4 \\
& $\so=\{(i+\alpha_ip+jp^2,\alpha_{ij}) \mid i,j\ttk{p}\}$ & & \\
& $C=\{(ip+\alpha_i p^2,j) \mid i\ttk{p},\,j\ttk{2}\}$ & & \\ \hline

& $\lc=\<{(p^2,0)} $ & & \\
& $A=\{(i+jp,\alpha_i) +\lc \mid i,j\ttk{p}\} $ & & \\
$(p^2,2p,p)$ 
& $B=\{(\alpha_i p,i) +\lc \mid i\ttk{2}\} $ 
& \ref{p^2.2} \row3 & row 5  \\
& $\so=\{(i+jp+\alpha_{ij}p^2,\alpha_i) \mid i,j\ttk{p}\} $ & & \\
& $C=\{(\alpha_i p+jp^2,i) \mid i\ttk{2}, \,j\ttk{p}\} $ & & \\ \hline

& $C=\{(ip^2,j) \mid i\ttk{p},\,j\ttk{2}\} $ & & \\
$(p^2,2p,2p)$ 
& $A=\{(i,0)+\lc \mid i\ttk{p^2}\} $ 
& & row 6 \\
& $\so=\{(i+\alpha_ip^2,\alpha_i) \mid i\ttk{p^2}\} $ & & \\ \hline

& $\lc=\<{(p^2,0)} $ & & \\
& $A=\{(i+\alpha_i p,j)+\lc \mid i\ttk{p},\,j\ttk{2}\} $ & & \\
$(2p,p^2,p)$ 
& $B=\{(ip,\alpha_i)+\lc \mid i\ttk{p}\} $ 
& \ref{p^2.2} \row5 & row 9 \\
& $\so=\{(i+\alpha_i p+\alpha_{ij}p^2,j) \mid i\ttk{p},\,j\ttk{2}\} $ & & \\
& $C=\{(ip+jp^2,\alpha_i) \mid i,j\ttk{p}\} $ & & \\ \hline

& $\lc=\<{(p^2,0)} $ & & \\
& $A=\{(\alpha_i+jp,i) +\lc \mid i\ttk{2}, \,j\ttk{p}\} $ & & \\
$(2p,p^2,p)$ 
& $B=\{(i+\alpha_i p,0)+\lc \mid i\ttk{p}\} $ 
& \ref{p^2.2} \row6 & row 10 \\
& $\so=\{(\alpha_i+jp+\alpha_{ij}p^2,i) \mid i\ttk{2}, \,j\ttk{p}\} $ & & \\
& $C=\{(i+\alpha_i p + j p^2,0) \mid i,j\ttk{p}\} $ & & \\ \hline

& $C=\{(ip,0) \mid i\ttk{p^2}\} $ & & \\
$(2p,p^2,p^2)$ 
& $A=\{(i,j)+\lc \mid i\ttk{p},\,j\ttk{2}\} $
& & row 11 \\
& $\so=\{(i + \alpha_{ij} p,j) \mid i\ttk{p},\,j\ttk{2}\} $ & & \\ \hline

& $C=\{(0,i) \mid i\ttk{2}\} $ & & \\
$(p^3,2,2)$ 
& $A=\{(i,0) + \lc \mid i\ttk{p^3}\} $ 
& & row 14 \\
& $\so=\{(i,\alpha_i) \mid i\ttk{p^3}\} $ & & \\ \hline

& $C=\{(ip^2,0) \mid i\ttk{p}\} $ & & \\
$(2p^2,p,p)$ 
& $A=\{(i,j)+\lc \mid i\ttk{p^2},\,j\ttk{2}\} $ 
& & row 19 \\
& $\so=\{(i+\alpha_{ij}p^2,j) \mid i\ttk{p^2},\,j\ttk{2}\} $ & & \\ \hline



\end{longtable}

\smallskip
\textsf{Case 2.} $\so$ is periodic.
\smallskip

    In this case we have $|\ls|\geq 2$ and $\so=D\oplus \ls$ for some subset $D$ of $G$ by Lemma \ref{period}.
    So $|D|\cdot|\ls|=|\so|$ and $1<|\ls|<|\so|$ by Lemma \ref{prime deg} part (b).
    Set $$X=(D+\ls)/\ls\quad\text{and}\quad Y=(C+\ls)/\ls.$$
    Since $\so$ is periodic, by Lemma \ref{quotient}, we have
    $$G/\ls=X\oplus Y$$ and $X$ is aperiodic in $G/\ls$.
    We also know that $X$ contains $\ls$ and $X$ generates $G/\ls$ by Lemma \ref{generate-zero}.
    This factorization enables us to make use of Lemma \ref{d_i+l_i} and previously established theorems to obtain the pair $(\so, C)$ via $(X,Y)$.
    
    The first columns of Table \ref{tp.p^3.2} represents all possible values for $\left(|\so|,|C|,|\ls|\right)$.
    In the second column we list $\ls$, $X$, $Y$, $\so$, and $C$.
    The third column shows which previous theorem is used to obtain $(X,Y)$.
    The last column indicates which row in Table \ref{t.p^3.2} the result corresponds to.

\begin{longtable}{|c|l|c|c|}
\caption{The case when $G=\Z_{p^3}\x\Z_2$ and $\so$ is periodic}
\label{tp.p^3.2}
\\
\flp

& $\ls=\<{(p^2,0)}$ & & \\
& $X=\{(i+\alpha_ip,\alpha_i)+\ls \mid i\ttk{p}\}$ & & \\
$(p^2,2p,p)$ 
& $Y=\{(ip,j)+\ls \mid i\ttk{p},\,j\ttk{2}\}$ 
& \ref{p^2.2} \row1 & row 3 \\
& $\so=\{(i+\alpha_ip+jp^2,\alpha_i) \mid i,j\ttk{p}\}$ & & \\
& $C=\{(ip+\alpha_{ij} p^2,j) \mid i\ttk{p},\,j\ttk{2}\}$ & & \\ \hline

& $\ls=\<{(0,1)} $ & & \\
& $X=\{(i+\alpha_ip,0)+\ls \mid i\ttk{p}\} $ & & \\
$(2p,p^2,2)$ 
& $Y=\{(ip^2,0)+\ls \mid i\ttk{p}\} $ 
& \ref{p^3} \row1 & row 7 \\
& $\so=\{(i+\alpha_ip,j) \mid i\ttk{p},\,j\ttk{2}\} $ & & \\
& $C=\{(ip^2,\alpha_i) \mid i\ttk{p^2}\} $ & & \\ \hline

& $\ls=\left<(p^2,0)\right> $ & & \\
& $X=\{(\alpha_i,i)+\ls \mid i\ttk{2}\} $ & & \\
$(2p,p^2,p)$ 
& $Y=\{(i,0)+\ls \mid i\ttk{p^2}\} $ 
& \ref{p^2.2} \row2 & row 8 \\
& $\so=\{(\alpha_i+jp^2,i) \mid i\ttk{2},\,j\ttk{p}\} $ & & \\
& $C=\{(i+\alpha_ip^2,0) \mid i\ttk{p^2}\} $ & & \\ \hline

& $\ls=\<{(p^2,0)} $ & & \\
& $X=\{(i,\alpha_i) +\ls \mid i\ttk{p^2}\} $ & & \\
$(p^3,2,p)$ 
& $Y=\{(0,i) +\ls \mid i\ttk{2}\} $ 
& \ref{p^2.2} \row4 & row 12 \\
& $\so=\{(i+jp^2,\alpha_i) \mid i\ttk{p^2},\,j\ttk{p}\} $ & & \\
& $C=\{(\alpha_i p^2,i) \mid i\ttk{2}\} $ & & \\ \hline

& $\ls=\<{(p,0)} $ & & \\
& $X=\{(i,\alpha_i) +\ls \mid i\ttk{p}\} $ & & \\
$(p^3,2,p^2)$ 
& $Y=\{(0,i) +\ls \mid i\ttk{2}\} $ 
& \ref{p.q} \row1 & row 13 \\
& $\so=\{(i+jp,\alpha_i) \mid i\ttk{p},\,j\ttk{p^2}\} $ & & \\
& $C=\{(\alpha_i p,i) \mid i\ttk{2}\} $ & & \\ \hline

& $\ls=\<{(0,1)} $ & & \\
& $X=\{(i+\alpha_ip^2,0) +\ls \mid i\ttk{p^2}\} $ & & \\
$(2p^2,p,2)$ 
& $Y=\{(ip^2,0) +\ls \mid i\ttk{p}\} $ 
& \ref{p^3} \row3 & row 15 \\
& $\so=\{(i+\alpha_ip^2,j) \mid i\ttk{p^2},\,j\ttk{2}\} $ & & \\
& $C=\{(i p^2,\alpha_i) \mid i\ttk{p}\} $ & & \\ \hline

& $\ls=\<{(p^2,0)} $ & & \\
& $X=\{(i+\alpha_{ij}p,j) +\ls \mid i\ttk{p},j\ttk{2}\} $ & & \\
$(2p^2,p,p)$ 
& $Y=\{(ip,0) +\ls \mid i\ttk{p}\} $ 
& \ref{p^2.2} \row7 & row 16 \\
& $\so=\{(i+\alpha_{ik}p+jp^2,k) \mid i,j\ttk{p},k\ttk{2}\} $ & & \\
& $C=\{(ip + \alpha_i p^2,0) \mid i\ttk{p}\} $ & & \\ \hline

& $\ls=\<{(p^2,1)} $ & & \\
& $X=\{(i+\alpha_ip,0) +\ls \mid i\ttk{p}\} $ & & \\
$(2p^2,p,2p)$ 
& $Y=\{(ip,0) +\ls \mid i\ttk{p}\} $ 
& \ref{p^2} & row 17 \\
& $\so=\{(i+\alpha_ip+jp^2,k) \mid i,j\ttk{p},\,k\ttk{2}\} $ & & \\
& $C=\{(ip + \alpha_i p^2,\alpha_i) \mid i\ttk{p}\} $ & & \\ \hline

& $\ls=\<{(p,0)} $ & & \\
& $X=\{(\alpha_i,i) +\ls \mid i\ttk{2}\} $ & & \\
$(2p^2,p,p^2)$ 
& $Y=\{(i,0) +\ls \mid i\ttk{p}\} $ 
& \ref{p.q} \row2 & row 18 \\
& $\so=\{(\alpha_i+jp,i) \mid i\ttk{2},\,j\ttk{p^2}\} $ & & \\
& $C=\{(i + \alpha_i p,0) \mid i\ttk{p}\} $ & & \\ \hline


\end{longtable}
\end{proof}

\begin{theorem}\label{p^2.2.2}
Let $G=\Z_{p^2}\x\Z_2\x\Z_2$ for an odd prime $p$ and let $S$ be a proper subset of $G\setminus \{0\}$ that generates $G$. Let $0\in C\subseteq G$. Then $C$ is a perfect code in $\Cay(G,S)$ if and only if $(\so, C)$ is one of the pairs in Table \ref{t.p^2.2.2} for some $\alpha_i$'s, $\alpha_{ij}$'s, $\alpha_{ijk}$'s, $\beta_i$'s in $\Z$ where $\alpha_0=\alpha_{00}=\alpha_{000}=\beta_0=0$.
\end{theorem}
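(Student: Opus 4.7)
The plan is to follow the same template used in the proof of Theorem \ref{p^3.2}. Since $G=\Z_{p^2}\x\Z_2\x\Z_2$ has order $4p^2$ and $\Cay(G,S)$ is connected and non-complete, we have $3\le |\so|,|C|<4p^2$ with $|\so|\cdot|C|=4p^2$ by Lemma \ref{oplus}, noting that $|\so|>2$ by Lemma \ref{ls<so}(a) since $G$ is not cyclic. This leaves the possibilities $(|\so|,|C|)\in\{(p,4p),(4,p^2),(2p,2p),(p^2,4),(4p,p),(2p^2,2)\}$. Since $G$ is Haj\'os and $G=\so\oplus C$, at least one of $\so$ and $C$ is periodic, so I split into two cases.

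\textbf{Case 1: $\so$ aperiodic.} Then $C$ is periodic. I would write $C=\lc\oplus D$, so that by Lemma \ref{quotient} we obtain $G/\lc=A\oplus B$, where $A=(\so+\lc)/\lc$ is periodic in $G/\lc$ (since $G/\lc$ is Haj\'os and $B=(D+\lc)/\lc$ is aperiodic by Lemma \ref{quotient}) and contains $\lc$ and generates $G/\lc$ by Lemma \ref{generate-zero}. The possible triples $(|\so|,|C|,|\lc|)$ are then enumerated: when $|\lc|=|C|$, we have $C=\lc$ and $D=\{0\}$, so $A=G/\lc$ and $\so$ is read off via Lemma \ref{d_i+l_i}; when $|\lc|<|C|$, the pair $(A,B)$ realises a perfect code in some Cayley graph of the smaller Haj\'os group $G/\lc$, and we invoke Theorem \ref{p^2}, \ref{2.2}, \ref{p^2.2}, or \ref{2.2.2} (depending on whether $G/\lc\cong\Z_{p^2},\,\Z_2\x\Z_2,\,\Z_{p^2}\x\Z_2$, or $\Z_2\x\Z_2\x\Z_2$) to recover $(A,B)$, then lift back using Lemma \ref{d_i+l_i}.

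\textbf{Case 2: $\so$ periodic.} I would write $\so=D\oplus\ls$ with $1<|\ls|<|\so|$ (Lemma \ref{ls<so}(b)) and set $X=(D+\ls)/\ls$, $Y=(C+\ls)/\ls$, giving the factorisation $G/\ls=X\oplus Y$ of Lemma \ref{quotient}, in which $X$ is aperiodic, contains $\ls$, and generates $G/\ls$. Once again I enumerate the triples $(|\so|,|C|,|\ls|)$: each yields $G/\ls$ of order strictly smaller than $|G|$, isomorphic to a subgroup of $G$, and the pair $(Y,X)$ corresponds to a perfect code in some Cayley graph of $G/\ls$, so one of Theorems \ref{p^2}, \ref{2.2}, \ref{p^2.2}, \ref{2.2.2} applies. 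Applying Lemma \ref{d_i+l_i} lifts $(X,Y)$ back to $(\so,C)$.

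Two routine verifications complete the argument: each resulting pair $(\so,C)$ matches one row of Table \ref{t.p^2.2.2} (bookkeeping through two summary tables, one for Case 1 and one for Case 2, in the style of Tables \ref{tap.p^3.2} and \ref{tp.p^3.2}), and conversely every pair listed in Table \ref{t.p^2.2.2} satisfies $G=\so\oplus C$ by the same elementary sum-decomposition calculation used for Theorem \ref{p,q,q}, which we omit as announced at the start of Section \ref{sec:p^k2^l}. The main obstacle is the bookkeeping: unlike $\Z_{p^3}\x\Z_2$, the group $\Z_{p^2}\x\Z_2\x\Z_2$ has several subgroups of each relevant order (for example, three subgroups of order $2$, namely $\<\e2\>$, $\<\e3\>$, $\<\e2+\e3\>$, and three of order $2p$ and $2p^2$), so each value of $|\ls|$ or $|\lc|$ branches into several subcases depending on which concrete subgroup is chosen. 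Some of these subcases will be ruled out because the resulting quotient is a non-cyclic group of order $4$ (invoking Theorem \ref{2.2}); the remaining ones must be matched carefully with the rows of Table \ref{t.p^2.2.2}, keeping track of the auxiliary parameters $\alpha_i,\alpha_{ij},\alpha_{ijk},\beta_i$.
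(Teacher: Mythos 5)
Your template is exactly the paper's: split on whether $\so$ is periodic, pass to the quotient $G/\lc$ or $G/\ls$ via Lemma \ref{quotient}, recover the pair from the classification for a smaller Haj\'os group using Lemmas \ref{d_i+l_i} and \ref{generate-zero}, rule out $(|\so|,|C|)$ with $|\so|\le 2$ by Lemma \ref{ls<so}, and defer sufficiency to the Theorem \ref{p,q,q}-style computation; your six pairs $(|\so|,|C|)$ also match the paper's enumeration. However, there is a concrete gap in the one place where you specify which prior results drive the induction: your list of quotient types is wrong. First, $\Z_2\x\Z_2\x\Z_2$ is never a quotient of $G=\Z_{p^2}\x\Z_2\x\Z_2$, since that would require a subgroup of index $8$ while $8\nmid 4p^2$ for odd $p$; Theorem \ref{2.2.2} plays no role here. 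Much more seriously, you omit the two quotient types that carry most of the classification. The subgroup $\<{p\e1}$ of order $p$ occurs as $\lc$ or $\ls$ in several admissible triples (for instance $(2p,2p,p)$ and $(4,p^2,p)$ in the aperiodic case, and $(p^2,4,p)$, $(2p^2,2,p)$, $(4p,p,p)$ in the periodic case), with $G/\<{p\e1}\cong\Z_p\x\Z_2\x\Z_2$, which requires Theorem \ref{p,q,q} (Table \ref{t.p.q.q}); and the three subgroups of order $2p$, such as $\<{p\e1+\e2}$, give quotient $\Z_{2p}\cong\Z_p\x\Z_2$, which requires Theorem \ref{p.q} (Table \ref{t.p.q}), used by the paper in the triples $(2p^2,2,2p)$ and $(4p,p,2p)$. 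Executed exactly as you wrote it, with only Theorems \ref{p^2}, \ref{2.2}, \ref{p^2.2}, \ref{2.2.2} available, your plan would silently drop the corresponding rows of Table \ref{t.p^2.2.2} (rows 2, 6, 11, 14, 15, 18 and 19), so the ``only if'' direction would be incomplete. This is inconsistent with your own (correct) remark that there are three subgroups each of orders $2p$ and $2p^2$: the order-$2p$ subgroups force the quotient $\Z_p\x\Z_2$ into the analysis.

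A secondary inaccuracy: you claim the dead subcases are ruled out because the quotient is a non-cyclic group of order $4$ (Theorem \ref{2.2}). That mechanism only handles $(2p^2,2,p^2)$ with $\ls=\<{\e1}$. The paper's other impossible subcase, $(2p,2p,p)$ in the periodic case, is instead ruled out because no admissible $X$ exists in the quotient $\Z_p\x\Z_2\x\Z_2$: there $X$ would be an aperiodic set of size $2$ containing the zero coset and generating a non-cyclic group, which Lemma \ref{ls<so}(a) (equivalently, inspection of Table \ref{t.p.q.q}) forbids. Once you correct the list of quotient types to $\Z_{p^2}\x\Z_2$, $\Z_p\x\Z_2\x\Z_2$, $\Z_{p^2}$, $\Z_p\x\Z_2$ and $\Z_2\x\Z_2$, and add Theorems \ref{p,q,q} and \ref{p.q} to your toolkit, the remainder of your plan does go through as in the paper.
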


\rc\begin{longtable}{|c|l|l|}
\caption{Perfect codes in Cayley graphs of $\Z_{p^2}\x\Z_2\x\Z_2$}
\label{t.p^2.2.2}
\\
\fl
\tn & $\{(i+\alpha_i p)\e1+\alpha_i\e2+\beta_i\e3 \mid i\ttk{p}\}$
& $\{ip\e1+j\e2+k\e3 \mid i\ttk{p},\,j,k\ttk{2}\}$ \\
\tn & $\{(i+jp)\e1+\alpha_i\e2+\beta_i\e3 \mid i,j\ttk{p}\}$
& $\{\alpha_{ij}p\e1+i\e2+j\e3 \mid i,j\ttk{2}\}$ \\
\tn & $\{(i+jp)\e1 + \alpha_{ij}\e2 + \alpha_i\e3  \mid i,j\ttk{p}\}$
& $\{\alpha_i p\e1 + j\e2 + i\e3 \mid i,j\ttk{2}\}$ \\
\tn & $\{i\e1+\alpha_i\e2+\beta_i\e3 \mid i\ttk{p^2}\}$
& $\{i\e2+j\e3 \mid i,j\ttk{2}\}$ \\
\tn & $\{i+\alpha_ip\e1 + j\e2 + \alpha_i\e3 \mid i\ttk{p},\,j\ttk{2}\}$
& $\{ip\e1 + \alpha_{ij}\e2 + j\e3 \mid i\ttk{p},\,j\ttk{2}\}$ \\
\tn & $\{i+\alpha_{ij}p\e1 + j\e2 + \alpha_i\e3 \mid i\ttk{p},\,j\ttk{2}\}$
& $\{jp\e1+\alpha_i\e2+i\e3 \mid i\ttk{2},\,j\ttk{p} \}$ \\
\tn & $\{i+\alpha_i p\e1+\alpha_{ij}\e2+j\e3 \mid i\ttk{p},\,j\ttk{2}\}$
& $\{ ip\e1+j\e2+\alpha_i\e3 \mid i\ttk{p},\,j\ttk{2}\}$ \\
\tn & $\{\alpha_i+jp\e1 +\alpha_{ij}\e2 + i\e3 \mid i\ttk{2}, \,j\ttk{p}\}$
& $\{ i+\alpha_i p\e1+j\e2 \mid i\ttk{p},\,j\ttk{2}\}$ \\
\tn & $\{i+\alpha_{ij}p\e1 + \alpha_{ij}\e2 + j\e3 \mid i\ttk{p},\,j\ttk{2}\}$
& $\{ip\e1+j\e2 \mid i\ttk{p},\,j\ttk{2} \}$ \\
\tn & $\{\alpha_i\e1 + j\e2 + i\e3 \mid i,j\ttk{2}\}$
& $\{i\e1+\alpha_i \e2 \mid i\ttk{p^2}\}$ \\
\tn & $\{(\alpha_i+\alpha_{ij}p)\e1 + j\e2 + i\e3 \mid i,j\ttk{2}\}$
& $\{(i+jp)\e1 + \alpha_i\e2 \mid i,j\ttk{p}\}$ \\
\tn & $\{\alpha_{ij}\e1+i\e2+j\e3 \mid i,j\ttk{2}\}$
& $\{i\e1 \mid i\ttk{p^2} \}$ \\
\tn & $\{i\e1 + j\e2 + \alpha_i\e3 \mid i\ttk{p^2},\,j\ttk{2}\}$
& $\{\alpha_i\e2 + i\e3 \mid i\ttk{2}\}$ \\
\tn & $\{(i+kp)\e1 +\alpha_{ij}\e2 + j\e3 \mid i,k\ttk{p},j\ttk{2}\}$
& $\{ \alpha_i p\e1 + i\e2 \mid i\ttk{2}\}$ \\
\tn & $\{(i+jp)\e1 + k\e2 + \alpha_i \e3 \mid i,j\ttk{p},\,k\ttk{2}\}$
& $\{\alpha_i p\e1 + \alpha_i\e2 + i\e3 \mid i\ttk{2} \}$ \\
\tn & $\{i\e1+\alpha_{ij}\e2+j\e3 \mid i\ttk{p^2},\,j\ttk{2}\}$
& $\{ i\e2 \mid i\ttk{2}\}$ \\
\tn & $\{(i+\alpha_{ij}p)\e1 + k\e2 + j\e3 \mid i\ttk{p},\,j,k\ttk{2}\}$
& $\{ip\e1 + \alpha_i\e2 \mid i\ttk{p} \}$ \\
\tn & $\{(\alpha_{ij}+kp)\e1+i\e2+j\e3 \mid i,j\ttk{2},\,k\ttk{p}\}$
& $\{ (i+\alpha_ip)\e1 \mid i\ttk{p} \}$ \\
\tn & $\{(\alpha_i+jp)\e1 + k\e2 + i \e3 \mid i,k\ttk{2},\,j\ttk{p} \}$
& $\{(i+\alpha_ip)\e1 + \alpha_i\e2 \mid i\ttk{p} \}$ \\
\tn & $\{(i+\alpha_ip)\e1+j\e2+k\e3 \mid i\ttk{p},\,j,k\ttk{2} \}$
& $\{ip\e1+\alpha_i\e2+\beta_i\e3  \mid i\ttk{p} \}$ \\
\tn & $\{(i+\alpha_{ijk}p)\e1+j\e2+k\e3 \mid i\ttk{p},\,j,k\ttk{2}\}$
& $\{ip\e1  \mid i\ttk{p} \}$ \\
\hline
\end{longtable}

\begin{proof}
    Suppose $C$ is a perfect code in $\Cay(G,S)$. Then $G=\so\oplus C$ and so $|\so|\cdot |C|=|G|=4p^2$.
    Since $G$ is not a cyclic group, $|\so|>2$ by Lemma \ref{ls<so}.
    Therefore, $2<|\so|<4p^2$.

\smallskip
\textsf{Case 1.} $\so$ is aperiodic.
\smallskip

    Since $G$ is Haj\'os and $G=\so\oplus C$, in this case $C$ must be periodic, that is, $\lc$ is nontrivial.
    Write $C=\lc\oplus D$ for some subset $D$ of $G$ whose existence is ensured by Lemma \ref{period}.
    Then $|\lc|\cdot |D|=|C|$ and $2\leq |\lc|\leq |C|$.
    If $|\so|$ is a prime, then $|\lc|=|C|$ by Lemma \ref{prime deg} part (c).
    If $|\so|$ is not a prime, then $|\lc|$ can be any divisor of $|C|$.
    Set $$A=(\so+\lc)/\lc\quad\text{and}\quad B=(D+\lc)/\lc.$$
    Since $C$ is periodic, by Lemma \ref{quotient}, we have
    $$G/\lc=A\oplus B$$ and $B$ is aperiodic in $G/\lc$.
    Since $G/\lc$ is isomorphic to a subgroup of $G$ and subgroups of Haj\'os group are also Haj\'os groups, $A$ is periodic in $G/\lc$.
    We also know that $A$ contains $\lc$ and $A$ generates $G/\lc$ by Lemma \ref{generate-zero}.
    Furthermore, $|A|=|\so|$ and $|B|=|D|$.
    Note that if $|\lc|=|C|$, then $C=\lc$ as $0\in C$.
    This means we can take $D=\{0\}$ and hence $B=\lc/\lc$ and $G/\lc=A$.
    We use $C$ to get $A$ and then use Lemma \ref{d_i+l_i} to obtain $\so$.
    If $|\lc|<|C|$, then this factorization enables us to make use of previously established theorems to obtain the pair $(\so, C)$ via $(A,B)$.

    The columns of Table \ref{tap.p^2.2.2} represents all possible values for $\left(|\so|,|C|,|\lc|\right)$; $C$, $A$, and $\so$ when $|\lc|=|C|$ or $L_C$, $A$, $B$, $\so$, and $C$ when $|\lc|<|C|$; which previous theorem is used to obtain $(A,B)$ when $|\lc|<|C|$; and which row in Table \ref{t.p^2.2.2} the result corresponds to.
    
\begin{longtable}{|c|l|c|c|}
\caption{The case when $G=\Z_{p^2}\x\Z_2\x\Z_2 $ and $\so$ is aperiodic}
\label{tap.p^2.2.2}\\
\flap

& $C=\<{p\e1+\e2,\e3} $ & & \\
$(p,4p,4p)$ 
& $A=\{i\e1+\lc \mid i\ttk{p}\} $ 
& & row 1 \\
& $\so=\{(i+\alpha_ip)\e1+\alpha_i\e2+\beta_i\e3 \mid i\ttk{p}\} $ & & \\ \hline

& $\lc=\<{\e2} $ & & \\
& $A=\{(i+jp)\e1 + \alpha_i\e3 + \lc \mid i,j\ttk{p}\} $ & & \\
$(p^2,4,2)$ 
& $B=\{\alpha_i p\e1 + i\e3 + \lc \mid i\ttk{2}\} $ 
& \ref{p^2.2} \row3 & row 3 \\
& $\so=\{(i+jp)\e1 + \alpha_{ij}\e2 + \alpha_i\e3 \mid i,j\ttk{p}\} $ & & \\
& $C=\{\alpha_i p\e1 + j\e2 + i\e3  \mid i,j\ttk{2}\} $ & & \\ \hline

& $C=\<{\e2,\e3} $ & & \\
$(p^2,4,4)$ 
& $A=\{i\e1+\lc \mid i\ttk{p^2}\} $ 
& & row 4 \\
& $\so=\{i\e1+\alpha_i\e2+\beta_i\e3 \mid i\ttk{p^2}\} $ & & \\ \hline

& $\lc=\<{p\e1} $ & & \\
& $A=\{i\e1+j\e2+\alpha_i\e3 +\lc \mid i\ttk{p},\,j\ttk{2}\} $ & & \\
$(2p,2p,p)$ 
& $B=\{\alpha_i\e2+i\e3 + \lc \mid i\ttk{2}\} $ 
& \ref{p.2.2} \row4 & row 6 \\
& $\so=\{(i+\alpha_{ij}p)\e1+j\e2+\alpha_i\e3 \mid i\ttk{p},\,j\ttk{2}\} $ & & \\
& $C=\{\alpha_i\e2+i\e3 + jp\e1 \mid i\ttk{2}\,j\ttk{p} \} $ & & \\ \hline

& $\lc=\<{\e2} $ & & \\
& $A=\{(i+\alpha_i p)\e1+j\e3+\lc \mid i\ttk{p},\,j\ttk{2}\} $ & & \\
$(2p,2p,2)$ 
& $B=\{ip\e1+\alpha_i\e3+\lc \mid i\ttk{p}\} $ 
& \ref{p^2.2} \row5 & row 7 \\
& $\so=\{(i+\alpha_i p)\e1+j\e3+\alpha_{ij}\e2 \mid i\ttk{p},\,j\ttk{2}\} $ & & \\
& $C=\{ip\e1+j\e2+\alpha_i\e3 \mid i\ttk{p},\,j\ttk{2}\} $ & & \\ \hline

& $\lc=\<{\e2} $ & & \\
& $A=\{(\alpha_i+jp)\e1 + i\e3 +\lc \mid i\ttk{2}, \,j\ttk{p}\} $ & & \\
$(2p,2p,2)$ 
& $B=\{(i+\alpha_i p)\e1+\lc \mid i\ttk{p}\} $ 
& \ref{p^2.2} \row6 & row 8 \\
& $\so=\{(\alpha_i+jp)\e1 + i\e3 +\alpha_{ij}\e2 \mid i\ttk{2}, \,j\ttk{p}\} $ & & \\
& $C=\{(i+\alpha_i p)\e1+j\e2 \mid i\ttk{p},\,j\ttk{2}\} $ & & \\ \hline

& $C=\<{p\e1+\e2} $ & & \\
$(2p,2p,2p)$ 
& $A=\{i\e1 + j\e3 + \lc \mid i\ttk{p},\,j\ttk{2}\} $ 
& & row 9 \\
& $\so=\{(i+\alpha_{ij}p)\e1 + \alpha_{ij}\e2 + j\e3 \mid i\ttk{p},\,j\ttk{2}\} $ & & \\ \hline

& $\lc=\<{p\e1} $ & & \\
& $A=\{\alpha_i\e1 + j\e2 + i\e3 + \lc \mid i,j\ttk{2}\} $ & & \\
$(4,p^2,p)$ 
& $B=\{i\e1 + \alpha_i\e2 + \lc \mid i\ttk{p}\} $ 
& \ref{p.2.2} \row6 & row 11 \\
& $\so=\{(\alpha_i+\alpha_{ij}p)\e1 + j\e2 + i\e3 \mid i,j\ttk{2}\} $ & & \\
& $C=\{(i+jp)\e1 + \alpha_i\e2 \mid i,j\ttk{p}\} $ & & \\ \hline

& $C=\<{\e1} $ & & \\
$(4,p^2,p^2)$ 
& $A=\{i\e2+j\e3 + \lc \mid i,j\ttk{2}\} $ 
& & row 12 \\
& $\so=\{\alpha_{ij}\e1+i\e2+j\e3 \mid i,j\ttk{2}\} $ & & \\ \hline

& $C=\<{\e2} $ & & \\
$(2p^2,2,2)$ 
& $A=\{i\e1 + j\e3 + \lc \mid i\ttk{p^2},\,j\ttk{2}\} $ 
& & row 16 \\
& $\so=\{i\e1+j\e3+\alpha_{ij}\e2 \mid i\ttk{p^2},\,j\ttk{2}\} $ & & \\ \hline

& $C=\<{p\e1} $ & & \\
$(4p,p,p)$ 
& $A=\{i\e1+j\e2+k\e3 + \lc \mid i\ttk{p},\,j,k\ttk{2}\} $ 
& & row 21 \\
& $\so=\{(i+\alpha_{ijk}p)\e1+j\e2+k\e3 \mid i\ttk{p},\,j,k\ttk{2}\} $ & & \\ \hline



\end{longtable}

\smallskip
    \textsf{Case 2.} $\so$ is periodic.
    \smallskip

    In this case we have $|\ls|\geq 2$ and $\so=D\oplus \ls$ for some subset $D$ of $G$ by Lemma \ref{period}.
    So $|D|\cdot|\ls|=|\so|$ and $1<|\ls|<|\so|$ by Lemma \ref{prime deg} part (b).
    Set $$X=(D+\ls)/\ls\quad\text{and}\quad Y=(C+\ls)/\ls.$$
    Since $\so$ is periodic, by Lemma \ref{quotient}, we have
    $$G/\ls=X\oplus Y$$ and $X$ is aperiodic in $G/\ls$.
    We also know that $X$ contains $\ls$ and $X$ generates $G/\ls$ by Lemma \ref{generate-zero}.
    This factorization enables us to make use of Lemma \ref{d_i+l_i} and previously established theorems to obtain the pair $(\so, C)$ via $(X,Y)$.
    
    The columns of Table \ref{tp.p^2.2.2} represents all possible values of $\left(|\so|,|C|,|\ls|\right)$; $\ls$, $X$, $Y$, $\so$, and $C$; which previous theorem is used to obtain $(X,Y)$; and which row in Table \ref{t.p^2.2.2} the result corresponds to. 
    Note that in some cases, for example when $\left(|\so|,|C|,|\ls|\right)=(2p,2p,p)$, no $S_0$ and $C$ exist since no $X$ exist from the previous theorem given in the third column.

\begin{longtable}{|c|l|c|c|}
\caption{The case when $G=\Z_{p^2}\x\Z_2\x\Z_2 $ and $\so$ is periodic}
\label{tp.p^2.2.2}
\\
\flp

& $\ls=\<{p\e1} $ & & \\
& $X=\{i\e1+\alpha_i\e2+\beta_i\e3+\ls \mid i\ttk{p}\} $ & & \\
$(p^2,4,p)$ 
& $Y=\{i\e1+j\e2+\ls \mid i,j\ttk{2}\} $ 
& \ref{p.2.2} \row1 & row 2 \\
& $\so=\{(i+jp)\e1+\alpha_i\e2+\beta_i\e3 \mid i,j\ttk{p}\} $ & & \\
& $C=\{\alpha_{ij}p\e1+i\e2+j\e3 \mid i,j\ttk{2}\} $ & & \\ \hline

& $\ls=\<{\e2} $ & & \\
& $X=\{(i+\alpha_ip)\e1 + \alpha_i\e3 + \ls \mid i\ttk{p}\} $ & & \\
$(2p,2p,2)$ 
& $Y=\{ip\e1 + j\e3 + \ls \mid i\ttk{p},\,j\ttk{2}\} $ 
& \ref{p^2.2} \row1 & row 5 \\
& $\so=\{(i+\alpha_ip)\e1 + j\e2 + \alpha_i\e3 \mid i\ttk{p},\,j\ttk{2}\} $ & & \\
& $C=\{ip\e1 + \alpha_{ij}\e2 + j\e3 \mid i\ttk{p},\,j\ttk{2}\} $ & & \\ \hline

$(2p,2p,p)$ 
& Not possible since no $X$ exists
& \ref{p.2.2}  & \\ \hline

& $\ls=\<{\e2} $ & & \\
& $X=\{\alpha_i\e1 + i\e3 + \ls \mid i\ttk{2}\} $ & & \\
$(4,p^2,2)$ 
& $Y=\{i\e1+\ls \mid i\ttk{p^2}\} $ 
& \ref{p^2.2} \row2 & row 10 \\
& $\so=\{\alpha_i\e1 + j\e2 + i\e3  \mid i,j\ttk{2}\} $ & & \\
& $C=\{i\e1+\alpha_i \e2 \mid i\ttk{p^2}\} $ & & \\ \hline

& $\ls=\<{\e2} $ & & \\
& $X=\{i\e1 + \alpha_i\e3 +\ls \mid i\ttk{p^2}\} $ & & \\
$(2p^2,2,2)$ 
& $Y=\{ i\e3 + \ls \mid i\ttk{2}\} $ 
& \ref{p^2.2} \row4 & row 13 \\
& $\so=\{i\e1 + j\e2 + \alpha_i\e3 \mid i\ttk{p^2},\,j\ttk{2}\} $ & & \\
& $C=\{\alpha_i\e2 + i\e3 \mid i\ttk{2}\} $ & & \\ \hline

& $\ls=\<{p\e1} $ & & \\
& $X=\{i\e1 +\alpha_{ij}\e2 + j\e3 + \ls \mid i\ttk{p},j\ttk{2}\} $ & & \\
$(2p^2,2,p)$ 
& $Y=\{ i\e2 + \ls \mid i\ttk{2}\} $ 
& \ref{p.2.2} \row5 & row 14 \\
& $\so=\{(i+kp)\e1 +\alpha_{ij}\e2 + j\e3 \mid i,k\ttk{p},j\ttk{2}\} $ & & \\
& $C=\{ i\e2 + \alpha_i p\e1 \mid i\ttk{2}\} $ & & \\ \hline

& $\ls=\<{p\e1+\e2} $ & & \\
& $X=\{i\e1 + \alpha_i \e3 + \ls \mid i\ttk{p}\} $ & & \\
$(2p^2,2,2p)$ 
& $Y=\{i\e3 +\ls \mid i\ttk{2} \} $ 
& \ref{p.q} \row1 & row 15 \\
& $\so=\{i\e1 + \alpha_i \e3 + j p\e1 + k\e2 \mid i,j\ttk{p},\,k\ttk{2}\} $ & & \\
& $C=\{i\e3 + \alpha_i p\e1 +\alpha_i\e2 \mid i\ttk{2} \} $ & & \\ \hline

$(2p^2,2,p^2)$ 
& Not possible since no $X$ exists
& \ref{2.2}  & \\ \hline

& $\ls=\<{\e2} $ & & \\
& $X=\{(i+\alpha_{ij}p)\e1 + j\e3 +\ls \mid i\ttk{p},\,j\ttk{2}\} $ & & \\
$(4p,p,2)$ 
& $Y=\{ip\e1 + \ls \mid i\ttk{p} \} $ 
& \ref{p^2.2} \row7 & row 17 \\
& $\so=\{(i+\alpha_{ij}p)\e1 + k\e2 + j\e3 \mid i\ttk{p},\,j,k\ttk{2}\} $ & & \\
& $C=\{ip\e1 + \alpha_i\e2 \mid i\ttk{p} \} $ & & \\ \hline

& $\ls=\<{p\e1} $ & & \\
& $X=\{\alpha_{ij}\e1+i\e2+j\e3 + \ls \mid i,j\ttk{2}\} $ & & \\
$(4p,p,p )$ 
& $Y=\{ i\e1 + \ls \mid i\ttk{p} \} $ 
& \ref{p.2.2} \row7 & row 18 \\
& $\so=\{(\alpha_{ij}+kp)\e1+i\e2+j\e3 \mid i,j\ttk{2},\,k\ttk{p}\} $ & & \\
& $C=\{ (i+\alpha_ip)\e1 \mid i\ttk{p} \} $ & & \\ \hline

& $\ls=\<{p\e1+\e2} $ & & \\
& $X=\{\alpha_i\e1 + i \e3 + \ls \mid i\ttk{2}\} $ & & \\
$(4p,p,2p)$ 
& $Y=\{i\e1 +\ls \mid i\ttk{p} \} $ 
& \ref{p.q} \row2 & row 19 \\
& $\so=\{(\alpha_i+jp)\e1 + k\e2 + i\e3 \mid i,k\ttk{2},\,j\ttk{p} \} $ & & \\
& $C=\{(i+\alpha_ip)\e1 + \alpha_i\e2 \mid i\ttk{p} \} $ & & \\ \hline

& $\ls=\<{\e2,\e3} $ & & \\
& $X=\{(i+\alpha_ip)\e1 + \ls \mid i\ttk{p}\} $ & & \\
$(4p,p,4)$ 
& $Y=\{ip\e1 +\ls \mid i\ttk{p} \} $ 
& \ref{p^2} & row 20 \\
& $\so=\{(i+\alpha_ip)\e1+j\e2+k\e3 \mid i\ttk{p},\,j,k\ttk{2} \} $ & & \\
& $C=\{ip\e1+\alpha_i\e2+\beta_i\e3  \mid i\ttk{p} \} $ & & \\ \hline


\end{longtable}
\end{proof}

\begin{remark}
    In Theorem \ref{p^2.2.2}, we can change $\e2$ and $\e3$ to any two generators of $\{0\}\x\Z_2\x\Z_2$.
\end{remark}

\begin{theorem}\label{p.2.2.2}
Let $G=\Z_p\x\Z_2\x\Z_2\x\Z_2$ for an odd prime $p$ and let $S$ be a proper subset of $G\setminus \{0\}$ that generates $G$. Let $0\in C\subseteq G$. Then $C$ is a perfect code in $\Cay(G,S)$ if and only if $(\so, C)$ is one of the pairs in Table \ref{t.p.2.2.2} for some $\alpha_i$'s, $\alpha_{ij}$'s, $\alpha_{ijk}$'s, $\beta_i$'s in $\Z$ where $\alpha_0=\alpha_{00}=\alpha_{000}=\beta_0=0$.
\end{theorem}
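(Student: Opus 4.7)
The plan is to follow the same template used in the proofs of Theorems \ref{p^3.2} and \ref{p^2.2.2}, adapted to $G=\Z_p\x\Z_2^3$ of order $8p$. The sufficiency direction will be handled exactly as in Theorem \ref{p,q,q}: for each candidate pair $(\so, C)$ in Table \ref{t.p.2.2.2} one checks that $|\so|\cdot|C|=8p=|G|$ and then, for any given $\mathbf{x}\in G$, solves for the unique $\mathbf{s}\in\so$ and $\mathbf{c}\in C$ with $\mathbf{x}=\mathbf{s}+\mathbf{c}$ by reading off the free coordinates; the calculations are routine and will be omitted as announced at the start of Section \ref{sec:p^k2^l}.

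For necessity, suppose $C$ is a perfect code in $\Cay(G,S)$, so $G=\so\oplus C$ by Lemma \ref{lem:factor}. Since $G$ is not cyclic, Lemma \ref{ls<so}(a) gives $|\so|>2$, and since $\Cay(G,S)$ is not complete we have $|C|\geq 2$. Hence $|\so|\in\{4,\, p,\, 2p,\, 4p,\, 8,\, 2p^{0}\cdot\text{none else}\}$; more precisely $(|\so|,|C|)$ ranges over the divisor pairs of $8p$ with both entries at least $2$ and $|\so|\geq 3$. I then split into the two standard cases.

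\emph{Case 1: $\so$ aperiodic.} Because $G$ is Haj\'os, $C$ is periodic; write $C=\lc\oplus D$. Set $A=(\so+\lc)/\lc$ and $B=(D+\lc)/\lc$, so that $G/\lc=A\oplus B$ is a factorization of the subgroup-type group $G/\lc$ by Lemma \ref{quotient}, with $B$ aperiodic and $A$ periodic (since $G/\lc$ is again Haj\'os). By Lemma \ref{generate-zero}, $A$ contains $\lc$ and generates $G/\lc$. When $|\lc|=|C|$ we take $D=\{0\}$ so $G/\lc=A$, read off $A$ directly from $C$, and recover $\so$ by Lemma \ref{d_i+l_i}. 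When $|\lc|<|C|$, the pair $(A,B)$ is a factorization of a proper subgroup-quotient of $G$ (isomorphic to some $\Z_p\x\Z_2^j$, $\Z_p$, or $\Z_2^j$), and I apply one of the previously established theorems (\ref{p^2}, \ref{2.2}, \ref{2.2.2}, \ref{p.q}, \ref{p.2.2}, or \ref{p,q,q}) to identify $(A,B)$, then lift back to $(\so,C)$ as in the proof of Theorem \ref{p^3.2}.

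\emph{Case 2: $\so$ periodic.} Write $\so=D\oplus\ls$ with $1<|\ls|<|\so|$ (Lemma \ref{ls<so}(b)), and set $X=(D+\ls)/\ls$, $Y=(C+\ls)/\ls$; then $G/\ls=X\oplus Y$ with $X$ aperiodic, $X$ containing $\ls$, and $X$ generating $G/\ls$. Using the appropriate previously established theorem for the Haj\'os group $G/\ls$, I determine $(X,Y)$ and then reconstruct $\so$ and $C$ by Lemma \ref{d_i+l_i}. The whole case analysis will be presented in two tables, one for each case, listing for each admissible triple $(|\so|,|C|,|\lc|)$ or $(|\so|,|C|,|\ls|)$ the data $(\lc, A, B, \so, C)$ or $(\ls, X, Y, \so, C)$, the reference theorem used, and the row of Table \ref{t.p.2.2.2} obtained. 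The main obstacle is purely organizational: since $8p$ has many divisors and $\Z_2^3$ has several subgroups of each order $1,2,4$ (so $\lc$ and $\ls$ have several isomorphism types to consider for each value of $|\lc|$ or $|\ls|$), the case list is longer than in Theorem \ref{p^2.2.2}; verifying exhaustiveness, and checking that the 21 rows of Table \ref{t.p.2.2.2} are each produced exactly once (up to the choice of generators of $\{0\}\x\Z_2^3$, as in the remark following Theorem \ref{2.2.2}), is the delicate bookkeeping step. A remark noting that $\e2,\e3,\e4$ may be replaced by any three generators of $\{0\}\x\Z_2^3$ will complete the statement.
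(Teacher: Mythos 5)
Your plan matches the paper's proof essentially verbatim: sufficiency is dispatched by the omitted routine computation announced at the start of Section \ref{sec:p^k2^l}, and necessity is split into the aperiodic and periodic cases for $\so$, using the quotient factorization of Lemma \ref{quotient}, the lifting of Lemma \ref{d_i+l_i}, the constraint $|\lc|=|C|$ when $|\so|$ is prime (Lemma \ref{prime deg}(c)), and tabulated triples $(|\so|,|C|,|\lc|)$ resp.\ $(|\so|,|C|,|\ls|)$ resolved by Theorems \ref{p.q}, \ref{2.2}, \ref{2.2.2} and \ref{p,q,q}, exactly as in the paper's Tables \ref{tap.p.2.2.2} and \ref{tp.p.2.2.2}, including the ``no $X$ (or $A$) exists'' exclusions your exhaustiveness check would catch. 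The only discrepancies are harmless bookkeeping guesses made blind (Table \ref{t.p.2.2.2} has $13$ rows rather than $21$, and Theorem \ref{p^2} is never invoked since no quotient of $G$ contains $\Z_4$), which do not affect the argument.
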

\rc\begin{longtable}{|c|l|l|}
\caption{Perfect codes in Cayley graphs of $\Z_p\x\Z_2\x\Z_2\x\Z_2$}
\label{t.p.2.2.2}
\\
\fl
\tn & $\{i\e1+\alpha_i\e2+\beta_i\e3+\gamma_i\e4 \mid i\ttk{p}\}$
& $\{i\e2+j\e3+k\e4 \mid i,j,k\ttk{2} \}$ \\
\tn & $\{i\e1 + j\e2 + \alpha_i\e3 + \beta_i\e4 \mid i\ttk{p},j\ttk{2}\}$
& $\{\alpha_{ij}\e2+i\e3 +j\e4 \mid i,j\ttk{2}\}$ \\
\tn & $\{i\e1+\alpha_{ij}\e2+j\e3+\alpha_i\e4 \mid i\ttk{p},\,j\ttk{2}\}$
& $\{+ j\e2+\alpha_i\e3+i\e4  \mid i,j\ttk{2}\}$ \\
\tn & $\{i\e1 + \alpha_{ij}\e2 + \beta_{ij}\e3 + j\e4 \mid i\ttk{p},\,j\ttk{2}\}$
& $\{i\e2+j\e3 \mid i,j\ttk{2} \}$ \\
\tn & $\{\alpha_i\e1 +\alpha_{ij}\e2+j\e3+i\e4 \mid i,j\ttk{2}\}$
& $\{i\e1 + j\e2 +\alpha_i\e3 \mid i\ttk{p},\,j\ttk{2}\}$ \\
\tn & $\{\alpha_{ij}\e1 + \alpha_{ij}\e2 + i\e3 + j\e4 \mid i,j\ttk{2}\}$
& $\{i\e1+ j\e2 \mid i\ttk{p},\,j\ttk{2}\}$ \\
\tn & $\{i\e1 + k\e2 + \alpha_{ij}\e3 + j\e4  \mid i\ttk{p},\,j,k\ttk{2}\}$
& $\{\alpha_i\e2 + i\e3 \mid i\ttk{2} \}$ \\
\tn & $\{k\e1 + \alpha_{ij}\e2 + i\e3 + j\e4  \mid i,j\ttk{2},\,k\ttk{p}\}$
& $\{ \alpha_i\e1 + i\e2 \mid i\ttk{2} \}$ \\
\tn & $\{i\e1 + j\e2 + k\e3 + \alpha_i\e4 \mid i\ttk{p},\,j,k\ttk{2}\}$
& $\{\alpha_i\e2 +\beta_i\e3 +i\e4 \mid i\ttk{2} \}$ \\
\tn & $\{i\e1 + \alpha_{ijk}\e2+ j\e3 + k\e4  \mid i\ttk{p},\,j,k\ttk{2}\}$
& $\{i\e2 \mid i\ttk{2}\}$ \\
\tn & $\{\alpha_{ij}\e1 + k\e2 + i\e3+j\e4 \mid i,j,k\ttk{2}\}$
& $\{i\e1 + \alpha_i \e2 \mid i\ttk{p} \}$ \\
\tn & $\{\alpha_i\e1 + j\e2 + k\e3 + i\e4 \mid i,j,k\ttk{2}\}$
& $\{ i\e1 + \alpha_i\e2 + \beta_i\e3 \mid i\ttk{p} \}$ \\
\tn & $\{\alpha_{ijk}\e1+i\e2+j\e3+k\e4 \mid i,j,k\ttk{2}\}$
& $\{ i\e1 \mid i\ttk{p} \}$ \\
\hline
\end{longtable}

\begin{proof}
    Suppose $C$ is a perfect code in $\Cay(G,S)$. Then $G=\so\oplus C$ and so $|\so|\cdot |C|=|G|=8p$. 
    Since $G$ is not a cyclic group, $|\so|>2$ by Lemma \ref{ls<so}.
    Therefore, $2<|\so|<8p$.

\smallskip
    \textsf{Case 1.} $\so$ is aperiodic.
    \smallskip

    Since $G$ is Haj\'os and $G=\so\oplus C$, in this case $C$ must be periodic, that is, $\lc$ is nontrivial.
    Write $C=\lc\oplus D$ for some subset $D$ of $G$ whose existence is ensured by Lemma \ref{period}.
    Then $|\lc|\cdot |D|=|C|$ and $2\leq |\lc|\leq |C|$.
    If $|\so|$ is a prime, then $|\lc|=|C|$ by Lemma \ref{prime deg} part (c).
    If $|\so|$ is not a prime, then $|\lc|$ can be any divisor of $|C|$.
    Set $$ A=(\so+\lc)/\lc\quad\text{and}\quad B=(D+\lc)/\lc.$$
    Since $C$ is periodic, by Lemma \ref{quotient}, we have
    $$G/\lc=A\oplus B$$ and $B$ is aperiodic in $G/\lc$.
    Since $G/\lc$ is isomorphic to a subgroup of $G$ and subgroups of Haj\'os group are also Haj\'os groups, $A$ is periodic in $G/\lc$.
    We also know that $A$ contains $\lc$ and $A$ generates $G/\lc$ by Lemma \ref{generate-zero}.
    Furthermore, $|A|=|\so|$ and $|B|=|D|$.
    Note that if $|\lc|=|C|$, then $C=\lc$ as $0\in C$.
    This means we can take $D=\{0\}$ and hence $B=\lc/\lc$ and $G/\lc=A$.
    We use $C$ to get $A$ and then use Lemma \ref{d_i+l_i} to obtain $\so$.
    If $|\lc|<|C|$, then this factorization enables us to make use of previously established theorems to obtain the pair $(\so, C)$ via $(A,B)$.

    Similar to Table \ref{tap.p^2.2.2}, the columns of Table \ref{tap.p.2.2.2} represents all possible values for $\left(|\so|,|C|,|\lc|\right)$; $C$, $A$, and $\so$ when $|\lc|=|C|$ or $L_C$, $A$, $B$, $\so$, and $C$ when $|\lc|<|C|$; which previous theorem is used to obtain $(A,B)$ when $|\lc|<|C|$; and which row in Table \ref{t.p.2.2.2} the result corresponds to.
        
\begin{longtable}{|c|l|c|c|}
\caption{The case when $G=\Z_p\x\Z_2\x\Z_2\x\Z_2$ and $\so$ is aperiodic}
\label{tap.p.2.2.2}
\\
\flap

& $C=\<{\e2,\e3,\e4} $ & & \\
$(p,8,8)$ 
& $A=\{i\e1+\lc \mid i\ttk{p}\} $ 
& & row 1 \\
& $\so=\{i\e1+\alpha_i\e2+\beta_i\e3+\gamma_i\e4 \mid i\ttk{p}\} $ & & \\ \hline

& $\lc=\<{\e2} $ & & \\
& $A=\{i\e1+j\e3+\alpha_i\e4 +\lc \mid i\ttk{p},\,j\ttk{2}\} $ & & \\
$(2p,4,2)$ 
& $B=\{\alpha_i\e3 + i\e4 + \lc \mid i\ttk{2}\} $ 
& \ref{p.2.2} \row4 & row 3 \\
& $\so=\{i\e1+j\e3+\alpha_i\e4 +\alpha_{ij}\e2 \mid i\ttk{p},\,j\ttk{2}\} $ & & \\
& $C=\{\alpha_i\e3 + i\e4 + j\e2 \mid i,j\ttk{2}\} $ & & \\ \hline

& $C=\<{\e2,\e3} $ & & \\
$(2p,4,4)$ 
& $A=\{i\e1 + j\e4 + \lc \mid i\ttk{p},\,j\ttk{2}\} $ 
& & row 4 \\
& $\so=\{i\e1 + \alpha_{ij}\e2 + \beta{ij}\e3 + j\e4 \mid i\ttk{p},\,j\ttk{2}\} $ & & \\ \hline

$(4,2p,p)$ 
& Not possible since no $A$ exists
& \ref{2.2.2}  & \\ \hline

& $\lc=\<{\e2} $ & & \\
& $A=\{\alpha_i\e1+j\e3+i\e4 +\lc \mid i,j\ttk{2}\} $ & & \\
$(4,2p,2)$ 
& $B=\{i\e1+\alpha_i\e3 + \lc \mid i\ttk{p}\} $ 
& \ref{p.2.2} \row6 & row 5 \\
& $\so=\{\alpha_i\e1+j\e3+i\e4 +\alpha_{ij}\e2 \mid i,j\ttk{2}\} $ & & \\
& $C=\{i\e1+j\e2 + \alpha_i\e3 \mid i\ttk{p},\,j\ttk{2}\} $ & & \\ \hline

& $C=\<{\e1+\e2} $ & & \\
$(4,2p,2p)$ 
& $A=\{i\e3 + j\e4 + \lc \mid i,j\ttk{2}\} $ 
& & row 6 \\
& $\so=\{\alpha_{ij}\e1+ \alpha_{ij}\e2 + i\e3 + j\e4 \mid i,j\ttk{2}\} $ & & \\ \hline

& $C=\<{\e2} $ & & \\
$(4p,2,2)$ 
& $A=\{i\e1 + j\e3 + k\e4 + \lc \mid i\ttk{p},\,j,k\ttk{2}\} $ 
& & row 10 \\
& $\so=\{i\e1 + \alpha_{ijk}\e2 + j\e3 + k\e4 \mid i\ttk{p},\,j,k\ttk{2}\} $ & & \\ \hline

& $C=\<{\e1} $ & & \\
$(8,p,p)$ 
& $A=\{i\e1+j\e2+k\e3 \mid i,j,k\ttk{2}\} $ 
& & row 13 \\
& $\so=\{\alpha_{ijk}\e1+i\e2+j\e3+k\e4 \mid i,j,k\ttk{2}\} $ & & \\ \hline

\end{longtable}

\smallskip
    \textsf{Case 2.} $\so$ is periodic.
    \smallskip

    In this case we have $|\ls|\geq 2$ and $\so=D\oplus \ls$ for some subset $D$ of $G$ by Lemma \ref{period}.
    So $|D|\cdot|\ls|=|\so|$ and $1<|\ls|<|\so|$ by Lemma \ref{prime deg} part (b).
    Set $$X=(D+\ls)/\ls\quad\text{and}\quad Y=(C+\ls)/\ls.$$
    Since $\so$ is periodic, by Lemma \ref{quotient}, we have
    $$G/\ls=X\oplus Y$$ and $X$ is aperiodic in $G/\ls$.
    We also know that $X$ contains $\ls$ and $X$ generates $G/\ls$ by Lemma \ref{generate-zero}.
    This factorization enables us to make use of Lemma \ref{d_i+l_i} and previously established theorems to obtain the pair $(\so, C)$ via $(X,Y)$.
    
    Similar to Table \ref{tp.p^2.2.2}, the columns of Table \ref{tp.p.2.2.2} represents all possible values for $\left(|\so|,|C|,|\ls|\right)$; $\ls$, $X$, $Y$, $\so$, and $C$; which previous theorem is used to obtain $(X,Y)$; and which row in Table \ref{t.p.2.2.2} the result corresponds to.

\begin{longtable}{|c|l|c|c|}
\caption{The case when $G=\Z_p\x\Z_2\x\Z_2\x\Z_2$ and $\so$ is periodic}
\label{tp.p.2.2.2}
\\
\flp

& $\ls=\<{\e2} $ & & \\
& $X=\{i\e1 + \alpha_i\e3 + \beta_i\e4 + \ls \mid i\ttk{p}\} $ & & \\
$(2p,4,2)$ 
& $Y=\{i\e3 +j\e4 + \ls \mid i,j\ttk{2}\} $ 
& \ref{p.2.2} \row1 & row 2 \\
& $\so=\{i\e1 + j\e2 + \alpha_i\e3 + \beta_i\e4 \mid i\ttk{p},j\ttk{2}\} $ & & \\
& $C=\{i\e3 +j\e4 + \alpha_{ij}\e2 \mid i,j\ttk{2}\} $ & & \\ \hline

$(2p,4,p)$ 
& Not possible since no $X$ exists
& \ref{2.2.2}  & \\ \hline

$(4,2p,2)$ 
& Not possible since no $X$ exists
& \ref{p.2.2}  & \\ \hline

& $\ls=\<{\e2} $ & & \\
& $X=\{i\e1 + \alpha_{ij}\e3+ j\e4  + \ls \mid i\ttk{p},\,j\ttk{2}\} $ & & \\
$(4p,2,2)$ 
& $Y=\{i\e3 + \ls \mid i\ttk{2} \} $ 
& \ref{p.2.2} \row5 & row 7 \\
& $\so=\{i\e1 + \alpha_{ij}\e3 + j\e4  + k\e2 \mid i\ttk{p},\,j,k\ttk{2}\} $ & & \\
& $C=\{\alpha_i\e2 + i\e3 \mid i\ttk{2} \} $ & & \\ \hline

& $\ls=\<{\e1} $ & & \\
& $X=\{\alpha_{ij}\e2+i\e3 + j\e4 + \ls \mid i,j\ttk{2}\} $ & & \\
$(4p,2,p)$ 
& $Y=\{ i\e2 + \ls \mid i\ttk{2} \} $ 
& \ref{2.2.2} & row 8 \\
& $\so=\{k\e1+\alpha_{ij}\e2+ i\e3 + j\e4 \mid i,j\ttk{2},\,k\ttk{p}\} $ & & \\
& $C=\{ \alpha_i\e1 + i\e2 \mid i\ttk{2} \} $ & & \\ \hline

$(4p,2,2p)$ 
& Not possible since no $X$ exists
& \ref{2.2}  & \\ \hline

& $\ls=\<{\e2,\e3} $ & & \\
& $X=\{i\e1 + \alpha_i \e4 + \ls \mid i\ttk{p}\} $ & & \\
$(4p,2,4)$ 
& $Y=\{i\e4 +\ls \mid i\ttk{2} \} $ 
& \ref{p.q} \row1 & row 9 \\
& $\so=\{i\e1 + j\e2 + k\e3 + \alpha_i \e4 \mid i\ttk{p},\,j,k\ttk{2}\} $ & & \\
& $C=\{\alpha_i\e2 +\beta_i\e3 + i\e4 \mid i\ttk{2} \} $ & & \\ \hline

& $\ls=\<{\e2} $ & & \\
& $X=\{\alpha_{ij}\e1 + i\e3+j\e4 + \ls \mid i,j\ttk{2}\} $ & & \\
$(8,p,2)$ 
& $Y=\{i\e1 + \ls \mid i\ttk{p} \} $ 
& \ref{p.2.2} \row7 & row 11 \\
& $\so=\{\alpha_{ij}\e1 + k\e2 + i\e3 + j\e4 \mid i,j,k\ttk{2}\} $ & & \\
& $C=\{i\e1 + \alpha_i \e2 \mid i\ttk{p} \} $ & & \\ \hline

& $\ls=\<{\e2,\e3} $ & & \\
& $X=\{\alpha_i\e1 + i\e4 + \ls \mid i\ttk{2}\} $ & & \\
$(8,p,4 )$ 
& $Y=\{ i\e1 + \ls \mid i\ttk{p} \} $ 
& \ref{p.q} \row2 & row 12 \\
& $\so=\{\alpha_i\e1 + j\e2 + k\e3 + i\e4  \mid i,j,k\ttk{2}\} $ & & \\
& $C=\{ i\e1 + \alpha_i\e2 + \beta_i\e3 \mid i\ttk{p} \} $ & & \\ \hline

\end{longtable}
\end{proof}

\begin{remark}
    In Theorem \ref{p.2.2.2}, we can change $\e2$, $\e3$, and $\e4$ to any three generators of $\{0\}\x\Z_2\x\Z_2\x\Z_2$.
\end{remark}

\begin{theorem}\label{2.2.2.2}
Let $G=\Z_2\x\Z_2\x\Z_2\x\Z_2$ and let $S$ be a proper subset of $G\setminus \{0\}$ that generates $G$. Let $0\in C\subseteq G$. Then $C$ is a perfect code in $\Cay(G,S)$ if and only if $(\so, C)$ is one of the pairs in Table \ref{t.2.2.2.2} for some $\alpha_i$'s, $\alpha_{ij}$'s, $\alpha_{ijk}$'s in $\Z$ where $\alpha_0=\alpha_{00}=\alpha_{000}=0$.
\end{theorem}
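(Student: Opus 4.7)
The plan is to mirror the case-analysis used in Theorems \ref{p^2.2.2} and \ref{p.2.2.2}. Since $|G|=16$, the graph is non-complete with $S$ generating $G$, and $G$ is not cyclic, Lemma \ref{ls<so}(a) gives $|S_0|>2$, while $|S_0|<|G|=16$. Combined with $|S_0|\cdot|C|=16$ and $|C|\geq 2$, the only cardinality patterns are $(|S_0|,|C|)=(4,4)$ and $(|S_0|,|C|)=(8,2)$.

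I would first handle the case when $S_0$ is aperiodic. Then, because $G$ is Haj\'os and $G = S_0\oplus C$, $C$ must be periodic; write $C = L_C\oplus D$ and apply Lemma \ref{quotient} to get $G/L_C=\bigl((S_0+L_C)/L_C\bigr)\oplus\bigl((D+L_C)/L_C\bigr)$. The admissible values of $|L_C|$ (divisors of $|C|$ with $|L_C|\geq 2$) give quotients isomorphic to $\Z_2^3$, $\Z_2^2$, or $\Z_2$. When $|L_C|=|C|$, I take $D=\{0\}$ so that $(S_0+L_C)/L_C=G/L_C$, and Lemma \ref{d_i+l_i} reconstructs $S_0$. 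When $|L_C|<|C|$ (arising only from $(|S_0|,|C|,|L_C|)=(4,4,2)$ with quotient $\Z_2^3$), I invoke Theorem \ref{2.2.2} in the quotient and lift via Lemma \ref{d_i+l_i}. Subcases whose aperiodic quotient factor would have cardinality $2$ and be forced to generate $\Z_2^2$ or $\Z_2^3$ are ruled out immediately by Theorem \ref{2.2} (no such aperiodic generating set of size $2$ exists in a non-cyclic elementary abelian $2$-group).

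Next I would treat the case when $S_0$ is periodic, writing $S_0=L_{S_0}\oplus D$. By Lemma \ref{ls<so}(b), $|L_{S_0}|$ properly divides $|S_0|$ and is at least $2$. For $|S_0|=4$, the only option $|L_{S_0}|=2$ forces $|D|=2$ in the quotient $G/L_{S_0}\cong\Z_2^3$, and Theorem \ref{2.2} inside this quotient rules it out. For $|S_0|=8$, the case $|L_{S_0}|=4$ fails analogously (quotient $\Z_2^2$ cannot be generated by a size-$2$ aperiodic set), while $|L_{S_0}|=2$ reduces to a factorization of $\Z_2^3$ with aperiodic first factor of size $4$, exactly the setting of Theorem \ref{2.2.2}. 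Lifting by Lemma \ref{d_i+l_i} yields the periodic $S_0$ together with the corresponding $C$. Collecting the outputs of all subcases, after a change of basis to canonical generators, should produce precisely the rows of Table \ref{t.2.2.2.2}.

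For the converse, in each row of Table \ref{t.2.2.2.2} it suffices to check $|S_0|\cdot|C|=16$ and that every $(x_1,x_2,x_3,x_4)\in G$ admits a unique decomposition $s+c$ with $s\in S_0$, $c\in C$; this follows coordinate-by-coordinate because exactly one of $S_0$ or $C$ has an independent index controlling each $\mathbf{e}_i$-coordinate while any $\alpha$-coefficients in the other set are functions of those indices, just as in the verification step at the end of the proof of Theorem \ref{p,q,q}. The main obstacle I expect is bookkeeping: $\Z_2^4$ has $15$ subgroups of order $2$ and $35$ of order $4$, so one must systematically choose canonical representatives up to the action of $\mathrm{GL}_4(\F_2)$ on generating sets to avoid redundant rows and to ensure each pair $(S_0,C)$ produced by the quotient analysis is presented in the canonical form of Table \ref{t.2.2.2.2}.
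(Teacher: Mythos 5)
There is a genuine gap: you never eliminate the cardinality pattern $(|\so|,|C|)=(4,4)$, and your stated rule-out mechanisms do not cover all of it. The paper disposes of this pattern in one line at the start of its proof: since $\Z_2^4$ is a $4$-dimensional vector space over $\F_2$, any generating set must contain at least four non-zero elements, so $|\so|\geq 5$ and hence $(|\so|,|C|)=(8,2)$ is the only possibility. In your outline the subcase ``$\so$ aperiodic, $|L_C|=|C|=4$'' survives everything you invoke: taking $D=\{0\}$ gives $(\so+\lc)/\lc=G/\lc\cong\Z_2\x\Z_2$, and Lemma \ref{d_i+l_i} then manufactures candidate pairs such as $\so=\{\alpha_{ij}\e1+\beta_{ij}\e2+i\e3+j\e4 \mid i,j\ttk{2}\}$ with $C=\left<\e1,\e2\right>$. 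These genuinely satisfy $G=\so\oplus C$, but $\so$ contains only three non-zero elements, so it spans an $\F_2$-subspace of dimension at most $3$ and $S$ does not generate $G$; such pairs must not appear in Table \ref{t.2.2.2.2}. Nothing in your toolkit detects this: Lemma \ref{generate-zero}(a) transfers the generation property only through the factor that has been split as $L\oplus D$ (here $C$), and generation of the image $(\so+\lc)/\lc=G/\lc$ does not imply generation of $\so$ itself; your Theorem \ref{2.2} mechanism only targets size-$2$ quotient factors, which do not occur in this subcase.

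Two smaller points. First, the other $(4,4)$ subcase, $(|\so|,|C|,|\lc|)=(4,4,2)$, does die, but not by ``invoking Theorem \ref{2.2.2} and lifting'' to obtain solutions as your wording suggests: Lemma \ref{quotient} forces $(D+\lc)/\lc$ to be an aperiodic $2$-element subset of $\Z_2^3$, yet every $2$-element subset of an elementary abelian $2$-group is periodic (its non-zero difference is a period); equivalently, Theorem \ref{2.2.2} requires the size-$4$ factor of $\Z_2^3$ to be aperiodic while the Haj\'os property forces $A=(\so+\lc)/\lc$ to be periodic here, a contradiction either way. Second, once $(4,4)$ is excluded by the dimension count, the remainder of your outline — periodic $\so$ with $|\ls|=4$ killed because a size-$2$ set cannot generate the non-cyclic quotient $\Z_2^2$, periodic $\so$ with $|\ls|=2$ reduced to Theorem \ref{2.2.2} in $G/\ls\cong\Z_2^3$ and lifted by Lemma \ref{d_i+l_i}, and aperiodic $\so$ giving $C=\lc$ of order $2$ — coincides with the paper's proof, as does the coordinate-by-coordinate sufficiency check in the style of Theorem \ref{p,q,q}. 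So the fix is simply to insert the generation count $|\so|\geq 5$ at the outset.
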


\rc\begin{longtable}{|c|l|l|}
\caption{Perfect codes in Cayley graphs of $\Z_2\x\Z_2\x\Z_2\x\Z_2$}
\label{t.2.2.2.2}
\\
\fl
\tn & $\{ k\e1 + \alpha_{ij}\e2 + i\e3 + j\e4 \mid i,j,k\ttk{2}\}$
& $\{ \alpha_i \e1 + i\e2 + \mid i\ttk{2} \}$ \\
\tn & $\{ \alpha_{ijk}\e1 + i\e2 + j\e3 + k\e4 \mid i,j,k\ttk{2}\}$
& $\{ i\e1 \mid i\ttk{2} \}$ \\
\hline
\end{longtable}

\begin{proof}
    \setcounter{cn}{1}\renewcommand{\Table}{\ref{t.2.2.2.2}}
    Suppose $\Cay(G,S)$ admits a perfect code $C$.
    Then $G=\so\oplus C$ and so $|\so|\cdot |C|=|G|=16$.
    Since $\so$ generates $G$, $\so$ contains at least four non-zero elements, so $|\so|\geq 5$.
    Also, $\Cay(G,S)$ is not a complete graph. Hence $|C|\geq 2$ and therefore $(|\so|,|C|)=(8,2)$.

    If $\so$ is periodic, then $|\ls|=2$ or $4$.
    Suppose $|\ls|=4$. Then $\ls=\left<\e1,\e2\right>$.
    Let $\so=D\oplus\ls$ for some subset $D$. From the factorization of $G/\ls$ given in Lemma \ref{quotient}, we have $G/\ls=((D+\ls)/\ls)\oplus ((C+\ls)/\ls)$.
    Note that $(D+\ls)/\ls$ is a set of order $2$ containing $0+\ls$ and generates $G/\ls\cong \Z_{2}\times\Z_2$ by Lemma \ref{generate-zero}. 
    Since $\Z_2\times\Z_2$ is a not a cyclic group, $(D+\ls)/\ls$ cannot generate $G/\ls$, a contradiction.
    Therefore, $|\ls|=2$, and hence $\ls=\<{\e1}$.
    Let $\so=D\oplus\ls$ for some subset $D$. From the factorization of $G/\ls$ given in Lemma \ref{quotient}, we have $G/\ls=((D+\ls)/\ls)\oplus ((C+\ls)/\ls)$.
    Note that $(D+\ls)/\ls$ is an aperiodic set of order $4$ and $G/\ls\cong \Z_{2}\times\Z_2\times\Z_2$.
    From the case of $|\so|=4$ and $\so$ is aperiodic in Theorem \ref{2.2.2}, we have $(D+\ls)/\ls=\{ \alpha_{ij}\e2 + i\e3 + j\e4 +\ls \mid i,j\ttk{2}\}$ and $(C+\ls)/\ls=\{ i\e2 +\ls \mid i\ttk{2} \}$.
    Therefore, $\so=D\oplus\ls=\{ k\e1 + \alpha_{ij}\e2 + i\e3 + j\e4 \mid i,j,k\ttk{2}\}$ and $C=\{ \alpha_i \e1 + i\e2 \mid i\ttk{2} \}$, where $\alpha_i,\beta_i \in \Z$. \R

    If $|\so|=8$ and $\so$ is aperiodic, then $|C|=2$ and $C$ is periodic, and hence $C=\lc=\<{\e1}$.
    From the factorization of $G/\lc$ given in Lemma \ref{quotient}, $(\so+\lc)/\lc=G/\lc=\{ i\e2 + j\e3 + k\e4 + \lc \mid i,j,k\ttk{2}\}$, and hence $\so=\{ \alpha_{ijk}\e1 + i\e2 + j\e3 + k\e4 \mid i,j,k\ttk{2}\}$, $\alpha_{ijk}\in \Z$. \R
\end{proof}

\begin{remark}
    In Theorem \ref{2.2.2.2}, we can change $\e1$, $\e2$, $\e3$, and $\e4$ to any four generators of $\Z_2\x\Z_2\x\Z_2\x\Z_2$.
\end{remark}

\begin{theorem}\label{p^3.2.2}
Let $G=\Z_{p^3}\x\Z_2\x\Z_2$ for an odd prime $p$ and let $S$ be a proper subset of $G\setminus \{0\}$ that generates $G$. Let $0\in C\subseteq G$. Then $C$ is a perfect code in $\Cay(G,S)$ if and only if $(\so, C)$ is one of the pairs in Table \ref{t.p^3.2.2} for some $\alpha_i$'s, $\alpha_{ij}$'s, $\alpha_{ijk}$'s, $\beta_i$'s, $\gamma_i$'s in $\Z$ where $\alpha_0=\alpha_{00}=\alpha_{000}=\beta_0=\gamma_i=0$.
\end{theorem}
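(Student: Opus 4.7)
The proof will mirror the structure of the proofs of Theorems~\ref{p^3.2}, \ref{p^2.2.2}, and \ref{p.2.2.2}. Since $G=C\oplus S_0$ gives $|S_0|\cdot|C|=4p^3$, since $G$ is not cyclic (so $|S_0|>2$ by Lemma~\ref{ls<so}), and since $\mathrm{Cay}(G,S)$ is not complete (so $|C|\ge 2$), we have $3\le|S_0|\le 2p^3$, with $|S_0|$ a divisor of $4p^3$. I would then split on whether $S_0$ is aperiodic or periodic.

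In the aperiodic case, $G$ being Haj\'os forces $C$ to be periodic, so $L_C$ is nontrivial and $C=L_C\oplus D$ for some $D\subseteq G$. By Lemma~\ref{quotient}, $G/L_C=A\oplus B$ with $A=(S_0+L_C)/L_C$ periodic (since $G/L_C$ is Haj\'os as a subgroup of $G$) and $B=(D+L_C)/L_C$ aperiodic; moreover $A\ni L_C$ and $A$ generates $G/L_C$ by Lemma~\ref{generate-zero}. When $|L_C|=|C|$ (forced when $|S_0|$ is prime by Lemma~\ref{prime deg}(c), and possible otherwise), $C=L_C$, so we take $D=\{0\}$, obtain $A=G/L_C$ directly, and recover $S_0$ via Lemma~\ref{d_i+l_i}. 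When $|L_C|<|C|$, I would apply the appropriate already-established theorem to the quotient $G/L_C\cong \Z_{p^{3-a}}\x\Z_2^{2-b}$ (for the relevant $a,b$ determined by $L_C$) to read off $(A,B)$, then lift back to $(S_0,C)$ via Lemma~\ref{d_i+l_i}.

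The periodic case is dual: write $S_0=D\oplus L_{S_0}$ with $1<|L_{S_0}|<|S_0|$ by Lemma~\ref{prime deg}(b), and set $X=(D+L_{S_0})/L_{S_0}$, $Y=(C+L_{S_0})/L_{S_0}$, so that $G/L_{S_0}=X\oplus Y$ with $X$ aperiodic, $L_{S_0}\in X$, and $X$ generating $G/L_{S_0}$. Applying the already-proved theorems for quotients $\Z_{p^{3-a}}\x\Z_2^{2-b}$ (in particular Theorems~\ref{p,p}, \ref{p.q}, \ref{p.2.2}, \ref{p^2}, \ref{2.2}, \ref{p^2.2}, \ref{p^3}, \ref{2.2.2}, \ref{p^2.2.2}, and \ref{p^3.2}) yields the possible $(X,Y)$, from which Lemma~\ref{d_i+l_i} gives $(S_0,C)$. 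In either case, when the relevant theorem yields no aperiodic generator $X$ (e.g., a quotient isomorphic to $\Z_2\x\Z_2$ has no aperiodic generating set of size~2), the subcase is vacuous.

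What remains is a bookkeeping exercise. For each triple $(|S_0|,|C|,|L_C|)$ in the aperiodic case and $(|S_0|,|C|,|L_{S_0}|)$ in the periodic case, I would enumerate the relevant subgroups of $G$ (the subgroups of $G=\Z_{p^3}\x\Z_2\x\Z_2$ factor as products of the chain $\{0\}<\<{p^2\e1}<\<{p\e1}<\<{\e1}$ with the five subgroups of $\Z_2\x\Z_2$), identify which previous theorem supplies $(A,B)$ or $(X,Y)$, perform the lift, and record the resulting row of Table~\ref{t.p^3.2.2}. The conclusion that each such pair $(S_0,C)$ actually satisfies $G=S_0\oplus C$ follows by the same coordinate-wise uniqueness argument used in Theorem~\ref{p,q,q} and in the analogous proofs, so no separate verification of sufficiency is required. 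The main obstacle is purely combinatorial: keeping track of the many subcases arising from the three distinct order-$2$ subgroups of $\Z_2\x\Z_2$ (which each produce genuinely different quotients and hence different rows), and ensuring that every row of Table~\ref{t.p^3.2.2} is hit by exactly one subcase.
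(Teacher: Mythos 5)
Your proposal is correct and follows essentially the same route as the paper's own proof: the same split into the cases of $\so$ aperiodic and periodic, the same reduction via Lemmas \ref{quotient}, \ref{generate-zero}, \ref{prime deg} and \ref{d_i+l_i} to factorizations of quotients $G/\lc$ or $G/\ls$ handled by the previously established theorems, and the same tabulated enumeration over triples $(|\so|,|C|,|\lc|)$ and $(|\so|,|C|,|\ls|)$ (including the vacuous subcases where no aperiodic generating set $X$ exists). Your treatment of sufficiency also matches the paper, which explicitly omits the verification of $G=\so\oplus C$ in Section \ref{sec:p^k2^l} by appealing to the same coordinate-wise argument as in Theorem \ref{p,q,q}.
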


\begin{small}
\rc\begin{longtable}{|c|l|l|}
\caption{Perfect codes in Cayley graphs of $\Z_{p^3}\x\Z_2\x\Z_2$}
\label{t.p^3.2.2} \\
\fl
\tn & $\{(i+\alpha_ip)\e1+\alpha_i\e2+\beta_i\e3 \mid i\ttk{p}\}$
& $\{(i+\alpha_i p^2)\e1 + j\e2 \mid i\ttk{p^2},\,j\ttk{2}\}$ \\
\tn & $\{(i+\alpha_i p + jp^2)\e1+\alpha_i\e2+\beta_i\e3 \mid i,j\ttk{p}\}$
& $\{(ip+kp^2)\e1 + \alpha_{ij}\e2 + j\e3 \mid i,k\ttk{p},\,j\ttk{2}\}$ \\
\tn & $\{(i+\alpha_ip+jp^2)\e1 + \alpha_{ij}\e2 + \alpha_i\e3 \mid i,j\ttk{p}\}$
& $\{(ip+jp^2)\e1 + k\e2 + \alpha_i\e3 \mid i,j\ttk{p},\,k\ttk{2}\}$ \\
\tn & $\{(i+jp+\alpha_{ij}p^2)\e1+\alpha_i\e2+\beta_i\e3 \mid i,j\ttk{p}\}$
& $\{(i+\alpha_ip+jp^2)\e1 +k\e2 \mid i,j\ttk{p},\,k\ttk{2}\}$ \\
\tn & $\{(i+\alpha_ip+jp^2)\e1+\alpha_{ij}\e2+\beta_{ij}\e3 \mid i,j\ttk{p}\}$
& $\{jp\e1 + \alpha_i\e2 + i\e3 \mid i\ttk{2},\,j\ttk{p^2}\}$ \\
\tn & $\{(i+jp+\alpha_{ij}p^2)\e1 + \alpha_{ij}\e2 + \alpha_i\e3 \mid i,j\ttk{p}\}$
& $\{ip\e1+j\e2 \mid i\ttk{p^2},\,j\ttk{2}\}$ \\
\tn & $\{(i+\alpha_ip^2)\e1+\beta_{i}\e2+\gamma_i\e3 \mid i\ttk{p^2}\}$
& $\{i\e1+\alpha_i\e2 \mid i\ttk{p^3}\}$ \\
\tn & $\{(i+\alpha_i p)\e1+j\e2+\alpha_i\e3\mid i\ttk{p},\,j\ttk{2}\}$
& $\{(i+jp^2)\e1 + \alpha_i\e2 \mid i\ttk{p^2},j\ttk{p}\}$ \\
\tn & $\{(i+\alpha_i p)\e1 + \alpha_{ij}\e2 + j\e3 \mid i\ttk{p},\,j\ttk{2}\}$
& $\{ip\e1+j\e2+k\e3 \mid i\ttk{p^2},\, j,k\ttk{2}\}$ \\
\tn & $\{(\alpha_i+jp^2)\e1 + \alpha_{ij}\e2 + i\e3 \mid i\ttk{2},\,j\ttk{p}\}$
& $\{(i+\alpha_{ijk}p^2)\e1+j\e2+k\e3 \mid i\ttk{p},\,j,k\ttk{2}\}$ \\
\tn & $\{(i+\alpha_ip+\alpha_{ij}p^2)\e1 + j\e2 + \alpha_i\e3 \mid i\ttk{p},\,j\ttk{2}\}$
& $\{(ip + \alpha_{ij}p^2)\e1 + k\e2 + j\e3 \mid i\ttk{p},\, j,k\ttk{2}\}$ \\
\tn & $\{(i+\alpha_ip+\alpha_{ij}p^2)\e1 + j\e3 \mid i\ttk{p},\,j\ttk{2}\}$
& $\{(\alpha_{ij} p +kp^2)\e1+i\e2+j\e3 \mid i,j\ttk{2},\, k\ttk{p}\}$ \\
\tn & $\{(\alpha_i+jp+\alpha_{ij}p^2)\e1 + i\e3 \mid i\ttk{2},\,j\ttk{p}\}$
& $\{(ip+\alpha_ip^2)\e1+j\e2+k\e3 \mid i\ttk{p},\,j,k\ttk{2}\}$ \\
\tn & $\{(i+\alpha_{ij}p)\e1 + j\e2 + \alpha_i\e3 \mid i\ttk{p},\,j\ttk{2}\}$
& $\{(\alpha_i p + jp^2)\e1 + k\e2 + i\e3  \mid i,k\ttk{2},\, j\ttk{p}\}$ \\
\tn & $\{(i+\alpha_{ij}p)\e1 + \alpha_{ij}\e2 + j\e3  \mid i\ttk{p},\,j\ttk{2}\}$
& $\{ip^2\e1+j\e2+k\e3 \mid i\ttk{p},\,j,k\ttk{2}\}$ \\
\tn & $\{\alpha_i\e1 + j\e2 + i\e3  \mid i\ttk{p},\,j\ttk{2}\}$
& $\{ip\e1+j\e3+\alpha_{ij}\e2 \mid i\ttk{p^2},\,j\ttk{2}\}$ \\
\tn & $\{(\alpha_i + \alpha_{ij}p^2)\e1 + j\e2 + i\e3  \mid i,j\ttk{2}\}$
& $\{ip^2\e1 + j\e2 + \alpha_i\e3 \mid i\ttk{p^2},\,j\ttk{2}\}$ \\
\tn & $\{(\alpha_i + \alpha_{ij}p)\e1 + j\e2 + i\e3  \mid i,j\ttk{2}\}$
& $\{(i+jp)\e1 + \alpha_i\e2 \mid i\ttk{p},j\ttk{p^2}\}$ \\
\tn & $\{\alpha_{ij}\e1+i\e2+j\e3 \mid i,j\ttk{2}\}$
& $\{i\e1 \mid i\ttk{p^3} \}$ \\
\tn & $\{(i+jp+kp^2)\e1 + \alpha_{ij}\e2 + \alpha_i\e3 \mid i,j,k\ttk{p}\}$
& $\{(\alpha_ip+\alpha_{ij}p^2)\e1 + j\e2 + i\e3  \mid i,j\ttk{2}\}$ \\
\tn & $\{(i+jp^2)\e1+\alpha_i\e2+\beta_i\e3 \mid i\ttk{p^2},j\ttk{p}\}$
& $\{\alpha_{ij}p^2\e1+i\e2+j\e3 \mid i,j\ttk{2}\}$ \\
\tn & $\{(i+jp)\e1+\alpha_i\e2+\beta_i\e3 \mid i\ttk{p},j\ttk{p^2}\}$
& $\{\alpha_{ij}p\e1+i\e2+j\e3 \mid i,j\ttk{2}\}$ \\
\tn & $\{(i+jp^2)\e1 + \alpha_{ij}\e2 + \alpha_i\e3 \mid i\ttk{p^2},\,j\ttk{p}\}$
& $\{\alpha_ip^2\e1 + j\e2 + i\e3  \mid i,j\ttk{2}\}$ \\
\tn & $\{(i+jp)\e1 + \alpha_{ij}\e2 + \alpha_i\e3 \mid i\ttk{p},\,j\ttk{p^2}\}$
& $\{\alpha_ip\e1 + j\e2 + i\e3  \mid i,j\ttk{2}\}$ \\
\tn & $\{i\e1+\alpha_i\e2+\beta_i\e3 \mid i\ttk{p^3}\}$
& $\{i\e2+j\e3 \mid i,j\ttk{2}\}$ \\
\tn & $\{(i+\alpha_ip+jp^2)\e1 + k\e2 + \alpha_{ij}\e3 \mid i,j\ttk{p},\,k\ttk{2}\}$
& $\{(ip+\alpha_ip^2)\e1 + \alpha_{ij}\e2 + j\e3 \mid i,j\ttk{2}\}$ \\
\tn & $\{(i+jp+\alpha_{ij}p^2)\e1 + k\e2 + i\e3 \mid i,j\ttk{p},\,k\ttk{2}\}$
& $\{(\alpha_ip+jp^2)\e1 + \alpha_{ij}\e2 + i\e3 \mid i\ttk{2},\,j\ttk{p}\}$ \\
\tn & $\{(i+\alpha_ip^2)\e1 + j\e2 + \alpha_i\e3 \mid i\ttk{p^2},\,j\ttk{2}\}$
& $\{ip^2\e1 + \alpha_{ij}\e2 + j\e3 \mid i\ttk{p},\,j\ttk{2}\}$ \\
\tn & $\{(i+\alpha_ip+kp^2)\e1 + \alpha_{ij}\e2 + j\e3 \mid i,k\ttk{p},\,j\ttk{2}\}$
& $\{(ip + \alpha_{ij}p^2)\e1 + j\e2 + \alpha_i\e3 \mid i\ttk{p},\,j\ttk{2}\}$ \\
\tn & $\{(\alpha_i + jp+kp^2)\e1 + \alpha_{ij}\e2 + i\e3 \mid i\ttk{2},\,j,k\ttk{p}\}$
& $\{(i+\alpha_ip+\alpha_{ij}p^2)\e1 + j\e2 \mid i\ttk{p},\,j\ttk{2}\}$ \\
\tn & $\{(i+\alpha_{ij}p+kp^2)\e1 + j\e2 + \alpha_i\e3 \mid i,k\ttk{p},\,j\ttk{2}\}$
& $\{(jp+\alpha_{ij}p^2)\e1 + \alpha_i\e2 + i\e3 \mid i\ttk{2},\,j\ttk{p}\}$ \\
\tn & $\{(i+\alpha_{ij}p+kp^2)\e1 + \alpha_{ij}\e2 + j\e3 \mid i,k\ttk{p},\,j\ttk{2}\}$
& $\{(ip+\alpha_{ij}p^2)\e1 + j\e2 \mid i\ttk{p},\,j\ttk{2}\}$ \\
\tn & $\{(i+jp^2)\e1 + k\e2 + \alpha_i\e3 \mid i,j\ttk{p},\,k\ttk{2}\}$
& $\{(ip+\alpha_{ij}p^2)\e1 + \alpha_{ij}\e2 + j\e3 \mid i\ttk{p},\,j\ttk{2}\}$ \\
\tn & $\{(i+\alpha_i p^2)\e1 + \alpha_{ij}\e2 + j\e3 \mid i\ttk{p^2},\,j\ttk{2}\}$
& $\{ip^2\e1 + j\e2 + \alpha_i\e3 \mid i\ttk{p},\,j\ttk{2}\}$ \\
\tn & $\{(i+\alpha_{ik}p+jp^2)\e1 + \alpha_{ijk}\e2 + k\e3 \mid i,j\ttk{p},\,k\ttk{2}\}$
& $\{(ip+\alpha_ip^2)\e1 + j\e2 \mid i\ttk{p},\,j\ttk{2}\}$ \\
\tn & $\{(i+\alpha_ip+jp^2)\e1 + \alpha_{ijk}\e2 + k\e3 \mid i,j\ttk{p},\,k\ttk{2}\}$
& $\{(ip+\alpha_ip^2)\e1 + j\e2 + \alpha_i\e3 \mid i\ttk{p},\,j\ttk{2}\}$ \\
\tn & $\{(\alpha_i+jp)\e1 + \alpha_{ij}\e2 + i\e3 \mid i\ttk{2},\,j\ttk{p^2}\}$
& $\{(i+\alpha_i p)\e1 + j\e2 \mid i\ttk{p},\,j\ttk{2}\}$ \\
\tn & $\{(i+\alpha_{ij}p^2)\e1 + j\e2 + \alpha_i\e3 \mid i\ttk{p^2},\,j\ttk{2}\}$
& $\{jp^2\e1 + \alpha_i\e2 + i\e3 + \lc \mid i\ttk{2},\,j\ttk{p}\}$ \\
\tn & $\{(i+kp+\alpha_{ijk}p^2)\e1 + \alpha_{ij}\e2 + j\e3 \mid i,k\ttk{p},\,j\ttk{2}\}$
& $\{(\alpha_ip+jp^2)\e1 + i\e2 + \lc \mid i\ttk{2},\,j\ttk{p}\}$ \\
\tn & $\{(i+jp+\alpha_{ijk}p^2)\e1 + k\e2 + \alpha_i\e3 \mid i,j\ttk{p},\,k\ttk{2}\}$
& $\{(\alpha_ip+jp^2)\e1 + \alpha_i\e2 + i\e3 \mid i\ttk{2},\,j\ttk{p}\}$ \\
\tn & $\{(i+\alpha_{ij}p^2)\e1 +\alpha_{ij}\e2 + j\e3 \mid i\ttk{p^2},\,j\ttk{2}\}$
& $\{ip^2\e1+j\e2 \mid i\ttk{p},\,j\ttk{2}\}$ \\
\tn & $\{(i+\alpha_ip+\alpha_{ij}p^2)\e1 + k\e2 + j\e3 \mid i\ttk{p},\,j,k\ttk{2}\}$
& $\{(ip+jp^2)\e1 + \alpha_{ij}\e2 + \alpha_i\e3 \mid i,j\ttk{p}\}$ \\
\tn & $\{(\alpha_i+jp+\alpha_{ij}p^2)\e1 + k\e2 + i\e3 \mid i,k\ttk{2},\,j\ttk{p}\}$
& $\{(i+\alpha_ip+jp^2)\e1 + \alpha_{ij}\e2 \mid i,j\ttk{p}\}$ \\
\tn & $\{(i+\alpha_{ij}p)\e1 + k\e2 + j\e3 \mid i\ttk{p},\,j,k\ttk{2}\}$
& $\{ip\e1 + \alpha_i\e2 \mid i\ttk{p^2}\}$ \\
\tn & $\{(\alpha_i+\alpha_{ij}p+kp^2)\e1 + j\e2 + i\e3  \mid i,j\ttk{2},\,k\ttk{p}\}$
& $\{(i+jp+\alpha_{ij}p^2)\e1 + \alpha_i\e2 \mid i,j\ttk{p}\}$ \\
\tn & $\{(\alpha_{ij}+kp^2)\e1+i\e2+j\e3 \mid i,j\ttk{2},\,k\ttk{p}\}$
& $\{(i+\alpha_i p^2)\e1 \mid i\ttk{p^2}\}$ \\
\tn & $\{(i+\alpha_ip)\e1+j\e2+k\e3 \mid i\ttk{p},\,j,k\ttk{2}\}$
& $\{ip\e1+\alpha_i\e2+\beta_i\e3 \mid i\ttk{p^2}\}$ \\
\tn & $\{(\alpha_i+jp^2)\e1 + k\e2 + i\e3 \mid i,k\ttk{2},\,j\ttk{p}\}$
& $\{(i+\alpha_i p^2)\e1 + \alpha_i\e2 \mid i\ttk{p^2}\}$ \\
\tn & $\{(i+\alpha_{ij}p+\alpha_{ijk}p^2)\e1 + k\e2 + j\e3 \mid i\ttk{p},\,j,k\ttk{2}\}$
& $\{(ip+jp^2)\e1 + \alpha_i\e2  \mid i,j\ttk{p}\}$ \\
\tn & $\{(\alpha_{ij}+kp+\alpha_{ijk}p^2)\e1+i\e2+j\e3 \mid i,j\ttk{2},\,k\ttk{p}\}$
& $\{ (i+\alpha_ip+jp^2)\e1 \mid i,j\ttk{p} \}$ \\
\tn & $\{(\alpha_i+jp+\alpha_{ijk}p^2)\e1 + k\e2 + i\e3 \mid i,k\ttk{2},\,j\ttk{p} \}$
& $\{(i+\alpha_ip+jp^2)\e1 + \alpha_i\e2 \mid i,j\ttk{p} \}$ \\
\tn & $\{(i+\alpha_ip+\alpha_{ijk}p^2)\e1+j\e2+k\e3 \mid i\ttk{p},\,j,k\ttk{2} \}$
& $\{(ip+jp^2)\e1+\alpha_i\e2+\beta_i\e3 \mid i,j\ttk{p} \}$ \\
\tn & $\{(i+\alpha_{ijk}p)\e1+j\e2+k\e3 \mid i\ttk{p},\,j,k\ttk{2}\}$
& $\{ip\e1 \mid i\ttk{p^2} \}$ \\
\tn & $\{i\e1 + j\e2 + \alpha_i\e3 \mid i\ttk{p^3},\,j\ttk{2}\}$
& $\{\alpha_i\e2 + i\e3 \mid i\ttk{2}\}$ \\
\tn & $\{(i+kp^2)\e1 + \alpha_{ij}\e2 + j\e3 \mid i\ttk{p^2},\,j\ttk{2},\,k\ttk{p}\}$
& $\{\alpha_ip^2\e1 + i\e2 \mid i\ttk{2}\}$ \\
\tn & $\{(i+jp^2)\e1 + k\e2 + \alpha_i\e3 \mid i\ttk{p^2},\,j\ttk{p},\,k\ttk{2}\}$
& $\{\alpha_ip^2\e1 + \alpha_i\e2 + i\e3 \mid i\ttk{2}\}$ \\
\tn & $\{(i+kp)\e1 + \alpha_{ij}\e2 + j\e3 \mid i\ttk{p},\,j\ttk{2},\,k\ttk{p^2}\}$
& $\{\alpha_ip\e1 + i\e2 \mid i\ttk{2}\}$ \\
\tn & $\{(i+jp)\e1 + k\e2 + \alpha_i\e3 \mid i\ttk{p},\,j\ttk{p^2},\,k\ttk{2}\}$
& $\{\alpha_ip\e1 + \alpha_i\e2 + i\e3 \mid i\ttk{2}\}$ \\
\tn & $\{i\e1 + \alpha_{ij}\e2 + j\e3 \mid i\ttk{p^3},\,j\ttk{2}\}$
& $\{i\e2 \mid i\ttk{2}\}$ \\
\tn & $\{(i+\alpha_{ij}p^2)\e1 + k\e2 + j\e3 \mid i\ttk{p^2},\,j,k\ttk{2}\}$
& $\{ip^2\e1 +\alpha_i\e2 \mid i\ttk{p}\}$ \\
\tn & $\{(i+\alpha_{ijk}p+lp^2)\e1+j\e2+k\e3\mid i,l\ttk{p},\,j,k\ttk{2}\}$
& $\{(ip+\alpha_ip^2)\e1 \mid i\ttk{p}\}$ \\
\tn & $\{(i+\alpha_ip^2)\e1+j\e2+k\e3 \mid i\ttk{p^2},\,j,k\ttk{2}\}$
& $\{ip^2\e1+\alpha_i\e2+\beta_i\e3 \mid i\ttk{p}\}$ \\
\tn & $\{(i+\alpha_{ij}p+kp^2)\e1 + l\e2 + j\e3 \mid i,k\ttk{p},\,j,l\ttk{2}\}$
& $\{(ip+\alpha_ip^2)\e1 +\alpha_i\e2 \mid i\ttk{p}\}$ \\
\tn & $\{(\alpha_{ij}+kp)\e1+i\e2+j\e3 \mid i,j\ttk{2}\,k\ttk{p^2}\} $
& $\{(i+\alpha_ip)\e1 \mid i\ttk{p}\} $ \\
\tn & $\{(i+\alpha_ip+jp^2)\e1+k\e2+l\e3 \mid i,j\ttk{p},\,k,l\ttk{2}\}$
& $\{(ip+\alpha_ip^2)\e1+\alpha_i\e2+\beta_i\e3 \mid i\ttk{p}\}$ \\
\tn & $\{(\alpha_i+jp)\e1 + k\e2 + i\e3 \mid i,k\ttk{2},\,j\ttk{p^2}\}$
& $\{(i+\alpha_ip)\e1 +\alpha_i\e2 \mid i\ttk{p}\}$ \\
\tn & $\{(i+\alpha_{ijk}p^2)\e1+j\e2+k\e3 \mid i\ttk{p^2},\,j,k\ttk{2}\}$  
& $\{ip^2\e1 \mid i\ttk{p}\}$ \\
\hline
\end{longtable}
\end{small}

\begin{proof}
    Suppose $C$ is a perfect code in $\Cay(G,S)$. Then $G=\so\oplus C$ and so $|\so|\cdot |C|=|G|=4p^3 $. 
    Since $G$ is not a cyclic group, $|\so|>2$ by Lemma \ref{ls<so}.
    Therefore $2<|\so|<4p^3 $.

\smallskip
    \textsf{Case 1.} $\so$ is aperiodic.
    \smallskip

    Since $G$ is Haj\'os and $G=\so\oplus C$, in this case $C$ must be periodic, that is, $\lc$ is nontrivial.
    Write $C=\lc\oplus D$ for some subset $D$ of $G$ whose existence is ensured by Lemma \ref{period}.
    Then $|\lc|\cdot |D|=|C|$ and $2\leq |\lc|\leq |C|$.
    If $|\so|$ is a prime, then $|\lc|=|C|$ by Lemma \ref{prime deg} part (c).
    If $|\so|$ is not a prime, then $|\lc|$ can be any divisor of $|C|$.
    Set $$ A=(\so+\lc)/\lc\quad\text{and}\quad B=(D+\lc)/\lc.$$
    Since $C$ is periodic, by Lemma \ref{quotient}, we have
    $$G/\lc=A\oplus B$$ and $B$ is aperiodic in $G/\lc$.
    Since $G/\lc$ is isomorphic to a subgroup of $G$ and subgroups of Haj\'os group are also Haj\'os groups, $A$ is periodic in $G/\lc$.
    We also know that $A$ contains $\lc$ and $A$ generates $G/\lc$ by Lemma \ref{generate-zero}.
    Furthermore, $|A|=|\so|$ and $|B|=|D|$.
    Note that if $|\lc|=|C|$, then $C=\lc$ as $0\in C$.
    This means we can take $D=\{0\}$ and hence $B=\lc/\lc$ and $G/\lc=A$.
    We use $C$ to get $A$ and then use Lemma \ref{d_i+l_i} to obtain $\so$.
    If $|\lc|<|C|$, then this factorization enables us to make use of previously established theorems to obtain the pair $(\so, C)$ via $(A,B)$.

    Similar to Table \ref{tap.p^2.2.2}, the columns of Table \ref{tap.p^3.2.2} represents all possible values for $\left(|\so|,|C|,|\lc|\right)$; $C$, $A$, and $\so$ when $|\lc|=|C|$ or $L_C$, $A$, $B$, $\so$, and $C$ when $|\lc|<|C|$; which previous theorem is used to obtain $(A,B)$ when $|\lc|<|C|$; and which row in Table \ref{t.p^3.2.2} the result corresponds to.
    
\begin{longtable}{|c|l|c|c|}
\caption{The case when $G=\Z_{p^3}\x\Z_2\x\Z_2$ and $\so$ is aperiodic}
\label{tap.p^3.2.2}
\\
\flap

& $C=\<{p\e1+\e2,\e3} $ & & \\
$(p,4p^2,4p^2)$ 
& $A=\{i\e1+\lc \mid i\ttk{p}\} $ 
& & row 1 \\
& $\so=\{(i+\alpha_ip)\e1+\alpha_i\e2+\beta_i\e3 \mid i\ttk{p}\} $ & & \\ \hline

& $\lc=\<{\e2} $ & & \\
& $A=\{(i+\alpha_ip+jp^2)\e1 + \alpha_i\e3 + \lc \mid i,j\ttk{p}\} $ & & \\
$(p^2,4p,2)$ 
& $B=\{(ip + \alpha_{ij}p^2)\e1 + j\e3 + \lc \mid i\ttk{p},\, j\ttk{2}\} $ 
& \ref{p^3.2} \row3 & row 3 \\ 
& $\so=\{(i+\alpha_ip+jp^2)\e1 + \alpha_{ij}\e2 + \alpha_i\e3 \mid i,j\ttk{p}\} $ & & \\
& $C=\{(ip + \alpha_{ij}p^2)\e1 + k\e2 + j\e3 \mid i\ttk{p},\, j,k\ttk{2}\} $ & & \\ \hline

& $\lc=\<{p^2\e1} $ & & \\
& $A=\{(i+jp)\e1+\alpha_i\e2+\beta_i\e3 + \lc \mid i,j\ttk{p}\} $ & & \\
$(p^2,4p,p )$ 
& $B=\{(\alpha_{ij} p,i,j) + \lc \mid i,j\ttk{2}\} $ 
& \ref{p^2.2.2} \row2 & row 4 \\ 
& $\so=\{(i+jp+\alpha_{ij}p^2)\e1+\alpha_i\e2+\beta_i\e3 \mid i,j\ttk{p}\} $ & & \\
& $C=\{(\alpha_{ij} p +kp^2)\e1+i\e2+j\e3\mid i,j\ttk{2},\, k\ttk{p}\} $ & & \\ \hline
    
& $\lc=\<{\e2,\e3} $ & & \\
& $A=\{(i+\alpha_ip+jp^2)\e1 + \lc \mid i,j\ttk{p}\} $ & & \\
$(p^2,4p,4)$ 
& $B=\{(ip+\alpha_ip^2)\e1 + \lc \mid i\ttk{p}\} $ 
& \ref{p^3} \row2 & row 5 \\ 
& $\so=\{(i+\alpha_ip+jp^2)\e1+\alpha_{ij}\e2+\beta_{ij}\e3 \mid i,j\ttk{p}\} $ & & \\
& $C=\{(ip+\alpha_ip^2)\e1+j\e2+k\e3 \mid i\ttk{p},\,j,k\ttk{2}\} $ & & \\ \hline

& $\lc=\<{p^2\e1+\e2} $ & & \\
& $A=\{(i+jp)\e1 + \alpha_i\e3 + \lc \mid i,j\ttk{p}\} $ & & \\
$(p^2,4p,2p)$ 
& $B=\{\alpha_i p\e1 + i\e3 + \lc \mid i\ttk{2}\} $ 
& \ref{p^2.2} \row3 & row 6 \\ 
& $\so=\{(i+jp+\alpha_{ij}p^2)\e1 + \alpha_{ij}\e2 + \alpha_i\e3 \mid i,j\ttk{p}\} $ & & \\
& $C=\{(\alpha_i p + jp^2)\e1 + k\e2 + i\e3  \mid i,k\ttk{2},\, j\ttk{p}\} $ & & \\ \hline

& $C=\<{p^2\e1,\e2,\e3} $ & & \\
$(p^2,4p,4p)$ 
& $A=\{i\e1+\lc \mid i\ttk{p^2}\} $ 
& & row 7 \\
& $\so=\{(i+\alpha_ip^2)\e1+\beta_{i}\e2+\gamma_i\e3 \mid i\ttk{p^2}\} $ & & \\ \hline

& $\lc=\<{\e2} $ & & \\
& $A=\{(i+\alpha_i p)\e1 + j\e3 + \lc \mid i\ttk{p},\,j\ttk{2}\} $ & & \\
$(2p,2p^2,2)$ 
& $B=\{ip^2\e1 + \alpha_i\e3 + \lc \mid i\ttk{p^2}\} $ 
& \ref{p^3.2} \row7 & row 9 \\ 
& $\so=\{(i+\alpha_i p)\e1 + \alpha_{ij}\e2 + j\e3 \mid i\ttk{p},\,j\ttk{2}\} $ & & \\
& $C=\{ip^2\e1 + j\e2 + \alpha_i\e3 \mid i\ttk{p^2},\,j\ttk{2}\} $ & & \\ \hline

& $\lc=\<{\e2} $ & & \\
& $A=\{(\alpha_i+jp^2)\e1 + i\e3 + \lc \mid i\ttk{2},\,j\ttk{p}\} $ & & \\
$(2p,2p^2,2 )$ 
& $B=\{(i+\alpha_i p^2)\e1 + \lc \mid i\ttk{p^2}\} $ 
& \ref{p^3.2} \row8 & row 10 \\ 
& $\so=\{(\alpha_i+jp^2)\e1 + \alpha_{ij}\e2 + i\e3 \mid i\ttk{2},\,j\ttk{p}\} $ & & \\
& $C=\{(i+\alpha_i p^2)\e1 + j\e2 \mid i\ttk{p^2},\,j\ttk{2}\} $ & & \\ \hline

& $\lc=\<{p^2\e1} $ & & \\
& $A=\{(i+\alpha_i p)\e1 + j\e2 + \alpha_i\e3 + \lc \mid i\ttk{p},\,j\ttk{2}\} $ & & \\
$(2p,2p^2,p)$ 
& $B=\{ip\e1 + \alpha_{ij}\e2 + j\e3 + \lc \mid i\ttk{p},\,j\ttk{2}\} $ 
& \ref{p^2.2.2} \row5 & row 11 \\ 
& $\so=\{(i+\alpha_ip+\alpha_{ij}p^2)\e1 + j\e2 + \alpha_i\e3 \mid i\ttk{p},\,j\ttk{2}\} $ & & \\
& $C=\{(ip+kp^2)\e1 + \alpha_{ij}\e2 + j\e3 \mid i,k\ttk{p},\,j\ttk{2}\} $ & & \\ \hline

& $\lc=\<{p^2\e1+\e2} $ & & \\
& $A=\{(i+\alpha_i p)\e1 + j\e3 + \lc \mid i\ttk{p},\,j\ttk{2}\} $ & & \\
$(2p,2p^2,2p )$ 
& $B=\{ip\e1 + \alpha_i\e3 + \lc \mid i\ttk{p}\} $ 
& \ref{p^2.2} \row5 & row 12 \\ 
& $\so=\{(i+\alpha_ip+\alpha_{ij}p^2)\e1 + j\e3 \mid i\ttk{p},\,j\ttk{2}\} $ & & \\
& $C=\{(ip+jp^2)\e1 + k\e2 + \alpha_i\e3 \mid i,j\ttk{p},\,k\ttk{2}\} $ & & \\ \hline

& $\lc=\<{p^2\e1+\e2} $ & & \\
& $A=\{(\alpha_i+jp)\e1 + i\e3 + \lc \mid i\ttk{2},\,j\ttk{p}\} $ & & \\
$(2p,2p^2,2p )$ 
& $B=\{(i+\alpha_i p)\e1 + \lc \mid i\ttk{p}\} $ 
& \ref{p^2.2} \row6 & row 13 \\ 
& $\so=\{(\alpha_i+jp+\alpha_{ij}p^2)\e1 + i\e3 \mid i\ttk{2},\,j\ttk{p}\} $ & & \\
& $C=\{(i+\alpha_ip+jp^2)\e1 +k\e2 \mid i,j\ttk{p},\,k\ttk{2}\} $ & & \\ \hline

& $\lc=\<{p\e1} $ & & \\
& $A=\{i\e1 + j\e2 + \alpha_i\e3 + \lc \mid i\ttk{p},\,j\ttk{2}\} $ & & \\
$(2p,2p^2,p^2)$ 
& $B=\{\alpha_i\e2 + i\e3 + \lc \mid i\ttk{2}\} $ 
& \ref{p.2.2} \row4 & row 14 \\ 
& $\so=\{(i+\alpha_{ij}p)\e1 + j\e2 + \alpha_i\e3 \mid i\ttk{p},\,j\ttk{2}\} $ & & \\
& $C=\{jp\e1 + \alpha_i\e2 + i\e3 + \lc \mid i\ttk{2},\,j\ttk{p^2}\} $ & & \\ \hline
    
& $C=\<{p\e1+\e2} $ & & \\
$(2p,2p^2,2p^2)$ 
& $A=\{i\e1+j\e3 + \lc \mid i\ttk{p},\,j\ttk{2}\} $ 
& & row 15 \\
& $\so=\{(i+\alpha_{ij}p)\e1 + \alpha_{ij}\e2 + j\e3  \mid i\ttk{p},\,j\ttk{2}\} $ & & \\ \hline

& $\lc=\<{p^2\e1} $ & & \\
& $A=\{\alpha_i\e1 + j\e2 + i\e3  + \lc \mid i,j\ttk{2}\} $ & & \\
$(4,p^3,p)$ 
& $B=\{i\e1 + \alpha_i\e2 + \lc \mid i\ttk{p^2}\} $ 
& \ref{p^2.2.2} \row{10} & row 17 \\ 
& $\so=\{(\alpha_i + \alpha_{ij}p^2)\e1 + j\e2 + i\e3  \mid i,j\ttk{2}\} $ & & \\
& $C=\{(i+jp^2)\e1 + \alpha_i\e2 \mid i\ttk{p^2},j\ttk{p}\} $ & & \\ \hline

& $\lc=\<{p\e1} $ & & \\
& $A=\{\alpha_i\e1 + j\e2 + i\e3  + \lc \mid i,j\ttk{2}\} $ & & \\
$(4,p^3,p^2 )$ 
& $B=\{i\e1 + \alpha_i\e2 + \lc \mid i\ttk{p}\} $ 
& \ref{p.2.2} \row6 & row 18 \\ 
& $\so=\{(\alpha_i + \alpha_{ij}p)\e1 + j\e2 + i\e3  \mid i,j\ttk{2}\} $ & & \\
& $C=\{(i+jp)\e1 + \alpha_i\e2 \mid i\ttk{p},j\ttk{p^2}\} $ & & \\ \hline

& $C=\<{\e1} $ & & \\
$(4,p^3,p^3 )$ 
& $A=\{i\e2+j\e3 + \lc \mid i,j\ttk{2}\} $ 
& & row 19 \\
& $\so=\{\alpha_{ij}\e1+i\e2+j\e3 \mid i,j\ttk{2}\} $ & & \\ \hline

& $\lc=\<{\e2} $ & & \\
& $A=\{(i+jp^2)\e1 + \alpha_i\e3 + \lc \mid i\ttk{p^2},\,j\ttk{p}\} $ & & \\
$(p^3,4,2)$ 
& $B=\{\alpha_ip^2\e1 + i\e3 + \lc \mid i\ttk{2}\} $ 
& \ref{p^3.2} \row{12} & row 23 \\ 
& $\so=\{(i+jp^2)\e1 + \alpha_{ij}\e2 + \alpha_i\e3 \mid i\ttk{p^2},\,j\ttk{p}\} $ & & \\
& $C=\{\alpha_ip^2\e1 + j\e2 + i\e3  \mid i,j\ttk{2}\} $ & & \\ \hline

& $\lc=\<{\e2} $ & & \\
& $A=\{(i+jp)\e1 + \alpha_i\e3 + \lc \mid i\ttk{p},\,j\ttk{p^2}\} $ & & \\
$(p^3,4,2)$ 
& $B=\{\alpha_ip\e1 + i\e3 + \lc \mid i\ttk{2}\} $ 
& \ref{p^3.2} \row{13} & row 24 \\ 
& $\so=\{(i+jp)\e1 + \alpha_{ij}\e2 + \alpha_i\e3 \mid i\ttk{p},\,j\ttk{p^2}\} $ & & \\
& $C=\{\alpha_ip\e1 + j\e2 + i\e3  \mid i,j\ttk{2}\} $ & & \\ \hline

& $C=\<{\e2,\e3} $ & & \\
$(p^3,4,4)$ 
& $A=\{i\e1 + \lc \mid i\ttk{p^3}\} $ 
& & row 25 \\
& $\so=\{i\e1+\alpha_i\e2+\beta_i\e3 \mid i\ttk{p^3}\} $ & & \\ \hline

& $\lc=\<{\e2} $ & & \\
& $A=\{(i+\alpha_i p^2)\e1 + j\e3 + \lc \mid i\ttk{p^2},\,j\ttk{2}\} $ & & \\
$(2p^2,2p,2)$ 
& $B=\{ip^2\e1 + \alpha_i\e3 + \lc \mid i\ttk{p}\} $ 
& \ref{p^3.2} \row{15} & row 34 \\ 
& $\so=\{(i+\alpha_i p^2)\e1 + \alpha_{ij}\e2 + j\e3 \mid i\ttk{p^2},\,j\ttk{2}\} $ & & \\
& $C=\{ip^2\e1 + j\e2 + \alpha_i\e3 \mid i\ttk{p},\,j\ttk{2}\} $ & & \\ \hline

& $\lc=\<{\e2} $ & & \\
& $A=\{(i+\alpha_{ik}p+jp^2)\e1 + k\e3 + \lc \mid i,j\ttk{p},\,k\ttk{2}\} $ & & \\
$(2p^2,2p,2 )$ 
& $B=\{(ip+\alpha_ip^2)\e1 + \lc \mid i\ttk{p}\} $ 
& \ref{p^3.2} \row{16} & row 35 \\ 
& $\so=\{(i+\alpha_{ik}p+jp^2)\e1 + \alpha_{ijk}\e2 + k\e3 \mid i,j\ttk{p},\,k\ttk{2}\} $ & & \\
& $C=\{(ip+\alpha_ip^2)\e1 + j\e2 \mid i\ttk{p},\,j\ttk{2}\} $ & & \\ \hline

& $\lc=\<{\e2} $ & & \\
& $A=\{(i+\alpha_ip+jp^2)\e1 + k\e3 + \lc \mid i,j\ttk{p},\,k\ttk{2}\} $ & & \\
$(2p^2,2p,2 )$ 
& $B=\{(ip+\alpha_ip^2)\e1 + \alpha_i\e3 + \lc \mid i\ttk{p}\} $ 
& \ref{p^3.2} \row{17} & row 36 \\ 
& $\so=\{(i+\alpha_ip+jp^2)\e1 + \alpha_{ijk}\e2 + k\e3 \mid i,j\ttk{p},\,k\ttk{2}\} $ & & \\
& $C=\{(ip+\alpha_ip^2)\e1 + j\e2 + \alpha_i\e3 \mid i\ttk{p},\,j\ttk{2}\} $ & & \\ \hline

& $\lc=\<{\e2} $ & & \\
& $A=\{(\alpha_i+jp)\e1 + i\e3 + \lc \mid i\ttk{2},\,j\ttk{p^2}\} $ & & \\
$(2p^2,2p,2 )$ 
& $B=\{(i+\alpha_i p)\e1 + \lc \mid i\ttk{p}\} $ 
& \ref{p^3.2} \row{18} & row 37 \\ 
& $\so=\{(\alpha_i+jp)\e1 + \alpha_{ij}\e2 + i\e3 \mid i\ttk{2},\,j\ttk{p^2}\} $ & & \\
& $C=\{(i+\alpha_i p)\e1 + j\e2 \mid i\ttk{p},\,j\ttk{2}\} $ & & \\ \hline

& $\lc=\<{p^2\e1} $ & & \\
& $A=\{i\e1 + j\e2 + \alpha_i\e3 + \lc \mid i\ttk{p^2},\,j\ttk{2}\} $ & & \\
$(2p^2,2p,p)$ 
& $B=\{\alpha_i\e2 + i\e3 + \lc \mid i\ttk{2}\} $ 
& \ref{p^2.2.2} \row{13} & row 38 \\ 
& $\so=\{(i+\alpha_{ij}p^2)\e1 + j\e2 + \alpha_i\e3 \mid i\ttk{p^2},\,j\ttk{2}\} $ & & \\
& $C=\{jp^2\e1 + \alpha_i\e2 + i\e3 + \lc \mid i\ttk{2},\,j\ttk{p}\} $ & & \\ \hline

& $\lc=\<{p^2\e1} $ & & \\
& $A=\{(i+kp)\e1 + \alpha_{ij}\e2 + j\e3 + \lc \mid i,k\ttk{p},\,j\ttk{2}\} $ & & \\
$(2p^2,2p,p)$ 
& $B=\{\alpha_ip\e1 + i\e2 + \lc \mid i\ttk{2}\} $ 
& \ref{p^2.2.2} \row{14} & row 39 \\ 
& $\so=\{(i+kp+\alpha_{ijk}p^2)\e1 + \alpha_{ij}\e2 + j\e3 \mid i,k\ttk{p},\,j\ttk{2}\} $ & & \\
& $C=\{(\alpha_ip+jp^2)\e1 + i\e2 + \lc \mid i\ttk{2},\,j\ttk{p}\} $ & & \\ \hline

& $\lc=\<{p^2\e1} $ & & \\
& $A=\{(i+jp)\e1 + k\e2 + \alpha_i\e3 + \lc \mid i,j\ttk{p},\,k\ttk{2}\} $ & & \\
$(2p^2,2p,p )$ 
& $B=\{\alpha_ip\e1 + \alpha_i\e2 + i\e3 + \lc \mid i\ttk{2}\} $ 
& \ref{p^2.2.2} \row{15} & row 40 \\ 
& $\so=\{(i+jp+\alpha_{ijk}p^2)\e1 + k\e2 + \alpha_i\e3 \mid i,j\ttk{p},\,k\ttk{2}\} $ & & \\
& $C=\{(\alpha_ip+jp^2)\e1 + \alpha_i\e2 + i\e3 \mid i\ttk{2},\,j\ttk{p}\} $ & & \\ \hline

& $C=\<{p^2\e1+\e2} $ & & \\
$(2p^2,2p,2p)$ 
& $A=\{i\e1 + j\e3 + \lc \mid i\ttk{p^2},\,j\ttk{2}\} $ 
& & row 41 \\
& $\so=\{(i+\alpha_{ij}p^2)\e1 +\alpha_{ij}\e2 + j\e3 \mid i\ttk{p^2},\,j\ttk{2}\} $ & & \\ \hline

& $\lc=\<{p^2\e1} $ & & \\
& $A=\{(i+\alpha_{ij}p)\e1 + k\e2 + j\e3 \mid i\ttk{p},\,j,k\ttk{2}\} $ & & \\
$(4p,p^2,p)$ 
& $B=\{ip\e1 + \alpha_i\e2 + \lc \mid i\ttk{p} \} $ 
& \ref{p^2.2.2} \row{17} & row 49 \\ 
& $\so=\{(i+\alpha_{ij}p+\alpha_{ijk}p^2)\e1 + k\e2 + j\e3 \mid i\ttk{p},\,j,k\ttk{2}\} $ & & \\
& $C=\{(ip+jp^2)\e1 + \alpha_i\e2  \mid i,j\ttk{p} \} $ & & \\ \hline

& $\lc=\<{p^2\e1} $ & & \\
& $A=\{(\alpha_{ij}+kp)\e1+i\e2+j\e3 + \lc \mid i,j\ttk{2},\,k\ttk{p}\} $ & & \\
$(4p,p^2,p )$ 
& $B=\{ (i+\alpha_i p)\e1 + \lc \mid i\ttk{p} \} $ 
& \ref{p^2.2.2} \row{18} & row 50 \\ 
& $\so=\{(\alpha_{ij}+kp+\alpha_{ijk}p^2)\e1+i\e2+j\e3 \mid i,j\ttk{2},\,k\ttk{p}\} $ & & \\
& $C=\{ (i+\alpha_ip+jp^2)\e1 \mid i,j\ttk{p} \} $ & & \\ \hline

& $\lc=\<{p^2\e1} $ & & \\
& $A=\{(\alpha_i+jp)\e1 + k\e2 + i\e3 + \lc \mid i,k\ttk{2},\,j\ttk{p} \} $ & & \\
$(4p,p^2,p )$ 
& $B=\{(i+\alpha_i p)\e1 + \alpha_i\e2 + \lc \mid i\ttk{p} \} $ 
& \ref{p^2.2.2} \row{19} & row 51 \\ 
& $\so=\{(\alpha_i+jp+\alpha_{ijk}p^2)\e1 + k\e2 + i\e3 \mid i,k\ttk{2},\,j\ttk{p} \} $ & & \\
& $C=\{(i+\alpha_ip+jp^2)\e1 + \alpha_i\e2 \mid i,j\ttk{p} \} $ & & \\ \hline

& $\lc=\<{p^2\e1} $ & & \\
& $A=\{(i+\alpha_ip,j,k) + \lc \mid i\ttk{p},\,j,k\ttk{2} \} $ & & \\
$(4p,p^2,p )$ 
& $B=\{ip\e1+\alpha_i\e2+\beta_i\e3  + \lc \mid i\ttk{p} \} $ 
& \ref{p^2.2.2} \row{20} & row 52 \\ 
& $\so=\{(i+\alpha_ip+\alpha_{ijk}p^2)\e1+j\e2+k\e3 \mid i\ttk{p},\,j,k\ttk{2} \} $ & & \\
& $C=\{(ip+jp^2)\e1+\alpha_i\e2+\beta_i\e3 \mid i,j\ttk{p} \} $ & & \\ \hline

& $C=\<{p\e1} $ & & \\
$(4p,p^2,p^2 )$ 
& $A=\{i\e1+j\e2+k\e3 + \lc \mid i\ttk{p},\,j,k\ttk{2}\} $ 
& & row 53 \\
& $\so=\{(i+\alpha_{ijk}p)\e1+j\e2+k\e3 \mid i\ttk{p},\,j,k\ttk{2}\} $ & & \\ \hline

& $C=\<{\e2} $ & & \\
$(2p^3,2,2 )$ 
& $A=\{i\e1 + j\e3 +\lc \mid i\ttk{p^3},\,j\ttk{2}\} $ 
& & row 59 \\
& $\so=\{i\e1 + \alpha_{ij}\e2 + j\e3 \mid i\ttk{p^3},\,j\ttk{2}\} $ & & \\ \hline

& $C=\<{p^2\e1} $ & & \\
$(4p^2,p,p)$ 
& $A=\{i\e1+j\e2+k\e3 +\lc \mid i\ttk{p^2},\,j,k\ttk{2}\} $ 
& & row 67 \\
& $\so=\{(i+\alpha_{ijk}p^2)\e1+j\e2+k\e3 \mid i\ttk{p^2},\,j,k\ttk{2}\} $ & & \\ \hline

\end{longtable}

\smallskip
    \textsf{Case 2.} $\so$ is periodic.
    \smallskip

    In this case we have $|\ls|\geq 2$ and $\so=D\oplus \ls$ for some subset $D$ of $G$ by Lemma \ref{period}.
    So $|D|\cdot|\ls|=|\so|$ and $1<|\ls|<|\so|$ by Lemma \ref{prime deg} part (b).
    Set $$X=(D+\ls)/\ls\quad\text{and}\quad Y=(C+\ls)/\ls.$$
    Since $\so$ is periodic, by Lemma \ref{quotient}, we have
    $$G/\lc=X\oplus Y$$ and $X$ is aperiodic in $G/\ls$.
    We also know that $X$ contains $\ls$ and $X$ generates $G/\ls$ by Lemma \ref{generate-zero}.
    This factorization enables us to make use of Lemma \ref{d_i+l_i} and previously established theorems to obtain the pair $(\so, C)$ via $(X,Y)$.
    
    Similar to Table \ref{tp.p^2.2.2}, the columns of Table \ref{tp.p^3.2.2} represents all possible values for $\left(|\so|,|C|,|\ls|\right)$; $\ls$, $X$, $Y$, $\so$, and $C$; which previous theorem is used to obtain $(X,Y)$; and which row in Table \ref{t.p^3.2.2} the result corresponds to.

\begin{longtable}{|c|l|c|c|}
\caption{The case when $G= $ and $\so$ is periodic}
\label{tp.p^3.2.2}
\\
\flp

& $\ls=\<{p^2\e1} $ & & \\
& $X=\{(i+\alpha_ip)\e1+\alpha_i\e2+\beta_i\e3+\ls \mid i\ttk{p}\} $ & & \\
$(p^2,4p,p)$ 
& $Y=\{i\e1+j\e2+k\e3+\ls \mid i\ttk{p},\,j,k\ttk{2}\} $ 
& \ref{p^2.2.2} \row1 & row 2 \\ 
& $\so=\{(i+\alpha_i p + jp^2)\e1+\alpha_i\e2+\beta_i\e3 \mid i,j\ttk{p}\} $ & & \\
& $C=\{(i+\alpha_{ijk}p^2)\e1+j\e2+k\e3 \mid i\ttk{p},\,j,k\ttk{2}\} $ & & \\ \hline

& $\ls=\<{\e2} $ & & \\
& $X=\{(i+\alpha_i p)\e1+\alpha_i\e3+\ls \mid i\ttk{p}\} $ & & \\
$(2p,2p^2,2)$ 
& $Y=\{ip\e1+j\e3+\ls \mid i\ttk{p^2},\,j\ttk{2}\} $ 
& \ref{p^3.2} \row1 & row 8 \\ 
& $\so=\{(i+\alpha_i p)\e1+j\e2+\alpha_i\e3\mid i\ttk{p},\,j\ttk{2}\} $ & & \\
& $C=\{ip\e1+j\e3+\alpha_{ij}\e2 \mid i\ttk{p^2},\,j\ttk{2}\} $ & & \\ \hline

$(2p,2p^2,p)$ 
& Not possible since no $X$ exists
& \ref{p^2.2.2}  & \\ \hline

& $\ls=\<{\e2} $ & & \\
& $X=\{\alpha_i\e1 + i\e3 + \ls \mid i\ttk{p}\} $ & & \\
$(4,p^3,2)$ 
& $Y=\{i\e1+\ls \mid i\ttk{p^3}\} $ 
& \ref{p^3.2} \row2 & row 16 \\ 
& $\so=\{\alpha_i\e1 + j\e2 + i\e3  \mid i\ttk{p},\,j\ttk{2}\} $ & & \\
& $C=\{i\e1+\alpha_i\e2 \mid i\ttk{p^3}\} $ & & \\ \hline

& $\ls=\<{p^2\e1} $ & & \\
& $X=\{(i+jp)\e1 + \alpha_{ij}\e2 + \alpha_i\e3 + \ls \mid i,j\ttk{p}\} $ & & \\
$(p^3,4,p)$ 
& $Y=\{\alpha_ip\e1 + j\e2 + i\e3  + \ls \mid i,j\ttk{2}\} $ 
& \ref{p^2.2.2} \row3 & row 20 \\ 
& $\so=\{(i+jp+kp^2)\e1 + \alpha_{ij}\e2 + \alpha_i\e3 \mid i,j,k\ttk{p}\} $ & & \\
& $C=\{\alpha_ip\e1 + j\e2 + i\e3  + \ls \mid i,j\ttk{2}\} $ & & \\ \hline

& $\ls=\<{p^2\e1} $ & & \\
& $X=\{i\e1+\alpha_i\e2+\beta_i\e3 + \ls \mid i\ttk{p^2}\} $ & & \\
$(p^3,4,p )$ 
& $Y=\{i\e2+j\e3 + \ls \mid i,j\ttk{2}\} $ 
& \ref{p^2.2.2} \row4 & row 21 \\ 
& $\so=\{(i+jp^2)\e1+\alpha_i\e2+\beta_i\e3 \mid i\ttk{p^2},j\ttk{p}\} $ & & \\
& $C=\{\alpha_{ij}p^2\e1+i\e2+j\e3 \mid i,j\ttk{2}\} $ & & \\ \hline

& $\ls=\<{p\e1} $ & & \\
& $X=\{i\e1+\alpha_i\e2+\beta_i\e3 + \ls \mid i\ttk{p}\} $ & & \\
$(p^3,4,p^2 )$ 
& $Y=\{i\e2+j\e3 + \ls \mid i,j\ttk{2}\} $ 
& \ref{p.2.2} \row1 & row 22 \\ 
& $\so=\{(i+jp)\e1+\alpha_i\e2+\beta_i\e3 \mid i\ttk{p},j\ttk{p^2}\} $ & & \\
& $C=\{\alpha_{ij}p\e1+i\e2+j\e3 \mid i,j\ttk{2}\} $ & & \\ \hline

& $\ls=\<{\e2} $ & & \\
& $X=\{(i+\alpha_ip+jp^2)\e1 + \alpha_{ij}\e3 + \ls \mid i,j\ttk{p}\} $ & & \\
$(2p^2,2p,2 )$ 
& $Y=\{(ip+\alpha_ip^2)\e1 + j\e3 + \ls \mid i\ttk{p},\,j\ttk{2}\} $ 
& \ref{p^3.2} \row4 & row 26 \\ 
& $\so=\{(i+\alpha_ip+jp^2)\e1 + k\e2 + \alpha_{ij}\e3 \mid i,j\ttk{p},\,k\ttk{2}\} $ & & \\
& $C=\{(ip+\alpha_ip^2)\e1 + \alpha_{ij}\e2 + j\e3 \mid i,j\ttk{2}\} $ & & \\ \hline

& $\ls=\<{\e2} $ & & \\
& $X=\{(i+jp+\alpha_{ij}p^2)\e1 + i\e3 + \ls \mid i,j\ttk{p}\} $ & & \\
$(2p^2,2p,2 )$ 
& $Y=\{(\alpha_ip+jp^2)\e1 + i\e3 + \ls \mid i\ttk{2},\,j\ttk{p}\} $ 
& \ref{p^3.2} \row5 & row 27 \\ 
& $\so=\{(i+jp+\alpha_{ij}p^2)\e1 + k\e2 + i\e3 \mid i,j\ttk{p},\,k\ttk{2}\} $ & & \\
& $C=\{(\alpha_ip+jp^2)\e1 + \alpha_{ij}\e2 + i\e3 \mid i\ttk{2},\,j\ttk{p}\} $ & & \\ \hline

& $\ls=\<{\e2} $ & & \\
& $X=\{(i+\alpha_ip^2)\e1 + \alpha_i\e3 + \ls \mid i\ttk{p^2}\} $ & & \\
$(2p^2,2p,2 )$ 
& $Y=\{ip^2\e1 + j\e3 + \ls \mid i\ttk{p},\,j\ttk{2}\} $ 
& \ref{p^3.2} \row6 & row 28 \\ 
& $\so=\{(i+\alpha_ip^2)\e1 + j\e2 + \alpha_i\e3 \mid i\ttk{p^2},\,j\ttk{2}\} $ & & \\
& $C=\{ip^2\e1 + \alpha_{ij}\e2 + j\e3 \mid i\ttk{p},\,j\ttk{2}\} $ & & \\ \hline

& $\ls=\<{p^2\e1} $ & & \\
& $X=\{(i+\alpha_ip)\e1 + \alpha_{ij}\e2 + j\e3 + \ls \mid i\ttk{p},\,j\ttk{2}\} $ & & \\
$(2p^2,2p,p )$ 
& $Y=\{ip\e1 + j\e2 + \alpha_i\e3 + \ls \mid i\ttk{p},\,j\ttk{2}\} $ 
& \ref{p^2.2.2} \row6 & row 29 \\ 
& $\so=\{(i+\alpha_ip+kp^2)\e1 + \alpha_{ij}\e2 + j\e3 \mid i,k\ttk{p},\,j\ttk{2}\} $ & & \\
& $C=\{(ip + \alpha_{ij}p^2)\e1 + j\e2 + \alpha_i\e3 \mid i\ttk{p},\,j\ttk{2}\} $ & & \\ \hline

& $\ls=\<{p^2\e1} $ & & \\
& $X=\{(\alpha_i + jp)\e1 + \alpha_{ij}\e2 + i\e3 + \ls \mid i\ttk{2},\,j\ttk{p}\} $ & & \\
$(2p^2,2p,p )$ 
& $Y=\{(i+\alpha_i p)\e1 + j\e2 + \ls \mid i\ttk{p},\,j\ttk{2}\} $ 
& \ref{p^2.2.2} \row7 & row 30 \\ 
& $\so=\{(\alpha_i + jp+kp^2)\e1 + \alpha_{ij}\e2 + i\e3 \mid i\ttk{2},\,j,k\ttk{p}\} $ & & \\
& $C=\{(i+\alpha_ip+\alpha_{ij}p^2)\e1 + j\e2 \mid i\ttk{p},\,j\ttk{2}\} $ & & \\ \hline

& $\ls=\<{p^2\e1} $ & & \\
& $X=\{(i+\alpha_{ij}p)\e1 + j\e2 + \alpha_i\e3 + \ls \mid i\ttk{p},j\ttk{2}\} $ & & \\
$(2p^2,2p,p )$ 
& $Y=\{jp\e1 + \alpha_i\e2 + i\e3 + \ls \mid i\ttk{2},\,j\ttk{p}\} $ 
& \ref{p^2.2.2} \row8 & row 31 \\ 
& $\so=\{(i+\alpha_{ij}p+kp^2)\e1 + j\e2 + \alpha_i\e3 \mid i,k\ttk{p},\,j\ttk{2}\} $ & & \\
& $C=\{(jp+\alpha_{ij}p^2)\e1 + \alpha_i\e2 + i\e3 \mid i\ttk{2},\,j\ttk{p}\} $ & & \\ \hline

& $\ls=\<{p^2\e1} $ & & \\
& $X=\{(i+\alpha_{ij}p)\e1 + \alpha_{ij}\e2 + j\e3 + \ls \mid i\ttk{p},\,j\ttk{2}\} $ & & \\
$(2p^2,2p,p )$ 
& $Y=\{ip\e1 + j\e2 + \ls \mid i\ttk{p},\,j\ttk{2}\} $ 
& \ref{p^2.2.2} \row9 & row 32 \\ 
& $\so=\{(i+\alpha_{ij}p+kp^2)\e1 + \alpha_{ij}\e2 + j\e3 \mid i,k\ttk{p},\,j\ttk{2}\} $ & & \\
& $C=\{(ip+\alpha_{ij}p^2)\e1 + j\e2 \mid i\ttk{p},\,j\ttk{2}\} $ & & \\ \hline

& $\ls=\<{p^2\e1+\e2} $ & & \\
& $X=\{i\e1 + \alpha_i\e3 + \ls \mid i\ttk{p}\} $ & & \\
$(2p^2,2p,2p)$ 
& $Y=\{ip\e1 + j\e3 + \ls \mid i\ttk{p},\,j\ttk{2}\} $ 
& \ref{p^2.2} \row1 & row 33 \\ 
& $\so=\{(i+jp^2)\e1 + k\e2 + \alpha_i\e3 \mid i,j\ttk{p},\,k\ttk{2}\} $ & & \\
& $C=\{(ip+\alpha_{ij}p^2)\e1 + \alpha_{ij}\e2 + j\e3 \mid i\ttk{p},\,j\ttk{2}\} $ & & \\ \hline

$(2p^2,2p,p^2)$ 
& Not possible since no $X$ exists
& \ref{p.2.2}  & \\ \hline

& $\ls=\<{\e2} $ & & \\
& $X=\{(i+\alpha_ip+\alpha_{ij}p^2)\e1 + j\e3 + \ls \mid i\ttk{p},\,j\ttk{2}\} $ & & \\
$(4p,p^2,2 )$ 
& $Y=\{(ip+jp^2)\e1 + \alpha_i\e3 + \ls \mid i,j\ttk{p}\} $ 
& \ref{p^3.2} \row{9} & row 42 \\ 
& $\so=\{(i+\alpha_ip+\alpha_{ij}p^2)\e1 + k\e2 + j\e3 \mid i\ttk{p},\,j,k\ttk{2}\} $ & & \\
& $C=\{(ip+jp^2)\e1 + \alpha_{ij}\e2 + \alpha_i\e3 \mid i,j\ttk{p}\} $ & & \\ \hline

& $\ls=\<{\e2} $ & & \\
& $X=\{(\alpha_i+jp+\alpha_{ij}p^2)\e1 + i\e3 + \ls \mid i\ttk{2},\,j\ttk{p}\} $ & & \\
$(4p,p^2,2 )$ 
& $Y=\{(i+\alpha_ip+jp^2)\e1 + \ls \mid i,j\ttk{p}\} $ 
& \ref{p^3.2} \row{10} & row 43 \\ 
& $\so==\{(\alpha_i+jp+\alpha_{ij}p^2)\e1 + k\e2 + i\e3 \mid i,k\ttk{2},\,j\ttk{p}\} $ & & \\
& $C=\{(i+\alpha_ip+jp^2)\e1 + \alpha_{ij}\e2 \mid i,j\ttk{p}\} $ & & \\ \hline

& $\ls=\<{\e2} $ & & \\
& $X=\{(i+\alpha_{ij}p)\e1 + j\e3 + \ls \mid i\ttk{p},\,j\ttk{2}\} $ & & \\
$(4p,p^2,2 )$ 
& $Y=\{ip\e1 + \ls \mid i\ttk{p^2}\} $ 
& \ref{p^3.2} \row{11} & row 44 \\ 
& $\so=\{(i+\alpha_{ij}p)\e1 + k\e2 + j\e3 \mid i\ttk{p},\,j,k\ttk{2}\} $ & & \\
& $C=\{ip\e1 + \alpha_i\e2 \mid i\ttk{p^2}\} $ & & \\ \hline

& $\ls=\<{p^2\e1} $ & & \\
& $X=\{(\alpha_i + \alpha_{ij}p)\e1 + j\e2 + i\e3  + \ls \mid i,j\ttk{2}\} $ & & \\
$(4p,p^2,p )$ 
& $Y=\{(i+jp)\e1 + \alpha_i\e2 + \ls \mid i,j\ttk{p}\} $ 
& \ref{p^2.2.2} \row{11} & row 45 \\ 
& $\so=\{(\alpha_i+\alpha_{ij}p+kp^2)\e1 + j\e2 + i\e3  \mid i,j\ttk{2},\,k\ttk{p}\} $ & & \\
& $C=\{(i+jp+\alpha_{ij}p^2)\e1 + \alpha_i\e2 \mid i,j\ttk{p}\} $ & & \\ \hline

& $\ls=\<{p^2\e1} $ & & \\
& $X=\{\alpha_{ij}\e1+i\e2+j\e3 + \ls \mid i,j\ttk{2}\} $ & & \\
$(4p,p^2,p )$ 
& $Y=\{i\e1 + \ls \mid i\ttk{p^2}\} $ 
& \ref{p^2.2.2} \row{12} & row 46 \\ 
& $\so=\{(\alpha_{ij}+kp^2)\e1+i\e2+j\e3 \mid i,j\ttk{2},\,k\ttk{p}\} $ & & \\
& $C=\{(i+\alpha_i p^2)\e1 \mid i\ttk{p^2}\} $ & & \\ \hline

& $\ls=\ls=\<{\e2,\e3} $ & & \\
& $X=\{(i+\alpha_ip)\e1 + \ls \mid i\ttk{p}\} $ & & \\
$(4p,p^2,4 )$ 
& $Y=\{ip\e1 + \ls \mid i\ttk{p^2}\} $ 
& \ref{p^3} \row1 & row 47 \\ 
& $\so=\{(i+\alpha_ip)\e1+j\e2+k\e3 \mid i\ttk{p},\,j,k\ttk{2}\} $ & & \\
& $C=\{ip\e1+\alpha_i\e2+\beta_i\e3 \mid i\ttk{p^2}\} $ & & \\ \hline

& $\ls=\<{p^2\e1+\e2} $ & & \\
& $X=\{\alpha_i\e1 + i\e3 + \ls \mid i\ttk{2}\} $ & & \\
$(4p,p^2,2p )$ 
& $Y=\{i\e1 + \ls \mid i\ttk{p^2}\} $ 
& \ref{p^2.2} \row2 & row 48 \\ 
& $\so=\{(\alpha_i+jp^2)\e1 + k\e2 + i\e3 \mid i,k\ttk{2},\,j\ttk{p}\} $ & & \\
& $C=\{(i+\alpha_i p^2)\e1 + \alpha_i\e2 \mid i\ttk{p^2}\} $ & & \\ \hline

& $\ls=\<{\e2} $ & & \\
& $X=\{i\e1 + \alpha_i\e3 + \ls \mid i\ttk{p^3}\} $ & & \\
$(2p^3,2,2)$ 
& $Y=\{i\e3 + \ls \mid i\ttk{2}\} $ 
& \ref{p^3.2} \row{14} & row 54 \\ 
& $\so=\{i\e1 + j\e2 + \alpha_i\e3 \mid i\ttk{p^3},\,j\ttk{2}\} $ & & \\
& $C=\{\alpha_i\e2 + i\e3 \mid i\ttk{2}\} $ & & \\ \hline

& $\ls=\<{p^2\e1} $ & & \\
& $X=\{i\e1 + \alpha_{ij}\e2 + j\e3 + \ls \mid i\ttk{p^2},\,j\ttk{2}\} $ & & \\
$(2p^3,2,p )$ 
& $Y=\{i\e2 + \ls \mid i\ttk{2}\} $ 
& \ref{p^2.2.2} \row{16} & row 55 \\ 
& $\so=\{(i+kp^2)\e1 + \alpha_{ij}\e2 + j\e3 \mid i\ttk{p^2},\,j\ttk{2},\,k\ttk{p}\} $ & & \\
& $C=\{\alpha_ip^2\e1 + i\e2 \mid i\ttk{2}\} $ & & \\ \hline

& $\ls=\<{p^2\e1+\e2} $ & & \\
& $X=\{i\e1 + \alpha_i\e3 + \ls \mid i\ttk{p^2}\} $ & & \\
$(2p^3,2,2p )$ 
& $Y=\{i\e3 + \ls \mid i\ttk{2}\} $ 
& \ref{p^2.2} \row4 & row 56 \\ 
& $\so=\{(i+jp^2)\e1 + k\e2 + \alpha_i\e3 \mid i\ttk{p^2},\,j\ttk{p},\,k\ttk{2}\} $ & & \\
& $C=\{\alpha_ip^2\e1 + \alpha_i\e2 + i\e3 \mid i\ttk{2}\} $ & & \\ \hline

& $\ls=\<{p\e1} $ & & \\
& $X=\{i\e1 + \alpha_{ij}\e2 + j\e3 + \ls \mid i\ttk{p},\,j\ttk{2}\} $ & & \\
$(2p^3,2,p^2 )$ 
& $Y=\{i\e2 + \ls \mid i\ttk{2}\} $ 
& \ref{p.2.2} \row5 & row 57 \\ 
& $\so=\{(i+kp)\e1 + \alpha_{ij}\e2 + j\e3 \mid i\ttk{p},\,j\ttk{2},\,k\ttk{p^2}\} $ & & \\
& $C=\{\alpha_ip\e1 + i\e2 \mid i\ttk{2}\} $ & & \\ \hline

& $\ls=\<{p\e1+\e2} $ & & \\
& $X=\{i\e1 + \alpha_i\e3 + \ls \mid i\ttk{p}\} $ & & \\
$(2p^3,2,2p^2)$ 
& $Y=\{i\e3 + \ls \mid i\ttk{2}\} $ 
& \ref{p.q} \row1 & row 58 \\ 
& $\so=\{(i+jp)\e1 + k\e2 + \alpha_i\e3 \mid i\ttk{p},\,j\ttk{p^2},\,k\ttk{2}\} $ & & \\
& $C=\{\alpha_ip\e1 + \alpha_i\e2 + i\e3 \mid i\ttk{2}\} $ & & \\ \hline

$(2p^3,2,p^3)$ 
& Not possible since no $X$ exists
& \ref{2.2}  & \\ \hline

& $\ls=\<{\e2} $ & & \\
& $X=\{(i+\alpha_{ij}p^2)\e1 + j\e3 + \ls \mid i\ttk{p^2},\,j\ttk{2}\} $ & & \\
$(4p^2,p,2 )$ 
& $Y=\{ip^2\e1 + \ls  \mid i\ttk{p}\} $ 
& \ref{p^3.2} \row{19} & row 60 \\ 
& $\so=\{(i+\alpha_{ij}p^2)\e1 + k\e2 + j\e3 \mid i\ttk{p^2},\,j,k\ttk{2}\} $ & & \\
& $C=\{ip^2\e1 +\alpha_i\e2 \mid i\ttk{p}\} $ & & \\ \hline

& $\ls=\<{p^2\e1} $ & & \\
& $X=\{(i+\alpha_{ijk}p)\e1+j\e2+k\e3 + \ls \mid i\ttk{p},\,j,k\ttk{2}\} $ & & \\
$(4p^2,p,p )$ 
& $Y=\{ip\e1 + \ls \mid i\ttk{p}\} $ 
& \ref{p^2.2.2} \row{21} & row 61 \\ 
& $\so=\{(i+\alpha_{ijk}p+lp^2)\e1+j\e2+k\e3 + \ls \mid i,l\ttk{p},\,j,k\ttk{2}\} $ & & \\
& $C=\{(ip+\alpha_ip^2)\e1 \mid i\ttk{p}\} $ & & \\ \hline

& $\ls=\<{\e2,\e3} $ & & \\
& $X=\{(i+\alpha_ip^2)\e1 + \ls \mid i\ttk{p^2}\} $ & & \\
$(4p^2,p,4 )$ 
& $Y=\{ip^2\e1 + \ls  \mid i\ttk{p}\} $ 
& \ref{p^3} \row3 & row 62 \\ 
& $\so=\{(i+\alpha_ip^2)\e1+j\e2+k\e3 \mid i\ttk{p^2},\,j,k\ttk{2}\} $ & & \\
& $C=\{ip^2\e1+\alpha_i\e2+\beta_i\e3 \mid i\ttk{p}\} $ & & \\ \hline

& $\ls=\<{p^2\e1+\e2} $ & & \\
& $X=\{(i+\alpha_{ij}p)\e1 + j\e3 + \ls \mid i\ttk{p},\,j\ttk{2}\} $ & & \\
$(4p^2,p,2p )$ 
& $Y=\{ip\e1 + \ls  \mid i\ttk{p}\} $ 
& \ref{p^2.2} \row7 & row 63 \\ 
& $\so=\{(i+\alpha_{ij}p+kp^2)\e1 + l\e2 + j\e3 \mid i,k\ttk{p},\,j,l\ttk{2}\} $ & & \\
& $C=\{(ip+\alpha_ip^2)\e1 +\alpha_i\e2 \mid i\ttk{p}\} $ & & \\ \hline

& $\ls=\<{p\e1} $ & & \\
& $X=\{\alpha_{ij}\e1+i\e2+j\e3+\ls \mid i,j\ttk{2}\} $ & & \\
$(4p^2,p,p^2)$ 
& $Y=\{i\e1+\ls \mid i\ttk{p}\} $ 
& \ref{p.2.2} \row7 & row 64 \\ 
& $\so=\{(\alpha_{ij}+kp)\e1+i\e2+j\e3 \mid i,j\ttk{2}\,k\ttk{p^2}\} $ & & \\
& $C=\{(i+\alpha_ip)\e1 \mid i\ttk{p}\} $ & & \\ \hline

& $\ls=\{p^2\e1+\e2,\e3\} $ & & \\
& $X=\{(i+\alpha_ip)\e1 + \ls \mid i\ttk{p}\} $ & & \\
$(4p^2,p,4p )$ 
& $Y=\{ip\e1 + \ls  \mid i\ttk{p}\} $ 
& \ref{p^2} & row 65 \\ 
& $\so=\{(i+\alpha_ip+jp^2)\e1+k\e2+l\e3 \mid i,j\ttk{p},\,k,l\ttk{2}\} $ & & \\
& $C=\{(ip+\alpha_ip^2)\e1+\alpha_i\e2+\beta_i\e3 \mid i\ttk{p}\} $ & & \\ \hline

& $\ls=\<{p\e1+\e2} $ & & \\
& $X=\{\alpha_i\e1 + i\e3 + \ls \mid i\ttk{2}\} $ & & \\
$(4p^2,p,2p^2 )$ 
& $Y=\{i\e1 + \ls  \mid i\ttk{p}\} $ 
& \ref{p.q} \row2 & row 66 \\ 
& $\so=\{(\alpha_i+jp)\e1 + k\e2 + i\e3 \mid i,k\ttk{2},\,j\ttk{p^2}\} $ & & \\
& $C=\{(i+\alpha_ip)\e1 +\alpha_i\e2 \mid i\ttk{p}\} $ & & \\ \hline

\end{longtable}
\end{proof}

\begin{remark}
    In Theorem \ref{p^3.2.2}, we can change $\e2$ and $\e3$ to any two generators of $\{0\}\x\Z_2\x\Z_2$.
\end{remark}

\begin{theorem}\label{p^2.2.2.2}
Let $G=\Z_{p^2}\x\Z_2\x\Z_2\x\Z_2$ for an odd prime $p$ and let $S$ be a proper subset of $G\setminus \{0\}$ that generates $G$. Let $0\in C\subseteq G$. Then $C$ is a perfect code in $\Cay(G,S)$ if and only if $(\so, C)$ is one of the pairs in Table \ref{t.p^2.2.2.2} for some $\alpha_i$'s, $\alpha_{ij}$'s, $\alpha_{ijk}$'s, $\beta_i$'s, and $\gamma_i$'s in $\Z$ where $\alpha_0=\alpha_{00}=\alpha_{000}=\beta_0=\gamma_0=0$.
\end{theorem}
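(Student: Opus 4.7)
The plan is to follow the same two-case framework developed for Theorems \ref{p^3.2.2} and \ref{p^2.2.2}. For the sufficiency direction, given any pair $(S_0,C)$ listed in Table \ref{t.p^2.2.2.2}, one checks that $|S_0|\cdot|C|=8p^2=|G|$ and that each element of $G$ has a unique representation as a sum of an element of $S_0$ and an element of $C$; the argument is the same bookkeeping as in the proof of Theorem \ref{p,q,q} and will be omitted.

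For the necessity direction, suppose $C$ is a perfect code in $\Cay(G,S)$. Then $G=S_0\oplus C$, so $|S_0|\cdot|C|=8p^2$, and by Lemma \ref{ls<so}(a) we have $|S_0|>2$ since $G$ is not cyclic. I will split into the two cases $S_0$ aperiodic and $S_0$ periodic.

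In the aperiodic case, since $G$ is a Haj\'os group (Lemma \ref{goodabelian}), $C$ must be periodic, so $L_C$ is a nontrivial subgroup of $G$. Writing $C=L_C\oplus D$ and applying Lemma \ref{quotient} gives the factorization $G/L_C=A\oplus B$, where $A=(S_0+L_C)/L_C$ is periodic, contains $L_C/L_C$, and generates $G/L_C$ (by Lemma \ref{generate-zero}), while $B=(D+L_C)/L_C$ is aperiodic. I will enumerate the possible values of the triple $(|S_0|,|C|,|L_C|)$: since $|G|=8p^2$, the subgroups of $G$ have orders in $\{1,2,4,8,p,2p,4p,8p,p^2,2p^2,4p^2,8p^2\}$, and for each possibility one determines $A,B$ either trivially (when $|L_C|=|C|$, forcing $C=L_C$ and $G/L_C=A$, after which $S_0$ is obtained by Lemma \ref{d_i+l_i}) or via one of Theorems \ref{p^k}, \ref{p.q}, \ref{p,q,q}, \ref{p^2}, \ref{p^2.2}, \ref{2.2.2}, \ref{p^3.2}, \ref{p^2.2.2}, \ref{p.2.2.2}, and \ref{2.2.2.2} applied to the quotient $G/L_C$, which is isomorphic to a subgroup of $G$ and hence itself Haj\'os. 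Each case produces one row of Table \ref{t.p^2.2.2.2}; the bookkeeping is organised into a table analogous to Table \ref{tap.p^3.2.2}.

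In the periodic case, Lemma \ref{ls<so}(b) gives $1<|L_{S_0}|<|S_0|$ and $|L_{S_0}|$ divides $|S_0|$. Writing $S_0=D\oplus L_{S_0}$ and applying Lemma \ref{quotient} yields $G/L_{S_0}=X\oplus Y$ with $X=(D+L_{S_0})/L_{S_0}$ aperiodic, $X\ni L_{S_0}$, and $X$ generating $G/L_{S_0}$, while $Y=(C+L_{S_0})/L_{S_0}$. I again enumerate $(|S_0|,|C|,|L_{S_0}|)$ and for each subgroup $L_{S_0}$ (up to the natural action; note that several non-isomorphic subgroups of the same order may need to be handled separately) apply the appropriate earlier theorem to $(X,Y)$ in the quotient, then lift by Lemma \ref{d_i+l_i} to obtain $(S_0,C)$; this produces another table analogous to Table \ref{tp.p^3.2.2}. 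The two tables together will exhaust Table \ref{t.p^2.2.2.2}.

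The main obstacle is purely combinatorial: $\Z_{p^2}\x\Z_2^3$ contains many more subgroups than $\Z_{p^3}\x\Z_2^2$ because of the rich subgroup lattice of $\Z_2^3$ (for instance there are seven subgroups of order $2$, seven of order $4$, and correspondingly many subgroups of each order $2^a p^b$). Consequently the case analysis is considerably longer, and the chief difficulty is to ensure that every possible triple $(|S_0|,|C|,|L|)$ is considered, that inequivalent choices of subgroup of the same order are separated, and that each case is matched to a row of Table \ref{t.p^2.2.2.2}. Once the enumeration is complete, each individual case is a direct application of a previous theorem together with Lemma \ref{d_i+l_i}, exactly as in the proof of Theorem \ref{p^3.2.2}.
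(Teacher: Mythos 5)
Your proposal follows essentially the same route as the paper's proof: the same aperiodic/periodic dichotomy for $S_0$, the same reduction via Lemmas \ref{quotient}, \ref{generate-zero} and \ref{d_i+l_i} to factorizations of the quotient $G/L_C$ or $G/L_{S_0}$, and the same tabulated enumeration of triples $(|S_0|,|C|,|L|)$ resolved by the earlier theorems (the paper's Tables \ref{tap.p^2.2.2.2} and \ref{tp.p^2.2.2.2} do exactly this, including the separation of inequivalent subgroups of equal order and the ``not possible since no $X$ exists'' cases you anticipate). One harmless imprecision: since every quotient of $\Z_{p^2}\x\Z_2^3$ has the form $\Z_{p^a}\x\Z_2^b$ with $a\leq 2$ and $b\leq 3$, Theorems \ref{p^3.2} and \ref{2.2.2.2} in your list of candidate inputs are never actually invoked.
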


\begin{footnotesize}
\rc\begin{longtable}{|c|l|l|}
\caption{Perfect codes in Cayley graphs of $\Z_{p^2}\x\Z_2\x\Z_2\x\Z_2$}
\label{t.p^2.2.2.2}
\\
\fl
\tn & $\{(i+\alpha_ip)\e1+\alpha_i\e2+\beta_i\e3+\gamma_i\e4 \mid i\ttk{p}\}$
& $\{ip\e1+j\e2+k\e3+l\e4\mid i\ttk{p},\,j,k,l\ttk{2}\}$ \\
\tn & $\{(i+jp)\e1+\alpha_i\e2+\beta_i\e3+\gamma_i\e4 \mid i,j\ttk{p}\}$
& $\{\alpha_{ijk}p\e1+i\e2+j\e3+k\e4 \mid i,j,k\ttk{2}\}$ \\
\tn & $\{(i+jp)\e1 + \alpha_{ij}\e2 + \alpha_i\e3 + \beta_i\e4 \mid i,j\ttk{p}\}$
& $\{\alpha_{ij}p\e1 + k\e2 + i\e3 + j\e4 \mid i,j,k\ttk{2}\}$ \\
\tn & $\{(i+jp)\e1 + \alpha_{ij}\e2 + \beta_{ij}\e3 + \alpha_i\e4  \mid i,j\ttk{p}\}$
& $\{\alpha_ip\e1 + j\e2 + k\e3 + i\e4  \mid i,j,k\ttk{2}\}$ \\
\tn & $\{i\e1+\alpha_i\e2+\beta_i\e3+\gamma_i\e4 \mid i\ttk{p^2}\}$
& $\{\e1+i\e2+j\e3+k\e4 \mid i,j,k\ttk{2}\}$ \\
\tn & $\{(i+\alpha_ip)\e1 + k\e2 + \alpha_i\e3 + \beta_i\e4 \mid i\ttk{p},\,k\ttk{2}\}$
& $\{ip\e1 + \alpha_{ijk}\e2 + j\e3 + k\e4 \mid i\ttk{p},\,j,k\ttk{2}\}$ \\
\tn & $\{(i+\alpha_ip)\e1 + \alpha_{ij}\e2 + j\e3 + \alpha_i\e4 \mid i\ttk{p},\,j\ttk{2}\}$
& $\{ip\e1 + k\e2 + \alpha_{ij}\e3 + j\e4 \mid i\ttk{p},\,j,k\ttk{2}\}$ \\
\tn & $\{(i+\alpha_{ij}p)\e1 + j\e2 + \alpha_i\e3 + \beta_i\e4 \mid i\ttk{p},\,j\ttk{2}\}$
& $\{ kp\e1 + \alpha_{ij}\e2 + i\e3 + j\e4 \mid i,j\ttk{2},\,k\ttk{p}\}$ \\
\tn & $\{(i+\alpha_ip)\e1 + \alpha_{ij}\e2 + \beta_{ij}\e3 + j\e4 \mid i\ttk{p},\,j\ttk{2}\}$
& $\{ip\e1 + \alpha_j\e2 + k\e3 + i\e4  \mid i\ttk{p},\,j,k\ttk{2}\}$ \\
\tn & $\{(\alpha_i+jp)\e1 + \alpha_{ij}\e2 + \beta_{ij}\e3 + i\e4 \mid i\ttk{2},\,j\ttk{p}\}$
& $\{(i+\alpha_ip)\e1 + j\e2 + k\e3 \mid i\ttk{p},\,j,k\ttk{2}\}$ \\
\tn & $\{(i+\alpha_{ij}p)\e1 + \alpha_{ij}\e2 + j\e3 + \alpha_i\e4 \mid i\ttk{p},\,j\ttk{2}\}$
& $\{jp\e1 + k\e2 + \alpha_i\e3 + i\e4 \mid j\ttk{p},\,i,k\ttk{2}\}$ \\
\tn & $\{(i+\alpha_{ij}p)\e1 + \alpha_{ij}\e2 + \beta_{ij}\e3 + j\e4 \mid i\ttk{p},\,j\ttk{2}\}$
& $\{ip\e1+j\e2+k\e3 \mid i\ttk{p}, j,k\ttk{2}\}$ \\
\tn & $\{\alpha_i\e1 + \alpha_{ij}\e2 + j\e3 +i\e4 \mid i,j\ttk{2}\}$
& $\{i\e1 + \alpha_i\e3 + j\e2 \mid i\ttk{p^2},\,j\ttk{2}\}$ \\
\tn & $\{\alpha_i\e1 + \alpha_{ij}\e2 + j\e3 +i\e4 \mid i,j\ttk{2}\}$
& $\{(i+jp)\e1 + k\e2 + \alpha_i \e3 \mid i,j\ttk{p},\,k\ttk{2}\}$ \\
\tn & $\{\alpha_{ij}\e1 + \alpha_{ij}\e2 + i\e3 + j\e4 \mid i,j\ttk{2}\}$
& $\{i\e1+j\e2 \mid i\ttk{p^2}, j\ttk{2}\}$ \\
\tn & $\{(i+jp)\e1 + k\e2 + \alpha_{ij}\e3 + \alpha_i\e4 \mid i,j\ttk{p},\,k\ttk{2}\}$
& $\{\alpha_i p\e1 + \alpha_{ij}\e2 + j\e3 +i\e4 \mid i,j\ttk{2}\}$ \\
\tn & $\{i\e1 + j\e2 + \alpha_i\e3 + \beta_i\e4 \mid i\ttk{p^2},\,j\ttk{2}\}$
& $\{\alpha_{ij}\e2 + i\e3 + j\e4 \mid i,j\ttk{2}\}$ \\
\tn & $\{(i+kp)\e1+\alpha_{ij}\e2 + j\e3 + \alpha_i\e4 \mid i,k\ttk{p},\,j\ttk{2}\}$
& $\{\alpha_{ij}\e1 + j\e2 + \alpha_i\e3 + i\e4 \mid i,j\ttk{2}\}$ \\
\tn & $\{(i+kp)\e1 + \alpha_{ij}\e2 + \beta{ij}\e3 + j\e4 \mid i,k\ttk{p},\,j\ttk{2}\}$
& $\{\alpha_{ij}\e1 + i\e2+j\e3 \mid i,j\ttk{2} \}$ \\
\tn & $\{(i+jp)\e1 + k\e2 + \alpha_i\e3 + \beta_i\e4 \mid i,j\ttk{p},\,k\ttk{2}\}$
& $\{\alpha_{ij}p + \alpha_{ij}\e2 + i\e3 + j\e4 \mid i,j\ttk{2}\}$ \\
\tn & $\{i\e1 + \alpha_{ij}\e2 + j\e3 + \alpha_i\e4 \mid i\ttk{p^2},\,j\ttk{2}\}$
& $\{j\e2 + \alpha_i\e3 + i\e4 \mid i,j\ttk{2}\}$ \\
\tn & $\{(i+kp)\e1 + \alpha_{ijk}\e2 +\alpha_{ij}\e3 + j\e4 \mid i,k\ttk{p},j\ttk{2}\}$
& $\{\alpha_i p\e1 + j\e2 + i\e3 \mid i,j\ttk{2}\}$ \\
\tn & $\{(i+jp)\e1 + \alpha_{ijk}\e2 + k\e3 + \alpha_i\e4 \mid i,j\ttk{p},\,k\ttk{2}\}$
& $\{\alpha_i p\e1 + j\e2 + \alpha_i\e3 + i\e4 \mid i,j\ttk{2}\}$ \\
\tn & $\{i\e1 + \alpha_{ij}\e2 + \beta_{ij}\e3 + j\e4 \mid i\ttk{p^2},\,j\ttk{2}\}$
& $\{i\e2 + j\e3 \mid \}$ \\
\tn & $\{(i+\alpha_{ij}p)\e1+k\e2 + j\e3 + \alpha_i\e4 \mid i\ttk{p},\,j,k\ttk{2}\}$
& $\{jp\e1+ \alpha_{ij}\e2 + \alpha_i\e3 + i\e4 \mid i\ttk{2},\,j\ttk{p}\}$ \\
\tn & $\{(i+\alpha_i p)\e1+k\e2+\alpha_{ij}\e3 +j\e4 \mid i\ttk{p},\,j,k\ttk{2}\}$
& $\{ip\e1+\alpha_{ij}\e2 + j\e3 + \alpha_i\e4 \mid i\ttk{p},\,j\ttk{2}\}$ \\
\tn & $\{(\alpha_i+jp)\e1 + k\e2 +\alpha_{ij}\e3 + i\e4 \mid i\ttk{2}, \,j,k\ttk{p}\}$
& $\{(i+\alpha_i p)\e1+\alpha_{ij}\e2 + j\e3 \mid i\ttk{p},\,j\ttk{2}\}$ \\
\tn & $\{(i+\alpha_{ij}p)\e1 + k\e2 + \alpha_{ij}\e3 + j\e4 \mid i\ttk{p},\,j,k\ttk{2}\}$
& $\{ip\e1+\alpha_{ij}\e2 + j\e3 \mid i\ttk{p},\,j\ttk{2} \}$ \\
\tn & $\{(\alpha_i+kp)\e1+\alpha_{ij}\e2 + j\e3 + i\e4 \mid i,j\ttk{2},\,k\ttk{p}\}$
& $\{(i+\alpha_{ij}p)\e1+j\e2 + \alpha_i\e3 \mid i\ttk{p},\,j\ttk{2}\}$ \\
\tn & $\{(\alpha_{ij}+kp)\e1+ \alpha_{ij}\e2 + i\e3 + j\e4 \mid i,j\ttk{2},\,k\ttk{p}\}$
& $\{(i+\alpha_{ij}p)\e1+ j\e2 \mid i\ttk{p},\,j\ttk{2}\}$ \\
\tn & $\{(i+\alpha_ip)\e1+\alpha_j\e2 + k\e3 + i\e4  \mid i\ttk{p},\,j,k\ttk{2}\}$
& $\{ip\e1+ \alpha_{ij}\e2 + \beta_{ij}\e3 + j\e4 \mid i\ttk{p},\,j\ttk{2}\}$ \\
\tn & $\{(i+\alpha_{ij}p)\e1 + \alpha_{ijk}\e2 + k\e3 + j\e4 \mid i\ttk{p},\,j,k\ttk{2}\}$
& $\{ip\e1 + \alpha_j\e2 + i\e3 \mid i\ttk{p},\,j\ttk{2}\}$ \\
\tn & $\{(\alpha_{ij}+kp)\e1 + \alpha_{ijk}\e2 + i\e3 + j\e4 \mid i,j\ttk{2},\,k\ttk{p}\}$
& $\{(i+\alpha_ip)\e1 +j\e2 \mid i\ttk{p},\,j\ttk{2}\}$ \\
\tn & $\{(\alpha_i+jp)\e1 + \alpha_{ijk}\e2 + k\e3 + i\e4  \mid i,k\ttk{2},\,j\ttk{p} \}$
& $\{(i+\alpha_ip)\e1 + \alpha_j\e2 + i\e3 \mid i\ttk{p},\,j\ttk{2} \}$ \\
\tn & $\{(i+\alpha_ip)\e1 + \alpha_{ijk}\e2 + j\e3 + k\e4 \mid i\ttk{p},\,j,k\ttk{2} \}$
& $\{ip\e1 + j\e2 + \alpha_i\e3 + \beta_i\e4 \mid i\ttk{p},\,j\ttk{2} \}$ \\
\tn & $\{(i+\alpha_{ijk}p)\e1 + k\e2 + \alpha_{ij}\e3 + j\e4 \mid i\ttk{p},\,j,k\ttk{2}\}$
& $\{jp\e1 + \alpha_i\e2 + i\e3 \mid i\ttk{2},\,j\ttk{p}\}$ \\
\tn & $\{(k+\alpha_{ijk}p)\e1+ \alpha_{ij}\e2 + i\e3 + j\e4 \mid i,j\ttk{2},\,k\ttk{p}\}$
& $\{(\alpha_i+jp)\e1 + i\e2 \mid i\ttk{2},\,j\ttk{p}\}$ \\
\tn & $\{(i+\alpha_{ijk}p)\e1 + j\e2 + k\e3 + \alpha_i \e4 \mid i\ttk{p},\,j,k\ttk{2}\}$
& $\{jp\e1 + \alpha_i\e2 +\beta_i\e3 + i\e4 \mid i\ttk{2},\,j\ttk{p}\}$ \\
\tn & $\{(i+\alpha_{ijk}p)\e1 + \alpha_{ijk}\e2 + j\e3 + k\e4 \mid i\ttk{p},\,j,k\ttk{2}\}$
& $\{ip\e1+j\e2 \mid i\ttk{p},\,j\ttk{2} \}$ \\
\tn & $\{(\alpha_i+\alpha_{ij}p)\e1 + k\e2 + j\e3 + i\e4 \mid i,j,k\ttk{2}\}$
& $\{(i+jp)\e1 + \alpha_{ij}\e2 + \alpha_i\e3 \mid i,j\ttk{p}\}$ \\
\tn & $\{\alpha_{ij}\e1 + k\e2 + i\e3 + j\e4 \mid i,j,k\ttk{2}\}$
& $\{i\e1 +\alpha_i\e2 \mid i\ttk{p^2} \}$ \\
\tn & $\{\alpha_i\e1 + j\e2 + k\e3 + i\e4  \mid i,j,k\ttk{2}\}$
& $\{ip\e1 + \alpha_{ij}\e2 + \beta_{ij}\e3 + j\e4 \mid i,j\ttk{p}\}$ \\
\tn & $\{(\alpha_{ij}+\alpha_{ijk}p)\e1 + k\e2 + i\e3 + j\e4 \mid i,j,k\ttk{2}\}$
& $\{(i+jp)\e1 + \alpha_i \e2 \mid i,j\ttk{p} \}$ \\
\tn & $\{(\alpha_i+\alpha_{ijk}p)\e1 + j\e2 + k\e3 + i\e4  \mid i,j,k\ttk{2}\}$
& $\{(i+jp)\e1 + \alpha_i\e2 + \beta_i\e3 \mid i,j\ttk{p} \}$ \\
\tn & $\{(\alpha_{ijk})\e1 + i\e2 + j\e3 + k\e4 \mid i,j,k\ttk{2}\}$
& $\{i\e1 \mid i\ttk{p^2}\}$ \\
\tn & $\{i\e1+k\e2+\alpha_{ij}\e3 +j\e4 \mid i\ttk{p^2},\,j,k\ttk{2}\}$
& $\{\alpha_i\e2 + i\e3 \mid i\ttk{2}\}$ \\    
\tn & $\{(i+lp)\e1 + \alpha_{ijk}\e2 + j\e3 + k\e4 \mid i,l\ttk{p},\,j,k\ttk{2}\}$
& $\{\alpha_ip\e1 + i\e2 \mid i\ttk{2}\}$ \\
\tn & $\{i\e1 + j\e2 + k\e3 + \alpha_i\e4 \mid i\ttk{p^2},\,j,k\ttk{2}\}$
& $\{\alpha_i \e2 + \beta_i \e3 + i\e4 \mid i\ttk{2}\}$ \\
\tn & $\{(i+kp)\e1 + l\e2 + \alpha_{ij}\e3 + j\e4 \mid i,k\ttk{p},\,j,l\ttk{2}\}$
& $\{\alpha_ip\e1 + \alpha_i\e2 + i\e3 \mid i\ttk{2}\}$ \\
\tn & $\{k\e1 + \alpha_{ij}\e2 + i\e3 + j\e4 \mid i,j\ttk{2},\,k\ttk{p^2}\}$
& $\{\alpha_i\e1 + i\e2 \mid i\ttk{2}\}$ \\
\tn & $\{(i+jp)\e1 + k\e2 + l\e3 + \alpha_i\e4 \mid i,j\ttk{p},\,k,l\ttk{2}\}$
& $\{\alpha_ip\e1 + \alpha_i\e2 + \beta_i\e3 + i\e4 \mid i\ttk{2}\}$ \\
\tn & $\{i\e1 + \alpha_{ijk}\e2 + j\e3 + k\e4 \mid i\ttk{p^2},\,j,k\ttk{2}\}$
& $\{i\e2 \mid i\ttk{2}\}$ \\
\tn & $\{(i+\alpha_{ijk}p)\e1 + l\e2 + j\e3 + k\e4 \mid i\ttk{p},\,j,k,l\ttk{2}\}$
& $\{ip\e1 + \alpha_i\e2 \mid i\ttk{p}\}$ \\
\tn & $\{(\alpha_{ijk}+lp)\e1+i\e2+j\e3+k\e4 \mid i,j,k\ttk{2},\,l\ttk{p}\}$
& $\{(i+\alpha_ip)\e1 \mid i\ttk{p}\}\}$ \\
\tn & $\{(i+\alpha_{ij}p)\e1 + k\e2 + l\e3 + j\e4 \mid i\ttk{p},\,j,k,l\ttk{2}\}$
& $\{ ip\e1 + \alpha_i\e2 + \beta_i\e3 \mid i\ttk{p}\}$ \\
\tn & $\{(\alpha_{ij}+kp)\e1 + l\e2 + i\e3 + j\e4 \mid i,j,l\ttk{2},\,k\ttk{p}\}$
& $\{\alpha_ip\e1 + \alpha_i\e2 + i\e3 \mid i\ttk{2}\}$ \\
\tn & $\{(i+\alpha_ip)\e1+j\e2+k\e3+l\e4 \mid i\ttk{p},\,j,k,l\ttk{2}\}$
& $\{ip\e1+\alpha_i\e2+\beta_i\e3+\gamma_i\e4 \mid i\ttk{p}\}$ \\
\tn & $\{(\alpha_i+jp)\e1 + k\e2 +l\e3 + i\e4 \mid i,k,l\ttk{2},\,j\ttk{p}\}$
& $\{(i+\alpha_ip)\e1 + \alpha_i\e2 + \beta_i\e3 \mid i\ttk{p}\}$ \\
\tn & $\{(i+\alpha_{ijkl}p)\e1+j\e2+k\e3+l\e4 \mid i\ttk{p},\,j,k,l\ttk{2}\}$
& $\{ip\e1 \mid i\ttk{p}\}$ \\
\hline
\end{longtable}
\end{footnotesize}

\begin{proof}
    Suppose $C$ is a perfect code in $\Cay(G,S)$. Then $G=\so\oplus C$ and so $|\so|\cdot |C|=|G|=8p^2  $. 
    Since $G$ is not a cyclic group, $|\so|>2$ by Lemma \ref{ls<so}.
    Therefore, $2<|\so|<8p^2$.

    \smallskip
    \textsf{Case 1.} $\so$ is aperiodic.
    \smallskip

    Since $G$ is Haj\'os and $G=\so\oplus C$, in this case $C$ must be periodic, that is, $\lc$ is nontrivial.
    Write $C=\lc\oplus D$ for some subset $D$ of $G$ whose existence is ensured by Lemma \ref{period}.
    Then $|\lc|\cdot |D|=|C|$ and $2\leq |\lc|\leq |C|$.
    If $|\so|$ is a prime, then $|\lc|=|C|$ by Lemma \ref{prime deg} part (c).
    If $|\so|$ is not a prime, then $|\lc|$ could be any divisor of $|C|$.
    Set $$ A=(\so+\lc)/\lc\quad\text{and}\quad B=(D+\lc)/\lc.$$
    Since $C$ is periodic, by Lemma \ref{quotient}, we have
    $$G/\lc=A\oplus B$$ and $B$ is aperiodic in $G/\lc$.
    Since $G/\lc$ is isomorphic to a subgroup of $G$ and subgroups of Haj\'os group are also Haj\'os groups, $A$ is periodic in $G/\lc$.
    We also know that $A$ contains $\lc$ and $A$ generates $G/\lc$ by Lemma \ref{generate-zero}.
    Furthermore, $|A|=|\so|$ and $|B|=|D|$.
    Note that if $|\lc|=|C|$, then $C=\lc$ as $0\in C$.
    This means we can take $D=\{0\}$ and hence $B=\lc/\lc$ and $G/\lc=A$.
    We use $C$ to get $A$ and then use Lemma \ref{d_i+l_i} to obtain $\so$.
    If $|\lc|<|C|$, then this factorization enables us to make use of previously established theorems to obtain the pair $(\so, C)$ via $(A,B)$.

    Table \ref{tap.p^2.2.2.2} represents all possible values for $\left(|\so|,|C|,|\lc|\right)$; $C$, $A$, and $\so$ when $|\lc|=|C|$ or $L_C$, $A$, $B$, $\so$, and $C$ when $|\lc|<|C|$; which previous theorem is used to obtain $(A,B)$ when $|\lc|<|C|$; and which row in Table \ref{t.p^2.2.2.2} the result corresponds to.
    
\begin{longtable}{|c|l|c|c|}
\caption{The case when $G=\Z_{p^2}\x\Z_2\x\Z_2\x\Z_2 $ and $\so$ is aperiodic}
\label{tap.p^2.2.2.2}
\\
\flap

& $C=\<{p\e1+\e2,\e3,\e4} $ & & \\
$(p,8p,8p)$ 
& $A=\{i\e1+\lc \mid i\ttk{p}\} $ 
& & row 1 \\
& $\so=\{(i+\alpha_ip)\e1+\alpha_i\e2+\beta_i\e3+\gamma_i\e4 \mid i\ttk{p}\} $ & & \\ \hline

& $\lc=\<{\e2} $ & & \\
& $A=\{(i+jp)\e1 + \alpha_i\e3 + \beta_i\e4 + \lc \mid i,j\ttk{p}\} $ & & \\
$(p^2,8,2 )$ 
& $B=\{\alpha_{ij}p\e1 + i\e3 + j\e4 + \lc \mid i,j\ttk{2}\} $ 
& \ref{p^2.2.2} \row2 & row 3 \\
& $\so=\{(i+jp)\e1 + \alpha_{ij}\e2 + \alpha_i\e3 + \beta_i\e4 \mid i,j\ttk{p}\} $ & & \\
& $C=\{\alpha_{ij}p\e1 + k\e2 + i\e3 + j\e4 \mid i,j,k\ttk{2}\} $ & & \\ \hline

& $\lc=\<{\e2,\e3} $ & & \\
& $A=\{(i+jp)\e1 + \alpha_i\e4 + \lc \mid i,j\ttk{p}\} $ & & \\
$(p^2,8,4 )$ 
& $B=\{\alpha_ip\e1 + i\e4 + \lc \mid i\ttk{2}\} $ 
& \ref{p^2.2} \row3 & row 4 \\
& $\so=\{(i+jp)\e1 + \alpha_{ij}\e2 + \beta_{ij}\e3 + \alpha_i\e4  \mid i,j\ttk{p}\} $ & & \\
& $C=\{\alpha_ip\e1 + j\e2 + k\e3 + i\e4  \mid i,j,k\ttk{2}\} $ & & \\ \hline

& $C=\<{\e2,\e3,\e4} $ & & \\
$(p^2,8,8 )$ 
& $A=\{i\e1 + \lc \mid i\ttk{p^2}\} $ 
& & row 5 \\
& $\so=\{i\e1+\alpha_i\e2+\beta_i\e3+\gamma_i\e4 \mid i\ttk{p^2}\} $ & & \\ \hline

& $\lc=\<{\e2} $ & & \\
& $A=\{(i+\alpha_ip)\e1 + j\e3 + \alpha_i\e4 + \lc \mid i\ttk{p},\,j\ttk{2}\} $ & & \\
$(2p,4p,2 )$ 
& $B=\{ip\e1 + j\e4 + \alpha_{ij}\e3 + \lc \mid i\ttk{p},\,j\ttk{2}\} $ 
& \ref{p^2.2.2} \row5 & row 7 \\
& $\so=\{(i+\alpha_ip)\e1 + \alpha_{ij}\e2 + j\e3 + \alpha_i\e4 \mid i\ttk{p},\,j\ttk{2}\} $ & & \\
& $C=\{ip\e1 + k\e2 + \alpha_{ij}\e3 + j\e4 \mid i\ttk{p},\,j,k\ttk{2}\} $ & & \\ \hline

& $\lc=\<{p\e1} $ & & \\
& $A=\{i\e1 + j\e2 + \alpha_i\e3 + \beta_i\e4 + \lc \mid i\ttk{p},\,j\ttk{2}\} $ & & \\
$(2p,4p,p )$ 
& $B=\{ \alpha_{ij}\e2 + i\e3 + j\e4 + \lc \mid i,j\ttk{2}\} $ 
& \ref{p.2.2.2} \row2 & row 8 \\
& $\so=\{(i+\alpha_{ij}p)\e1 + j\e2 + \alpha_i\e3 + \beta_i\e4 \mid i\ttk{p},\,j\ttk{2}\} $ & & \\
& $C=\{ kp\e1 + \alpha_{ij}\e2 + i\e3 + j\e4 \mid i,j\ttk{2},\,k\ttk{p}\} $ & & \\ \hline

& $\lc=\<{\e2,\e3} $ & & \\
& $A=\{(i+\alpha_ip)\e1 + j\e4 + \lc \mid i\ttk{p},\,j\ttk{2}\} $ & & \\
$(2p,4p,4 )$ 
& $B=\{ip\e1 + \alpha_i\e4 + \lc \mid i\ttk{p}\} $ 
& \ref{p^2.2} \row5 & row 9 \\
& $\so=\{(i+\alpha_ip)\e1 + \alpha_{ij}\e2 + \beta_{ij}\e3 + j\e4 \mid i\ttk{p},\,j\ttk{2}\} $ & & \\
& $C=\{ip\e1 + \alpha_j\e2 + k\e3 + i\e4  \mid i\ttk{p},\,j,k\ttk{2}\} $ & & \\ \hline

& $\lc=\<{\e2,\e3} $ & & \\
& $A=\{(\alpha_i+jp)\e1 + i\e4 + \lc \mid i\ttk{2},\,j\ttk{p}\} $ & & \\
$(2p,4p,4 )$ 
& $B=\{(i+\alpha_ip)\e1 + \lc \mid i\ttk{p}\} $ 
& \ref{p^2.2} \row6 & row 10 \\
& $\so=\{(\alpha_i+jp)\e1 + \alpha_{ij}\e2 + \beta_{ij}\e3 + i\e4 \mid i\ttk{2},\,j\ttk{p}\} $ & & \\
& $C=\{(i+\alpha_ip)\e1 + j\e2 + k\e3 \mid i\ttk{p},\,j,k\ttk{2}\} $ & & \\ \hline

& $\lc=\<{p\e1+\e2} $ & & \\
& $A=\{i\e1 + j\e3 + \alpha_i\e4 + \lc \mid i\ttk{p},\,j\ttk{2}\} $ & & \\
$(2p,4p,2p )$ 
& $B=\{\alpha_i\e3 + i\e4 + \lc \mid i,j\ttk{2}\} $ 
& \ref{p.2.2} \row4 & row 11 \\
& $\so=\{(i+\alpha_{ij}p)\e1 + \alpha_{ij}\e2 + j\e3 + \alpha_i\e4 \mid i\ttk{p},\,j\ttk{2}\} $ & & \\
& $C=\{jp\e1 + k\e2 + \alpha_i\e3 + i\e4 \mid j\ttk{p},\,i,k\ttk{2}\} $ & & \\ \hline

& $C=\<{p\e1+\e2,\e3} $ & & \\
$(2p,4p,4p )$ 
& $A=\{i\e1 + j\e4 + \lc \mid i\ttk{p},\,j\ttk{2}\} $ 
& & row 12 \\
& $\so=\{(i+\alpha_{ij}p)\e1 + \alpha_{ij}\e2 + \beta_{ij}\e3 + j\e4 \mid i\ttk{p},\,j\ttk{2}\} $ & & \\ \hline

& $\lc=\<{\e2} $ & & \\
& $A=\{\alpha_i\e1 + j\e3 + i\e4 + \lc \mid i,j\ttk{2}\} $ & & \\
$(4,2p^2,2 )$ 
& $B=\{i\e1 + \alpha_i \e3 + \lc \mid i\ttk{p^2}\} $ 
& \ref{p^2.2.2} \row{10} & row 13 \\
& $\so=\{\alpha_i\e1 + \alpha_{ij}\e2 + j\e3 +i\e4 \mid i,j\ttk{2}\} $ & & \\
& $C=\{i\e1 + j\e2 + \alpha_i \e3  \mid i\ttk{p^2},\,j\ttk{2}\} $ & & \\ \hline

$(4,2p^2,p )$ 
& Not possible since no $A$ exists
& \ref{p.2.2.2}  & \\ \hline

& $\lc=\<{p\e1+\e2} $ & & \\
& $A=\{\alpha_i\e1 + j\e3 + i\e4 + \lc \mid i,j\ttk{2}\} $ & & \\
$(4,2p^2,2p )$ 
& $B=\{i\e1 + \alpha_i \e3 + \lc \mid i\ttk{p}\} $ 
& \ref{p.2.2} \row6 & row 14 \\
& $\so=\{\alpha_i\e1 + \alpha_{ij}\e2 + j\e3 +i\e4 \mid i,j\ttk{2}\} $ & & \\
& $C=\{(i+jp)\e1 + k\e2 + \alpha_i \e3 \mid i,j\ttk{p},\,k\ttk{2}\} $ & & \\ \hline

$(4,2p^2,p^2 )$ 
& Not possible since no $A$ exists
& \ref{2.2.2}  & \\ \hline

& $C=\<{\e1+\e2} $ & & \\
$(4,2p^2,2p^2 )$ 
& $A=\{i\e3 + j\e4 + \lc \mid i,j\ttk{2}\} $ 
& & row 15 \\
& $\so=\{\alpha_{ij}\e1 + \alpha_{ij}\e2 + i\e3 + j\e4 +\mid i,j\ttk{2}\} $ & & \\ \hline

& $\lc=\<{\e2} $ & & \\
& $A=\{i\e1 + j\e3 + \alpha_i\e4 +\lc \mid i\ttk{p^2},\,j\ttk{2}\} $ & & \\
$(2p^2,4,2)$ 
& $B=\{\alpha_i\e3 + i\e4 + \lc \mid i\ttk{2}\} $ 
& \ref{p^2.2.2} \row{13} & row 21 \\
& $\so=\{i\e1 + \alpha_{ij}\e2 + j\e3 + \alpha_i\e4 \mid i\ttk{p^2},\,j\ttk{2}\} $ & & \\
& $C=\{j\e2 + \alpha_i\e3 + i\e4 \mid i,j\ttk{2}\}$ & & \\ \hline

& $\lc=\<{\e2} $ & & \\
& $A=\{(i+kp)\e1 + \alpha_{ij}\e3 + j\e4  +\lc \mid i,k\ttk{p},j\ttk{2}\} $ & & \\
$(2p^2,4,2)$ 
& $B=\{\alpha_i p\e1 + i\e3 +\lc \mid i\ttk{2}\} $ 
& \ref{p^2.2.2} \row{14} & row 22 \\
& $\so=\{(i+kp)\e1 + \alpha_{ijk}\e2 +\alpha_{ij}\e3 + j\e4 \mid i,k\ttk{p},j\ttk{2}\} $ & & \\
& $C=\{\alpha_i p\e1 + j\e2 + i\e3 \mid i,j\ttk{2}\} $ & & \\ \hline
    
& $\lc=\<{\e2} $ & & \\
& $A=\{(i+jp)\e1 + k\e3 + \alpha_i\e4 + \lc \mid i,j\ttk{p},\,k\ttk{2}\} $ & & \\
$(2p^2,4,2)$ 
& $B=\{\alpha_i p\e1 + \alpha_i\e3 + i\e4 +\lc \mid i\ttk{2}\} $ 
& \ref{p^2.2.2} \row{15} & row 23 \\
& $\so=\{(i+jp)\e1 + \alpha_{ijk}\e2 + k\e3 + \alpha_i\e4 \mid i,j\ttk{p},\,k\ttk{2}\} $ & & \\
& $C=\{\alpha_i p\e1 + j\e2 + \alpha_i\e3 + i\e4 \mid i,j\ttk{2}\} $ & & \\ \hline

& $C=\<{\e2,\e3} $ & & \\
$(2p^2,4,4)$
& $A=\{i\e1 + j\e4 + \lc \mid i\ttk{p^2},\,j\ttk{2}\} $ 
& & row 24 \\
& $\so=\{i\e1 + \alpha_{ij}\e2 + \beta_{ij}\e3 + j\e4 \mid i\ttk{p^2},\,j\ttk{2}\} $ & & \\ \hline

& $\lc=\<{\e2} $ & & \\
& $A=\{(i+\alpha_{ij}p)\e1 + k\e3 + j\e4  +\lc \mid i\ttk{p},\,j,k\ttk{2}\} $ & & \\
$(4p,2p,2 )$ 
& $B=\{ip\e1 + \alpha_i\e3 +\lc \mid i\ttk{p}\} $ 
& \ref{p^2.2.2} \row{17} & row 32 \\
& $\so=\{(i+\alpha_{ij}p)\e1 + \alpha_{ijk}\e2 + k\e3 + j\e4 \mid i\ttk{p},\,j,k\ttk{2}\} $ & & \\
& $C=\{ip\e1 + \alpha_j\e2 + i\e3 \mid i\ttk{p},\,j\ttk{2}\} $ & & \\ \hline

& $\lc=\<{\e2} $ & & \\
& $A=\{(\alpha_{ij}+kp)\e1 + i\e3 + j\e4 +\lc \mid i,j\ttk{2},\,k\ttk{p}\} $ & & \\
$(4p,2p,2 )$ 
& $B=\{ (i+\alpha_ip)\e1 +\lc \mid i\ttk{p}\} $ 
& \ref{p^2.2.2} \row{18} & row 33 \\
& $\so=\{(\alpha_{ij}+kp)\e1 + \alpha_{ijk}\e2 + i\e3 + j\e4 \mid i,j\ttk{2},\,k\ttk{p}\} $ & & \\
& $C=\{(i+\alpha_ip)\e1 +j\e2 \mid i\ttk{p},\,j\ttk{2}\} $ & & \\ \hline

& $\lc=\<{\e2} $ & & \\
& $A=\{(\alpha_i+jp)\e1 + k\e3 + i \e4 +\lc \mid i,k\ttk{2},\,j\ttk{p} \} $ & & \\
$(4p,2p,2 )$ 
& $B=\{(i+\alpha_ip)\e1 + \alpha_i\e3 +\lc \mid i\ttk{p} \} $ 
& \ref{p^2.2.2} \row{19} & row 34 \\
& $\so=\{(\alpha_i+jp)\e1 + \alpha_{ijk}\e2 + k\e3 + i\e4  \mid i,k\ttk{2},\,j\ttk{p} \} $ & & \\
& $C=\{(i+\alpha_ip)\e1 + \alpha_j\e2 + i\e3 \mid i\ttk{p},\,j\ttk{2} \} $ & & \\ \hline

& $\lc=\<{\e2} $ & & \\
& $A=\{(i+\alpha_ip)\e1 + j\e3 + k\e4 +\lc \mid i\ttk{p},\,j,k\ttk{2} \} $ & & \\
$(4p,2p,2 )$ 
& $B=\{ip\e1 + \alpha_i\e3 + \beta_i\e4 +\lc \mid i\ttk{p} \} $ 
& \ref{p^2.2.2} \row{20} & row 35 \\
& $\so=\{(i+\alpha_ip)\e1 + \alpha_{ijk}\e2 + j\e3 + k\e4 \mid i\ttk{p},\,j,k\ttk{2} \} $ & & \\
& $C=\{ip\e1 + j\e2 + \alpha_i\e3 + \beta_i\e4 \mid i\ttk{p},\,j\ttk{2} \} $ & & \\ \hline

& $\lc=\<{p\e1} $ & & \\
& $A=\{i\e1 + k\e2 + \alpha_{ij}\e3 + j\e4 +\lc \mid i\ttk{p},\,j,k\ttk{2}\} $ & & \\
$(4p,2p,p )$ 
& $B=\{\alpha_i\e2 + i\e3 +\lc \mid i\ttk{2} \} $ 
& \ref{p.2.2.2} \row7 & row 36 \\
& $\so=\{(i+\alpha_{ijk}p)\e1 + k\e2 + \alpha_{ij}\e3 + j\e4 \mid i\ttk{p},\,j,k\ttk{2}\} $ & & \\
& $C=\{jp\e1 + \alpha_i\e2 + i\e3 \mid i\ttk{2},\,j\ttk{p}\} $ & & \\ \hline

& $\lc=\<{p\e1} $ & & \\
& $A=\{k\e1+ \alpha_{ij}\e2 + i\e3 + j\e4 +\lc \mid i,j\ttk{2},\,k\ttk{p}\} $ & & \\
$(4p,2p,p )$ 
& $B=\{ \alpha_i\e1 + i\e2 +\lc \mid i\ttk{2} \} $ 
& \ref{p.2.2.2} \row8 & row 37 \\
& $\so=\{(k+\alpha_{ijk}p)\e1+ \alpha_{ij}\e2 + i\e3 + j\e4 \mid i,j\ttk{2},\,k\ttk{p}\} $ & & \\
& $C=\{(\alpha_i+jp)\e1 + i\e2 \mid i\ttk{2},\,j\ttk{p}\} $ & & \\ \hline

& $\lc=\<{p\e1} $ & & \\
& $A=\{i\e1 + j\e2 + k\e3 + \alpha_i \e4 +\lc \mid i\ttk{p},\,j,k\ttk{2}\} $ & & \\
$(4p,2p,p )$ 
& $B=\{\alpha_i\e2 +\beta_i\e3 + i\e4 +\lc \mid i\ttk{2} \} $ 
& \ref{p.2.2.2} \row9 & row 38 \\
& $\so=\{(i+\alpha_{ijk}p)\e1 + j\e2 + k\e3 + \alpha_i \e4 \mid i\ttk{p},\,j,k\ttk{2}\} $ & & \\
& $C=\{jp\e1 + \alpha_i\e2 +\beta_i\e3 + i\e4 \mid i\ttk{2},\,j\ttk{p}\} $ & & \\ \hline

& $C=\<{p\e1+\e2} $ & & \\
$(4p,2p,2p )$ 
& $A=\{i\e1 + j\e3 +k\e4 + \lc \mid i\ttk{p},\,j,k\ttk{2}\} $ 
& & row 39 \\
& $\so=\{(i+\alpha_{ijk}p)\e1 + \alpha_{ijk}\e2 + j\e3 + k\e4 \mid i\ttk{p},\,j,k\ttk{2}\} $ & & \\ \hline

& $\lc=\<{p\e1} $ & & \\
& $A=\{\alpha_{ij}\e1 + k\e2 + i\e3 + j\e4 +\lc \mid i,j,k\ttk{2}\} $ & & \\
$(8,p^2,p )$ 
& $B=\{i\e1 + \alpha_i \e2 +\lc \mid i\ttk{p} \} $ 
& \ref{p.2.2.2} \row{11} & row 43 \\
& $\so=\{(\alpha_{ij}+\alpha_{ijk}p)\e1 + k\e2 + i\e3 + j\e4 \mid i,j,k\ttk{2}\} $ & & \\
& $C=\{(i+jp)\e1 + \alpha_i \e2 \mid i,j\ttk{p} \} $ & & \\ \hline

& $\lc=\<{p\e1} $ & & \\
& $A=\{\alpha_i\e1 + j\e2 + k\e3 + i\e4  +\lc \mid i,j,k\ttk{2}\} $ & & \\
$(8,p^2,p )$ 
& $B=\{i\e1 + \alpha_i\e2 + \beta_i\e3 +\lc \mid i\ttk{p} \} $ 
& \ref{p.2.2.2} \row{12} & row 44 \\
& $\so=\{(\alpha_i+\alpha_{ijk}p)\e1 + j\e2 + k\e3 + i\e4  \mid i,j,k\ttk{2}\} $ & & \\
& $C=\{(i+jp)\e1 + \alpha_i\e2 + \beta_i\e3 \mid i,j\ttk{p} \} $ & & \\ \hline

& $C=\<{\e1} $ & & \\
$(8,p^2,p^2 )$ 
& $A=\{i\e2 + j\e3 +k\e4 + \lc \mid i,j,k\ttk{2}\} $ 
& & row 45 \\
& $\so=\{(\alpha_{ijk})\e1 + i\e2 + j\e3 + k\e4 \mid i,j,k\ttk{2}\} $ & & \\ \hline

& $C=\<{\e2} $ & & \\
$(4p^2,2,2)$ 
& $A=\{i\e1 + j\e3 +k\e4 + \lc \mid i\ttk{p^2},\,j,k\ttk{2}\} $ 
& & row 52 \\
& $\so=\{i\e1 + \alpha_{ijk}\e2 + j\e3 + k\e4 \mid i\ttk{p^2},\,j,k\ttk{2}\} $ & & \\ \hline

& $C=\<{p\e1} $ & & \\
$(8p,p,p)$ 
& $A=\{i\e1+j\e2+k\e3+l\e4+\lc \mid i\ttk{p},\,j,k,l\ttk{2}\} $ 
& & row 59 \\
& $\so=\{(i+\alpha_{ijkl}p)\e1+j\e2+k\e3+l\e4 \mid i\ttk{p},\,j,k,l\ttk{2}\} $ & & \\ \hline

\end{longtable}

\smallskip
    \textsf{Case 2.} $\so$ is periodic.
    \smallskip

    In this case we have $|\ls|\geq 2$ and $\so=D\oplus \ls$ for some subset $D$ of $G$ by Lemma \ref{period}.
    So $|D|\cdot|\ls|=|\so|$ and $1<|\ls|<|\so|$ by Lemma \ref{prime deg} part (b).
    Set $$X=(D+\ls)/\ls\quad\text{and}\quad Y=(C+\ls)/\ls.$$
    Since $\so$ is periodic, by Lemma \ref{quotient}, we have
    $$G/\lc=X\oplus Y$$ and $X$ is aperiodic in $G/\ls$.
    We also know that $X$ contains $\ls$ and $X$ generates $G/\ls$ by Lemma \ref{generate-zero}.
    This factorization enables us to make use of Lemma \ref{d_i+l_i} and previously established theorems to obtain the pair $(\so, C)$ via $(X,Y)$.
    
    Table \ref{tp.p^2.2.2.2} represents all possible values for $\left(|\so|,|C|,|\ls|\right)$; $\ls$, $X$, $Y$, $\so$, and $C$; which previous theorem is used to obtain $(X,Y)$; and which row in Table \ref{t.p^2.2.2.2} the result corresponds to.

\begin{longtable}{|c|l|c|c|}
\caption{The case when $G=\Z_{p^2}\x\Z_2\x\Z_2\x\Z_2 $ and $\so$ is periodic}
\label{tp.p^2.2.2.2}
\\
\flp

& $\ls=\<{p\e1} $ & & \\
& $X=\{i\e1+\alpha_i\e2+\beta_i\e3+\gamma_i\e4+\ls \mid i\ttk{p}\} $ & & \\
$(p^2,8,p )$ 
& $Y=\{\e1+i\e2+j\e3+k\e4+\ls \mid i,j,k\ttk{2}\} $ 
& \ref{p.2.2.2} \row1 & row 2 \\
& $\so=\{(i+jp)\e1+\alpha_i\e2+\beta_i\e3+\gamma_i\e4 \mid i,j\ttk{p}\} $ & & \\
& $C=\{\alpha_{ijk}p\e1+i\e2+j\e3+k\e4 \mid i,j,k\ttk{2}\} $ & & \\ \hline

& $\ls=\<{\e2} $ & & \\
& $X=\{(i+\alpha_ip)\e1 + \alpha_i\e3 + \beta_i\e4 + \ls \mid i\ttk{p}\} $ & & \\
$(2p,4p,2)$ 
& $Y=\{ip\e1 + j\e3 + k\e4 + \ls \mid i\ttk{p},\,j,k\ttk{2}\} $ 
& \ref{p^2.2.2} \row1 & row 6 \\
& $\so=\{(i+\alpha_ip)\e1 + k\e2 + \alpha_i\e3 + \beta_i\e4 \mid i\ttk{p},\,k\ttk{2}\} $ & & \\
& $C=\{ip\e1 + \alpha_{ijk}\e2 + j\e3 + k\e4 \mid i\ttk{p},\,j,k\ttk{2}\} $ & & \\ \hline

$(2p,4p,p)$ 
& Not possible since no $X$ exists
& \ref{p.2.2.2}  & \\ \hline

$(4,2p^2,2)$ 
& Not possible since no $X$ exists
& \ref{p^2.2.2}  & \\ \hline

& $\ls=\<{\e2} $ & & \\
& $X=\{(i+jp)\e1 + \alpha_{ij}\e3 + \alpha_i\e4 +\ls \mid i,j\ttk{p}\} $ & & \\
$(2p^2,2p,2 )$ 
& $Y=\{\alpha_i p\e1 + j\e3 + i\e4 +\ls \mid i,j\ttk{2}\} $ 
& \ref{p^2.2.2} \row3 & row 16 \\
& $\so=\{(i+jp)\e1 + k\e2 + \alpha_{ij}\e3 + \alpha_i\e4 \mid i,j\ttk{p},\,k\ttk{2}\} $ & & \\
& $C=\{\alpha_i p\e1 + \alpha_{ij}\e2 + j\e3 +i\e4 \mid i,j\ttk{2}\} $ & & \\ \hline

& $\ls=\<{\e2} $ & & \\
& $X=\{i\e1 + \alpha_i\e3 + \beta_i\e4 + \ls \mid i\ttk{p^2}\} $ & & \\
$(2p^2,2p,2 )$ 
& $Y=\{i\e3 + j\e4 +\ls \mid i,j\ttk{2}\} $ 
& \ref{p^2.2.2} \row4 & row 17 \\
& $\so=\{i\e1 + j\e2 + \alpha_i\e3 + \beta_i\e4 \mid i\ttk{p^2} $ & & \\
& $C=\{\alpha_{ij}\e2 + i\e3 + j\e4 \mid i,j\ttk{2}\} $ & & \\ \hline

& $\ls=\<{p\e1} $ & & \\
& $X=\{i\e1+\alpha_{ij}\e2 + j\e3 + \alpha_i\e4 +\ls \mid i\ttk{p},\,j\ttk{2}\} $ & & \\
$(2p^2,2p,p )$ 
& $Y=\{j\e2 + \alpha_i\e3 + i\e4 \mid i,j\ttk{2}\} $ 
& \ref{p.2.2.2} \row3 & row 18 \\
& $\so=\{(i+jp)\e1 + k\e2 + \alpha_{ij}\e3 + \alpha_i\e4 \mid i,j\ttk{p},\,k\ttk{2}\} $ & & \\
& $C=\{\alpha_{ij}\e1 + i\e4 + j\e2 + \alpha_i\e3 \mid i,j\ttk{2} \} $ & & \\ \hline

& $\ls=\<{p\e1} $ & & \\
& $X=\{i\e1 + \alpha_{ij}\e2 + \beta{ij}\e3 + j\e4 + \ls \mid i\ttk{p},\,j\ttk{2}\} $ & & \\
$(2p^2,2p,p )$ 
& $Y=\{i\e2+j\e3 +\ls \mid i,j\ttk{2} \} $ 
& \ref{p.2.2.2} \row4 & row 19 \\
& $\so=\{i\e1 + j\e2 + \alpha_i\e3 + \beta_i\e4 \mid i\ttk{p^2},\,j\ttk{2}\} $ & & \\
& $C=\{\alpha_{ij}\e1 + i\e2+j\e3 \mid i,j\ttk{2} \} $ & & \\ \hline

& $\ls=\<{p\e1+\e2} $ & & \\
& $X=\{i\e1 + \alpha_i\e3 + \beta_i\e4 +\ls \mid i\ttk{p}\} $ & & \\
$(2p^2,2p,2p )$ 
& $Y=\{j\e2 + \alpha_i\e3 + i\e4 \mid i,j\ttk{2}\} $ 
& \ref{p.2.2} \row1 & row 20 \\
& $\so=\{(i+jp)\e1 + k\e2 + \alpha_i\e3 + \beta_i\e4 \mid i,j\ttk{p},\,k\ttk{2}\} $ & & \\
& $C=\{\alpha_{ij}p + \alpha_{ij}\e2 + i\e3 + j\e4 \mid i,j\ttk{2}\} $ & & \\ \hline

$(2p^2,2p,p^2)$ 
& Not possible since no $X$ exists
& \ref{2.2.2}  & \\ \hline

& $\ls=\<{\e2} $ & & \\
& $X=\{(i+\alpha_{ij}p)\e1+j\e3+\alpha_i\e4 +\ls \mid i\ttk{p},\,j\ttk{2}\} $ & & \\
$(4p,2p,2)$ 
& $Y=\{jp\e1+ \alpha_i\e3 + i\e4  +\ls \mid i\ttk{2},\,j\ttk{p}\} $ 
& \ref{p^2.2.2} \row6 & row 25 \\
& $\so=\{(i+\alpha_{ij}p)\e1+k\e2 + j\e3 + \alpha_i\e4 \mid i\ttk{p},\,j,k\ttk{2}\} $ & & \\
& $C=\{jp\e1+ \alpha_{ij}\e2 + \alpha_i\e3 + i\e4 \mid i\ttk{2},\,j\ttk{p}\} $ & & \\ \hline

& $\ls=\<{\e2} $ & & \\
& $X=\{(i+\alpha_i p)\e1+\alpha_{ij}\e3+j\e4 +\ls \mid i\ttk{p},\,j\ttk{2}\} $ & & \\
$(4p,2p,2 )$ 
& $Y=\{ip\e1 +j\e3+\alpha_i\e4 +\ls \mid i\ttk{p},\,j\ttk{2}\} $ 
& \ref{p^2.2.2} \row7 & row 26 \\
& $\so=\{(i+\alpha_i p)\e1+k\e2+\alpha_{ij}\e3 +j\e4 \mid i\ttk{p},\,j,k\ttk{2}\} $ & & \\
& $C=\{ip\e1+\alpha_{ij}\e2 + j\e3 + \alpha_i\e4 \mid i\ttk{p},\,j\ttk{2}\} $ & & \\ \hline

& $\ls=\<{\e2} $ & & \\
& $X=\{(\alpha_i+jp)\e1 +\alpha_{ij}\e3+ i\e4  +\ls \mid i\ttk{2}, \,j\ttk{p}\} $ & & \\
$(4p,2p,2 )$ 
& $Y=\{(i+\alpha_i p)\e1+j\e3 +\ls \mid i\ttk{p},\,j\ttk{2}\} $ 
& \ref{p^2.2.2} \row8 & row 27 \\
& $\so=\{(\alpha_i+jp)\e1 + k\e2 +\alpha_{ij}\e3 + i\e4 \mid i\ttk{2}, \,j,k\ttk{p}\} $ & & \\
& $C=\{(i+\alpha_i p)\e1+\alpha_{ij}\e2 + j\e3 \mid i\ttk{p},\,j\ttk{2}\} $ & & \\ \hline

& $\ls=\<{\e2} $ & & \\
& $X=\{(i+\alpha_{ij}p)\e1 + \alpha_{ij}\e3 + j\e4 +\ls \mid i\ttk{p},\,j\ttk{2}\} $ & & \\
$(4p,2p,2 )$ 
& $Y=\{ip\e1+j\e3 +\ls \mid i\ttk{p},\,j\ttk{2} \} $ 
& \ref{p^2.2.2} \row9 & row 28 \\
& $\so=\{(i+\alpha_{ij}p)\e1 + k\e2 + \alpha_{ij}\e3 + j\e4 \mid i\ttk{p},\,j,k\ttk{2}\} $ & & \\
& $C=\{ip\e1+\alpha_{ij}\e2 + j\e3 \mid i\ttk{p},\,j\ttk{2}\} $ & & \\ \hline

& $\ls=\<{p\e1} $ & & \\
& $X=\{\alpha_i\e1+\alpha_{ij}\e2 + j\e3 + i\e4 +\ls \mid i,j\ttk{2}\} $ & & \\
$(4p,2p,p )$ 
& $Y=\{i\e1+j\e2 + \alpha_i\e3 +\ls \mid i\ttk{p},\,j\ttk{2}\} $ 
& \ref{p.2.2.2} \row5 & row 29 \\
& $\so=\{(\alpha_i+kp)\e1+\alpha_{ij}\e2 + j\e3 + i\e4 \mid i,j\ttk{2},\,k\ttk{p}\} $ & & \\
& $C=\{(i+\alpha_{ij}p)\e1+j\e2 + \alpha_i\e3 \mid i\ttk{p},\,j\ttk{2}\} $ & & \\ \hline

& $\ls=\<{p\e1} $ & & \\
& $X=\{\alpha_{ij}\e1+ \alpha_{ij}\e2 + i\e3 + j\e4 +\ls \mid i,j\ttk{2}\} $ & & \\
$(4p,2p,p )$ 
& $Y=\{i\e1+ j\e2 +\ls \mid i\ttk{p},\,j\ttk{2}\} $ 
& \ref{p.2.2.2} \row6 & row 30 \\
& $\so=\{(\alpha_{ij}+kp)\e1+ \alpha_{ij}\e2 + i\e3 + j\e4 \mid i,j\ttk{2},\,k\ttk{p}\} $ & & \\
& $C=\{(i+\alpha_{ij}p)\e1+ j\e2 \mid i\ttk{p},\,j\ttk{2}\} $ & & \\ \hline

& $\ls=\<{\e2,\e3} $ & & \\
& $X=\{(i+\alpha_ip)\e1+\alpha_i\e4 +\ls \mid i\ttk{p}\} $ & & \\
$(4p,2p,4 )$ 
& $Y=\{ip\e1+ j\e4 +\ls \mid i\ttk{p},\,j\ttk{2}\} $ 
& \ref{p^2.2} \row1 & row 31 \\
& $\so=\{(i+\alpha_ip)\e1+\alpha_j\e2 + k\e3 + i\e4  \mid i\ttk{p},\,j,k\ttk{2}\} $ & & \\
& $C=\{ip\e1+ \alpha_{ij}\e2 + \beta_{ij}\e3 + j\e4 \mid i\ttk{p},\,j\ttk{2}\} $ & & \\ \hline

$(4p,2p,2p)$
& Not possible since no $X$ exists
& \ref{p.2.2}  & \\ \hline

& $\ls=\<{\e2} $ & & \\
& $X=\{(\alpha_i+\alpha_{ij}p)\e1 + j\e3 + i\e4 +\ls \mid i,j\ttk{2}\} $ & & \\
$(8,p^2,2 )$ 
& $Y=\{(i+jp)\e1 + \alpha_i\e3 +\ls \mid i,j\ttk{p}\} $ 
& \ref{p^2.2.2} \row{11} & row 40 \\
& $\so=\{(\alpha_i+\alpha_{ij}p)\e1 + k\e2 + j\e3 + i\e4 \mid i,j,k\ttk{2}\} $ & & \\
& $C=\{(i+jp)\e1 + \alpha_{ij}\e2 + \alpha_i\e3 \mid i,j\ttk{p}\} $ & & \\ \hline

& $\ls=\<{\e2} $ & & \\
& $X=\{\alpha_{ij}\e1 + i\e3 + j\e4 +\ls \mid i,j\ttk{2}\} $ & & \\
$(8,p^2,2 )$ 
& $Y=\{i\e1 +\ls \mid i\ttk{p^2}\} $ 
& \ref{p^2.2.2} \row{12} & row 41 \\
& $\so=\{\alpha_{ij}\e1 + k\e2 + i\e3 + j\e4 \mid i,j,k\ttk{2}\} $ & & \\
& $C=\{i\e1 +\alpha_i\e2 \mid i\ttk{p^2} \} $ & & \\ \hline

& $\ls=\<{\e2,\e3} $ & & \\
& $X=\{\alpha_i\e1 + i\e4 + \ls \mid i\ttk{2}\} $ & & \\
$(8,p^2,4 )$ 
& $Y=\{ip\e1 + j\e4 +\ls \mid i\ttk{p},\,j\ttk{2}\} $ 
& \ref{p^2.2} \row2 & row 42 \\
& $\so=\{\alpha_i\e1 + j\e2 + k\e3 + i\e4  \mid i,j,k\ttk{2}\} $ & & \\
& $C=\{ip\e1 + \alpha_{ij}\e2 + \beta_{ij}\e3 + j\e4 \mid i,j\ttk{p}\} $ & & \\ \hline

& $\ls=\<{\e2} $ & & \\
& $X=\{i\e1+\alpha_{ij}\e3+j\e4 +\ls \mid i\ttk{p^2},\,j\ttk{2}\} $ & & \\
$(4p^2,2,2 )$ 
& $Y=\{ i\e3 +\ls \mid i\ttk{2}\} $ 
& \ref{p^2.2.2} \row{16} & row 46 \\
& $\so=\{i\e1+k\e2+\alpha_{ij}\e3 +j\e4 \mid i\ttk{p^2},\,j,k\ttk{2}\} $ & & \\
& $C=\{\alpha_i\e2 + i\e3 \mid i\ttk{2}\} $ & & \\ \hline

& $\ls=\<{p\e1} $ & & \\
& $X=\{i\e1 + \alpha_{ijk}\e2 + j\e3 + k\e4 +\ls \mid i\ttk{p},\,j,k\ttk{2}\} $ & & \\
$(4p^2,2,p )$ 
& $Y=\{ i\e2 +\ls \mid i\ttk{2}\} $ 
& \ref{p.2.2.2} \row{10} & row 47 \\
& $\so=\{(i+lp)\e1 + \alpha_{ijk}\e2 + j\e3 + k\e4 \mid i,l\ttk{p},\,j,k\ttk{2}\} $ & & \\
& $C=\{\alpha_ip\e1 + i\e2 \mid i\ttk{2}\} $ & & \\ \hline

& $\ls=\<{\e2,\e3} $ & & \\
& $X=\{i\e1 + \alpha_i\e4 + \ls \mid i\ttk{p^2}\} $ & & \\
$(4p^2,2,4 )$ 
& $Y=\{ i\e4 + \ls \mid i\ttk{2}\} $ 
& \ref{p^2.2} \row4 & row 48 \\
& $\so=\{i\e1 + j\e2 + k\e3 + \alpha_i\e4 \mid i\ttk{p^2},\,j,k\ttk{2}\} $ & & \\
& $C=\{\alpha_i \e2 + \beta_i \e3 + i\e4 \mid i\ttk{2}\} $ & & \\ \hline

& $\ls=\<{p\e1+\e2} $ & & \\
& $X=\{i\e1 + \alpha_{ij}\e3 + j\e4 +\ls \mid i\ttk{p},\,j\ttk{2}\} $ & & \\
$(4p^2,2,2p )$ 
& $Y=\{ i\e3 +\ls \mid i\ttk{2}\} $ 
& \ref{p.2.2} \row5 & row 49 \\
& $\so=\{(i+kp)\e1 + l\e2 + \alpha_{ij}\e3 + j\e4 \mid i,k\ttk{p},\,j,l\ttk{2}\} $ & & \\
& $C=\{\alpha_ip\e1 + \alpha_i\e2 + i\e3 \mid i\ttk{2}\} $ & & \\ \hline

& $\ls=\<{\e1} $ & & \\
& $X=\{\alpha_{ij}\e2 + i\e3 + j\e4 + \ls \mid i,j\ttk{2}\} $ & & \\
$(4p^2,2,p^2 )$ 
& $Y=\{ i\e2 +\ls \mid i\ttk{2}\} $ 
& \ref{2.2.2} & row 50 \\
& $\so=\{k\e1 + \alpha_{ij}\e2 + i\e3 + j\e4 \mid i,j\ttk{2},\,k\ttk{p^2}\} $ & & \\
& $C=\{\alpha_i\e1 + i\e2 \mid i\ttk{2}\} $ & & \\ \hline

& $\ls=\<{p\e1+\e2,\e3} $ & & \\
& $X=\{i\e1 + \alpha_i\e4+\ls \mid i\ttk{p}\} $ & & \\
$(4p^2,2,4p )$ 
& $Y=\{ i\e4 +\ls \mid i\ttk{2}\} $ 
& \ref{p.q} \row1 & row 51 \\
& $\so=\{(i+jp)\e1 +k\e2 + l\e3 + \alpha_i\e4 \mid i,j\ttk{p},\,k,l\ttk{2}\} $ & & \\
& $C=\{\alpha_ip\e1 + \alpha_i\e2 + \beta_i\e3 + i\e4 \mid i\ttk{2}\} $ & & \\ \hline

$(4p^2,2,2p^2)$ 
& Not possible since no $X$ exists
& \ref{2.2}  & \\ \hline

& $\ls=\<{\e2} $ & & \\
& $X=\{(i+\alpha_{ijk}p)\e1 + j\e3 + k\e4 + \ls \mid i\ttk{p},\,j,k\ttk{2}\} $ & & \\
$(8p,p,2)$ 
& $Y=\{ ip\e1 +\ls \mid i\ttk{p}\} $ 
& \ref{p^2.2.2} \row{21} & row 53 \\
& $\so=\{(i+\alpha_{ijk}p)\e1 + l\e2 + j\e3 + k\e4 \mid i\ttk{p},\,j,k,l\ttk{2}\} $ & & \\
& $C=\{ ip\e1 + \alpha_i\e2 \mid i\ttk{p}\} $ & & \\ \hline

& $\ls=\<{p\e1} $ & & \\
& $X=\{(\alpha_{ijk})\e1+i\e2+j\e3+k\e4 +\ls \mid i,j,k\ttk{2}\} $ & & \\
$(8p,p,p )$ 
& $Y=\{ i\e1 +\ls \mid i\ttk{p}\}\} $ 
& \ref{p.2.2.2} \row{13} & row 54 \\
& $\so=\{(\alpha_{ijk}+lp)\e1+i\e2+j\e3+k\e4 \mid i,j,k\ttk{2},\,l\ttk{p}\} $ & & \\
& $C=\{ (i+\alpha_ip)\e1 \mid i\ttk{p}\} $ & & \\ \hline

& $\ls=\<{\e2,\e3} $ & & \\
& $X=\{(i+\alpha_{ij}p)\e1 + j\e4 + \ls \mid i\ttk{p},\,j\ttk{2}\} $ & & \\
$(8p,p,4 )$ 
& $Y=\{ ip\e1 + \ls \mid i\ttk{p}\} $ 
& \ref{p^2.2} \row7 & row 55 \\
& $\so= \{(i+\alpha_{ij}p)\e1 + k\e2 + l\e3 + j\e4 \mid i\ttk{p},\,j,k,l\ttk{2}\}$ & & \\
& $C=\{ ip\e1 + \alpha_i\e2 + \beta_i\e3 \mid i\ttk{p}\} $ & & \\ \hline

& $\ls=\<{p\e1+\e2} $ & & \\
& $X=\{\alpha_{ij}\e1 + i\e3 + j\e4 +\ls \mid i,j\ttk{2}\} $ & & \\
$(8p,p,2p )$ 
& $Y=\{ i\e1 +\ls \mid i\ttk{p}\} $ 
& \ref{p.2.2} \row7 & row 56 \\
& $\so=\{(\alpha_{ij}+kp)\e1 + l\e2 + i\e3 + j\e4 \mid i,j,l\ttk{2},\,k\ttk{p}\} $ & & \\
& $C=\{\alpha_ip\e1 + \alpha_i\e2 + i\e3 \mid i\ttk{2}\} $ & & \\ \hline

& $\ls=\<{\e2,\e3,\e4} $ & & \\
& $X=\{(i+\alpha_ip)\e1 + \ls \mid i\ttk{p}\} $ & & \\
$(8p,p,8 )$ 
& $Y=\{ ip\e1 + \ls \mid i\ttk{p}\} $ 
& \ref{p^2} & row 57 \\
& $\so=\{(i+\alpha_ip)\e1+j\e2+k\e3+l\e4 \mid i\ttk{p},\,j,k,l\ttk{2}\} $ & & \\
& $C=\{ip\e1+\alpha_i\e2+\beta_i\e3+\gamma_i\e4 \mid i\ttk{p}\} $ & & \\ \hline

& $\ls=\<{p\e1+\e2,\e3} $ & & \\
& $X=\{\alpha_i\e1 + i\e4 +\ls \mid i\ttk{2}\} $ & & \\
$(8p,p,4p )$ 
& $Y=\{ i\e1 +\ls \mid i\ttk{p}\} $ 
& \ref{p.q} \row2 & row 58 \\
& $\so=\{(\alpha_i+jp)\e1 + k\e2 +l\e3 + i\e4 \mid i,k,l\ttk{2},\,j\ttk{p}\} $ & & \\
& $C=\{(i+\alpha_ip)\e1 + \alpha_i\e2 + \beta_i\e3 \mid i\ttk{p}\} $ & & \\ \hline

\end{longtable}
\end{proof}

\begin{remark}
    In Theorem \ref{p^2.2.2.2}, we can change $\e2$, $\e3$, and $\e4$ to any three generators of $\{0\}\x\Z_2\x\Z_2\x\Z_2$.
\end{remark}

\begin{theorem}\label{p.2.2.2.2}
Let $G=\Z_{p}\x\Z_2\x\Z_2\x\Z_2\x\Z_2$ for an odd prime $p$ and let $S$ be a proper subset of $G\setminus \{0\}$ that generates $G$. Let $0\in C\subseteq G$. Then $C$ is a perfect code in $\Cay(G,S)$ if and only if $(\so, C)$ is one of the pairs in Table \ref{t.p.2.2.2.2} for some $\alpha_i$'s, $\alpha_{ij}$'s, $\alpha_{ijk}$'s, $\beta_i$'s, $\beta_{ij}$, $\beta_{ijk}$, and $\gamma_i$ in $\Z$ where $\alpha_0=\alpha_{00}=\alpha_{000}=\beta_0=\beta_{00}=\beta_{0000}=\gamma_0=0$.
\end{theorem}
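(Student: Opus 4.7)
The proof will follow the same dichotomy used in Theorems \ref{p^3.2.2} and \ref{p^2.2.2.2}. Since $|G|=16p$ and, by Lemma \ref{ls<so}(a), $|\so|>2$ (because $G$ is not cyclic), the only possibilities are $|\so|\in\{4,8,16,p,2p,4p,8p\}$. As usual, the sufficiency direction will be omitted following the convention stated at the beginning of this section (the check that $G=\so\oplus C$ for each tabulated pair is a routine calculation modeled on the proof of Theorem \ref{p,q,q}). So the work is to show necessity: assuming $C$ is a normalized perfect code in $\Cay(G,S)$, classify all possible pairs $(\so,C)$.

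The plan is to split on whether $\so$ is aperiodic or periodic. If $\so$ is aperiodic, then since $G$ is Haj\'os and $G=\so\oplus C$, the factor $C$ must be periodic. Writing $C=L_C\oplus D$ and applying Lemma \ref{quotient} gives
\[
G/L_C \;=\; \bigl((\so+L_C)/L_C\bigr)\oplus\bigl((D+L_C)/L_C\bigr),
\]
where $(D+L_C)/L_C$ is aperiodic, so $(\so+L_C)/L_C$ is periodic in the Haj\'os group $G/L_C$; by Lemma \ref{generate-zero} it contains $L_C$ and generates $G/L_C$. I then enumerate all possible values of $|L_C|$ (and the corresponding subgroups $L_C$ of $G$, of which there are several since $\Z_2^4$ has many subgroups) and for each, invoke an earlier theorem (Theorems \ref{p^2}, \ref{p^2.2}, \ref{2.2.2}, \ref{p.2.2.2}, \ref{2.2.2.2}, \ref{p^2.2.2}, \ref{p^2.2.2.2} applied to the quotient) to characterize $((\so+L_C)/L_C,(D+L_C)/L_C)$. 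Lemma \ref{d_i+l_i} then lifts this data back to $(\so,C)$. The cases $|L_C|=|C|$ (so $C=L_C$, $D=\{0\}$, $A=G/L_C$) are especially direct and handled by reading off $\so$ from a transversal of $L_C$ in $G$.

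The second main case, $\so$ periodic, is symmetric: write $\so=D\oplus\ls$ and apply Lemma \ref{quotient} to obtain
\[
G/\ls \;=\; \bigl((D+\ls)/\ls\bigr)\oplus\bigl((C+\ls)/\ls\bigr),
\]
with $(D+\ls)/\ls$ aperiodic, containing $\ls$, and generating $G/\ls$ by Lemmas \ref{quotient} and \ref{generate-zero}. For each admissible $\ls$ (enumerated by $|\ls|\in\{2,4,8,16,p,2p,4p,8p\}$ together with its isomorphism type as a subgroup of $G$) I apply the earlier theorem for the Haj\'os quotient $G/\ls$ to identify $(D+\ls)/\ls$ and $(C+\ls)/\ls$, then use Lemma \ref{d_i+l_i} to recover $(\so,C)$. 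Cases where no such aperiodic generating $(D+\ls)/\ls$ exists (for instance when $G/\ls$ admits no perfect code of the required size, cf.\ the impossible rows in Tables \ref{tp.p^3.2.2} and \ref{tp.p^2.2.2.2}) are discarded.

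The genuine difficulty is bookkeeping rather than mathematics: $G=\Z_p\times\Z_2^4$ has a large number of subgroups of each admissible order (e.g.\ fifteen subgroups of order $2$ inside $\Z_2^4$, etc.), so the two tables analogous to Tables \ref{tap.p^2.2.2.2} and \ref{tp.p^2.2.2.2} will contain many more rows than in the previous theorems, and one must be careful both to cover every $(|\so|,|C|,|L_C|)$ or $(|\so|,|C|,|\ls|)$ triple \emph{and} every isomorphism type of $\ls$ or $L_C$, without duplication. After exhausting all cases by the procedure above, each resulting pair $(\so,C)$ will match exactly one row of Table \ref{t.p.2.2.2.2}, completing the classification.
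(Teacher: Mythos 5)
Your proposal is correct and follows essentially the same route as the paper: the same aperiodic/periodic dichotomy for $\so$, the same use of Lemmas \ref{quotient}, \ref{generate-zero} and \ref{d_i+l_i} to pass to the Haj\'os quotient $G/L_C$ or $G/\ls$, the same invocation of the earlier theorems to classify the quotient factorizations, the same lifting back to $(\so,C)$ with impossible triples discarded, and sufficiency deferred per the section's convention. Two small economies in the paper are worth noting: since $S$ must generate $G=\Z_p\times\Z_2^4$ it contains at least four non-zero elements, so $|\so|\geq 5$ and your case $|\so|=4$ is vacuous from the start; and rather than running over all subgroups of a given order (e.g.\ the fifteen of order $2$), the paper fixes one canonical representative per order (such as $\ls=\langle\e2\rangle$) and absorbs the remaining choices into the closing remark that $\e2,\dots,\e5$ may be replaced by any four generators of the $\Z_2^4$ factor.
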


\begin{small}
\rc\begin{longtable}{|c|l|l|}
\caption{Perfect codes in Cayley graphs of $\Z_{p}\x\Z_2\x\Z_2\x\Z_2\x\Z_2$}
\label{t.p.2.2.2.2}
\\
\fl
\tn & $\{i\e1+\alpha_i\e2+\beta_i\e3+\gamma_i\e4+\delta_i\e5 \mid i\ttk{p}\}$
& $\{i\e2 + j\e3 + k\e4 + l\e5 \mid i,j,k,l\ttk{2}\}$ \\
\tn & $\{i\e1 + j\e2 + \alpha_i\e3 + \beta_i\e4 + \gamma_i\e5  \mid i\ttk{p},\,j\ttk{2}\}$
& $\{\alpha_{ijk}\e2 + i\e3 + j\e4 + k\e5 \mid i,j,k\ttk{2}\}$ \\
\tn & $\{i\e1 + \alpha_{ij}\e2 +j\e3 + \alpha_i\e4 + \alpha_i\e5   \mid i\ttk{p},\,j\ttk{2}\}$
& $\{k\e2 + \alpha_{ij}\e3 + i\e4 + j\e5  \mid i,j,k\ttk{2}\}$ \\
\tn & $\{i\e1 + \alpha_{ij}\e2 +\beta_{ij}\e3 +j\e4+ \alpha_i\e5   \mid i\ttk{p},\,j\ttk{2}\}$
& $\{j\e2 +k\e3 + \alpha_i\e4 + i\e5 \mid i,j,k\ttk{2}\}$ \\
\tn & $\{i\e1 + \alpha_{ij}\e2 + \beta_{ij}\e3 + \gamma_{ij}\e4 + j\e5 \mid i\ttk{p},\,j\ttk{2}\}$
& $\{i\e2 + j\e3 + k\e4 \mid i,j,k\ttk{2}\}$ \\
\tn & $\{i\e1 + k\e2 +\alpha_{ij}\e3 + j\e4 +\alpha_i\e5 \mid i\ttk{p},\,j,k\ttk{2}\}$
& $\{\alpha_{ij}\e2+ j\e3+\alpha_i\e4  + i\e5 \mid i,j\ttk{2}\}$ \\
\tn & $\{i\e1 +k\e2+ \alpha_{ij}\e3 + \beta{ij}\e4 + j\e5  \mid i\ttk{p},\,j,k\ttk{2}\}$
& $\{\alpha_{ij}\e2+i\e3+j\e4 \mid i,j\ttk{2} \}$ \\
\tn & $\{i\e1+j\e2 +k\e3+\alpha_i\e4+\beta_i\e5 \mid i\ttk{p},\,j,k\ttk{2}\}$
& $\{\alpha_{ij}\e2 +\beta_{ij}\e3+i\e4+ j\e5 \mid i,j\ttk{2}\}$ \\
\tn & $\{i\e1 +\alpha_{ijk}\e2 + k\e3 + \alpha_{ij}\e4 + j\e5    \mid i\ttk{p},\,j,k\ttk{2}\}$
& $\{ j\e2+ \alpha_i\e3 +i\e4 \mid i,j\ttk{2} \}$ \\
\tn & $\{k\e1 + \alpha_{ijk}\e2 + \alpha_{ij}\e3 + i\e4 + j\e5  \mid i,j\ttk{2},\,k\ttk{p}\}$
& $\{\alpha_i\e1 +j\e2+ i\e3  \mid i,j\ttk{2} \}$ \\
\tn & $\{i\e1 +\alpha_{ijk}\e2 + j\e3 + k\e4 + \alpha_i \e5 \mid i\ttk{p},\,j,k\ttk{2}\}$
& $\{j\e2 + \alpha_i\e3 +\beta_i\e4 + i\e5 \mid i,j\ttk{2} \}$ \\
\tn & $\{i\e1 + \alpha_{ijk}\e2 + \beta_{ijk}\e3 + j\e4 + k\e5  \mid i\ttk{p},\,j,k\ttk{2}\}$
& $\{i\e2 + j\e3 \mid i,j\ttk{2}\}$ \\
\tn & $\{\alpha_i\e1 + k\e2 + \alpha_{ij}\e3 + j\e4 + i\e5 \mid i,j,k\ttk{2}\}$
& $\{i\e1 + \alpha_{ij}\e2 + j\e3 + \alpha_i\e4 \mid i\ttk{p},\,j\ttk{2}\}$ \\
\tn & $\{\alpha_{ij}\e1 + k\e2 + \alpha_{ij}\e3 + i\e4 + j\e5   \mid i,j,k\ttk{2}\}$
& $\{i\e1 + \alpha_{ij}\e2 + j\e3  \mid i\ttk{p},\,j\ttk{2}\}$ \\
\tn & $\{\alpha_{ij}\e1 + \alpha_{ijk}\e2 + k\e3 + i\e4 + j\e5   \mid i,j,k\ttk{2}\}$
& $\{i\e1 +j\e2 + \alpha_i\e3  \mid i\ttk{p},\,j\ttk{2} \}$ \\
\tn & $\{\alpha_i\e1 + \alpha_{ijk}\e2 + j\e3 + k\e4 + i\e5 \mid i,j,k\ttk{2}\}$
& $\{i\e1 +j\e2 + \alpha_i\e3 + \beta_i\e4  \mid i\ttk{p},\,j\ttk{2} \}$ \\
\tn & $\{\alpha_{ijk}\e1 + k\e2 + \alpha_{ij}\e3 + i\e4 + j\e5   \mid i,j,k\ttk{2}\}$
& $\{j\e1+ \alpha_i \e2 + i\e3  \mid i\ttk{2} \}$ \\
\tn & $\{\alpha_{ijk}\e1 + \alpha_{ijk}\e2 + i\e3 + j\e4 + k\e5  \mid i,j,k\ttk{2}\}$
& $\{i\e1 + j\e2 \mid i\ttk{p},\,j\ttk{2}\}$ \\
\tn & $\{i\e1+l\e2+\alpha_{ijk}\e3+j\e4+k\e5 \mid i\ttk{p},\,j,k,l\ttk{2}\}$
& $\{i\e3 + \alpha_i \e2 \mid i\ttk{2}\}$ \\
\tn & $\{l\e1+\alpha_{ijk}\e2+i\e3+j\e4+k\e5 \mid i,j,k\ttk{2},\,l\ttk{p}\}$
& $\{\alpha_i\e1+ i\e2 \mid i\ttk{2}\}$ \\
\tn & $\{i\e1 +k\e2 +l\e3+\alpha_{ij}\e4+j\e5 \mid i\ttk{p},\,j,k,l\ttk{2}\}$
& $\{\alpha_i\e2 + \beta_i\e3 + i\e4 \mid i\ttk{2}\}$ \\
\tn & $\{k\e1+ l\e2 +\alpha_{ij}\e3+i\e4+j\e5  \mid i,j,l\ttk{2},\,k\ttk{p}\}$
& $\{\alpha_i\e1 +\alpha_i \e2 + i\e3  \mid i\ttk{2}\}$ \\
\tn & $\{i\e1 +j\e2 +k\e3 +l\e4 +\alpha_i\e5 \mid i\ttk{p},\,j,k,l\ttk{2}\}$
& $\{\alpha_i\e2 +\beta_i\e3 +\gamma_i\e4 + i\e5 \mid i\ttk{2}\}$ \\
\tn & $\{i\e1 + \alpha_{ijkl}\e2 + j\e3 + k\e4 +l\e5 \mid i\ttk{p},\,j,k,l\ttk{2}\}$
& $\{i\e2 \mid i\ttk{2}\}$ \\
\tn & $\{\alpha_{ijk}\e1+l\e2+i\e3+j\e4+k\e5  \mid i,j,k,l\ttk{2}\}$
& $\{i\e1+\alpha_i\e2 \mid i\ttk{p}\}$ \\
\tn & $\{\alpha_{ij}\e1+k\e2 +l\e3+i\e4+j\e5  \mid i,j,k,l\ttk{2}\}$
& $\{i\e1+\alpha_i\e2 +\beta_i\e3 \mid i\ttk{p}\}$ \\
\tn & $\{\alpha_i\e1 +j\e2 +k\e3 +l\e4+i\e5 \mid i,j,k,l\ttk{2}\}$
& $\{i\e1+\alpha_i\e2 +\beta_i\e3 +\gamma_i\e4 \mid i\ttk{p}\}$ \\
\tn & $\{\alpha_{ijkl}\e1+i\e2+j\e3+k\e4+l\e5 \mid i,j,k,l\ttk{2}\}$
& $\{i\e1 \mid i\ttk{p}\}$ \\
\hline
\end{longtable}
\end{small}

\begin{proof}
    Suppose $C$ is a perfect code in $\Cay(G,S)$. Then $G=\so\oplus C$ and so $|\so|\cdot |C|=|G|=16p  $. 
    Since $\so$ generates $G$, $\so$ contains at least four non-zero elements, so $|\so|\geq 5$.
    Therefore, $4<|\so|<16p$.

\smallskip
    \textsf{Case 1.} $\so$ is aperiodic.
    \smallskip

    Since $G$ is Haj\'os and $G=\so\oplus C$, in this case $C$ must be periodic, that is, $\lc$ is nontrivial.
    Write $C=\lc\oplus D$ for some subset $D$ of $G$ whose existence is ensured by Lemma \ref{period}.
    Then $|\lc|\cdot |D|=|C|$ and $2\leq |\lc|\leq |C|$.
    If $|\so|$ is a prime, then $|\lc|=|C|$ by Lemma \ref{prime deg} part (c).
    If $|\so|$ is not a prime, then $|\lc|$ can be any divisor of $|C|$.
    Set $$ A=(\so+\lc)/\lc\quad\text{and}\quad B=(D+\lc)/\lc.$$
    Since $C$ is periodic, by Lemma \ref{quotient}, we have
    $$G/\lc=A\oplus B$$ and $B$ is aperiodic in $G/\lc$.
    Since $G/\lc$ is isomorphic to a subgroup of $G$ and subgroups of Haj\'os group are also Haj\'os groups, $A$ is periodic in $G/\lc$.
    We also know that $A$ contains $\lc$ and $A$ generates $G/\lc$ by Lemma \ref{generate-zero}.
    Furthermore, $|A|=|\so|$ and $|B|=|D|$.
    Note that if $|\lc|=|C|$, then $C=\lc$ as $0\in C$.
    This means we can take $D=\{0\}$ and hence $B=\lc/\lc$ and $G/\lc=A$.
    We use $C$ to get $A$ and then use Lemma \ref{d_i+l_i} to obtain $\so$.
    If $|\lc|<|C|$, then this factorization enables us to make use of previously established theorems to obtain the pair $(\so, C)$ via $(A,B)$.

    Table \ref{tap.p.2.2.2.2} represents all possible values for $\left(|\so|,|C|,|\lc|\right)$; $C$, $A$, and $\so$ when $|\lc|=|C|$ or $L_C$, $A$, $B$, $\so$, and $C$ when $|\lc|<|C|$; which previous theorem is used to obtain $(A,B)$ when $|\lc|<|C|$; and which row in Table \ref{t.p.2.2.2.2} the result corresponds to.
        
\begin{longtable}{|c|l|c|c|}
\caption{The case when $G=\Z_{p}\x\Z_2\x\Z_2\x\Z_2\x\Z_2 $ and $\so$ is aperiodic}
\label{tap.p.2.2.2.2}
\\
\flap

& $C=\<{\e2,\e3,\e4,\e5} $ & & \\
$(p,16,16)$ 
& $A=\{i\e1+\lc \mid i\ttk{p}\} $ 
& & row 1 \\
& $\so=\{i\e1+\alpha_i\e2+\beta_i\e3+\gamma_i\e4+\delta_i\e5 \mid i\ttk{p}\} $ & & \\ \hline

& $\lc=\<{\e2} $ & & \\
& $A=\{i\e1 +j\e3 + \alpha_i\e4 + \alpha_i\e5 + \lc \mid i\ttk{p},\,j\ttk{2}\} $ & & \\
$(2p,8,2 )$ 
& $B=\{\alpha_{ij}\e3 +  i\e4 + j\e5 + \lc \mid i,j\ttk{2}\} $ 
& \ref{p.2.2.2} \row2 & row 3 \\
& $\so=\{i\e1 + \alpha_i\e4 + \alpha_i\e5 +\alpha_{ij}\e2 + j\e3 \mid i\ttk{p},\,j\ttk{2}\} $ & & \\
& $C=\{\alpha_{ij}\e3 +  i\e4 + j\e5 + k\e2 \mid i,j,k\ttk{2}\} $ & & \\ \hline

& $\lc=\<{\e2,\e3} $ & & \\
& $A=\{i\e1 +j\e4 + \alpha_i\e5  + \lc \mid i\ttk{p},\,j\ttk{2}\} $ & & \\
$(2p,8,4 )$ 
& $B=\{ \alpha_i\e4 + i\e5 + \lc \mid i\ttk{2}\} $ 
& \ref{p.2.2} \row4 & row 4 \\
& $\so=\{i\e1 +j\e4 + \alpha_i\e5  + \alpha_{ij}\e2 +\beta_{ij}\e3 \mid i\ttk{p},\,j\ttk{2}\} $ & & \\
& $C=\{ \alpha_i\e4 +i\e5+j\e2 +k\e3 \mid i,j,k\ttk{2}\} $ & & \\ \hline

& $C=\<{\e2,\e3,\e4} $ & & \\
$(2p,8,8 )$ 
& $A=\{i\e1 + j\e5 + \lc \mid i\ttk{p},\,j\ttk{2}\} $ 
& & row 5 \\
& $\so=\{i\e1 + j\e5 + \alpha_{ij}\e2 + \beta_{ij}\e3 + \gamma_{ij}\e4 \mid i\ttk{p},\,j\ttk{2}\} $ & & \\ \hline

& $\lc=\<{\e2} $ & & \\
& $A=\{i\e1 + k\e3 + \alpha_{ij}\e4 + j\e5 +\lc \mid i\ttk{p},\,j,k\ttk{2}\} $ & & \\
$(4p,4,2 )$ 
& $B=\{\alpha_i\e3 + i\e4 +\lc \mid i\ttk{2} \} $ 
& \ref{p.2.2.2} \row7 & row 9 \\
& $\so=\{i\e1 + k\e3 + \alpha_{ij}\e4 + j\e5 +\alpha_{ijk}\e2 \mid i\ttk{p},\,j,k\ttk{2}\} $ & & \\
& $C=\{\alpha_i\e3 + i\e4 +j\e2 \mid i,j\ttk{2} \} $ & & \\ \hline

& $\lc=\<{\e2} $ & & \\
& $A=\{k\e1+ \alpha_{ij}\e3 +  i\e4 + j\e5 +\lc \mid i,j\ttk{2},\,k\ttk{p}\} $ & & \\
$(4p,4,2 )$ 
& $B=\{\alpha_i\e1 + i\e3 +\lc \mid i\ttk{2} \} $ 
& \ref{p.2.2.2} \row8 & row 10 \\
& $\so=\{k\e1+ \alpha_{ij}\e3 +  i\e4 + j\e5 +\alpha_{ijk}\e2 \mid i,j\ttk{2},\,k\ttk{p}\} $ & & \\
& $C=\{\alpha_i\e1 + j\e2 + i\e3 \mid i,j\ttk{2} \} $ & & \\ \hline

& $\lc=\<{\e2} $ & & \\
& $A=\{i\e1 + j\e3 + k\e4 + \alpha_i \e5 +\lc \mid i\ttk{p},\,j,k\ttk{2}\} $ & & \\
$(4p,4,2 )$ 
& $B=\{\alpha_i\e3 +\beta_i\e4 + i\e5 +\lc \mid i\ttk{2} \} $ 
& \ref{p.2.2.2} \row9 & row 11 \\
& $\so=\{i\e1 + j\e3 + k\e4 + \alpha_i \e5 +\alpha_{ijk}\e2 \mid i\ttk{p},\,j,k\ttk{2}\} $ & & \\
& $C=\{\alpha_i\e3 +\beta_i\e4 + i\e5 +j\e2 \mid i,j\ttk{2} \} $ & & \\ \hline

& $C=\<{\e2,\e3} $ & & \\
$(4p,4,4 )$ 
& $A=\{i\e1 + j\e4 + k\e5 + \lc \mid i\ttk{p},\,j,k\ttk{2}\} $ 
& & row 12 \\
& $\so=\{i\e1 + j\e4 + k\e5 + \alpha_{ijk}\e2 + \beta_{ijk}\e3 \mid i\ttk{p},\,j,k\ttk{2}\} $ & & \\ \hline

& $\lc=\<{\e2} $ & & \\
& $A=\{\alpha_{ij}\e1 +  k\e3+i\e4+j\e5 +\lc \mid i,j,k\ttk{2}\} $ & & \\
$(8,2p,2 )$ 
& $B=\{i\e1 + \alpha_i \e3 +\lc \mid i\ttk{p} \} $ 
& \ref{p.2.2.2} \row{11} & row 15 \\
& $\so=\{\alpha_{ij}\e1 +  k\e3+i\e4+j\e5 +\alpha_{ijk}\e2 \mid i,j,k\ttk{2}\} $ & & \\
& $C=\{i\e1 + \alpha_j\e2 + i\e3 \mid i\ttk{p},\,j\ttk{2} \} $ & & \\ \hline

& $\lc=\<{\e2} $ & & \\
& $A=\{\alpha_i\e1 + j\e3 + k\e4 + i\e5 +\lc \mid i,j,k\ttk{2}\} $ & & \\
$(8,2p,2 )$ 
& $B=\{i\e1 + \alpha_i\e3 + \beta_i\e4 +\lc \mid i\ttk{p} \} $ 
& \ref{p.2.2.2} \row{12} & row 16 \\
& $\so=\{\alpha_i\e1 + j\e3 + k\e4 + i\e5 +\alpha_{ijk}\e2 \mid i,j,k\ttk{2}\} $ & & \\
& $C=\{i\e1 + j\e2 + \alpha_i\e3 + \beta_i\e4 \mid i\ttk{p},\,j\ttk{2} \} $ & & \\ \hline

& $\lc=\<{\e1} $ & & \\
& $A=\{k\e2 + \alpha_{ij}\e3 +  i\e4 + j\e5  +\lc \mid i,j,k\ttk{2}\} $ & & \\
$(8,2p,p )$ 
& $B=\{\alpha_i \e2+ i\e3  +lc \mid i\ttk{2} \} $ 
& \ref{2.2.2.2} \row 1 & row 17 \\
& $\so=\{\alpha_{ij}\e1 +  k\e3+i\e4+j\e5 +\alpha_{ijk}\e2 \mid i,j,k\ttk{2}\} $ & & \\
& $C=\{i\e1 + \alpha_j\e2 + i\e3 \mid i\ttk{p},\,j\ttk{2} \} $ & & \\ \hline

& $C=\<{\e1+\e2} $ & & \\
$(8,2p,2p )$ 
& $A=\{i\e3 + j\e4 + k\e5 + \lc \mid i,j,k\ttk{2}\} $ 
& & row 18 \\
& $\so=\{\alpha_{ijk}\e1 + i\e3 + j\e4 + k\e5 + \alpha_{ijk}\e2 \mid i,j,k\ttk{2}\} $ & & \\ \hline

& $C=\<{\e2} $ & & \\
$(8p,2,2 )$ 
& $A=\{i\e1 + j\e3 + k\e4 +l\e5 + \lc \mid i\ttk{p},\,j,k,l\ttk{2}\} $ 
& & row 24 \\
& $\so=\{i\e1 + j\e3 + k\e4 +l\e5 + \alpha_{ijkl}\e2 \mid i\ttk{p},\,j,k,l\ttk{2}\} $ & & \\ \hline

& $C=\<{\e1} $ & & \\
$(16,p,p)$ 
& $A=\{i\e2+j\e3+k\e4+l\e5 + \lc \mid i,j,k,l\ttk{2}\} $ 
& & row 28 \\
& $\so=\{\alpha_{ijkl}\e1+i\e2+j\e3+k\e4+l\e5 \mid i,j,k,l\ttk{2}\} $ & & \\ \hline

\end{longtable}

\smallskip
    \textsf{Case 2.} $\so$ is periodic.
    \smallskip

    In this case we have $|\ls|\geq 2$ and $\so=D\oplus \ls$ for some subset $D$ of $G$ by Lemma \ref{period}.
    So $|D|\cdot|\ls|=|\so|$ and $1<|\ls|<|\so|$ by Lemma \ref{prime deg} part (b).
    Set $$X=(D+\ls)/\ls\quad\text{and}\quad Y=(C+\ls)/\ls.$$
    Since $\so$ is periodic, by Lemma \ref{quotient}, we have
    $$G/\lc=X\oplus Y$$ and $X$ is aperiodic in $G/\ls$.
    We also know that $X$ contains $\ls$ and $X$ generates $G/\ls$ by Lemma \ref{generate-zero}.
    This factorization enables us to make use of Lemma \ref{d_i+l_i} and previously established theorems to obtain the pair $(\so, C)$ via $(X,Y)$.
    
    Table \ref{tp.p.2.2.2.2} represents all possible values for $\left(|\so|,|C|,|\ls|\right)$; $\ls$, $X$, $Y$, $\so$, and $C$; which previous theorem is used to obtain $(X,Y)$; and which row in Table \ref{t.p.2.2.2.2} the result corresponds to.

\begin{longtable}{|c|l|c|c|}
\caption{The case when $G=\Z_{p}\x\Z_2\x\Z_2\x\Z_2\x\Z_2 $ and $\so$ is periodic}
\label{tp.p.2.2.2.2}
\\
\flp

& $\ls=\<{\e2} $ & & \\
& $X=\{i\e1 + \alpha_i\e3 + \beta_i\e4 + \gamma_i\e5 + \ls \mid i\ttk{p}\} $ & & \\
$(2p,8,2 )$ 
& $Y=\{i\e3 + j\e4 + k\e5 + \ls \mid i,j,k\ttk{2}\} $ 
& \ref{p.2.2.2} \row1 & row 2 \\
& $\so=\{i\e1 + \alpha_i\e3 + \beta_i\e4 + \gamma_i\e5 + j\e2 \mid i\ttk{p},\,j\ttk{2}\} $ & & \\
& $C=\{i\e3 + j\e4 + k\e5 + \alpha_{ijk}\e2 \mid i,j,k\ttk{2}\} $ & & \\ \hline

$(2p,8,p )$ 
& Not possible since no $X$ exists
& \ref{2.2.2.2}  & \\ \hline

& $\ls=\<{\e2} $ & & \\
& $X=\{i\e1+\alpha_{ij}\e3+j\e4 +\alpha_i\e5 +\ls \mid i\ttk{p},\,j\ttk{2}\} $ & & \\
$(4p,4,2)$ 
& $Y=\{j\e3+\alpha_i\e4 + i\e5 +\ls \mid i,j\ttk{2}\} $ 
& \ref{p.2.2.2} \row3 & row 6 \\
& $\so=\{i\e1+\alpha_{ij}\e3+j\e4 +\alpha_i\e5 +k\e2 \mid i\ttk{p},\,j,k\ttk{2}\} $ & & \\
& $C=\{j\e3+\alpha_i\e4 + i\e5 +\alpha_{ij}\e2 \mid i,j\ttk{2}\} $ & & \\ \hline

& $\ls=\<{\e2} $ & & \\
& $X=\{i\e1 + \alpha_{ij}\e3 + \beta{ij}\e4 + j\e5 +\ls \mid i\ttk{p},\,j\ttk{2}\} $ & & \\
$(4p,4,2 )$ 
& $Y=\{i\e3+j\e4 +\ls \mid i,j\ttk{2} \} $ 
& \ref{p.2.2.2} \row4 & row 7 \\
& $\so=\{i\e1 + \alpha_{ij}\e3 + \beta{ij}\e4 + j\e5 +k\e2 \mid i\ttk{p},\,j,k\ttk{2}\} $ & & \\
& $C=\{i\e3+j\e4 +\alpha_{ij}\e2 \mid i,j\ttk{2} \} $ & & \\ \hline

$(4p,4,p )$ 
& Not possible since no $X$ exists
& \ref{2.2.2.2}  & \\ \hline

& $\ls=\<{\e2,\e3} $ & & \\
& $X=\{i\e1+\alpha_i\e4+\beta_i\e5 +\ls \mid i\ttk{p}\} $ & & \\
$(4p,4,4 )$ 
& $Y=\{i\e4+ j\e5 +\ls \mid i,j\ttk{2}\} $ 
& \ref{p.2.2} \row1 & row 8 \\
& $\so=\{i\e1+\alpha_i\e4+\beta_i\e5 +j\e2 +k\e3 \mid i\ttk{p},\,j,k\ttk{2}\} $ & & \\
& $C=\{i\e4+ j\e5 + \alpha_{ij}\e2 +\beta_{ij}\e3 \mid i,j\ttk{2}\} $ & & \\ \hline

$(4p,4,2p )$ 
& Not possible since no $X$ exists
& \ref{2.2.2}  & \\ \hline

& $\ls=\<{\e2} $ & & \\
& $X=\{\alpha_i\e1+\alpha_{ij}\e3+j\e4 +i\e5 +\ls \mid i,j\ttk{2}\} $ & & \\
$(8,2p,2 )$ 
& $Y=\{i\e1+ j\e3 + \alpha_i\e4 \mid i\ttk{p},\,j\ttk{2}\} $ 
& \ref{p.2.2.2} \row5 & row 13 \\
& $\so=\{\alpha_i\e1+\alpha_{ij}\e3+j\e4 +i\e5 +k\e2 \mid i,j,k\ttk{2}\} $ & & \\
& $C=\{i\e1+ j\e3 + \alpha_i\e4 +\alpha_{ij}\e2 \mid i\ttk{p},\,j\ttk{2}\} $ & & \\ \hline

& $\ls=\<{\e2} $ & & \\
& $X=\{\alpha_{ij}\e1+ \alpha_{ij}\e3 +  i\e4 + j\e5 +\ls \mid i,j\ttk{2}\} $ & & \\
$(8,2p,2 )$ 
& $Y=\{i\e1+ j\e3 \mid i\ttk{p},\,j\ttk{2}\} $ 
& \ref{p.2.2.2} \row6 & row 14 \\
& $\so=\{\alpha_{ij}\e1+ \alpha_{ij}\e3 +  i\e4 + j\e5 +k\e2 \mid i,j,k\ttk{2}\} $ & & \\
& $C=\{i\e1+ \alpha_{ij}\e2 + j\e3 \mid i\ttk{p},\,j\ttk{2}\} $ & & \\ \hline

$(8,2p,4 )$ 
& Not possible since no $X$ exists
& \ref{p.2.2}  & \\ \hline

& $\ls=\<{\e2} $ & & \\
& $X=\{i\e1+\alpha_{ijk}\e3+j\e4+k\e5 +\ls \mid i\ttk{p},\,j,k\ttk{2}\} $ & & \\
$(8p,2,2 )$ 
& $Y=\{i\e3+\ls \mid i\ttk{2}\} $ 
& \ref{p.2.2.2} \row{10} & row 19 \\
& $\so=\{i\e1+\alpha_{ijk}\e3+j\e4+k\e5 +l\e2 \mid i\ttk{p},\,j,k,l\ttk{2}\} $ & & \\
& $C=\{i\e3 + \alpha_i \e2 \mid i\ttk{2}\} $ & & \\ \hline

& $\ls=\<{\e1} $ & & \\
& $X=\{\alpha_{ijk}\e2+i\e3+j\e4+k\e5 +\ls \mid i,j,k\ttk{2}\} $ & & \\
$(8p,2,p )$ 
& $Y=\{i\e2+\ls \mid i\ttk{2}\} $ 
& \ref{2.2.2.2} \row2 & row 20 \\
& $\so=\{l\e1+i\e3+j\e4+k\e5 +\alpha_{ijk}\e2 \mid i,j,k\ttk{2},\,l\ttk{p}\} $ & & \\
& $C=\{\alpha_i\e1+ i\e2 \mid i\ttk{2}\} $ & & \\ \hline

& $\ls=\<{\e2,\e3} $ & & \\
& $X=\{i\e1 +\alpha_{ij}\e4 + j\e5 +\ls \mid i\ttk{p},\,j\ttk{2}\} $ & & \\
$(8p,2,4 )$ 
& $Y=\{i\e4+\ls \mid i\ttk{2}\} $ 
& \ref{p.2.2} \row5 & row 21 \\
& $\so=\{i\e1 +\alpha_{ij}\e4 + j\e5 +k\e2 +l\e3 \mid i\ttk{p},\,j,k,l\ttk{2}\} $ & & \\
& $C=\{\alpha_i\e2 +\beta_i\e3 + i\e4 \mid i\ttk{2}\} $ & & \\ \hline

& $\ls=\<{\e1+\e2} $ & & \\
& $X=\{\alpha_{ij}\e3+i\e4+j\e5 +\ls \mid i,j\ttk{2}\} $ & & \\
$(8p,2,2p )$ 
& $Y\{i\e3+\ls \mid i\ttk{2}\}= $ 
& \ref{2.2.2} & row 22 \\
& $\so=\{k\e1+\alpha_{ij}\e3+i\e4+j\e5 + l\e2 \mid i,j,l\ttk{2},\,k\ttk{p}\} $ & & \\
& $C=\{\alpha_i\e1+ i\e3 +\alpha_i \e2 \mid i\ttk{2}\} $ & & \\ \hline

& $\ls=\<{\e2,\e3,\e4} $ & & \\
& $X=\{i\e1 +\alpha_i\e5 +\ls \mid i\ttk{p}\} $ & & \\
$(8p,2,8 )$ 
& $Y=\{i\e5+\ls \mid i\ttk{2}\} $ 
& \ref{p.q} \row1 & row 23 \\
& $\so=\{i\e1 +\alpha_i\e5 +j\e2 +k\e3 +l\e4 \mid i\ttk{p},\,j,k,l\ttk{2}\} $ & & \\
& $C=\{i\e5 + \alpha_i\e2 +\beta_i\e3 +\gamma_i\e4 \mid i\ttk{2}\} $ & & \\ \hline

$(8p,2,4p )$ 
& Not possible since no $X$ exists
& \ref{2.2}  & \\ \hline

& $\ls=\<{\e2} $ & & \\
& $X=\{\alpha_{ijk}\e1+i\e3+j\e4+k\e5 +\ls \mid i,j,k\ttk{2}\} $ & & \\
$(16,p,2 )$ 
& $Y=\{i\e1+\ls \mid i\ttk{p}\} $ 
& \ref{p.2.2.2} \row{13} & row 25 \\
& $\so=\{\alpha_{ijk}\e1+i\e3+j\e4+k\e5 +l\e2 \mid i,j,k,l\ttk{2}\} $ & & \\
& $C=\{i\e1+\alpha_i\e2 \mid i\ttk{p}\} $ & & \\ \hline

& $\ls=\<{\e2,\e3} $ & & \\
& $X=\{\alpha_{ij}\e1+i\e4+j\e5 +\ls \mid i,j\ttk{2}\} $ & & \\
$(16,p,4 )$ 
& $Y=\{i\e1+\ls \mid i\ttk{p}\} $ 
& \ref{p.2.2} \row7 & row 26 \\
& $\so=\{\alpha_{ij}\e1+i\e4+j\e5 +k\e2 +l\e3 \mid i,j,k,l\ttk{2}\} $ & & \\
& $C=\{i\e1+\alpha_i\e2 +\beta_i\e3 \mid i\ttk{p}\} $ & & \\ \hline

& $\ls=\<{\e2,\e3,\e4} $ & & \\
& $X=\{\alpha_i\e1+i\e5 +\ls \mid i\ttk{2}\} $ & & \\
$(16,p,8 )$ 
& $Y=\{i\e1+\ls \mid i\ttk{p}\} $ 
& \ref{p.q} \row2 & row 27 \\
& $\so=\{\alpha_i\e1+i\e5 +j\e2 +k\e3 +l\e4 \mid i,j,k,l\ttk{2}\} $ & & \\
& $C=\{i\e1+\alpha_i\e2 +\beta_i\e3 +\gamma_i\e4 \mid i\ttk{p}\} $ & & \\ \hline

\end{longtable}
\end{proof}

\begin{remark}
    In Theorem \ref{p.2.2.2.2}, we can change $\e2$, $\e3$, $\e4$, and $\e5$ to any four generators of $\{0\}\x\Z_2\x\Z_2\x\Z_2\x\Z_2$.
\end{remark}
\section{Other Haj\'os groups}
\label{sec:pq22}

In this section, we characterize perfect codes in Cayley graphs of the remaining Haj\'os groups. These groups are subgroups of $\Z_p \times \Z_q \times \Z_2 \times \Z_2$, $\Z_p \times \Z_4 \times \Z_2$, $\Z_9 \x \Z_3$, $\Z_{p^2}\x\Z_{q^2}$, and $\Z_{p^2}\x\Z_q\x\Z_r$.

\subsection{Subgroups of $\Z_p \times \Z_q \times \Z_2 \times \Z_2$}

\begin{theorem}\label{p.q.2}
Let $G=\Z_{p}\x\Z_q\x\Z_r$ for distinct odd primes $p$, $q$ and $r$, and let $S$ be a proper subset of $G\setminus \{0\}$ that generates $G$. Let $0\in C\subseteq G$. Then $C$ is a perfect code in $\Cay(G,S)$ if and only if $(\so,C)$ is one of the pairs in Table \ref{t.p.q.2} for some $\alpha_i$'s, $\alpha_{ij}$'s, and $\beta_i$'s in $\Z$ where $\alpha_0=\alpha_{00}=\alpha_{000}=\beta_0=0$.
\end{theorem}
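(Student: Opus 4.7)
\textbf{Proof plan for Theorem \ref{p.q.2}.}
The plan is to mimic the case-analysis strategy used in Theorems \ref{p,q,q}, \ref{p^2.2} and \ref{p^3.2}: assume $C$ is a normalized perfect code in $\Cay(G,S)$, so that by Lemma \ref{lem:factor} we have the factorization $G = \so \oplus C$ with $|\so|\cdot|C|=pqr$, and then stratify by the value of $|\so|$. Since $\Cay(G,S)$ is connected but not complete, $\so$ generates $G$ and $|\so|\ne 1,pqr$, so the possibilities are $|\so|\in\{p,q,r,pq,pr,qr\}$. For each, I will split on whether $\so$ is aperiodic or periodic and invoke Lemma \ref{quotient} together with Lemma \ref{generate-zero} and Lemma \ref{d_i+l_i}.

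First, when $|\so|$ is prime (say $|\so|=p$), Lemma \ref{prime deg}(c) forces $\so$ to be aperiodic and $C=g+L_C$ to be a coset of $L_C$; normalization ($0\in C$) then gives $C=L_C=\langle\e2,\e3\rangle$, and the factorization $G/L_C=(\so+L_C)/L_C$ (taking $D=\{0\}$ in Lemma \ref{quotient}) together with Lemma \ref{d_i+l_i} yields $\so=\{i\e1+\alpha_i\e2+\beta_i\e3\mid i\in[p]\}$ for some $\alpha_i,\beta_i\in\Z$. The cases $|\so|=q,r$ are symmetric and produce the corresponding three ``prime-size'' rows.

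For $|\so|=pq$ (and symmetrically $pr$, $qr$), if $\so$ is periodic then Lemma \ref{ls<so}(b) forces $|\ls|\in\{p,q\}$, so $\ls$ is either $\langle\e1\rangle$ or $\langle\e2\rangle$. Writing $\so=D\oplus\ls$ and passing to the quotient $G/\ls\cong\Z_q\times\Z_r$ (or $\Z_p\times\Z_r$), we have $G/\ls=((D+\ls)/\ls)\oplus((C+\ls)/\ls)$ with $(D+\ls)/\ls$ aperiodic of prime order, so Theorem \ref{p.q} describes these two factors completely; lifting via Lemma \ref{d_i+l_i} reconstructs $\so$ and $C$. If instead $\so$ is aperiodic, then $C$ is periodic with $|C|=r$ prime, hence $C=L_C=\langle\e3\rangle$, and the same quotient argument with $G/L_C\cong\Z_p\times\Z_q$ and Theorem \ref{p.q} (actually just $C=L_C$ so $G/L_C=(\so+L_C)/L_C$) gives $\so=\{i\e1+j\e2+\alpha_{ij}\e3\mid i\in[p],j\in[q]\}$. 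Each of these sub-cases produces one row in Table \ref{t.p.q.2}.

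The converse direction is a verification, entirely analogous to the last paragraph of the proof of Theorem \ref{p,q,q}: for each listed pair $(\so,C)$ one checks $|\so|\cdot|C|=pqr=|G|$ and exhibits, for an arbitrary $(x,y,z)\in G$, a decomposition $\mathbf{x}=\mathbf{s}+\mathbf{c}$ with $\mathbf{s}\in\so$, $\mathbf{c}\in C$ by reading off the ``free'' coordinate indices in each row. No serious obstacle is expected, since the only tools needed beyond bookkeeping are Lemmas \ref{quotient}, \ref{generate-zero}, \ref{d_i+l_i}, \ref{prime deg} and Theorem \ref{p.q}; the main work is really organizational, namely making sure that the six prime values of $|\so|$ together with the periodic/aperiodic dichotomy enumerate exactly the rows of Table \ref{t.p.q.2} without duplication or omission.
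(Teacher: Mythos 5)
Your proposal is correct and follows essentially the same route as the paper: case analysis on $|\so|\in\{p,q,r,pq,pr,qr\}$, the periodic/aperiodic dichotomy resolved via Lemma \ref{prime deg} and Lemma \ref{quotient}, reduction to Theorem \ref{p.q} in the quotient with lifting by Lemma \ref{d_i+l_i}, yielding exactly the twelve rows of Table \ref{t.p.q.2}. The only difference is cosmetic: you spell out the sufficiency verification, which the paper omits by its stated convention that the calculation is analogous to that in Theorem \ref{p,q,q}.
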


\rc\begin{longtable}{|c|l|l|}
\caption{Perfect codes in Cayley graphs of $\Z_{p}\x\Z_q\x\Z_r$}
\label{t.p.q.2}
\\
\fl
\tn & $\{(i,\alpha_i,\beta_i) \mid i\ttk{p}\}$
& $\{(0,i,j) \mid i\ttk{q},\,j\ttk{r}\}$ \\
\tn & $\{(\alpha_i,i,\beta_i) \mid i\ttk{q}\}$
& $\{(i,0,j) \mid i\ttk{p},\,j\ttk{r}\}$ \\
\tn & $\{(\alpha_i,\beta_i,i) \mid i\ttk{r}\}$
& $\{(i,j,0) \mid i\ttk{p},\,j\ttk{q}\}$ \\
\tn & $\{(j,i,\alpha_i) \mid i\ttk{q},\,j\ttk{p}\}$
& $\{(\alpha_i,0,i) \mid i\ttk{r}\}$ \\
\tn & $\{(i,j,\alpha_i) \mid i\ttk{p},\,j\ttk{q}\}$
& $\{(0,\alpha_i,i) \mid i\ttk{r}\}$ \\
\tn & $\{(j,\alpha_i,i) \mid i\ttk{r},\,j\ttk{p}\}$
& $\{(\alpha_i,i,0) \mid i\ttk{q}\}$ \\
\tn & $\{(i,\alpha_i,j) \mid i\ttk{p},\,j\ttk{r}\}$
& $\{(0,i,\alpha_i) \mid i\ttk{q}\}$ \\
\tn & $\{(\alpha_i,j,i) \mid i\ttk{r},\,j\ttk{q}\}$
& $\{(i,\alpha_i,0) \mid i\ttk{p}\}$ \\
\tn & $\{(\alpha_i,i,j) \mid i\ttk{q},\,j\ttk{r}\}$
& $\{(i,0,\alpha_i) \mid i\ttk{p}\}$ \\
\tn & $\{(i,j,\alpha_{ij}) \mid i\ttk{p},\,j\ttk{q}\}$
& $\{(0,0,i) \mid i\ttk{r}\}$ \\
\tn & $\{(i,\alpha_{ij},j) \mid i\ttk{p},\,j\ttk{r}\}$
& $\{(0,i,0) \mid i\ttk{q}\}$ \\
\tn & $\{(\alpha_{ij},i,j) \mid i\ttk{q},\,j\ttk{r}\}$
& $\{(i,0,0) \mid i\ttk{p}\}$ \\
\hline
\end{longtable}

\begin{proof}
    \setcounter{cn}{1}\renewcommand{\Table}{\ref{t.p.q.2}}
    Suppose $C$ is a perfect code in $\Cay(G,S)$.

    If $|\so|=p$, then from Lemma \ref{prime deg}, $\so$ is aperiodic and $C$ is a subgroup of order $qr$, so $C=\lc=\{0\}\times\Z_q\times\Z_r$.
    From the factorization of $G/\lc$ given in Lemma \ref{quotient}, $(\so+\lc)/\lc=G/\lc=\{(i,0,0)+\lc \mid i\ttk{p}\}$, and hence $\so=\{(i,\alpha_i,\beta_i) \mid i\ttk{p}\}$, where $\alpha_i,\beta_i\in \Z$. \R 
    The same argument can be applied when $|\so|=q$ and $|\so|=r$ to get rows 2 and 3 in Table \ref{t.p.q.2}.

    If $|\so|=pq$ and $\so$ periodic, then $|\ls|=p$ or $q$.
    Let $\so=D\oplus\ls$ for some subset $D$. From the factorization of $G/\ls$ given in Lemma \ref{quotient}, we have $G/\ls=(D+\ls)/\ls\oplus (C+\ls)/\ls$.
    If $|\ls|=p$, then $\ls=\<{(1,0,0)}$.
    Note that $(D+\ls)/\ls$ is a aperiodic set of order $q$ and $G/\ls\cong \Z_{q}\times\Z_r$.
    From Theorem \ref{p.q} \row2, we get $(D+\ls)/\ls=\{(0,i,\alpha_i)+\ls \mid i\ttk{q}\}$ and $(C+\ls)/\ls=\{(0,0,i)+\ls \mid i\ttk{r}\}$.
    Therefore, $\so=D\oplus\ls=\{(j,i,\alpha_i) \mid i\ttk{q},\,j\ttk{p}\}$ and $C=\{(\alpha_i,0,i) \mid i\ttk{r}\}$ where $\alpha_i\in \Z$. This is row 4 in Table \ref{t.p.q.2}.
    By using the same arguments, we can get the case of $|\ls|=q$. This is row 5 in Table \ref{t.p.q.2}.
    We can get rows 6 to 9 by changing $|\so|$ to $pr$ and $qr$, and also set $|\ls|$ to a prime divisor of $|\so|$.

    If $|\so|=pq$ and $\so$ aperiodic, then $|C|=r$ and $C$ is periodic, 
    which means $C=\lc=\<{(0,0,1)}$.
    From the factorization of $G/\lc$, $(\so+\lc)/\lc=G/\lc=\{(i,j,0) + \lc \mid i\ttk{p},\,j\ttk{q}\}$, and hence $\so=\{(i,j,\alpha_{ij}) \mid i\ttk{p},\,j\ttk{q}\}$, $\alpha_{ij}\in \Z$. This is row 10 in Table \ref{t.p.q.2}. 
    Rows 11 and 12 are the cases $|\so|$ equal to $pr$ and $qr$ with the same arguments.
\end{proof}

\begin{theorem}\label{p,q,2,2}
Let $G=\Z_{p}\x\Z_q\x\Z_2\x\Z_2$ for distinct odd primes $p$ and $q$, and let $S$ be a proper subset of $G\setminus \{0\}$ that generates $G$. Let $0\in C\subseteq G$. Then $C$ is a perfect code in $\Cay(G,S)$ if and only if $(\so,C)$ is one of the pairs in Table \ref{t.p.q.2.2} for some $\alpha_i$'s, $\alpha_{ij}$'s, $\alpha_{ijk}$'s, and $\beta_i$'s, and $\gamma_i$'s in $\Z$ where $\alpha_0=\alpha_{00}=\alpha_{000}=\beta_0=\gamma_0=0$.
\end{theorem}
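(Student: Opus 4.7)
The plan is to follow the same case-splitting template used in Theorems \ref{p^3.2.2}, \ref{p^2.2.2.2} and \ref{p.2.2.2.2}. Suppose $C$ is a perfect code in $\Cay(G,S)$, so that $G=\so\oplus C$ and $|\so|\cdot|C|=|G|=4pq$. Since $G$ is not cyclic, Lemma \ref{ls<so}(a) gives $|\so|>2$, so the possible values of $|\so|$ are the proper divisors $4,p,q,2p,2q,4p,4q,pq,2pq$ of $4pq$ that exceed $2$. For the converse, once the table is established, sufficiency will follow by the same counting-plus-uniqueness verification used in Lemma \ref{G=so+c}: for each pair $(\so,C)$ in Table \ref{t.p.q.2.2} one checks $|\so|\cdot|C|=4pq=|G|$ and that the coordinate structure makes the representation $g=s+c$ unique.

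For necessity I would split on whether $\so$ is aperiodic or periodic.

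\textbf{Case 1: $\so$ aperiodic.} Since $G$ is Haj\'os, $C$ is periodic; write $C=\lc\oplus D$. By Lemma \ref{quotient} we get a factorisation $G/\lc=A\oplus B$ with $A=(\so+\lc)/\lc$ periodic, $B=(D+\lc)/\lc$ aperiodic, $|A|=|\so|$, $|B|=|D|$, $\lc\in A$, and $A$ generating $G/\lc$ by Lemma \ref{generate-zero}. When $|\lc|=|C|$ one has $C=\lc$, $D=\{0\}$ and $G/\lc=A$, which immediately reads off $\so$ from Lemma \ref{d_i+l_i} by lifting coset representatives of $G/\lc$. Otherwise, $G/\lc$ is again a Haj\'os group of the form listed earlier (a subgroup of $G$), and $(A,B)$ falls under one of the already-classified pairs in Theorems \ref{p.q}, \ref{p.2.2}, \ref{p^2} (not applicable here), \ref{p.q.2}, \ref{p.2.2.2}, or \ref{2.2.2}, allowing us to lift $\so$ and $C$ back to $G$.

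\textbf{Case 2: $\so$ periodic.} Now $|\ls|\ge 2$ and, by Lemma \ref{ls<so}(b), $|\ls|$ properly divides $|\so|$. Write $\so=D\oplus\ls$ and use Lemma \ref{quotient} to obtain $G/\ls=X\oplus Y$ with $X=(D+\ls)/\ls$ aperiodic, $Y=(C+\ls)/\ls$, $\ls\in X$, and $X$ generating $G/\ls$. Again $G/\ls$ is one of the previously treated Haj\'os groups, so $(X,Y)$ is determined by an earlier theorem, and Lemma \ref{d_i+l_i} lifts to the desired $(\so,C)$. In certain sub-cases (for example, when $G/\ls$ has order a prime power admitting only periodic generating sets of a given size) no aperiodic $X$ of the required order exists; these sub-cases are ruled out as in Tables \ref{tp.p^2.2.2} and \ref{tp.p^3.2.2}.

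The mechanical work is the enumeration: for each admissible triple $(|\so|,|C|,|\lc|)$ in Case 1 and $(|\so|,|C|,|\ls|)$ in Case 2, one identifies the canonical form of $\lc$ or $\ls$ (a cyclic subgroup of $G$ generated by one element, or a product of two such cyclic subgroups), applies the corresponding previous theorem to $(A,B)$ or $(X,Y)$, and uses Lemma \ref{d_i+l_i} to read off $(\so,C)$. Each resulting pair is then matched to a row of Table \ref{t.p.q.2.2}. The only genuine obstacle is bookkeeping: making sure every admissible triple is considered, every impossible sub-case is flagged as such, and that the final list of pairs is exactly Table \ref{t.p.q.2.2} without omission or repetition. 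This is best organised into two tables analogous to Tables \ref{tap.p.2.2.2.2} and \ref{tp.p.2.2.2.2}: one table for $\so$ aperiodic indexed by $(|\so|,|C|,|\lc|)$, and one for $\so$ periodic indexed by $(|\so|,|C|,|\ls|)$, each row citing the earlier theorem invoked and the Table \ref{t.p.q.2.2} row it produces.
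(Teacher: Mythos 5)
Your proposal is correct and follows essentially the same route as the paper: the same aperiodic/periodic dichotomy on $\so$, the same use of Lemmas \ref{quotient}, \ref{generate-zero} and \ref{d_i+l_i} to pass to $G/\lc$ or $G/\ls$ and lift from the earlier theorems (\ref{p.q}, \ref{p.2.2}, \ref{p.q.2}, \ref{p.2.2.2}, \ref{2.2.2}), organized into two tables indexed by $(|\so|,|C|,|\lc|)$ and $(|\so|,|C|,|\ls|)$ with impossible subcases flagged, and sufficiency by the counting-plus-uniqueness verification. The only difference is cosmetic: the paper additionally exploits the symmetry swapping $\e1$ and $\e2$ to derive the $|\so|\in\{q,2q,4q\}$ cases from the $|\so|\in\{p,2p,4p\}$ cases, which merely halves the bookkeeping you describe.
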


\rc\begin{longtable}{|c|l|l|}
\caption{Perfect codes in Cayley graphs of $\Z_{p}\x\Z_q\x\Z_2\x\Z_2$}
\label{t.p.q.2.2}
\\
\fl
\tn & $\{i\e1+\alpha_i\e2+\beta_i\e3+\gamma_i\e4 \mid i\ttk{p}\}$
& $\{i\e2+j\e3+k\e4 \mid i\ttk{q},\,j,k\ttk{2}\}$ \\
\tn & $\{\alpha_i\e1+i\e2+\beta_i\e3+\gamma_i\e4 \mid i\ttk{q}\}$
& $\{i\e1+j\e3+k\e4 \mid i\ttk{p},\,j,k\ttk{2}\}$ \\
\tn & $\{j\e1+i\e2+\alpha_i\e3+\beta_i\e4 \mid i\ttk{q},\,j\ttk{p}\}$
& $\{\alpha_{ij}\e1+i\e3+j\e4 \mid i,j\ttk{2}\}$ \\
\tn & $\{i\e1+j\e2+\alpha_i\e3+\beta_i\e4 \mid i\ttk{p},\,j\ttk{q}\}$
& $\{\alpha_{ij}\e2+i\e3+j\e4 \mid i,j\ttk{2}\}$ \\
\tn & $\{j\e1+i\e2 +\alpha_{ij}\e3 +\alpha_i\e4 \mid i\ttk{q},\,j\ttk{p}\}$
& $\{\alpha_i\e1+j\e3 +i\e4  \mid i,j\ttk{2}\}$ \\
\tn & $\{i\e1+j\e2 +\alpha_{ij}\e3 +\alpha_i\e4  \mid i\ttk{p},\,j\ttk{q}\}$
& $\{\alpha_i\e2 + j\e3 + i\e4  \mid i,j\ttk{2}\}$ \\
\tn & $\{i\e1+j\e2+\alpha_{ij}\e3+\beta_{ij}\e4 \mid i\ttk{p},\,j\ttk{q}\}$
& $\{i\e3+j\e4 \mid i,j\ttk{2}\}$ \\
\tn & $\{i\e1+\alpha_i\e2+j\e3+\beta_i\e4 \mid i\ttk{p},\,j\ttk{2}\}$
& $\{i\e2+\alpha_{ij}\e3+j\e4 \mid i\ttk{q},\,j\ttk{2}\}$ \\
\tn & $\{j\e1+\alpha_i\e2+\alpha_{ij}\e3 +i\e4  \mid i\ttk{2},\,j\ttk{p}\}$
& $\{\alpha_i\e1+i\e2 + j\e3 \mid i\ttk{q},\,j\ttk{2}\}$ \\
\tn & $\{i\e1+\alpha_i\e2+\alpha_{ij}\e3 +j\e4  \mid i\ttk{p},\,j\ttk{2}\}$
& $\{i\e2+ j\e3 + \alpha_i\e4  \mid i\ttk{q},\,j\ttk{2}\}$ \\
\tn & $\{i\e1+\alpha_{ij}\e2 + j\e3+\alpha_i\e4  \mid i\ttk{p},\,j\ttk{2}\}$
& $\{j\e2+\alpha_i\e3+i\e4 \mid i\ttk{2},\,j\ttk{q}\}$ \\
\tn & $\{i\e1+\alpha_{ij}\e2 + \alpha_{ij}\e3+ j\e4  \mid i\ttk{p},\,j\ttk{2}\}$
& $\{i\e2 + j\e3 \mid i\ttk{q},\,j\ttk{2}\}$ \\
\tn & $\{\alpha_i\e1+i\e2+j\e3+\beta_i\e4 \mid i\ttk{q},\,j\ttk{2}\}$
& $\{i\e1+\alpha_{ij}\e3+j\e4 \mid i\ttk{p},\,j\ttk{2}\}$ \\
\tn & $\{\alpha_i\e1+j\e2+\alpha_{ij}\e3 +i\e4  \mid i\ttk{2},\,j\ttk{q}\}$
& $\{i\e1+\alpha_i\e2 + j\e3 \mid i\ttk{p},\,j\ttk{2}\}$ \\
\tn & $\{\alpha_i\e1+i\e2+\alpha_{ij}\e3 +j\e4  \mid i\ttk{q},\,j\ttk{2}\}$
& $\{i\e1+ j\e3 + \alpha_i\e4  \mid i\ttk{p},\,j\ttk{2}\}$ \\
\tn & $\{\alpha_{ij}\e1+i\e2 + j\e3+\alpha_i\e4  \mid i\ttk{q},\,j\ttk{2}\}$
& $\{j\e1+\alpha_i\e3+i\e4 \mid i\ttk{2},\,j\ttk{p}\}$ \\
\tn & $\{\alpha_{ij}\e1+i\e2 + \alpha_{ij}\e3 + j\e4 \mid i\ttk{q},\,j\ttk{2}\}$
& $\{i\e1 + j\e3 \mid i\ttk{p},\,j\ttk{2}\}$ \\
\tn & $\{\alpha_1\e1+\beta_i\e2+j\e3+i\e4 \mid i,j\ttk{2}\}$
& $\{i\e1+j\e2+\alpha_{ij}\e3 \mid i\ttk{p},\,j\ttk{q}\}$ \\
\tn & $\{\alpha_{ij}\e1+\alpha_i\e2 + j\e3+i\e4  \mid i,j\ttk{2}\}$
& $\{j\e1+i\e2 + \alpha_i\e3 \mid i\ttk{q},\,j\ttk{p}\}$ \\
\tn & $\{\alpha_i\e1+\alpha_{ij}\e2+ j\e3 +i\e4  \mid i,j\ttk{2}\}$
& $\{i\e1+j\e2 + \alpha_i\e3 \mid i\ttk{p},\,j\ttk{q}\}$ \\
\tn & $\{\alpha_{ij}\e1+\alpha_{ij}\e2+i\e3+j\e4 \mid i,j\ttk{2}\}$
& $\{i\e1+j\e2 \mid i\ttk{p},\,j\ttk{q}\}$ \\
\tn & $\{i\e1+j\e2+k\e3+\alpha_{ij}\e4 \mid i\ttk{p},\,j\ttk{q},\,k\ttk{2}\}$
& $\{\alpha_i\e3+i\e4 \mid i\ttk{2}\}$ \\
\tn & $\{k\e1+i\e2+\alpha_{ij}\e3+j\e4 \mid i\ttk{q},\,j\ttk{2},\,k\ttk{p}\}$
& $\{\alpha_i\e1+i\e3 \mid i\ttk{2}\}$ \\
\tn & $\{i\e1+k\e2+\alpha_{ij}\e3+j\e4 \mid i\ttk{p},\,j\ttk{2},\,k\ttk{q}\}$
& $\{\alpha_i\e2+i\e3 \mid i\ttk{2}\}$ \\
\tn & $\{j\e1+i\e2+k\e3+\alpha_i\e4 \mid i\ttk{q},\,j\ttk{p},\,k\ttk{2}\}$
& $\{\alpha_i\e1+\alpha_i\e3+i\e4 \mid i\ttk{2}\}$ \\
\tn & $\{i\e1+j\e2+k\e3+\alpha_i\e4 \mid i\ttk{p},\,j\ttk{q},\,k\ttk{2}\}$
& $\{\alpha_i\e2+\alpha_i\e3+i\e4 \mid i\ttk{2}\}$ \\
\tn & $\{i\e1+j\e2 + \alpha_{ijk}\e3+ k\e4  \mid i\ttk{p},\,j\ttk{q},\,k\ttk{2}\}$
& $\{i\e3 \mid i\ttk{2}\}$ \\
\tn & $\{i\e1+\alpha_{ij}\e2+k\e3 +j\e4\mid i\ttk{p},\,j,k\ttk{2}\}$
& $\{i\e2+\alpha_i\e3 \mid i\ttk{q}\}$ \\
\tn & $\{k\e1+\alpha_{ij}\e2+i\e3+j\e4 \mid i,j\ttk{2},\,k\ttk{p}\}$
& $\{\alpha_i\e1+i\e2 \mid i\ttk{q}\}$ \\
\tn & $\{i\e1+\alpha_i\e2+j\e3+k\e4 \mid i\ttk{p},\,j,k\ttk{2}\}$
& $\{i\e2+\alpha_1\e3+\beta_i\e4 \mid i\ttk{q}\}$ \\
\tn & $\{j\e1+\alpha_i\e2+k\e3+i\e4 \mid i,k\ttk{2},\,j\ttk{p}\}$
& $\{\alpha_i\e1+i\e2 + \alpha_i\e3 \mid i\ttk{q}\}$ \\
\tn & $\{i\e1+\alpha_{ijk}\e2+j\e3+k\e4 \mid i\ttk{p},\,j,k\ttk{2}\}$
& $\{i\e2 \mid i\ttk{q}\}$ \\
\tn & $\{\alpha_{ij}\e1+i\e2+k\e3+j\e4 \mid i\ttk{q},\,j,k\ttk{2}\}$
& $\{i\e1+\alpha_i\e3 \mid i\ttk{p}\}$ \\
\tn & $\{\alpha_{ij}\e1+k\e2+i\e3+j\e4 \mid i,j\ttk{2},\,k\ttk{q}\}$
& $\{i\e1+\alpha_i\e2 \mid i\ttk{p}\}$ \\
\tn & $\{\alpha_i\e1+i\e2+j\e3+k\e4 \mid i\ttk{q},\,j,k\ttk{2}\}$
& $\{i\e1+\alpha_i\e3+\beta_i\e4 \mid i\ttk{p}\}$ \\
\tn & $\{\alpha_i\e1+j\e2+k\e3+i\e4 \mid i,k\ttk{2},\,j\ttk{q}\}$
& $\{i\e1+\alpha_i\e2 + \alpha_i\e3 \mid i\ttk{p}\}$ \\
\tn & $\{\alpha_{ijk}\e1+i\e2+j\e3+k\e4 \mid i\ttk{q},\,j,k\ttk{2}\}$
& $\{i\e1 \mid i\ttk{p}\}$ \\
\hline
\end{longtable}

\begin{proof}
    Suppose $C$ is a perfect code in $\Cay(G,S)$. Then $G=\so\oplus C$ and so $|\so|\cdot |C|=|G|=4pq  $. 
    Since $G$ is not a cyclic group, $|\so|>2$ by Lemma \ref{ls<so}.
    Therefore, $2<|\so|<4pq  $.

\smallskip
    \textsf{Case 1.} $\so$ is aperiodic.
    \smallskip

    Since $G$ is Haj\'os and $G=\so\oplus C$, in this case $C$ must be periodic, that is, $\lc$ is nontrivial.
    Write $C=\lc\oplus D$ for some subset $D$ of $G$ whose existence is ensured by Lemma \ref{period}.
    Then $|\lc|\cdot |D|=|C|$ and $2\leq |\lc|\leq |C|$.
    If $|\so|$ is a prime, then $|\lc|=|C|$ by Lemma \ref{prime deg} part (c).
    If $|\so|$ is not a prime, then $|\lc|$ can be any divisor of $|C|$.
    Set $$ A=(\so+\lc)/\lc\quad\text{and}\quad B=(D+\lc)/\lc.$$
    Since $C$ is periodic, by Lemma \ref{quotient}, we have
    $$G/\lc=A\oplus B$$ and $B$ is aperiodic in $G/\lc$.
    Since $G/\lc$ is isomorphic to a subgroup of $G$ and subgroups of Haj\'os groups are also Haj\'os groups, $A$ is periodic in $G/\lc$.
    We also know that $A$ contains $\lc$ and $A$ generates $G/\lc$ by Lemma \ref{generate-zero}.
    Furthermore, $|A|=|\so|$ and $|B|=|D|$.
    Note that if $|\lc|=|C|$, then $C=\lc$ as $0\in C$.
    This means we can take $D=\{0\}$ and hence $B=\lc/\lc$ and $G/\lc=A$.
    We use $C$ to get $A$ and then use Lemma \ref{d_i+l_i} to obtain $\so$.
    If $|\lc|<|C|$, then this factorization enables us to make use of previously established theorems to obtain the pair $(\so, C)$ via $(A,B)$.

    All possible values for $\left(|\so|,|C|,|\lc|\right)$ are given in the first column of Table \ref{tap.p.q.2.2}.
    In the second column, we list $C$, $A$, and $\so$ when $|\lc|=|C|$ and we list $L_C$, $A$, $B$, $\so$, and $C$ when $|\lc|<|C|$.
    The third column shows which previous theorem is used to obtain $(A,B)$ when $|\lc|<|C|$.
    The last column indicates which row in Table \ref{t.p.q.2.2}
    the result corresponds to.
    The result in the cases when $|\so|=q$, $2q$, and $4q$ can be derived by swapping $\e1$ and $\e2$ from the cases when $|\so|=p$, $2p$, and $4p$, respectively. The resulting form of $\so$ and $C$ is in rows 2, 14 to 17, and 37 in Table \ref{t.p.q.2.2}. 
    
\begin{longtable}{|c|l|c|c|}
\caption{The case when $G=\Z_{p}\x\Z_q\x\Z_2\x\Z_2 $ and $\so$ is aperiodic}
\label{tap.p.q.2.2}
\\
\flap

& $C=\<{\e2,\e3,\e4} $ & & \\
$(p,4q,4q)$ 
& $A=\{i\e1+\lc \mid i\ttk{p}\} $ 
& & row 1 \\
& $\so=\{i\e1+\alpha_i\e2+\beta_i\e3+\gamma_i\e4 \mid i\ttk{p}\} $ & & \\ \hline


& $\lc=\<{\e3} $ & & \\
& $A=\{j\e1+i\e2 +\alpha_i\e4 +\lc \mid i\ttk{q},\,j\ttk{p}\} $ & & \\
$(pq,4,2 )$ 
& $B=\{\alpha_i\e1 +i\e4 +\lc \mid i\ttk{2}\} $ 
& \ref{p.q.2} \row4 & row 5 \\
& $\so=\{j\e1+i\e2 +\alpha_i\e4 +\alpha_{ij}\e3 \mid i\ttk{q},\,j\ttk{p}\} $ & & \\
& $C=\{\alpha_i\e1 +i\e4 +j\e3 \mid i,j\ttk{2}\} $ & & \\ \hline

& $\lc=\<{\e3} $ & & \\
& $A=\{i\e1+j\e2 +\alpha_i\e4 +\lc \mid i\ttk{p},\,j\ttk{q}\} $ & & \\
$(pq,4,2 )$ 
& $B=\{\alpha_i\e2 +i\e4 +\lc \mid i\ttk{2}\} $ 
& \ref{p.q.2} \row5 & row 6 \\
& $\so=\{i\e1+j\e2 +\alpha_i\e4 +\alpha_{ij}\e3 \mid i\ttk{p},\,j\ttk{q}\} $ & & \\
& $C=\{\alpha_i\e2 +i\e4 +j\e3 \mid i,j\ttk{2}\} $ & & \\ \hline

& $C=\<{\e3,\e4} $ & & \\
$(pq,4,4 )$ 
& $A=\{i\e1+j\e2 + \lc \mid i\ttk{p},\,j\ttk{q}\} $ 
& & row 7 \\
& $\so=\{i\e1+j\e2+\alpha_{ij}\e3+\beta_{ij}\e4 \mid i\ttk{p},\,j\ttk{p}\} $ & & \\ \hline

& $\lc=\<{\e3} $ & & \\
& $A=\{j\e1+\alpha_i\e2 +i\e4 +\lc \mid i\ttk{2},\,j\ttk{p}\} $ & & \\
$(2p,2q,2)$ 
& $B=\{\alpha_i\e1+i\e2 +\lc \mid i\ttk{q}\} $ 
& \ref{p.q.2} \row6 & row 9 \\
& $\so=\{j\e1+\alpha_i\e2 +i\e4 +\alpha_{ij}\e3 \mid i\ttk{2},\,j\ttk{p}\} $ & & \\
& $C=\{\alpha_i\e1+i\e2 + j\e3 \mid i\ttk{q},\,j\ttk{2}\} $ & & \\ \hline

& $\lc=\<{\e3} $ & & \\
& $A=\{i\e1+\alpha_i\e2 +j\e4 +\lc \mid i\ttk{p},\,j\ttk{2}\} $ & & \\
$(2p,2q,2 )$ 
& $B=\{i\e2 + \alpha_i\e4 +\lc \mid i\ttk{q}\} $ 
& \ref{p.q.2} \row7 & row 10 \\
& $\so=\{i\e1+\alpha_i\e2 +j\e4 +\alpha_{ij}\e3 \mid i\ttk{p},\,j\ttk{2}\} $ & & \\
& $C=\{i\e2 + \alpha_i\e4 + j\e3 \mid i\ttk{q},\,j\ttk{2}\} $ & & \\ \hline

& $\lc=\<{\e2} $ & & \\
& $A=\{i\e1 + j\e3+\alpha_i\e4  + \lc \mid i\ttk{p},\,j\ttk{2}\} $ & & \\
$(2p,2q,q )$ 
& $B=\{(\alpha_i\e3+i\e4 +\lc \mid i\ttk{2}\} $ 
& \ref{p.2.2} \row4 & row 11 \\
& $\so=\{i\e1+\alpha_{ij}\e2+ j\e3 +\alpha_i\e4  \mid i\ttk{p},\,j\ttk{2}\} $ & & \\
& $C=\{j\e2+\alpha_i\e3+i\e4 \mid i\ttk{2}\} $ & & \\ \hline

& $C=\<{\e2+\e3} $ & & \\
$(2p,2q,2q)$ 
& $A=\{i\e1 + j\e4 + \lc \mid i\ttk{p},\,j\ttk{2}\} $ 
& & row 12 \\
& $\so=\{i\e1+\alpha_{ij}\e2 + j\e4 + \alpha_{ij}\e3 \mid i\ttk{p},\,j\ttk{2}\} $ & & \\ \hline

& $\lc=\<{\e1} $ & & \\
& $A=\{\alpha_i\e2 + j\e3+i\e4  +\lc \mid i,j\ttk{2}\} $ & & \\
$(4,pq,p )$ 
& $B=\{i\e2 + \alpha_i\e3 + \lc \mid i\ttk{q}\} $ 
& \ref{p.2.2} \row6 & row 19 \\
& $\so=\{\alpha_{ij}\e1+\alpha_i\e2 + j\e3+i\e4  \mid i,j\ttk{2}\} $ & & \\
& $C=\{j\e1+i\e2 + \alpha_i\e3 \mid i\ttk{q},\,j\ttk{p}\} $ & & \\ \hline

& $\lc=\<{\e2} $ & & \\
& $A=\{\alpha_i\e1 + j\e3+i\e4  +\lc \mid i,j\ttk{2}\} $ & & \\
$(4,pq,q )$ 
& $B=\{i\e1 + \alpha_i\e3 + \lc \mid i\ttk{p}\} $ 
& \ref{p.2.2} \row6 & row 20 \\
& $\so=\{\alpha_i\e1 +\alpha_{ij}\e2+ j\e3+i\e4  \mid i,j\ttk{2}\} $ & & \\
& $C=\{i\e1 +j\e2+ \alpha_i\e3 \mid i\ttk{p},\,j\ttk{q}\} $ & & \\ \hline

& $C=\<{\e1+\e2} $ & & \\
$(4,pq,pq)$ 
& $A=\{i\e3+j\e4 + \lc \mid i,j\ttk{2}\} $ 
& & row 21 \\
& $\so=\{\alpha_{ij}\e1+\alpha_{ij}\e2+i\e3+j\e4 \mid i,j\ttk{2}\} $ & & \\ \hline

& $C=\<{\e3} $ & & \\
$(2pq,2,2 )$ 
& $A=\{i\e1+j\e2 + k\e4 + \lc \mid i\ttk{p},\,j\ttk{q},\,k\ttk{2}\} $ 
& & row 27 \\
& $\so=\{i\e1+j\e2 + k\e4 + \alpha_{ijk}\e3 \mid i\ttk{p},\,j\ttk{q},\,k\ttk{2}\} $ & & \\ \hline

& $C=\<{\e2} $ & & \\
$(4p,q,q)$ 
& $A=\{i\e1+j\e3+k\e4 + \lc \mid i\ttk{p},\,j,k\ttk{2}\} $ 
& & row 32 \\
& $\so=\{i\e1+\alpha_{ijk}\e2+j\e3+k\e4 \mid i\ttk{p},\,j,k\ttk{2}\} $ & & \\ \hline

\end{longtable}

\smallskip
    \textsf{Case 2.} $\so$ is periodic.
    \smallskip

    In this case we have $|\ls|\geq 2$ and $\so=D\oplus \ls$ for some subset $D$ of $G$ by Lemma \ref{period}.
    So $|D|\cdot|\ls|=|\so|$ and $1<|\ls|<|\so|$ by Lemma \ref{prime deg} part (b).
    Set $$X=(D+\ls)/\ls\quad\text{and}\quad Y=(C+\ls)/\ls.$$
    Since $\so$ is periodic, by Lemma \ref{quotient}, we have
    $$G/\lc=X\oplus Y$$ and $X$ is aperiodic in $G/\ls$.
    We also know that $X$ contains $\ls$ and $X$ generates $G/\ls$ by Lemma \ref{generate-zero}.
    This factorization enables us to make use of Lemma \ref{d_i+l_i} and previously established theorems to obtain the pair $(\so, C)$ via $(X,Y)$.
    
    All possible values for $\left(|\so|,|C|,|\ls|\right)$ are given in the first column of Table \ref{tp.p.q.2.2}.
    In the second column we list $\ls$, $X$, $Y$, $\so$, and $C$.
    The third column shows which previous theorem is used to obtain $(X,Y)$.
    The last column indicates which row in Table \ref{t.p.q.2.2} the result corresponds to.
    The case for $|\so|=2q$ and $4q$ 
    is derived from the case of $|\so|=2p$ and $4p$, respectively. The resulting form of $\so$ and $C$ is in rows 13, and 33 to 36 in Table \ref{t.p.q.2.2}.

\begin{longtable}{|c|l|c|c|}
\caption{The case when $G=\Z_{p}\x\Z_q\x\Z_2\x\Z_2 $ and $\so$ is periodic}
\label{tp.p.q.2.2}
\\
\flp

& $\ls=\<{\e1} $ & & \\
& $X=\{i\e2+\alpha_1\e3+\beta_i\e4+\ls \mid i\ttk{q}\} $ & & \\
$(pq,4,p )$ 
& $Y=\{i\e3+j\e4+\ls \mid i,j\ttk{2}\} $ 
& \ref{p.2.2} \row1 & row 3 \\
& $\so=\{j\e1+i\e2+\alpha_i\e3+\beta_i\e4 \mid i\ttk{q},\,j\ttk{p}\} $ & & \\
& $C=\{\alpha_{ij}\e1+i\e3+j\e4 \mid i,j\ttk{2}\} $ & & \\ \hline

& $\ls=\<{\e2} $ & & \\
& $X=\{i\e1+\alpha_1\e3+\beta_i\e4+\ls \mid i\ttk{p}\} $ & & \\
$(pq,4,p )$ 
& $Y=\{i\e3+j\e4+\ls \mid i,j\ttk{2}\} $ 
& \ref{p.2.2} \row1 & row 4 \\
& $\so=\{i\e1+j\e2+\alpha_i\e3+\beta_i\e4 \mid i\ttk{p},\,j\ttk{q}\} $ & & \\
& $C=\{\alpha_{ij}\e2+i\e3+j\e4 \mid i,j\ttk{2}\} $ & & \\ \hline

& $\ls=\<{\e3} $ & & \\
& $X=\{i\e1+\alpha_i\e2+\beta_i\e4+\ls \mid i\ttk{p}\} $ & & \\
$(2p,2q,2 )$ 
& $Y=\{i\e2+j\e4+\ls \mid i\ttk{q},\,j\ttk{2}\} $ 
& \ref{p.q.2} \row{1} & row 8 \\
& $\so=\{i\e1+\alpha_i\e2+\beta_i\e4+j\e3 \mid i\ttk{p},\,j\ttk{2}\} $ & & \\
& $C=\{i\e2+j\e4+\alpha_{ij}\e3 \mid i\ttk{q},\,j\ttk{2}\} $ & & \\ \hline
    
$(2p,2q,p )$ 
& Not possible since no $X$ exists
& \ref{p.2.2}  & \\ \hline

& $\ls=\<{\e3} $ & & \\
& $X=\{\alpha_1\e1+\beta_i\e2+i\e4+\ls \mid i\ttk{2}\} $ & & \\
$(4,pq,2 )$ 
& $Y=\{i\e1+j\e2+\ls \mid i\ttk{p},\,j\ttk{q}\} $ 
& \ref{p.q.2} \row{3} & row 18 \\
& $\so=\{\alpha_1\e1+\beta_i\e2+i\e4+j\e3 \mid i,j\ttk{2}\} $ & & \\
& $C= $ & & \\ \hline

& $\ls=\<{\e3} $ & & \\
& $X=\{i\e1+j\e2+\alpha_{ij}\e4+\ls \mid i\ttk{p},\,j\ttk{q}\} $ & & \\
$(2pq,2,2 )$ 
& $Y=\{i\e4+\ls \mid i\ttk{2}\} $ 
& \ref{p.q.2} \row{10} & row 22 \\
& $\so=\{i\e1+j\e2+\alpha_{ij}\e4+k\e3 \mid i\ttk{p},\,j\ttk{q},\,k\ttk{2}\} $ & & \\
& $C=\{i\e4+\alpha_i\e3 \mid i\ttk{2}\} $ & & \\ \hline

& $\ls=\<{\e1} $ & & \\
& $X=\{i\e2+\alpha_{ij}\e3+j\e4+\ls \mid i\ttk{q},\,j\ttk{2}\} $ & & \\
$(2pq,2,p )$ 
& $Y=\{i\e3+\ls \mid i\ttk{2}\} $ 
& \ref{p.2.2} \row5 & row 23 \\
& $\so=\{k\e1+i\e2+\alpha_{ij}\e3+j\e4 \mid i\ttk{q},\,j\ttk{2},\,k\ttk{p}\} $ & & \\
& $C=\{\alpha_i\e1+i\e3 \mid i\ttk{2}\} $ & & \\ \hline

& $\ls=\<{\e2} $ & & \\
& $X=\{i\e1+\alpha_{ij}\e3+j\e4+\ls \mid i\ttk{p},\,j\ttk{2}\} $ & & \\
$(2pq,2,q )$ 
& $Y=\{i\e3+\ls \mid i\ttk{2}\} $ 
& \ref{p.2.2} \row5 & row 24 \\
& $\so=\{i\e1+k\e2+\alpha_{ij}\e3+j\e4 \mid i\ttk{p},\,j\ttk{2},\,k\ttk{q}\} $ & & \\
& $C=\{\alpha_i\e2+i\e3 \mid i\ttk{2}\} $ & & \\ \hline

& $\ls=\<{\e1+\e3} $ & & \\
& $X=\{i\e2+\alpha_i\e4+\ls \mid i\ttk{q}\} $ & & \\
$(2pq,2,2p )$ 
& $Y=\{i\e4+\ls \mid i\ttk{2}\} $ 
& \ref{p.q} \row1 & row 25 \\
& $\so=\{j\e1+i\e2+k\e3+\alpha_i\e4 \mid i\ttk{q},\,j\ttk{p},\,k\ttk{2}\} $ & & \\
& $C=\{\alpha_i\e1+\alpha_i\e3+i\e4 \mid i\ttk{2}\} $ & & \\ \hline

& $\ls=\<{\e2+\e3} $ & & \\
& $X=\{i\e1+\alpha_i\e4+\ls \mid i\ttk{p}\} $ & & \\
$(2pq,2,2q )$ 
& $Y=\{i\e4+\ls \mid i\ttk{2}\} $ 
& \ref{p.q} \row1 & row 26 \\
& $\so=\{i\e1+j\e2+\alpha_i\e4+k\e3 \mid i\ttk{q},\,j\ttk{p},\,k\ttk{2}\}$ & & \\
& $C=\{\alpha_i\e2+\alpha_i\e3+i\e4 \mid i\ttk{2}\} $ & & \\ \hline

$(2pq,2,pq )$ 
& Not possible since no $X$ exists
& \ref{2.2}  & \\ \hline

& $\ls=\<{\e3} $ & & \\
& $X=\{i\e1+\alpha_{ij}\e2+j\e4+\ls \mid i\ttk{p},\,j\ttk{2}\} $ & & \\
$(4p,q,2 )$ 
& $Y=\{i\e2+\ls \mid i\ttk{q}\} $ 
& \ref{p.q.2} \row{11} & row 28 \\
& $\so=\{i\e1+\alpha_{ij}\e2+j\e4+k\e3 \mid i\ttk{p},\,j,k\ttk{2}\} $ & & \\
& $C=\{i\e2+\alpha_i\e3 \mid i\ttk{q}\} $ & & \\ \hline

& $\ls=\<{\e1} $ & & \\
& $X=\{\alpha_{ij}\e2+i\e3+j\e4+\ls \mid i,j\ttk{2}\} $ & & \\
$(4p,q,p )$ 
& $Y=\{i\e2+\ls \mid i\ttk{q}\} $ 
& \ref{p.2.2} \row7 & row 29 \\
& $\so=\{k\e1+\alpha_{ij}\e2+i\e3+j\e4 \mid i,j\ttk{2},\,k\ttk{p}\} $ & & \\
& $C=\{\alpha_i\e1+i\e2 \mid i\ttk{q}\} $ & & \\ \hline

& $\ls=\<{\e3,\e4} $ & & \\
& $X=\{i\e1+\alpha_i\e2+\ls \mid i\ttk{p}\} $ & & \\
$(4p,q,4 )$ 
& $Y=\{i\e2+\ls \mid i\ttk{q}\} $ 
& \ref{p.q} \row1 & row 30 \\
& $\so=\{i\e1+\alpha_i\e2+j\e3+k\e4+ \mid i\ttk{p},\,j,k\ttk{2}\} $ & & \\
& $C=\{i\e2+\alpha_1\e3+\beta_i\e4 \mid i\ttk{q}\} $ & & \\ \hline

& $\ls=\<{\e1+\e3} $ & & \\
& $X=\{\alpha_i\e2+i\e4+\ls \mid i\ttk{2}\} $ & & \\
$(4p,q,2p )$ 
& $Y=\{i\e2+\ls \mid i\ttk{q}\} $ 
& \ref{p.q} \row2 & row 31 \\
& $\so=\{j\e1+\alpha_i\e2+i\e4+k\e3 \mid i,k\ttk{2},\,j\ttk{p}\} $ & & \\
& $C=\{\alpha_i\e1+i\e2 + \alpha_i\e3 \mid i\ttk{q}\} $ & & \\ \hline

\end{longtable}
\end{proof}

\begin{remark}
    In Theorem \ref{p,q,2,2}, we can change $\e3$ and $\e4$ to any two generators of $\{0\}\x\{0\}\x\Z_2\x\Z_2$.
\end{remark} 
\subsection{Subgroups of $\Z_p \times \Z_{q^2} \times \Z_q$}

\begin{theorem}\label{4,2}\label{p^2.p}
Let $G=\Z_p^2\x\Z_p$ for $p=2$ or $3$, and let $S$ be a proper subset of $G\setminus \{0\}$ that generates $G$. Let $0\in C\subseteq G$. Then $C$ is a perfect code in $\Cay(G,S)$ if and only if $(\so,C)$ is one of the pairs in Table \ref{t.p^2.p} for some $\alpha_i$'s and $\alpha_{ij}$'s in $\Z$ where $\alpha_0=\alpha_{00}=0$.
\end{theorem}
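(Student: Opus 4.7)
The plan is to follow the two-case framework used in Theorems \ref{p^3} and \ref{p^2.2}. Since $|G| = p^3$ with $G$ non-cyclic, Lemma \ref{ls<so}(a) forces $|\so| > 2$, and $|\so| \cdot |C| = p^3$ together with $|C| \geq 2$ leaves only $|\so| \in \{p, p^2\}$. Sufficiency --- that every pair $(\so, C)$ listed in Table \ref{t.p^2.p} satisfies $G = \so \oplus C$ --- reduces to verifying $|\so| \cdot |C| = |G|$ and checking uniqueness of representations, carried out by inspection as in Lemma \ref{G=so+c}.

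For necessity, I would split into three subcases. When $|\so| = p$, Lemma \ref{prime deg}(c) gives $\so$ aperiodic and $C = L_C$ equal to a subgroup of $G = \Z_{p^2}\x\Z_p$ of order $p^2$. I would enumerate the order-$p^2$ subgroups of $\Z_{p^2}\x\Z_p$: for $p = 2$ these are $\<\e1\>$, $\<\e1+\e2\>$, and $\<p\e1, \e2\>$; for $p = 3$ there are in addition two further cyclic subgroups of order $9$ of the form $\<\e1 + k\e2\>$ with $k \in \{1,2\}$. For each such $L_C$, Lemma \ref{quotient} with $D = \{0\}$ yields $G/L_C = (\so + L_C)/L_C$, and Lemma \ref{d_i+l_i} recovers $\so$ as a coset transversal indexed by $G/L_C$ with one parameter per coset.

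When $|\so| = p^2$ and $\so$ is periodic, Lemma \ref{prime deg}(b) forces $|L_{\so}| = p$, so $G/L_{\so}$ has order $p^2$ and is isomorphic either to $\Z_{p^2}$ or to $\Z_p \x \Z_p$, depending on which order-$p$ subgroup $L_{\so}$ equals. Lemma \ref{quotient} produces a factorization of $G/L_{\so}$ whose first factor is aperiodic of size $p$, and applying Theorem \ref{p^2} or Theorem \ref{as-bs} respectively classifies the pair $((D+L_{\so})/L_{\so}, (C+L_{\so})/L_{\so})$ in the quotient; Lemma \ref{d_i+l_i} then lifts this back to give $(\so, C)$ in $G$. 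When $|\so| = p^2$ and $\so$ is aperiodic, the Haj\'os property forces $C = L_C$ to be a subgroup of order $p$, and the argument proceeds symmetrically to the $|\so| = p$ case with the roles of $\so$ and $C$ swapped.

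The main obstacle is organizational rather than technical: the several order-$p$ and order-$p^2$ subgroups of $\Z_{p^2}\x\Z_p$ (and, for $p = 3$, the additional cyclic order-$9$ subgroups absent when $p = 2$) each produce a distinct row of Table \ref{t.p^2.p}, and one must ensure the enumeration is exhaustive and free of duplication across the three subcases. A useful consistency check is that for $p = 2$ the output must coincide with the restriction of Theorem \ref{2^k.2} to $k = 2$; the substantively new content lies in the $\Z_9 \x \Z_3$ case, where the cyclic order-$9$ subgroups $\<\e1 + k\e2\>$ give rise to rows with no analogue when $p = 2$.
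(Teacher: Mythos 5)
Your proposal follows the same route as the paper's own proof: the reduction $|\so|\in\{p,p^2\}$ via Lemma \ref{ls<so}, Lemma \ref{prime deg}(c) forcing $C=\lc$ to be a subgroup in the aperiodic cases, Lemma \ref{quotient} with $D=\{0\}$ together with Lemma \ref{d_i+l_i} to recover $\so$ as a parametrized transversal, and, in the periodic case $|\so|=p^2$, the classification of the quotient factorization by Theorem \ref{p^2} or Theorem \ref{as-bs} (with Theorem \ref{2.2} excluding the $\Z_2\times\Z_2$ quotient), lifted back by Lemma \ref{d_i+l_i}; omitting the sufficiency computation is also what the paper does. The genuine difference lies in the subgroup enumeration, and there your plan is the more careful one. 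The paper's proof considers only $\langle(1,0)\rangle$ and $\langle(3,0),(0,1)\rangle$ as candidates for $\lc$ of order $9$, and only $\langle(p,0)\rangle$, $\langle(p,1)\rangle$, $\langle(0,1)\rangle$ for order $p$, whereas $\Z_{p^2}\x\Z_p$ has $p+1$ subgroups of each order; for $p=3$ your insistence on the skew cyclic subgroups $\langle\e1+k\e2\rangle$ is not idle, since for instance $C=\langle(1,1)\rangle$ really is a normalized perfect code: taking $\so=\{(0,0),(1,2),(0,2)\}$ one checks $(\so-\so)\cap\langle(1,1)\rangle=\{0\}$ and $|\so|\,|C|=27$, so $G=\so\oplus C$ by Lemma \ref{oplus}, with $S$ generating $G$ --- yet no row of Table \ref{t.p^2.p} has this $C$. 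Likewise your statement that $G/\ls$ is $\Z_{p^2}$ or $\Z_p\x\Z_p$ \emph{depending on which} order-$p$ subgroup $\ls$ is, is the correct dichotomy: $G/\langle(p,1)\rangle$ is cyclic of order $p^2$ (consistent with Observation \ref{obs} at $k=2$), so the relevant quotient theorem there is Theorem \ref{p^2}, not Theorem \ref{as-bs}. Your proposed consistency check against Theorem \ref{2^k.2} at $k=2$ is exactly the safeguard that detects such misassignments.

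Two small slips in your write-up, neither affecting the method. First, the subcase $|\so|=p$ is vacuous for $p=2$ (your own bound $|\so|>2$ forces $p=3$ there), so listing the order-$4$ subgroups of $\Z_4\x\Z_2$ under that subcase is misplaced. Second, the count is muddled: since $\langle\e1+\e2\rangle$ already appears in your list, for $p=3$ there is only one further subgroup $\langle\e1+2\e2\rangle$ beyond it, giving $p+1=4$ order-$9$ subgroups in total, not five. With those repairs, executing your plan exhaustively over all $p+1$ subgroups of each order (both as $\lc$ and as $\ls$, including $\langle(3,2)\rangle$) is the right way to guarantee the table is complete and duplicate-free.
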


\rc\begin{longtable}{|c|l|l|}
\caption{Perfect codes in Cayley graphs of $\Z_{p^2}\x\Z_p$}
\label{t.4.2}\label{t.p^2.p}
\\
\fl
\tn & $\{(\alpha_i,i) \mid i\ttk{3}\}$
& $\{(i,0) \mid i\ttk{9}\}$ \\
\tn & $\{(i+3\alpha_i,\beta_i) \mid i\ttk{3}\}$
& $\{(3i,j) \mid i,j\ttk{3}\}$ \\
\tn & $\{(\alpha_i+3j,i) \mid i,j\ttk{3}\}$
& $\{(i+\alpha_i j,0) \mid i\ttk{3}\}$ \\
\tn & $\{(\alpha_i+3j,i+j)\mid i,j\ttk{3}\}$
& $\{(i+3\alpha_i, \alpha_i) \mid i\ttk{3}\}$ \\
\tn & $\{(i+p\alpha_i,j) \mid i,j\ttk{p}\}$
& $\{(pi,\alpha_i) \mid i\ttk{p}\}$ \\
\tn & $\{(i+p\alpha_{ij},j) \mid i,j\ttk{p}\}$
& $\{(pi,0) \mid i\ttk{p}\}$ \\
\tn & $\{(i+p\alpha_{ij},j+\alpha_{ij}) \mid i,j\ttk{p}\}$
& $\{(pi,i) \mid i\ttk{p}\}$ \\
\tn & $\{(i,\alpha_i) \mid i\ttk{p^2}\}$
& $\{(0,i) \mid i\ttk{p}\}$ \\
\hline
\end{longtable}

\begin{proof}
    \setcounter{cn}{1}\renewcommand{\Table}{\ref{t.p^2.p}}
    Suppose $C$ is a perfect code in $\Cay(G,S)$. From lemma \ref{ls<so}, $|\so|\geq 3$.
    If $|\so|=p$, then $p=3$.
    From Lemma \ref{prime deg}, $\so$ is aperiodic and $C$ is a subgroup of order $9$, so $C=\lc=\<{(1,0)}$ or $\<{(3,0),(0,1)}$.
    From the factorization of $G/\lc$ given in Lemma \ref{quotient}, $G/\lc=(\so+\lc)/\lc$. 
    If $\lc=\<{(1,0)}$, then $(\so+\lc)/\lc=\{(0,i)+\lc \mid i\ttk{3}\}$, and hence $\so=\{(\alpha_i,i) \mid i\ttk{3}\}$, for some $\alpha_i$'s in $\Z$. \R
    If $\lc=\<{(3,0),(0,1)}$, then $(\so+\lc)/\lc=\{(i,0)+\lc \mid i\ttk{3}\}$, and hence $\so=\{(i,0)+\alpha_i(3,0)+\beta_i(0,1)\mid i\ttk{3}\}=\{(i+3\alpha_i,\beta_i) \mid i\ttk{3}\}$, for some $\alpha_i$'s and $\beta_i$'s in $\Z$. \R

    If $|\so|=p^2$ and $\so$ is periodic, then $|\ls|=p$ and $\ls=\<{(p,0)}$, $\<{(p,1)}$, or $\<{(0,1)}$.
    Let $\so=D\oplus\ls$ for some subset $D$. From the factorization of $G/\ls$ given in Lemma \ref{quotient}, we have $G/\ls=(D+\ls)/\ls\oplus (C+\ls)/\ls$.
    If $\ls=\<{(p,0)}$ or $\<{(p,1)}$, then $G/\ls\cong \Z_p\times\Z_p$.
    If $p=2$, then from Theorem \ref{2.2} we know this case is not possible. So $p=3$. 
    From the case of $|\so|=p$ in Theorem \ref{p,p}, we get $(D+\ls)/\ls=\{(\alpha_i,i)+\ls \mid i\ttk{3}\}$ and $(C+\ls)/\ls=\{(i,0)+\ls \mid i\ttk{3}\}$ where $\alpha_i\in \Z$.
    If $\ls=\<{(3,0)}$, then $\so=D\oplus\ls=\{(\alpha_i+3j,i) \mid i,j\ttk{3}\}$ and $C=\{(i+\alpha_i j,0) \mid i\ttk{3}\}$ where $\alpha_i\in \Z$. \R
    If $\ls=\<{(3,1)}$, then $\so=D\oplus\ls=\{(\alpha_i+3j,i+j)\mid i,j\ttk{3}\}$ and $C=\{(i+3\alpha_i, \alpha_i) \mid i\ttk{3}\}$ where $\alpha_i\in \Z$. \R
    If $\ls=\<{(0,1)}$, then $G/\ls\cong \Z_{p^2}$.
    From Theorem \ref{p^2}, we get $(D+\ls)/\ls=\{(i+p\alpha_i,0)+\ls \mid i\ttk{p}\}$ and $(C+\ls)/\ls=\{(pi,0)+\ls \mid i\ttk{p}\}$.
    Therefore, $\so=D\oplus\ls=\{(i+p\alpha_i,j) \mid i,j\ttk{p}\}$ and $C=\{(pi,\alpha_i) \mid i\ttk{p}\}$ where $\alpha_i\in \Z$. \R

    If $|\so|=p^2$ and $\so$ is aperiodic, then $|C|=p$ and periodic, so $C=\lc=\<{(p,0)}$, $\<{(p,1)}$, or $\<{(0,1)}$.
    From the factorization of $G/\lc$ given in Lemma \ref{quotient}, we have $G/\lc=(\so+\lc)/\lc\oplus (D+\lc)/\lc$.
    If $C=\lc=\<{(p,0)}$, from the factorization of $G/\lc$, $(\so+\lc)/\lc=G/\lc=\{(i,j) + \lc \mid i,j\ttk{p}\}$, and hence $\so=\{(i+p\alpha_{ij},j) \mid i,j\ttk{p}\}$, $\alpha_{ij}\in \Z$. \R
    If $C=\lc=\<{(p,1)}$, from the factorization of $G/\lc$, $(\so+\lc)/\lc=G/\lc=\{(i,j) + \lc \mid i,j\ttk{p}\}$, and hence $\so=\{(i+p\alpha_{ij},j+\alpha_{ij}) \mid i,j\ttk{p}\}$, $\alpha_{ij}\in \Z$. \R
    If $C=\lc=\<{(0,1)}$, from the factorization of $G/\lc$, $(\so+\lc)/\lc=G/\lc=\{(i,0) + \lc \mid i\ttk{p^2}\}$, and hence $\so=\{(i,\alpha_i) \mid i\ttk{p^2}\}$, $\alpha_{i}\in \Z$. \R
\end{proof}

\begin{theorem}\label{p,4}
Let $G=\Z_p\x\Z_4$ for an odd primes $p$, and let $S$ be a proper subset of $G\setminus \{0\}$ that generates $G$. Let $0\in C\subseteq G$. Then $C$ is a perfect code in $\Cay(G,S)$ if and only if $(\so,C)$ is one of the pairs in Table \ref{t.p.4} for some $\alpha_i$'s and $\alpha_{ij}$'s in $\Z$ where $\alpha_0=\alpha_{00}=0$.
\end{theorem}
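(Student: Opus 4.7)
The plan is to follow the same two-case structure used in Theorems \ref{p^2.2}, \ref{p^3.2}, and \ref{p^3.2.2}. By Lemma \ref{lem:factor}, a subset $C$ containing $0$ is a perfect code in $\Cay(G,S)$ if and only if $G = S_0 \oplus C$. In particular $|S_0|\cdot |C| = 4p$, and since $\Cay(G,S)$ is non-complete and connected we have $2 \le |S_0|, |C| < 4p$, so $|S_0| \in \{2, 4, p, 2p\}$. For the converse direction, once the table is produced, each listed pair can be verified to satisfy $G = S_0 + C$ and $|S_0|\cdot|C| = |G|$, giving $G = S_0 \oplus C$ by Lemma \ref{oplus}; the argument is essentially identical to the closing step of the proof of Theorem \ref{p,q,q}, so I would state it in a single sentence and omit the routine calculation.

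For the forward direction, first assume $S_0$ is aperiodic. Since $G$ is Haj\'os, $C$ must then be periodic, so $L_C$ is a nontrivial subgroup of $G$. Using Lemma \ref{quotient} I set $A = (S_0 + L_C)/L_C$ and $B = (D + L_C)/L_C$ where $C = L_C \oplus D$, obtaining $G/L_C = A \oplus B$ with $B$ aperiodic and $A$ periodic in the Haj\'os quotient $G/L_C$. For $|L_C| = |C|$ the factor $D$ is trivial, and $S_0$ is obtained directly by lifting $A = G/L_C$ via Lemma \ref{d_i+l_i}; for $|L_C| < |C|$ I would invoke the previously established theorem whose group is $G/L_C$: Theorem \ref{p^2} when $G/L_C \cong \Z_4$, Theorem \ref{p.q} when $G/L_C \cong \Z_p\times\Z_2$, and so on. Enumerating the admissible triples $(|S_0|, |C|, |L_C|)$ (taking into account that $L_C$ must be one of $\<(0,2)\>$, $\<(0,1)\>$, $\<(1,0)\>$, $\<(1,2)\>$) yields a finite tabulation analogous to Table \ref{tap.p^2.2}.

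For the case that $S_0$ is periodic, I proceed dually. Writing $S_0 = L_{S_0} \oplus D$ and setting $X = (D + L_{S_0})/L_{S_0}$, $Y = (C + L_{S_0})/L_{S_0}$, Lemma \ref{quotient} gives $G/L_{S_0} = X \oplus Y$ with $X$ aperiodic. By Lemma \ref{generate-zero}, $X$ contains $L_{S_0}$ and generates $G/L_{S_0}$. Lemma \ref{prime deg}(b) forces $1 < |L_{S_0}| < |S_0|$, which leaves only $(|S_0|, |L_{S_0}|) \in \{(4,2),(2p,2),(2p,p)\}$. In each sub-case the quotient $G/L_{S_0}$ is isomorphic to one of $\Z_p \times \Z_2$, $\Z_{2p}$, or $\Z_4$, and I read off $(X,Y)$ from Theorem \ref{p.q}, \ref{p^2}, or the appropriate earlier result; the pair $(S_0,C)$ is then recovered coset-by-coset through Lemma \ref{d_i+l_i}, producing the remaining rows of Table \ref{t.p.4}.

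The main obstacle is simply keeping the bookkeeping tight: because $G$ is cyclic of order $4p$ it has several order-$2$, order-$p$, and order-$2p$ subgroups (namely $\<(0,2)\>$, $\<(1,0)\>$, and $\<(1,2)\>$ among others), and each choice of $L_{S_0}$ or $L_C$ leads to a distinct normal form. The calculations themselves, however, are direct translations of the ones already executed in Theorem \ref{p^2.2}, so the proof reduces to organising the finitely many cases in a table parallel to Tables \ref{tap.p^2.2} and \ref{tp.p^2.2} and reading off the rows of Table \ref{t.p.4}.
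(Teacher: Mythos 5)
Your proposal is correct and takes essentially the same approach as the paper: a case split on $|S_0|\in\{2,4,p,2p\}$ and on whether $S_0$ is periodic, with Lemmas \ref{quotient}, \ref{prime deg} and \ref{d_i+l_i} reducing each sub-case to Theorem \ref{p.q} or Theorem \ref{p^2} in the quotient, exactly as the paper does. One harmless slip in your closing paragraph: $G\cong\Z_{4p}$ is cyclic, so it has a \emph{unique} subgroup of each order dividing $4p$ (the four you list, $\<{(0,2)}$, $\<{(0,1)}$, $\<{(1,0)}$, $\<{(1,2)}$, are all of them, not ``several per order''), and your $|L_C|<|C|$ branch is in fact vacuous here, since whenever $|S_0|$ is not prime $|C|$ is prime and Lemma \ref{prime deg}(c) or the divisibility $|L_C|\mid|C|$ forces $C=L_C$ in every aperiodic sub-case, matching the paper.
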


\rc\begin{longtable}{|c|l|l|}
\caption{Perfect codes in Cayley graphs of $\Z_{p}\x\Z_4$}
\label{t.p.4}
\\
\fl
\tn & $\{(\alpha_i,i+2\alpha_i) \mid i\ttk{2}\}$
& $\{(i,2j) \mid i\ttk{p},\,j\ttk{2}\}$ \\
\tn & $\{(i,\alpha_i) \mid i\ttk{p}\}$
& $\{(0,i) \mid i\ttk{4}\}$ \\
\tn & $\{(i,\alpha_i+2j) \mid i\ttk{p},\,j\ttk{2}\}$
& $\{(0,i+2\alpha_i) \mid i\ttk{2}\}$ \\
\tn & $\{(j,i+2\alpha_i) \mid i\ttk{2},\,j\ttk{p}\}$
& $\{(\alpha_i,2i) \mid i\ttk{2}\}$ \\
\tn & $\{(i,j+2\alpha_{ij}) \mid i\ttk{p},\,j\ttk{2}\}$
& $\{(0,2i) \mid i\ttk{2}\}$ \\
\tn & $\{(\alpha_i,i+2j) \mid i,j\ttk{2}\}$
& $\{(i,2\alpha_i) \mid i\ttk{p}\}$ \\
\tn & $\{(\alpha_i,i) \mid i\ttk{4}\}$
& $\{(i,0) \mid i\ttk{p}\}$ \\
\hline
\end{longtable}

\begin{proof}
    \setcounter{cn}{1}\renewcommand{\Table}{\ref{t.p.4}}
    Suppose $C$ is a perfect code in $\Cay(G,S)$.

    If $|\so|=2$, then from Lemma \ref{prime deg}, $\so$ is aperiodic and $C$ is a subgroup of order $2p$, so $C=\lc=\<{(1,2)}$.
    From the factorization of $G/\lc$ given in Lemma \ref{quotient}, $(\so+\lc)/\lc=G/\lc=\{(0,i)+\lc \mid i\ttk{2}\}$, and hence $\so=\{(\alpha_i,i+2\alpha_i) \mid i\ttk{2}\}$, where $\alpha_i\in\Z$. \R

    If $|\so|=p$, then from Lemma \ref{prime deg}, $\so$ is aperiodic and $C$ is a subgroup of order $4$, so $C=\lc=\<{(0,1)}$.
    From the factorization of $G/\lc$ given in Lemma \ref{quotient}, $(\so+\lc)/\lc=G/\lc=\{(i,0)+\lc \mid i\ttk{p}\}$, and hence $\so=\{(i,\alpha_i) \mid i\ttk{p}\}$, where $\alpha_i\in\Z$. \R

    If $|\so|=2p$ and $\so$ is periodic, then $|\ls|=2$ or $p$.
    Let $\so=D\oplus\ls$ for some subset $D$. From the factorization of $G/\ls$ given in Lemma \ref{quotient}, we have $G/\ls=(D+\ls)/\ls\oplus (C+\ls)/\ls$.
    If $|\ls|=2$, then $\ls=\<{(0,2)}$.
    Note that $(D+\ls)/\ls$ is a aperiodic set of order $p$ and $G/\ls\cong \Z_p\times\Z_2$.
    From the case of $|\so|=p$ in Theorem \ref{p.q} \row1, we get $(D+\ls)/\ls=\{(i,\alpha_i)+\ls \mid i\ttk{p}\}$ and $(C+\ls)/\ls=\{(0,i)+\ls \mid i\ttk{2}\}$.
    Therefore, $\so=D\oplus\ls=\{(i,\alpha_i+2j)+\ls \mid i\ttk{p},\,j\ttk{2}\}$ and $C=\{(0,i+2\alpha_i) \mid i\ttk{2}\}$ where $\alpha_i\in \Z$. \R

    If $|\so|=2p$, $\so$ is periodic, and $|\ls|=p$, then $\ls=\<{(1,0)}$.
    Note that $(D+\ls)/\ls$ is an aperiodic set of cardinality $2$ and $G/\ls\cong \Z_4$.
    From Theorem \ref{p^2}, we get $(D+\ls)/\ls=\{(0,i+2\alpha_i)+\ls \mid i\ttk{2}\}$ and $(C+\ls)/\ls=\{(0,2i)+\ls \mid i\ttk{2}\}$.
    Therefore, $\so=D\oplus\ls=\{(j,i+2\alpha_i) \mid i\ttk{2},\,j\ttk{p}\}$ and $C=\{(\alpha_i,2i) \mid i\ttk{2}\}$ where $\alpha_i\in \Z$. \R

    If $|\so|=2p$ and $\so$ is aperiodic, then $|C|=2$ and $C$ is periodic,
    which means $C=\lc=\<{(0,2)}$.
    From the factorization of $G/\lc$, $(\so+\lc)/\lc=G/\lc=\{(i,j) + \lc \mid i\ttk{p},\,j\ttk{2}\}$, and hence $\so=\{(i,j+2\alpha_{ij}) \mid i\ttk{p},\,j\ttk{2}\}$, $\alpha_{ij}\in \Z$. \R

    If $|\so|=4$ and $\so$ is periodic, then $|\ls|=2$ and hence $\ls=\<{(0,2)}$.
    Let $\so=D\oplus\ls$ for some subset $D$. From the factorization of $G/\ls$ given in Lemma \ref{quotient}, we have $G/\ls=(D+\ls)/\ls\oplus (C+\ls)/\ls$.
    Note that $(D+\ls)/\ls$ is an aperiodic set of cardinality $2$ and $G/\ls\cong \Z_p\times\Z_2$.
    From the case of $|\so|=2$ in Theorem \ref{p.q} \row2, we get $(D+\ls)/\ls=\{(\alpha_i,i)+\ls \mid i\ttk{2}\}$ and $(C+\ls)/\ls=\{(i,0)+\ls \mid i\ttk{p}\}$.
    Therefore, $\so=D\oplus\ls=\{(\alpha_i,i+2j) \mid i,j\ttk{2}\}$ and $C=\{(i,2\alpha_i) \mid i\ttk{p}\}$ where $\alpha_i\in \Z$. \R

    If $|\so|=4$ and $\so$ is aperiodic, then $|C|=p$ and periodic, which means $C=\lc=\<{(1,0)}$.
    From the factorization of $G/\lc$, $(\so+\lc)/\lc=G/\lc=\{(0,i) + \lc \mid i\ttk{4}\}$, and hence $\so=\{(\alpha_i,i) \mid i\ttk{4}\}$, $\alpha_{i}\in \Z$. \R
\end{proof}

\begin{theorem}\label{p,4,2}
Let $G=\Z_p\x\Z_4\x\Z_2$ for an odd primes $p$, and let $S$ be a proper subset of $G\setminus \{0\}$ that generates $G$. Let $0\in C\subseteq G$. Then $C$ is a perfect code in $\Cay(G,S)$ if and only if $(\so,C)$ is one of the pairs in Table \ref{t.p.4.2} for some $\alpha_i$'s and $\alpha_{ij}$'s in $\Z$ where $\alpha_0=\alpha_{00}=0$.
\end{theorem}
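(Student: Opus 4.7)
The plan is to follow the same template used successfully in Theorems \ref{p^3.2}, \ref{p^2.2.2}, \ref{p,q,2,2}, and \ref{p,4} of this paper. Assume $\Cay(G,S)$ admits a perfect code $C$. Then $G = S_0 \oplus C$ by Lemma \ref{lem:factor}, so $|S_0|\cdot|C| = |G| = 8p$, and since $S$ is a proper subset of $G\setminus\{0\}$ generating $G$ and $G$ is non-cyclic, Lemma \ref{ls<so}(a) gives $2 < |S_0| < 8p$. Hence $|S_0| \in \{p,\,2p,\,4p,\,4,\,8\}$. We then split into two cases according to whether $S_0$ is aperiodic or periodic, and within each case enumerate the admissible triples $(|S_0|,|C|,|L_C|)$ or $(|S_0|,|C|,|L_{S_0}|)$, following the strategy of Case 1 / Case 2 used in the earlier theorems.

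In the aperiodic case, $G$ being Haj\'os forces $C$ to be periodic, so we write $C = L_C \oplus D$ and apply Lemma \ref{quotient} to obtain the factorization $G/L_C = ((S_0+L_C)/L_C) \oplus ((D+L_C)/L_C)$ in which $(S_0+L_C)/L_C$ is periodic, contains $L_C$, and generates $G/L_C$ (Lemma \ref{generate-zero}). When $|L_C|=|C|$ we take $D=\{0\}$ and recover $S_0$ directly from $A=(S_0+L_C)/L_C = G/L_C$ via Lemma \ref{d_i+l_i}; otherwise we identify $(S_0+L_C)/L_C$ and $(D+L_C)/L_C$ using the previously established theorems (Theorems \ref{p.q}, \ref{p^2}, \ref{p.q.2}, \ref{p.2.2}, \ref{p,4}, \ref{p^2.2}, etc., depending on which subgroup of $G$ the quotient $G/L_C$ is isomorphic to). The subgroup $L_C$ ranges over the subgroups of $G$ of order dividing $|C|$; for $G = \Z_p\x\Z_4\x\Z_2$ the relevant subgroups up to order $8$ are $\<\e2\>, \<\e3\>, \<\e2+\e3\>, \<2\e2\>, \<\e1\>, \<2\e2,\e3\>, \<\e2\>, \<\e1+2\e2\>, \<\e1+\e3\>$, and so on, whose quotients are covered by the previous theorems.

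In the periodic case, we put $S_0 = D \oplus L_{S_0}$ with $|L_{S_0}|>1$ and $|L_{S_0}|<|S_0|$ by Lemma \ref{ls<so}(b), and apply Lemma \ref{quotient} to get $G/L_{S_0} = ((D+L_{S_0})/L_{S_0}) \oplus ((C+L_{S_0})/L_{S_0})$ with $(D+L_{S_0})/L_{S_0}$ aperiodic, containing $L_{S_0}$, and generating $G/L_{S_0}$. We again enumerate the subgroups of $G$ that can play the role of $L_{S_0}$, and for each one we read off the pair $(X,Y) = ((D+L_{S_0})/L_{S_0},\,(C+L_{S_0})/L_{S_0})$ from a previously established theorem applied to $G/L_{S_0}$, then lift back via Lemma \ref{d_i+l_i} to obtain $(S_0,C)$. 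The sufficiency direction is obtained, exactly as in the proofs of Theorems \ref{p^3.2}--\ref{p.2.2.2.2}, by verifying $|S_0|\cdot|C|=|G|$ and then checking that in each listed pair the expressions for $S_0$ and $C$ provide a unique decomposition of every $(x,y,z)\in G$, a routine calculation analogous to the one carried out explicitly in the proof of Theorem \ref{p,q,q}.

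The main obstacle is bookkeeping rather than mathematics: one must correctly enumerate all admissible values of $(|S_0|,|C|,|L_C|)$ and $(|S_0|,|C|,|L_{S_0}|)$, determine for each the full list of subgroups realising the given order (here the presence of both the cyclic factor $\Z_4$ and the factor $\Z_2$ means that, for instance, there are several distinct order-$2$ subgroups -- $\<2\e2\>$, $\<\e3\>$, and $\<2\e2+\e3\>$ -- each producing a non-isomorphic quotient), and match each quotient with the correct previously established theorem. As in Theorems \ref{p^3.2} and \ref{p.q.2} the resulting cases can be organised in two tables (one for aperiodic $S_0$ and one for periodic $S_0$) whose rows correspond exactly to the rows of Table~\ref{t.p.4.2}, and a small number of triples $(|S_0|,|C|,|L|)$ will turn out to be impossible because no valid $A$ or $X$ exists (for example, cases where the quotient is $\Z_2\x\Z_2$ and an aperiodic generating set of order $2$ is required, as ruled out by Theorem \ref{2.2}).
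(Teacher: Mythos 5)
Your proposal follows essentially the same route as the paper's own proof of Theorem \ref{p,4,2}: the split into aperiodic and periodic $\so$, the use of Lemmas \ref{quotient}, \ref{d_i+l_i}, and \ref{generate-zero}, the enumeration of triples $(|\so|,|C|,|L|)$ together with all subgroups realising each order (including the three distinct order-$2$ subgroups $\<{(0,2,0)}$, $\<{(0,0,1)}$, $\<{(0,2,1)}$, cf.\ Observation \ref{obs}), the reduction to earlier theorems such as \ref{p.q}, \ref{p^2}, \ref{p.2.2}, \ref{p,4}, and \ref{4,2}, and the elimination of cases where no valid $X$ or $A$ exists (e.g.\ via Theorem \ref{2.2}). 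The only difference is presentational: the paper carries out the bookkeeping explicitly in Tables \ref{tap,p,4,2} and \ref{tp.p.4.2}, which your plan correctly anticipates but does not execute.
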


\rc\begin{longtable}{|c|l|l|}
\caption{Perfect codes in Cayley graphs of $\Z_{p}\x\Z_4\x\Z_2$}
\label{t.p.4.2}
\\
\fl
\tn & $\{(i,\alpha_i,\beta_i) \mid i\ttk{p}\}$
& $\{(0,i,j) \mid i\ttk{4},\,j\ttk{2}\}$ \\
\tn & $\{(i,\alpha_i+2j,\beta_i) \mid i\ttk{p},\,j\ttk{2}\}$
& $\{(0,i+2\alpha_{ij},j) \mid i,j\ttk{2}\}$ \\
\tn & $\{(i,\alpha_i+2j,\beta_i+j) \mid i\ttk{p},\,j\ttk{2}\}$
& $\{(0,i+2\alpha_{ij},j+\alpha_{ij}) \mid i,j\ttk{2}\}$ \\
\tn & $\{(i,\alpha_i,j) \mid i\ttk{p},\,j\ttk{2}\}$
& $\{(0,i,\alpha_i) \mid i\ttk{4}\}$ \\
\tn & $\{(i,j+2\alpha_{ij},\alpha_i) \mid i\ttk{p},\,j\ttk{2}\}$
& $\{(0,\alpha_i+2j,i) \mid i,j\ttk{2}\}$ \\
\tn & $\{(i,j+2\alpha_{ij},\alpha_i+\alpha_{ij}) \mid i\ttk{p},\,j\ttk{2}\}$
& $\{(0,\alpha_i+2j,i+j) \mid i,j\ttk{2}\}$ \\
\tn & $\{(i,\alpha_i+2j,\alpha_{ij}) \mid i\ttk{p},\,j\ttk{2}\}$
& $\{(0,i+2\alpha_i,j) \mid i,j\ttk{2}\}$ \\
\tn & $\{(j,i+2\alpha_i,\alpha_{ij}) \mid i\ttk{2},\,j\ttk{p}\}$
& $\{(\alpha_i,2i,j)  \mid i,j\ttk{2}\}$ \\
\tn & $\{(i,\alpha_{ij},j) \mid i\ttk{p},\,j\ttk{2}\}$
& $\{(0,i,0) \mid i\ttk{4}\}$ \\
\tn & $\{(i,\alpha_{ij},j+\alpha_{ij}) \mid i\ttk{p},\,j\ttk{2}\}$
& $\{(0,i,i) \mid i\ttk{4}\}$ \\
\tn & $\{(i,j+2\alpha_{ij},j+\beta_{ij}) \mid i\ttk{p},\,j\ttk{2}\}$
& $\{(0,2i,j) \mid i,j\ttk{2}\}$ \\
\tn & $\{(\alpha_i,i+2\alpha_i,j) \mid i,j\ttk{2}\}$
& $\{(i,2j,\alpha_{ij}) \mid i\ttk{p},\,j\ttk{2}\}$ \\
\tn & $\{(\alpha_i,j+2\alpha_{ij},i) \mid i,j\ttk{2}\}$
& $\{(i,\alpha_i+2j,0) \mid i\ttk{p},\,j\ttk{2}\}$ \\
\tn & $\{(\alpha_i,j+2\alpha_{ij},i+\alpha_{ij}) \mid i,j\ttk{2}\}$
& $\{(i,\alpha_i+2j,j) \mid i\ttk{p},\,j\ttk{2}\}$ \\
\tn & $\{(\alpha_i,i+2j,\alpha_{ij}) \mid i,j\ttk{2}\}$
& $\{(i,2\alpha_i,j) \mid i\ttk{p},\,j\ttk{2}\}$ \\
\tn & $\{(\alpha_{ij},i+2\alpha_i,j) \mid i,j\ttk{2}\}$
& $\{(j,2i,\alpha_i) \mid i\ttk{2},\,j\ttk{p}\}$ \\
\tn & $\{(\alpha_{ij},i+2\alpha_{ij},j) \mid i,j\ttk{2}\}$
& $\{(i,2j,0) \mid i\ttk{p},\,j\ttk{2}\}$ \\
\tn & $\{(\alpha_{ij},i+2\alpha_{ij},j+\alpha_{ij}) \mid i,j\ttk{2}\}$
& $\{(i,2j,j) \mid i\ttk{p},\,j\ttk{2}\}$ \\
\tn & $\{(\alpha_{i},i,\alpha_i) \mid i\ttk{4}\}$
& $\{(i,0,j) \mid i\ttk{p},\,j\ttk{2}\}$ \\
\tn & $\{(i,\alpha_{ij}+2k,j) \mid i\ttk{p},\,j,k\ttk{2}\}$
& $\{(0,i+2\alpha_i,0) \mid i\ttk{2}\}$ \\
\tn & $\{(i,\alpha_{ij}+2k,j+k) \mid i\ttk{p},\,j,k\ttk{2}\}$
& $\{(0,i+2\alpha_i,\alpha_i) \mid i\ttk{2}\}$ \\
\tn & $\{(i,j+2\alpha_{ij},k) \mid i\ttk{p},\,j,k\ttk{2}\}$
& $\{(0,2i,\alpha_i) \mid i\ttk{2}\}$ \\
\tn & $\{(k,i+2\alpha_{ij},j) \mid i,j\ttk{2},\,k\ttk{p}\}$
& $\{(\alpha_i,2i,0) \mid i\ttk{2}\}$ \\
\tn & $\{(k,i+2\alpha_{ij},j+\alpha_{ij}) \mid i,j\ttk{2},\,k\ttk{p}\}$
& $\{(\alpha_i,2i,i) \mid i\ttk{2}\}$ \\
\tn & $\{(j,i,\alpha_{i}) \mid i\ttk{4},\,j\ttk{p}\}$
& $\{(\alpha_i,0,i) \mid i\ttk{2}\}$ \\
\tn & $\{(i,j,\alpha_i) \mid i\ttk{p},\,j\ttk{4}\}$
& $\{(0,\alpha_i,i) \mid i\ttk{2}\}$ \\
\tn & $\{(i,j,\alpha_i+j) \mid i\ttk{p},\,j\ttk{4}\}$
& $\{(0,\alpha_i,i+\alpha_i) \mid i\ttk{2}\}$ \\
\tn & $\{(i,\alpha_i+2j,k) \mid i\ttk{p},\,j,k\ttk{2}\}$
& $\{(0,i+2\alpha_i,\beta_i) \mid i\ttk{2}\}$ \\
\tn & $\{(j,i+2\alpha_i,k) \mid i,k\ttk{2},\,j\ttk{p}\}$
& $\{(\alpha_i,2i,\alpha_i) \mid i\ttk{2}\}$ \\
\tn & $\{(i,j+2\alpha_{ijk},k) \mid i\ttk{p},\,j,k\ttk{2}\}$
& $\{(0,2i,0) \mid i\ttk{2}\}$ \\
\tn & $\{(i,j+2\alpha_{ijk},k+\alpha_{ijk}) \mid i\ttk{p},\,j,k\ttk{2}\}$
& $\{(0,2i,i) \mid i\ttk{2}\}$ \\
\tn & $\{(i,j,\alpha_{ij}) \mid i\ttk{p},\,j\ttk{4}\}$
& $\{(0,0,i) \mid i\ttk{2}\}$ \\
\tn & $\{(\alpha_{ij},i+2k,j) \mid i,j,k\ttk{2}\}$
& $\{(i,2\alpha_i,0) \mid i\ttk{p}\}$ \\
\tn & $\{(\alpha_{ij},i+2k,j+k) \mid i,j,k\ttk{2}\}$
& $\{(i,2\alpha_i,\alpha_i) \mid i\ttk{p}\}$ \\
\tn & $\{(\alpha_{i},i,j) \mid i\ttk{4},\,j\ttk{2}\}$
& $\{(i,0,\alpha_i) \mid i\ttk{p}\}$ \\
\tn & $\{(\alpha_i,j,i)\mid i\ttk{2},\,j\ttk{4}\}$
& $\{(i,\alpha_i,0) \mid i\ttk{p}\}$ \\
\tn & $\{(\alpha_i,j,i+j)\mid i\ttk{2},\,j\ttk{4}\}$
& $\{(i,\alpha_i,\alpha_i) \mid i\ttk{p}\}$ \\
\tn & $\{(\alpha_i,i+2j,k) \mid i,j,k\ttk{2}\}$
& $\{(i,2\alpha_i,\beta_i) \mid i\ttk{p}\}$ \\
\tn & $\{(\alpha_{ij},i,j) \mid i\ttk{4},\,j\ttk{2}\}$
& $\{(i,0,0) \mid i\ttk{p}\}$ \\
\hline
\end{longtable}

\begin{proof}
    Suppose $C$ is a perfect code in $\Cay(G,S)$. Then $G=\so\oplus C$ and so $|\so|\cdot |C|=|G|=8p$. 
    Since $G$ is not a cyclic group, $|\so|>2$ by Lemma \ref{ls<so}.
    Therefore, $2<|\so|<8p$.

\smallskip
    \textsf{Case 1.} $\so$ is aperiodic.
    \smallskip

    Since $G$ is Haj\'os and $G=\so\oplus C$, in this case $C$ must be periodic, that is, $\lc$ is nontrivial.
    Write $C=\lc\oplus D$ for some subset $D$ of $G$ whose existence is ensured by Lemma \ref{period}.
    Then $|\lc|\cdot |D|=|C|$ and $2\leq |\lc|\leq |C|$.
    If $|\so|$ is a prime, then $|\lc|=|C|$ by Lemma \ref{prime deg} part (c).
    If $|\so|$ is not a prime, then $|\lc|$ could be any divisor of $|C|$.
    Set $$ A=(\so+\lc)/\lc\quad\text{and}\quad B=(D+\lc)/\lc.$$
    Since $C$ is periodic, by Lemma \ref{quotient}, we have
    $$G/\lc=A\oplus B$$ and $B$ is aperiodic in $G/\lc$.
    Since $G/\lc$ is isomorphic to a subgroup of $G$ and subgroups of Haj\'os groups are also Haj\'os groups, $A$ is periodic in $G/\lc$.
    We also know that $A$ contains $\lc$ and $A$ generates $G/\lc$ by Lemma \ref{generate-zero}.
    Furthermore, $|A|=|\so|$ and $|B|=|D|$.
    Note that if $|\lc|=|C|$, then $C=\lc$ as $0\in C$.
    This means we can take $D=\{0\}$ and hence $B=\lc/\lc$ and $G/\lc=A$.
    We use $C$ to get $A$ and then use Lemma \ref{d_i+l_i} to obtain $\so$.
    If $|\lc|<|C|$, then this factorization enables us to make use of previously established theorems to obtain the pair $(\so, C)$ via $(A,B)$.
    Note that for some triples $\left(|\so|,|C|,|\lc|\right)$, there are more than one possible $\lc$, see Observation \ref{obs}.

    The columns of Table \ref{tap,p,4,2} represents all possible values of $\left(|\so|,|C|,|\lc|\right)$; $C$, $A$, and $\so$ when $|\lc|=|C|$ or $L_C$, $A$, $B$, $\so$, and $C$ when $|\lc|<|C|$; which previous theorem is used to obtain $(A,B)$ when $|\lc|<|C|$; and which row in Table \ref{t.p.4.2} the result corresponds to.
    
\begin{longtable}{|c|l|c|c|}
\caption{The case when $G=\Z_{p}\x\Z_4\x\Z_2 $ and $\so$ is aperiodic}
\label{tap,p,4,2}
\\
\flap

& $C=\<{(0,1,0),(0,0,1)} $ & & \\
$(p,8,8 )$ 
& $A=\{(i,0,0)+\lc \mid i\ttk{p}\}$
& & row 1 \\
& $\so=\{(i,\alpha_i,\beta_i) \mid i\ttk{p}\} $ & & \\ \hline

& $\lc=\<{(0,2,0)} $ & & \\
& $A=\{(i,0,0) +\alpha_i\e3 +j\e2 + \lc \mid i\ttk{p},\,j\ttk{2}\} $ & & \\
$(2p,4,2)$ 
& $B=\{i\e3+\alpha_i\e2 +\lc \mid i\ttk{2}\} $ 
& \ref{p.2.2} \row4 & row 5 \\
& $\so=\{(i,j+2\alpha_{ij},\alpha_i) \mid i\ttk{p},\,j\ttk{2}\} $ & & \\
& $C=\{(0,2j+\alpha_i,i) \mid i,j\ttk{2}\} $ & & \\ \hline
    
& $\lc=\<{(0,2,1)} $ & & \\
& $A=\{(i,0,0) +\alpha_i\e3 +j\e2 + \lc \mid i\ttk{p},\,j\ttk{2}\} $ & & \\
$(2p,4,2 )$ 
& $B=\{i\e3+\alpha_i\e2 +\lc \mid i\ttk{2}\} $ 
& \ref{p.2.2} \row4 & row 6 \\
& $\so=\{(i,j+2\alpha_{ij},\alpha_i+\alpha_{ij}) \mid i\ttk{p},\,j\ttk{2}\} $ & & \\
& $C=\{(0,\alpha_i+2j,j+1) \mid i,j\ttk{2}\} $ & & \\ \hline

& $\lc=\<{(0,0,1)} $ & & \\
& $A=\{(j,i+2\alpha_i,0) +\lc \mid i\ttk{2},\,j\ttk{p}\} $ & & \\
$(2p,4,2 )$ 
& $B=\{(\alpha_i,2i,0) +\lc \mid i\ttk{2}\} $ 
& \ref{p,4} \row3 & row 7 \\
& $\so=\{(j,i+2\alpha_i,\alpha_{ij}) \mid i\ttk{2},\,j\ttk{p}\} $ & & \\
& $C=\{(\alpha_i,2i,j)  \mid i,j\ttk{2}\} $ & & \\ 
\hhline{~---}
& $\lc=\<{(0,0,1)} $ & & \\
& $A=\{(j,i+2\alpha_i,0) +\lc \mid i\ttk{2},\,j\ttk{p}\} $ & & \\
$(2p,4,2 )$ 
& $B=\{(\alpha_i,2i,0) +\lc \mid i\ttk{2}\} $ 
& \ref{p,4} \row4 & row 8 \\
& $\so=\{(j,i+2\alpha_i,\alpha_{ij}) \mid i\ttk{2},\,j\ttk{p}\} $ & & \\
& $C=\{(\alpha_i,2i,j)  \mid i,j\ttk{2}\} $ & &  \\ \hline

\multirow{1}{*}{ }
& $C=\<{(0,1,0)} $ & & \\
$(2p,4,4 )$ 
& $A=\{(i,0,j) + \lc \mid i\ttk{p},\,j\ttk{2}\} $ 
& & row 9 \\
& $\so=\{(i,\alpha_{ij},j) \mid i\ttk{p},\,j\ttk{2}\} $ & & \\ \hline

& $C=\<{(0,1,1)} $ & & \\
$(2p,4,4 )$ 
& $A=\{(i,0,j) + \lc \mid i\ttk{p},\,j\ttk{2}\} $ 
& & row 10 \\
& $\so=\{(i,\alpha_{ij},j+\alpha_{ij}) \mid i\ttk{p},\,j\ttk{2}\} $ & & \\ \hline

& $C=\<{(0,2,0),(0,0,1)} $ & & \\
$(2p,4,4 )$ 
& $A=\{(i,j,0) + \lc \mid i\ttk{p},\,j\ttk{2}\} $ 
& & row 11 \\
& $\so=\{(i,j+2\alpha_{ij},j+\beta_{ij}) \mid i\ttk{p},\,j\ttk{2}\} $ & & \\ \hline

& $\lc=\<{(0,2,0)} $ & & \\
& $A=\{(\alpha_i,j,i) +i\e3 +j\e2 + \lc \mid i,j\ttk{2}\} $ & & \\
$(4,2p,2 )$ 
& $B=\{(i,0,0)+\alpha_i\e2 +\lc \mid i\ttk{p}\} $ 
& \ref{p.2.2} \row6 & row 13 \\
& $\so=\{(\alpha_i,j+2\alpha_{ij},i) \mid i,j\ttk{2}\} $ & & \\
& $C=\{(i,\alpha_i+2j,0) \mid i\ttk{p},\,j\ttk{2}\} $ & & \\ \hline

& $\lc=\<{(0,2,1)} $ & & \\
& $A=\{(\alpha_i,0,0) +i\e3 +j\e2 + \lc \mid i,j\ttk{2}\} $ & & \\
$(4,2p,2 )$ 
& $B=\{(i,0,0)+\alpha_i\e2 +\lc \mid i\ttk{p}\} $ 
& \ref{p.2.2} \row6 & row 14 \\
& $\so=\{(\alpha_i,j+2\alpha_{ij},i+\alpha_{ij}) \mid i,j\ttk{2}\} $ & & \\
& $C=\{(i,\alpha_i+2j,j) \mid i\ttk{p},\,j\ttk{2}\} $ & & \\ \hline

& $\lc=\<{(0,0,1)} $ & & \\
& $A=\{(\alpha_i,i+2j,0) + \lc \mid i,j\ttk{2}\} $ & & \\
$(4,2p,2 )$ 
& $B=\{(i,2\alpha_i,0)+\lc \mid i\ttk{p}\} $ 
& \ref{p,4} \row{6} & row 15 \\
& $\so=\{(\alpha_i,i+2j,\alpha_{ij}) \mid i,j\ttk{2}\} $ & & \\
& $C=\{(i,2\alpha_i,j) \mid i\ttk{p},\,j\ttk{2}\} $ & & \\ \hline

& $\lc=\<{(1,0,0)} $ & & \\
& $A=\{(0,i+2\alpha_i,j) + \lc \mid i,j\ttk{2}\} $ & & \\
$(4,2p,p )$ 
& $B=\{(0,2i,\alpha_i)+\lc \mid i\ttk{2}\} $ 
& \ref{p,4} \row{6} & row 16 \\
& $\so=\{(\alpha_{ij},i+2\alpha_i,j) \mid i,j\ttk{2}\} $ & & \\
& $C=\{(j,2i,\alpha_i) \mid i\ttk{2},\,j\ttk{p}\} $ & & \\ \hline

& $C=\<{(1,2,0)} $ & & \\
$(4,2p,2p )$ 
& $A=\{(0,i,j) + \lc \mid i,j\ttk{2}\} $ 
& & row 17 \\
& $\so=\{(\alpha_{ij},i+2\alpha_{ij},j) \mid i,j\ttk{2}\} $ & & \\ \hline

& $C=\<{(1,2,1)} $ & & \\
$(4,2p,2p )$ 
& $A=\{(0,i,j) + \lc \mid i,j\ttk{2}\} $ 
& & row 18 \\
& $\so=\{(\alpha_{ij},i+2\alpha_{ij},j+\alpha_{ij}) \mid i,j\ttk{2}\} $ & & \\ \hline

& $C=\<{(1,0,1)} $ & & \\
$(4,2p,2p )$ 
& $A=\{(0,i,0) + \lc \mid i\ttk{4}\} $ 
& & row 19 \\
& $\so=\{(\alpha_{i},i,\alpha_i) \mid i\ttk{4}\} $ & & \\ \hline

& $C=\<{(0,2,0)} $ & & \\
$(4p,2,2)$ 
& $A=\{(i,j,k) + \lc \mid i\ttk{p},\,j,k\ttk{2}\} $ 
& & row 30 \\
& $\so=\{(i,j+2\alpha_{ijk},k) \mid i\ttk{p},\,j,k\ttk{2}\} $ & & \\ \hline

& $C=\<{(0,2,1)} $ & & \\
$(4p,2,2)$ 
& $A=\{(i,j,k) + \lc \mid i\ttk{p},\,j,k\ttk{2}\} $ 
& & row 31 \\
& $\so=\{(i,j+2\alpha_{ijk},k+\alpha_{ijk}) \mid i\ttk{p},\,j,k\ttk{2}\} $ & & \\ \hline

& $C=\<{(0,0,1)} $ & & \\
$(4p,2,2)$ 
& $A=\{(i,j,0) + \lc \mid i\ttk{p},\,j\ttk{4}\} $ 
& & row 32 \\
& $\so=\{(i,j,\alpha_{ij}) \mid i\ttk{p},\,j\ttk{4}\} $ & & \\ \hline

& $C=\<{(1,0,0)} $ & & \\
$(8,p,p)$ 
& $A=\{(0,i,j) + \lc \mid i\ttk{4},\,j\ttk{2}\} $ 
& & row 39 \\
& $\so=\{(\alpha_{ij},i,j) \mid i\ttk{4},\,j\ttk{2}\} $ & & \\ \hline

\end{longtable}

\smallskip
    \textsf{Case 2.} $\so$ is periodic.
    \smallskip

    In this case we have $|\ls|\geq 2$ and $\so=D\oplus \ls$ for some subset $D$ of $G$ by Lemma \ref{period}.
    So $|D|\cdot|\ls|=|\so|$ and $1<|\ls|<|\so|$ by Lemma \ref{prime deg} part (b).
    Set $$X=(D+\ls)/\ls\quad\text{and}\quad Y=(C+\ls)/\ls.$$
    Since $\so$ is periodic, by Lemma \ref{quotient}, we have
    $$G/\lc=X\oplus Y$$ and $X$ is aperiodic in $G/\ls$.
    We also know that $X$ contains $\ls$ and $X$ generates $G/\ls$ by Lemma \ref{generate-zero}.
    This factorization enables us to make use of Lemma \ref{d_i+l_i} and previously established theorems to obtain the pair $(\so, C)$ via $(X,Y)$.
    Note that for some triples $\left(|\so|,|C|,|\ls|\right)$, there are more than one possible $\ls$, see Observation \ref{obs}.
    
    The columns of Table \ref{tp.p.4.2} represents all possible values for $\left(|\so|,|C|,|\ls|\right)$; $\ls$, $X$, $Y$, $\so$, and $C$; which previous theorem is used to obtain $(X,Y)$; and which row in Table \ref{t.p.4.2} the result corresponds to.

\begin{longtable}{|c|l|c|c|}
\caption{The case when $G=\Z_p\x\Z_4\x\Z_2 $ and $\so$ is periodic}
\label{tp.p.4.2}
\\
\flp

& $\ls=\<{(0,2,0)} $ & & \\
& $X=\{(i,\alpha_i,\beta_i)+\ls \mid i\ttk{p}\} $ & & \\
$(2p,4,2 )$ 
& $Y=\{(0,i,j)+\ls \mid i,j\ttk{2}\} $ 
& \ref{p.2.2} \row1 & row 2 \\
& $\so=\{(i,\alpha_i+2j,\beta_i) \mid i\ttk{p},\,j\ttk{2}\} $ & & \\
& $C=\{(0,i+2\alpha_{ij},j) \mid i,j\ttk{2}\} $ & & \\ \hline

& $\ls=\<{(0,2,1)} $ & & \\
& $X=\{(i,\alpha_i,\beta_i)+\ls \mid i\ttk{p}\} $ & & \\
$(2p,4,2 )$ 
& $Y=\{(0,i,j)+\ls \mid i,j\ttk{2}\} $ 
& \ref{p.2.2} \row1 & row 3 \\
& $\so=\{(i,\alpha_i+2j,\beta_i+j) \mid i\ttk{p},\,j\ttk{2}\} $ & & \\
& $C=\{(0,i+2\alpha_{ij},j+\alpha_{ij}) \mid i,j\ttk{2}\} $ & & \\ \hline

& $\ls=\<{(0,0,1)} $ & & \\
& $X=\{(i,\alpha_i,0)+\ls \mid i\ttk{p}\} $ & & \\
$(2p,4,2 )$ 
& $Y=\{(0,i,0)+\ls \mid i\ttk{4}\} $ 
& \ref{p,4} \row{2} & row 4 \\
& $\so=\{(i,\alpha_i,j) \mid i\ttk{p},\,j\ttk{2}\} $ & & \\
& $C=\{(0,i,\alpha_i) \mid i\ttk{4}\} $ & & \\ \hline

$(2p,4,p)$ 
& Not possible since no $X$ exists
& \ref{4,2}  & \\ \hline

$(4,2p,2 )$ 
& Not possible for $\ls=\<{(0,2,0)}$ and $\<{(0,2,1)}$
& \ref{p.2.2}  & \\ \hline

& $\ls=\<{(0,0,1)} $ & & \\
& $X=\{(\alpha_i,i+2\alpha_i,0) +\ls \mid i\ttk{2}\} $ & & \\
$(4,2p,2 )$ 
& $Y=\{(i,2j,0) +\ls \mid i\ttk{p},\,j\ttk{2}\} $ 
& \ref{p,4} \row{1} & row 12 \\
& $\so=\{(\alpha_i,i+2\alpha_i,j) \mid i,j\ttk{2}\} $ & & \\
& $C=\{(i,2j,\alpha_{ij}) \mid i\ttk{p},\,j\ttk{2}\} $ & & \\ \hline

& $\ls=\<{(0,2,0)} $ & & \\
& $X=\{(i,0,0)+j\e3+\alpha_{ij}\e2+\ls \mid i\ttk{p},\,j\ttk{2}\} $ & & \\
$(4p,2,2 )$ 
& $Y=\{i\e2+\ls \mid i\ttk{2}\} $ 
& \ref{p.2.2} \row5 & row 20 \\
& $\so=\{(i,\alpha_{ij}+2k,j) \mid i\ttk{p},\,j,k\ttk{2}\} $ & & \\
& $C=\{(0,i+2\alpha_i,0) \mid i\ttk{2}\} $ & & \\ \hline

& $\ls=\<{(0,2,1)} $ & & \\
& $X=\{(i,0,0)+j\e3+\alpha_{ij}\e2+\ls \mid i\ttk{p},\,j\ttk{2}\} $ & & \\
$(4p,2,2 )$ 
& $Y=\{i\e2+\ls \mid i\ttk{2}\} $ 
& \ref{p.2.2} \row5 & row 21 \\
& $\so=\{(i,\alpha_{ij}+2k,j+k) \mid i\ttk{p},\,j,k\ttk{2}\} $ & & \\
& $C=\{(0,i+2\alpha_i,\alpha_i) \mid i\ttk{2}\} $ & & \\ \hline

& $\ls=\<{(0,0,1)} $ & & \\
& $X=\{(i,j+2\alpha_{ij},0)+\ls \mid i\ttk{p},\,j\ttk{2}\} $ & & \\
$(4p,2,2 )$ 
& $Y=\{(0,2i,0)+\ls \mid i\ttk{2}\} $ 
& \ref{p,4} \row{5} & row 22 \\
& $\so=\{(i,j+2\alpha_{ij},k) \mid i\ttk{p},\,j,k\ttk{2}\} $ & & \\
& $C=\{(0,2i,\alpha_i) \mid i\ttk{2}\} $ & & \\ \hline

& $\ls=\<{(1,0,0)} $ & & \\
& $X=\{(0,i+2\alpha_{ij},j) +\ls \mid i,j\ttk{2}\} $ & & \\
$(4p,2,p )$ 
& $Y=\{(0,2i,0) +\ls \mid i\ttk{2}\} $ 
& \ref{4,2} \row2 & row 23 \\
& $\so=\{(k,i+2\alpha_{ij},j) \mid i,j\ttk{2},\,k\ttk{p}\} $ & & \\
& $C=\{(\alpha_i,2i,0) \mid i\ttk{2}\} $ & & \\ \hline

& $\ls=\<{(1,0,0)} $ & & \\
& $X=\{(0,i+2\alpha_{ij},j+\alpha_{ij}) +\ls \mid i,j\ttk{2}\} $ & & \\
$(4p,2,p )$ 
& $Y=\{(0,2i,i) +\ls \mid i\ttk{2}\} $ 
& \ref{4,2} \row3 & row 24 \\
& $\so=\{(k,i+2\alpha_{ij},j+\alpha_{ij}) \mid i,j\ttk{2},\,k\ttk{p}\} $ & & \\
& $C=\{(\alpha_i,2i,i) \mid i\ttk{2}\} $ & & \\ \hline

& $\ls=\<{(1,0,0)} $ & & \\
& $X=\{(0,i,\alpha_i) +\ls \mid i\ttk{4}\} $ & & \\
$(4p,2,p )$ 
& $Y=\{(0,0,i) +\ls \mid i\ttk{2}\} $ 
& \ref{4,2} \row4 & row 25 \\
& $\so=\{(j,i,\alpha_{i}) \mid i\ttk{4},\,j\ttk{p}\} $ & & \\
& $C=\{(\alpha_i,0,i) \mid i\ttk{2}\} $ & & \\ \hline

& $\ls=\<{(0,1,0)} $ & & \\
& $X=\{(i,0,\alpha_i)+\ls \mid i\ttk{p}\} $ & & \\
$(4p,2,4 )$ 
& $Y=\{(0,0,i)+\ls \mid i\ttk{2}\} $ 
& \ref{p.q} \row1 & row 26 \\
& $\so=\{(i,j,\alpha_i) \mid i\ttk{p},\,j\ttk{4}\} $ & & \\
& $C=\{(0,\alpha_i,i) \mid i\ttk{2}\} $ & & \\ \hline

& $\ls=\<{(0,1,1)} $ & & \\
& $X=\{(i,0,\alpha_i)+\ls \mid i\ttk{p}\} $ & & \\
$(4p,2,4 )$ 
& $Y=\{(0,0,i)+\ls \mid i\ttk{2}\} $ 
& \ref{p.q} \row1 & row 27 \\
& $\so=\{(i,j,\alpha_i+j) \mid i\ttk{p},\,j\ttk{4}\} $ & & \\
& $C=\{(0,\alpha_i,i+\alpha_i) \mid i\ttk{2}\} $ & & \\ \hline

& $\ls=\<{(0,2,0),(0,0,1)} $ & & \\
& $X=\{(i,\alpha_i,0)+\ls \mid i\ttk{p}\} $ & & \\
$(4p,2,4 )$ 
& $Y=\{(0,i,0)+\ls \mid i\ttk{2}\} $ 
& \ref{p.q} \row1 & row 28 \\
& $\so=\{(i,\alpha_i+2j,k) \mid i\ttk{p},\,j,k\ttk{2}\} $ & & \\
& $C=\{(0,i+2\alpha_i,\beta_i) \mid i\ttk{2}\} $ & & \\ \hline

$(4p,2,2p )$ 
& Not possible for $\ls=\<{(1,2,0)}$ and $\<{(1,2,1)}$
& \ref{2.2}  & \\ \hline

& $\ls=\<{(1,0,1)} $ & & \\
& $X=\{(0,i+2\alpha_i,0)+\ls \mid i\ttk{2}\} $ & & \\
$(4p,2,2p )$ 
& $Y=\{(0,2i,0)+\ls \mid i\ttk{2}\} $ 
& \ref{p^2} & row 29 \\
& $\so=\{(j,i+2\alpha_i,k) \mid i,k\ttk{2},\,j\ttk{p}\} $ & & \\
& $C=\{(\alpha_i,2i,\alpha_i) \mid i\ttk{2}\} $ & & \\ \hline

& $\ls=\<{(0,2,0)} $ & & \\
& $X=\{(\alpha_{ij},i,j)+\ls \mid i,j\ttk{2}\}$ & & \\
$(8,p,2)$ 
& $Y=\{(i,0,0)+\ls \mid i\ttk{p}\} $ 
& \ref{p.2.2} \row7 & row 33 \\
& $\so=\{(\alpha_{ij},i+2k,j) \mid i,j,k\ttk{2}\} $ & & \\
& $C=\{(i,2\alpha_i,0) \mid i\ttk{p}\} $ & & \\ \hline

& $\ls=\<{(0,2,1)} $ & & \\
& $X=\{(\alpha_{ij},i,j)+\ls \mid i,j\ttk{2}\} $ & & \\
$(8,p,2 )$ 
& $Y=\{(i,0,0)+\ls \mid i\ttk{p}\} $ 
& \ref{p.2.2} \row7 & row 34 \\
& $\so=\{(\alpha_{ij},i+2k,j+k) \mid i,j,k\ttk{2}\} $ & & \\
& $C=\{(i,2\alpha_i,\alpha_i) \mid i\ttk{p}\} $ & & \\ \hline

& $\ls=\<{(0,0,1)} $ & & \\
& $X=\{(\alpha_{i},i,0)+\ls \mid i\ttk{4}\} $ & & \\
$(8,p,2 )$ 
& $Y=\{(i,0,0)+\ls \mid i\ttk{p}\} $ 
& \ref{p,4} \row{7} & row 35 \\
& $\so=\{(\alpha_{i},i,j) \mid i\ttk{4},\,j\ttk{2}\} $ & & \\
& $C=\{(i,0,\alpha_i) \mid i\ttk{p}\} $ & & \\ \hline

& $\ls=\<{(0,1,0)} $ & & \\
& $X=\{(\alpha_i,0,i)+\ls \mid i\ttk{2}\} $ & & \\
$(8,p,4 )$ 
& $Y=\{(i,0,0)+\ls \mid i\ttk{p}\} $ 
& \ref{p.q} \row2 & row 36 \\
& $\so=\{(\alpha_i,j,i)\mid i\ttk{2},\,j\ttk{4}\} $ & & \\
& $C=\{(i,\alpha_i,0) \mid i\ttk{p}\} $ & & \\ \hline

& $\ls=\<{(0,1,1)} $ & & \\
& $X=\{(\alpha_i,0,i)+\ls \mid i\ttk{2}\} $ & & \\
$(8,p,4 )$ 
& $Y=\{(i,0,0)+\ls \mid i\ttk{p}\} $ 
& \ref{p.q} \row2 & row 37 \\
& $\so=\{(\alpha_i,j,i+j)\mid i\ttk{2},\,j\ttk{4}\} $ & & \\
& $C=\{(i,\alpha_i,\alpha_i) \mid i\ttk{p}\} $ & & \\ \hline

& $\ls=\<{(0,2,0),(0,0,1)} $ & & \\
& $X=\{(\alpha_i,i,0)+\ls \mid i\ttk{2}\} $ & & \\
$(8,p,4 )$ 
& $Y=\{(i,0,0)+\ls \mid i\ttk{p}\} $ 
& \ref{p.q} \row2 & row 38 \\
& $\so=\{(\alpha_i,i+2j,k) \mid i,j,k\ttk{2}\} $ & & \\
& $C=\{(i,2\alpha_i,\beta_i) \mid i\ttk{p}\} $ & & \\ \hline

\end{longtable}
\end{proof}

\subsection{Circulant graphs}

As mentioned in the introduction, cyclic Haj\'os groups are subgroups of $\Z_{p^k}\x\Z_q$, $\Z_{p^2}\x\Z_{q^2}$, $\Z_{p^2}\x\Z_q\x\Z_r$, and $\Z_{p}\x\Z_q\x\Z_r\x\Z_s$. 
Recall that circulant graphs having perfect codes with prime size have been classified in \cite{eds2}. Circulant graphs on Haj\'os groups having perfect codes with size relative prime to the degree plus one have been classified in \cite{eds3}.

If a circulant graph on a subgroup of $\Z_p\x\Z_q\x\Z_r\x\Z_s$ has a perfect code, then the size of the perfect code will be relatively prime to the degree plus one.
This means that circulant graphs on subgroups of $\Z_p\x\Z_q\x\Z_r\x\Z_s$ have been characterized.
Subgroups of $\Z_{p^k}\x\Z_q$ are covered in Theorems \ref{p.q}, \ref{p^k}, and \ref{p^k.q}.
As proper subgroups of either $\Z_{p^2}\x\Z_{q^2}$ and $\Z_{p^2}\x\Z_q\x\Z_r$ are also subgroups of either $\Z_{p}\x\Z_q\x\Z_r\x\Z_s$ or $\Z_{p^k}\x\Z_q$, we are left to characterize circulant graphs on $\Z_{p^2}\x\Z_{q^2}$ and $\Z_{p^2}\x\Z_q\x\Z_r$ having a perfect code $C$ where $|C|$ is not relatively prime to $|G|/|C|$.


\begin{theorem}\label{p^2.q^2}
Let $G=\Z_{p^2}\x\Z_{q^2}$ for distinct primes $p$ and $q$, and let $S$ be a proper subset of $G\setminus \{0\}$ that generates $G$. Let $0\in C\subseteq G$ and let $|C|$ be not relatively prime to $|S_0|$. Then $C$ is a perfect code in $\Cay(G,S)$ if and only if $(\so,C)$ is one of the pairs in Table \ref{t.p^2.q^2} where ${\alpha_i}$'s, ${\alpha_{ij}}$'s, and ${\beta_i}$'s are in $\Z$ and $\alpha_0=\alpha_{00}=\beta_{0}=0$.
\end{theorem}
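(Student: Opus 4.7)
The proof will follow the same two-part structure used throughout Sections \ref{sec:pqq}--\ref{sec:pq22}. Sufficiency, namely that each pair $(S_0, C)$ listed in Table \ref{t.p^2.q^2} satisfies $G = S_0 \oplus C$, reduces to checking $|S_0|\cdot|C| = p^2 q^2$ and that every element of $G$ has a unique representation as $s + c$ with $s \in S_0$ and $c \in C$; this is entirely analogous to Lemma \ref{G=so+c} and the calculation can be omitted, as was done beginning with Theorem \ref{p^2}.

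For necessity, suppose $C$ is a normalized perfect code in $\Cay(G,S)$, so $G = S_0 \oplus C$ by Lemma \ref{lem:factor}. Then $|S_0|\cdot|C| = p^2 q^2$, and since $\Cay(G,S)$ is non-complete we have $2 \leq |S_0|, |C| < p^2 q^2$. The hypothesis that $|C|$ is not coprime to $|S_0|$ restricts $|S_0|$ to one of $\{p,\ q,\ pq,\ p^2 q,\ p q^2\}$. Since $G$ is Haj\'os, at least one of $S_0$ and $C$ must be periodic. If $|S_0|$ is a prime, Lemma \ref{prime deg}(c) gives that $S_0$ is aperiodic and $C = g + L_C$ with $|L_C| = |C|$; otherwise we enumerate all admissible values of $|L_{S_0}|$ (when $S_0$ is periodic) and of $|L_C|$ (when $S_0$ is aperiodic), using Lemma \ref{ls<so}(b) to bound them.

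For every subcase we identify the relevant subgroup $L \in \{L_{S_0}, L_C\}$ explicitly. Because $G = \Z_{p^2}\x\Z_{q^2}$ has a unique subgroup of each order dividing $p^2q^2$, the choice of $L$ is determined by $|L|$, which simplifies the analysis compared to the situations treated in Theorems \ref{p^3.2.2} and \ref{p^2.2.2.2}. Applying Lemma \ref{quotient} yields a factorization of $G/L$, which is one of $\Z_{p^2}\x\Z_q$, $\Z_p\x\Z_{q^2}$, $\Z_{p^2}$, $\Z_{q^2}$, or $\Z_p\x\Z_q$. The factor of $G/L$ coming from the aperiodic partner is read off from the appropriate earlier theorem: Theorem \ref{p^2.2} (equivalently Theorem \ref{p^k.q} with $k=2$) for the $\Z_{p^2}\x\Z_q$ and $\Z_p\x\Z_{q^2}$ quotients, Theorem \ref{p^2} for the cyclic $\Z_{p^2}$ and $\Z_{q^2}$ quotients, and Theorem \ref{p.q} for the $\Z_p\x\Z_q$ quotient. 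Lemmas \ref{d_i+l_i} and \ref{generate-zero} then lift these expressions back to $S_0$ and $C$ in $G$. We organize the computation in two tables analogous to Tables \ref{tap.p^3.2.2} and \ref{tp.p^3.2.2}, one for aperiodic $S_0$ and one for periodic $S_0$, and in each row verify that the resulting pair $(S_0, C)$ matches a row of Table \ref{t.p^2.q^2}.

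The main obstacle will be bookkeeping: ensuring every admissible triple $(|S_0|, |C|, |L|)$ is covered, flagging the ``not possible'' subcases in which no aperiodic $X$ with the required order exists in $G/L$ (ruled out either by Lemma \ref{prime deg} or by the earlier theorems themselves, exactly as in the ``Not possible since no $X$ exists'' rows of Tables \ref{tp.p^2.2.2} and \ref{tp.p^3.2.2}), and verifying that no two distinct subcases produce conflicting parametrizations of the same row of Table \ref{t.p^2.q^2}. Since the quotient groups involved are all Haj\'os and have already been fully classified in earlier sections, no genuinely new perfect-code family arises; each subcase reduces to one of the families $(\alpha_i)$, $(\alpha_{ij})$ or $(\alpha_i,\beta_i)$ recorded in Table \ref{t.p^2.q^2}, completing the proof together with the complementary classification of the coprime cases in \cite{eds2,eds3}.
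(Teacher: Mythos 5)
Your proposal is correct and follows essentially the same route as the paper's own proof: the paper splits into the cases $S_0$ aperiodic and $S_0$ periodic, quotients by the unique subgroup $L_C$ or $L_{S_0}$ of each admissible order (uniqueness holding exactly because $G\cong\Z_{p^2q^2}$ is cyclic, as you note), and lifts the factorizations of $G/L$ back via Lemmas \ref{quotient}, \ref{d_i+l_i} and \ref{generate-zero} using Theorems \ref{p^2.2}, \ref{p^2} and \ref{p.q}, organizing everything in two tables just as you describe. The only cosmetic difference is that you hedge about ``not possible'' subcases, which in fact never arise here since every quotient is cyclic and admits aperiodic factors of every required order.
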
 

\rc\begin{longtable}{|c|l|l|}
\caption{Perfect codes in Cayley graphs of $\Z_{p^2}\x\Z_{q^2}$}\label{t.p^2.q^2}\\
\fl
\tn & $\{(i+\alpha_i p,\beta_i) \mid i\ttk{p}\}$
& $\{(ip,j) \mid i\ttk{p}, j\ttk{q^2}\}$  \\
\tn & $\{(\alpha_i,i+\beta_i q) \mid i\ttk{q}\}$
& $\{(i,jq) \mid i\ttk{p^2}, j\ttk{q}\}$  \\
\tn & $\{(i+\alpha_{i}p,\alpha_i+jq) \mid i\ttk{p},\,j\ttk{q} \}$
& $\{(ip,j+\alpha_{ij}q) \mid i\ttk{p},\,j\ttk{q} \}$  \\
\tn & $\{(\alpha_i+jp,i+\alpha_{i}q) \mid i\ttk{q},\,j\ttk{p} \}$
& $\{(i+\alpha_{ij}p,jq) \mid i\ttk{p},\,j\ttk{q} \}$  \\
\tn & $\{(j+\alpha_{ij}p,i+\alpha_i q) \mid i\ttk{q},\,j\ttk{p}\}$
& $\{(\alpha_i+jp,iq) \mid i\ttk{q},\,j\ttk{p}\}$  \\
\tn & $\{(i+\alpha_ip,\alpha_i + jq) \mid i\ttk{p},\,j\ttk{q}\}$
& $\{(jp,i+\alpha_iq) \mid i\ttk{q},\,j\ttk{p}\}$  \\
\tn & $\{(i+\alpha_i p, j+\alpha_{ij}q) \mid i\ttk{p},\,j\ttk{q}\}$
& $\{(ip,\alpha_i+jq) \mid i\ttk{p},\,j\ttk{q}\}$  \\
\tn & $\{(\alpha_i + jp, i+\alpha_{ij}q) \mid i\ttk{q},\,j\ttk{p}\}$
& $\{(i+\alpha_ip,jq) \mid i\ttk{p},\,j\ttk{q}\}$  \\
\tn & $\{(i+\alpha_{ij}p,j+\alpha_{ij}q) \mid i\ttk{p},\,j\ttk{q} \}$
& $\{(ip,jq) \mid i\ttk{p},\,j\ttk{q} \}$  \\
\tn & $\{(j+kp,i+\alpha_{ij}q) \mid i\ttk{q},\,j,k\ttk{p} \}$
& $\{(p\alpha_i,iq)\mid i\ttk{q} \}$  \\
\tn & $\{(i,\alpha_i+jq) \mid i\ttk{p^2},\,j\ttk{q}\}$
& $\{(0,i+\alpha_iq) \mid i\ttk{q}\}$  \\
\tn & $\{(j,i+\alpha_iq) \mid i\ttk{q},\,j\ttk{p^2}\}$
& $\{ (\alpha_i,iq) \mid i\ttk{q}\}$  \\
\tn & $\{(i+jp,\alpha_i+kq) \mid i,j\ttk{p},\,k\ttk{q} \}$
& $\{(p\alpha_i,i+q\alpha_i) \mid i\ttk{q} \}$  \\
\tn & $\{(i,j+\alpha_{ij}q) \mid i\ttk{p^2},\,j\ttk{q}\}$
& $\{(0,iq) \mid i\ttk{q} \}$  \\
\tn & $\{(i+\alpha_{ij}p,j+kq) \mid i\ttk{p},\,j,k\ttk{q} \}$
& $\{(ip,\alpha_iq)\mid i\ttk{p} \}$  \\
\tn & $\{(\alpha_i+jp,i) \mid i\ttk{q^2},\,j\ttk{p}\}$
& $\{(i+\alpha_ip,0) \mid i\ttk{p}\}$  \\
\tn & $\{(i+\alpha_ip,j) \mid i\ttk{p},\,j\ttk{q^2}\}$
& $\{ (ip,\alpha_i) \mid i\ttk{p}\}$  \\
\tn & $\{(\alpha_i+jp,i+kq) \mid i,k\ttk{q},\,j\ttk{p} \}$
& $\{(i+p\alpha_i,q\alpha_i) \mid i\ttk{p} \}$  \\
\tn & $\{(i+\alpha_{ij}p,j) \mid i\ttk{p},\,j\ttk{q^2}\}$
& $\{(ip,0) \mid i\ttk{p} \}$  \\
\hline
\end{longtable}

\begin{proof}
    Suppose $C$ is a perfect code in $\Cay(G,S)$. Then $G=\so\oplus C$ and so $|\so|\cdot |C|=|G|= p^2q^2$. 
    Since $G$ is a connected non-complete graph, $1<|\so|<p^2q^2$.
    
    \smallskip
    \textsf{Case 1.} $\so$ is aperiodic.
    \smallskip

    Since $G$ is Haj\'os and $G=\so\oplus C$, in this case $C$ must be periodic, that is, $\lc$ is nontrivial.
    Write $C=\lc\oplus D$ for some subset $D$ of $G$ whose existence is ensured by Lemma \ref{period}.
    Then $|\lc|\cdot |D|=|C|$ and $2\leq |\lc|\leq |C|$.
    If $|\so|$ is a prime, then $|\lc|=|C|$ by Lemma \ref{prime deg} part (c).
    If $|\so|$ is not a prime, then $|\lc|$ can be any divisor of $|C|$.
    Set $$ A=(\so+\lc)/\lc\quad\text{and}\quad B=(D+\lc)/\lc.$$
    Since $C$ is periodic, by Lemma \ref{quotient}, we have
    $$G/\lc=A\oplus B$$ and $B$ is aperiodic in $G/\lc$.
    Since $G/\lc$ is isomorphic to a subgroup of $G$ and subgroups of Haj\'os group are also Haj\'os groups, $A$ is periodic in $G/\lc$.
    We also know that $A$ contains $\lc$ and $A$ generates $G/\lc$ by Lemma \ref{generate-zero}.
    Furthermore, $|A|=|\so|$ and $|B|=|D|$.
    Note that if $|\lc|=|C|$, then $C=\lc$ as $0\in C$.
    This means we can take $D=\{0\}$ and hence $B=\lc/\lc$ and $G/\lc=A$.
    We use $C$ to get $A$ and then use Lemma \ref{d_i+l_i} to obtain $\so$.
    If $|\lc|<|C|$, then this factorization enables us to make use of previously established theorems to obtain the pair $(\so, C)$ via $(A,B)$.

    All possible values for $\left(|\so|,|C|,|\lc|\right)$ when $|\so|$ and $|C|$ are not relatively prime are given in the first column of Table \ref{tap.p^2.q^2}.
    In the second column, we list $C$, $A$, and $\so$ when $|\lc|=|C|$ and we list $L_C$, $A$, $B$, $\so$, and $C$ when $|\lc|<|C|$.
    The third column shows which previous theorem is used to obtain $(A,B)$ when $|\lc|<|C|$.
    The last column indicates which row in Table \ref{t.p^2.q^2}
    the result corresponds to.
    
\begin{longtable}{|c|l|c|c|}
\caption{The case when $G=\Z_{p^2}\x\Z_{q^2} $ and $\so$ is aperiodic}
\label{tap.p^2.q^2}
\\
\flap

& $C=\{(ip,j) \mid i\ttk{p}, j\ttk{q^2}\} $ & & \\
$(p,pq^2,pq^2 )$ 
& $A=\{(i,0)+\lc \mid i\ttk{p}\} $ 
& & row 1 \\
& $\so=\{(i+\alpha_i p,\beta_i) \mid i\ttk{p}\} $ & & \\ \hline

& $C=\{(i,jq) \mid i\ttk{p^2}, j\ttk{q}\} $ & & \\
$(q,p^2q,p^2q)$
& $A=\{(0,i)+\lc \mid i\ttk{q}\} $ 
& & row 2 \\
& $\so=\{(\alpha_i,i+\beta_i q) \mid i\ttk{q}\} $ & & \\ \hline

& $\lc=\<{p\e1} $ & & \\
& $A=\{(j,i+\alpha_i q) + \lc \mid i\ttk{q},\,j\ttk{p}\} $ & & \\
$(pq,pq,p)$
& $B=\{(\alpha_i,iq) + \lc \mid i\ttk{q}\} $ 
& \ref{p^2.2} \row5 & row 5 \\
& $\so=\{(j+\alpha_{ij}p,i+\alpha_i q) \mid i\ttk{q},\,j\ttk{p}\} $ & & \\
& $C=\{(\alpha_i+jp,iq) \mid i\ttk{q},\,j\ttk{p}\} $ & & \\ \hline

& $\lc=\<{p\e1} $ & & \\
& $A=\{(i,\alpha_i + jq) + \lc \mid i\ttk{p},\,j\ttk{q}\} $ & & \\
$(pq,pq,p)$
& $B=\{(0,i+\alpha_iq) + \lc \mid i\ttk{q}\} $ 
& \ref{p^2.2} \row6 & row 6 \\
& $\so=\{(i+\alpha_ip,\alpha_i + jq) \mid i\ttk{p},\,j\ttk{q}\} $ & & \\
& $C=\{(jp,i+\alpha_iq) \mid i\ttk{q},\,j\ttk{p}\} $ & & \\ \hline

& $\lc=\<{q\e2} $ & & \\
& $A=\{(i+\alpha_i p, j) + \lc \mid i\ttk{p},\,j\ttk{q}\} $ & & \\
$(pq,pq,q)$
& $B=\{(ip,\alpha_i) + \lc \mid i\ttk{p}\} $ 
& \ref{p^2.2} \row5 & row 7 \\
& $\so=\{(i+\alpha_i p, j+\alpha_{ij}q) \mid i\ttk{p},\,j\ttk{q}\} $ & & \\
& $C=\{(ip,\alpha_i+jq) \mid i\ttk{p},\,j\ttk{q}\} $ & & \\ \hline

& $\lc=\<{q\e2} $ & & \\
& $A=\{(\alpha_i + jp, i) + \lc \mid i\ttk{q},\,j\ttk{p}\} $ & & \\
$(pq,pq,q)$
& $B=\{(i+\alpha_ip,0) + \lc \mid i\ttk{p}\} $ 
& \ref{p^2.2} \row6 & row 8 \\
& $\so=\{(\alpha_i + jp, i+\alpha_{ij}q) \mid i\ttk{q},\,j\ttk{p}\} $ & & \\
& $C=\{(i+\alpha_ip,jq) \mid i\ttk{p},\,j\ttk{q}\} $ & & \\ \hline

& $C=\{(ip,jq) \mid i\ttk{p},\,j\ttk{q} \} $ & & \\
$(pq,pq,pq)$ 
& $A=\{(i,j) + \lc \mid i\ttk{p},\,j\ttk{q} \} $ 
& & row 9 \\
& $\so=\{(i+\alpha_{ij}p,j+\alpha_{ij}q) \mid i\ttk{p},\,j\ttk{q} \} $ & & \\ \hline

& $C=\{(0,iq) \mid i\ttk{q} \} $ & & \\
$(p^2q,q,q)$ 
& $A=\{(i,j)+\lc \mid i\ttk{p^2},\,j\ttk{q}\} $ 
& & row 14 \\
& $\so=\{(i,j+\alpha_{ij}q) \mid i\ttk{p^2},\,j\ttk{q}\} $ & & \\ \hline

& $C=\{(ip,0) \mid i\ttk{p} \} $ & & \\
$(pq^2,p,p)$ 
& $A=\{(i,j)+\lc \mid i\ttk{p},\,j\ttk{q^2}\} $ 
& & row 19 \\
& $\so=\{(i+\alpha_{ij}p,j) \mid i\ttk{p},\,j\ttk{q^2}\} $ & & \\ \hline

\end{longtable}

    \smallskip
    \textsf{Case 2.} $\so$ is periodic.
    \smallskip

    In this case we have $|\ls|\geq 2$ and $\so=D\oplus \ls$ for some subset $D$ of $G$ by Lemma \ref{period}.
    So $|D|\cdot|\ls|=|\so|$ and $1<|\ls|<|\so|$ by Lemma \ref{prime deg} part (b).
    Set $$X=(D+\ls)/\ls\quad\text{and}\quad Y=(C+\ls)/\ls.$$
    Since $\so$ is periodic, by Lemma \ref{quotient}, we have
    $$G/\lc=X\oplus Y$$ and $X$ is aperiodic in $G/\ls$.
    We also know that $X$ contains $\ls$ and $X$ generates $G/\ls$ by Lemma \ref{generate-zero}.
    This factorization enables us to make use of Lemma \ref{d_i+l_i} and previously established theorems to obtain the pair $(\so, C)$ via $(X,Y)$.
    
    All possible values for $\left(|\so|,|C|,|\ls|\right)$ when $|\so|$ and $|C|$ are not relatively prime are given in the first column of Table \ref{tp.p^2.q^2}.
    In the second column we list $\ls$, $X$, $Y$, $\so$, and $C$.
    The third column shows which previous theorem is used to obtain $(X,Y)$.
    The last column indicates which row in Table \ref{t.p^2.q^2} the result corresponds to.

\begin{longtable}{|c|l|c|c|}
\caption{The case when $G=\Z_{p^2}\x\Z_{q^2} $ and $\so$ is periodic}
\label{tp.p^2.q^2}
\\
\flp

& $\ls=\<{q\e2} $ & & \\
& $X=\{(i+\alpha_{i}p,\alpha_i) +\ls \mid i\ttk{p} \} $ & & \\
$(pq,pq,q)$
& $Y=\{(ip,j) +\ls \mid i\ttk{p},\,j\ttk{q} \} $ 
& \ref{p^2.2} \row1 & row 3 \\
& $\so=\{(i+\alpha_{i}p,\alpha_i+jq) \mid i\ttk{p},\,j\ttk{q} \} $ & & \\
& $C=\{(ip,j+\alpha_{ij}q) \mid i\ttk{p},\,j\ttk{q} \} $ & & \\ \hline

& $\ls=\<{p\e1} $ & & \\
& $X=\{(\alpha_i,i+\alpha_{i}q) +\ls \mid i\ttk{q} \} $ & & \\
$(pq,pq,p)$
& $Y=\{(i,jq) +\ls \mid i\ttk{p},\,j\ttk{q} \} $ 
& \ref{p^2.2} \row1 & row 4 \\
& $\so=\{(\alpha_i+jp,i+\alpha_{i}q) \mid i\ttk{q},\,j\ttk{p} \} $ & & \\
& $C=\{(i+\alpha_{ij}p,jq) \mid i\ttk{p},\,j\ttk{q} \} $ & & \\ \hline

& $\ls=\<{p\e1} $ & & \\
& $X=\{(j,i+\alpha_{ij}q) +\ls \mid i\ttk{q},\,j\ttk{p} \} $ & & \\
$(p^2q,q,p)$
& $Y=\{(0,iq) +\ls \mid i\ttk{q} \} $ 
& \ref{p^2.2} \row7 & row 10 \\
& $\so=\{(j+kp,i+\alpha_{ij}q) \mid i\ttk{q},\,j,k\ttk{p} \} $ & & \\
& $C=\{(p\alpha_i,iq)\mid i\ttk{q} \} $ & & \\ \hline

& $\ls=\<{q\e2} $ & & \\
& $X=\{(i,\alpha_i) +\ls \mid i\ttk{p^2}\} $ & & \\
$(p^2q,q,q)$ 
& $Y=\{(0,i)+\ls \mid i\ttk{q}\} $ 
& \ref{p^2.2} \row4 & row 11 \\
& $\so=\{(i,\alpha_i+jq) \mid i\ttk{p^2},\,j\ttk{q}\} $ & & \\
& $C=\{(0,i+\alpha_iq) \mid i\ttk{q}\} $ & & \\ \hline

& $\ls=\<{\e1} $ & & \\
& $X=\{(0,i+\alpha_iq)+\ls \mid i\ttk{q}\} $ & & \\
$(p^2q,q,p^2)$ 
& $Y=\{ (0,iq)+\ls \mid i\ttk{q}\} $ 
& \ref{p^2} & row 12 \\
& $\so=\{(j,i+\alpha_iq) \mid i\ttk{q},\,j\ttk{p^2}\} $ & & \\
& $C=\{ (\alpha_i,iq) \mid i\ttk{q}\} $ & & \\ \hline

& $\ls=\<{p\e1+q\e2} $ & & \\
& $X=\{(i,\alpha_i)+\ls \mid i\ttk{p} \} $ & & \\
$(p^2q,q,pq)$ 
& $Y=\{(0,i)+\ls \mid i\ttk{q} \} $ 
& \ref{p.q} \row1 & row 13 \\
& $\so=\{(i+jp,\alpha_i+kq) \mid i,j\ttk{p},\,k\ttk{q} \} $ & & \\
& $C=\{(p\alpha_i,i+q\alpha_i) \mid i\ttk{q} \} $ & & \\ \hline

& $\ls=\<{q\e2} $ & & \\
& $X=\{(i+\alpha_{ij}p,j) +\ls \mid i\ttk{p},\,j\ttk{q} \} $ & & \\
$(pq^2,p,q)$
& $Y=\{(ip,0) +\ls \mid i\ttk{p} \} $ 
& \ref{p^2.2} \row7 & row 15 \\
& $\so=\{(i+\alpha_{ij}p,j+kq) \mid i\ttk{p},\,j,k\ttk{q} \} $ & & \\
& $C=\{(ip,\alpha_iq)\mid i\ttk{p} \} $ & & \\ \hline

& $\ls=\<{p\e1} $ & & \\
& $X=\{(\alpha_i,i) +\ls \mid i\ttk{q^2}\} $ & & \\
$(pq^2,p,p)$ 
& $Y=\{(i,0)+\ls \mid i\ttk{p}\} $ 
& \ref{p^2.2} \row4 & row 16 \\
& $\so=\{(\alpha_i+jp,i) \mid i\ttk{q^2},\,j\ttk{p}\} $ & & \\
& $C=\{(i+\alpha_ip,0) \mid i\ttk{p}\} $ & & \\ \hline

& $\ls=\<{\e2} $ & & \\
& $X=\{(i+\alpha_iq,0)+\ls \mid i\ttk{p}\} $ & & \\
$(pq^2,p,q^2)$ 
& $Y=\{ (ip,0)+\ls \mid i\ttk{p}\} $ 
& \ref{p^2} & row 17 \\
& $\so=\{(i+\alpha_ip,j) \mid i\ttk{p},\,j\ttk{q^2}\} $ & & \\
& $C=\{ (ip,\alpha_i) \mid i\ttk{p}\} $ & & \\ \hline

& $\ls=\<{p\e1+q\e2} $ & & \\
& $X=\{(\alpha_i,i)+\ls \mid i\ttk{q} \} $ & & \\
$(pq^2,p,pq)$ 
& $Y=\{(i,0)+\ls \mid i\ttk{p} \} $ 
& \ref{p.q} \row2 & row 18 \\
& $\so=\{(\alpha_i+jp,i+kq) \mid i,k\ttk{q},\,j\ttk{p} \} $ & & \\
& $C=\{(i+p\alpha_i,q\alpha_i) \mid i\ttk{p} \} $ & & \\ \hline

\end{longtable}
\end{proof}

\begin{theorem}\label{p^2.q.r}
Let $G=\Z_{p^2}\x\Z_{q}\x\Z_r$ for distinct primes $p$, $q$ and $r$, and let $S$ be a proper subset of $G\setminus \{0\}$ that generates $G$. Let $0\in C\subseteq G$ and let $|C|$ be not relatively prime to $|S_0|$. Then $C$ is a perfect code in $\Cay(G,S)$ if and only if $(\so,C)$ is one of the pairs in Table \ref{t.p^2.q.r} where ${\alpha_i}$'s, ${\alpha_{ij}}$'s, ${\alpha_{ijk}}$'s, and ${\beta_i}$'s are in $\Z$ and $\alpha_0=\alpha_{00}=\alpha_{000}=\beta_{0}=0$.
\end{theorem}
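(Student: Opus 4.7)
The proof will follow the same template used for Theorem \ref{p^2.q^2} and the earlier enumeration theorems in Section \ref{sec:p^k2^l}. The sufficiency direction amounts to checking $G=\so\oplus C$ for each of the listed pairs, which is a routine verification using the same kind of uniqueness-of-representation argument as in the proof of Theorem \ref{p,q,q} (this is exactly the type of calculation the authors have declared omissible from this section onward). For necessity, assume $C$ is a normalized perfect code in $\Cay(G,S)$, so $G=\so\oplus C$ with $|\so|\cdot|C|=p^2qr$, and $|\so|,|C|\geq 2$. Since $G$ is Haj\'os, at least one of $\so$, $C$ must be periodic; we split into two cases accordingly.

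The first step is to reduce to the enumeration. Because the hypothesis requires $\gcd(|\so|,|C|)>1$, and $p^2qr$ has only $p$ as a repeated prime, the only admissible triples $(|\so|,|C|)$ are $(p,pqr),(pq,pr),(pr,pq),(pqr,p)$ together with their reverses. For \textsf{Case 1} ($\so$ aperiodic, $C$ periodic), write $C=\lc\oplus D$; Lemma \ref{quotient} gives the factorization $G/\lc=A\oplus B$ with $A=(\so+\lc)/\lc$ periodic, $B=(D+\lc)/\lc$ aperiodic, $\lc\in A$, and $A$ generating $G/\lc$ by Lemma \ref{generate-zero}. List every subgroup $\lc$ of $G$ whose order divides $|C|$ and is greater than $1$; for each, $G/\lc$ is itself a Haj\'os abelian group isomorphic to a subgroup of $\Z_{p^2}\x\Z_q\x\Z_r$ (namely one of $\Z_{p^2},\Z_{p^2}\x\Z_q,\Z_{p^2}\x\Z_r,\Z_p\x\Z_q,\Z_p\x\Z_r,\Z_q\x\Z_r,\Z_p\x\Z_q\x\Z_r$, etc.), so $(A,B)$ is classified by the appropriate one of Theorems \ref{p.q}, \ref{p^2}, \ref{p^2.2}, \ref{p.q.2}, or by Lemma \ref{prime deg}(c) when $|\lc|=|C|$. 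Lemma \ref{d_i+l_i} then lifts $(A,B)$ to $(\so,C)$.

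For \textsf{Case 2} ($\so$ periodic), write $\so=D\oplus\ls$ and consider $G/\ls=X\oplus Y$ with $X=(D+\ls)/\ls$ aperiodic, $Y=(C+\ls)/\ls$, $\ls\in X$, and $X$ generating $G/\ls$. Again enumerate all subgroups $\ls$ of order a nontrivial divisor of $|\so|$ (Lemma \ref{prime deg}(b) forces $1<|\ls|<|\so|$), identify $G/\ls$ concretely, and invoke Theorems \ref{p.q}, \ref{p^2}, \ref{p^2.2}, \ref{p.q.2} (or note that no aperiodic $X$ of the required size exists in $G/\ls$, ruling the case out, exactly as happens in the tables of Theorem \ref{p^3.2.2}) to get $(X,Y)$. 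Lifting via Lemma \ref{d_i+l_i} and a short coset-representative calculation produces $(\so,C)$.

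The presentation will mirror the tabular layout of Theorems \ref{p^2.2}, \ref{p^3.2}, and \ref{p^2.q^2}: one table for each of the two cases, with columns $(|\so|,|C|,|\lc|)$ or $(|\so|,|C|,|\ls|)$, the explicit $\lc$ (resp.\ $\ls$), $A,B,\so,C$ (resp.\ $X,Y,\so,C$), the reference used, and the corresponding row of Table \ref{t.p^2.q.r}. The main obstacle is not any single computation but the completeness of the enumeration: for each triple one must list \emph{every} subgroup of the specified order in $\Z_{p^2}\x\Z_q\x\Z_r$ (for instance, three distinct subgroups of order $p$, namely $\langle p\e1\rangle$, and for $|\ls|$ of the form $pq$ or $pr$ one must consider both direct-product subgroups and ``diagonal'' subgroups such as $\langle p\e1+\e2\rangle$, $\langle p\e1+\e3\rangle$), and verify in each subcase whether a suitable aperiodic $X$ or periodic $A$ exists, discarding the impossible ones. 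Once that bookkeeping is in place, each surviving triple produces exactly one pattern in Table \ref{t.p^2.q.r}, and matching the arithmetic expressions with the coset lifts completes the proof.
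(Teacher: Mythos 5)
Your proposal matches the paper's proof essentially verbatim: the same split into $\so$ aperiodic versus periodic, the same quotient factorizations via Lemmas \ref{quotient}, \ref{d_i+l_i}, \ref{generate-zero} and \ref{prime deg}, the same admissible cardinality pairs $(p,pqr),(pq,pr),(pr,pq),(pqr,p)$ forced by $\gcd(|\so|,|C|)>1$, and the same invocations of Theorems \ref{p.q}, \ref{p^2}, \ref{p^2.2} and \ref{p.q.2}, organized into the same two tables. One minor correction to your bookkeeping worry: since $p$, $q$, $r$ are distinct primes, $G\cong\Z_{p^2qr}$ is cyclic, so there is exactly one subgroup of each order (e.g.\ $\left<p\e1\right>$ is the unique subgroup of order $p$, and the ``diagonal'' $\left<p\e1+\e2\right>$ is simply the unique subgroup of order $pq$ written in coordinates), so the enumeration is lighter than your phrase ``three distinct subgroups of order $p$'' suggests.
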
 

\rc\begin{longtable}{|c|l|l|}
\caption{Perfect codes in Cayley graphs of $\Z_{p^2}\x\Z_{q}\x\Z_r$}\label{t.p^2.q.r}\\
\fl
\tn & $\{(i+\alpha_ip,\alpha_i,\alpha_i) \mid i\ttk{p} \}$
& $\{(ip,j,k) \mid i\ttk{p},\,j\ttk{q},\,k\ttk{r}\}$  \\
\tn & $\{(\alpha_i+jp,i,\beta_i) \mid i\ttk{q},\,j\ttk{p}\}$
& $\{(i+\alpha_{ij}p,0,j) \mid i\ttk{p},\,j\ttk{r}\}$  \\
\tn & $\{(i+\alpha_ip,j,\alpha_i) \mid i\ttk{p},\,j\ttk{q}\}$
& $\{(ip,\alpha_{ij},j) \mid i\ttk{p},\,j\ttk{r}\}$  \\
\tn & $\{(i+\alpha_{ij}p,j,\alpha_i) \mid i\ttk{p},\,j\ttk{q}\}$
& $\{(\alpha_i+jp,0,i) \mid i\ttk{r},\,j\ttk{p} \}$  \\
\tn & $\{(i+\alpha_{ij}p,j,\alpha_i) \mid i\ttk{p},\,j\ttk{q}\}$
& $\{(jp,\alpha_i,i) \mid i\ttk{r},\,j\ttk{p} \}$  \\
\tn & $\{(i+\alpha_ip,j,\alpha_{ij}) \mid i\ttk{p},\,j\ttk{q}\}$
& $\{(ip,\alpha_i,j) \mid i\ttk{p},\,j\ttk{r} \}$  \\
\tn & $\{(\alpha_i+jp,i,\alpha_{ij}) \mid i\ttk{p},\,j\ttk{q}\}$
& $\{(i+\alpha_ip,0,j) \mid i\ttk{p},\,j\ttk{r} \}$  \\
\tn & $\{(i+\alpha_{ij}p,j,\beta_{ij}) \mid i\ttk{p},\,j\ttk{q} \}$
& $\{(ip,0,j) \mid i\ttk{p},\,j\ttk{r}\}$  \\
\tn & $\{(\alpha_i+jp,\beta_i,i) \mid i\ttk{r},\,j\ttk{p}\}$
& $\{(i+\alpha_{ij}p,j,0) \mid i\ttk{p},\,j\ttk{q}\}$  \\
\tn & $\{(i+\alpha_ip,\alpha_i,j) \mid i\ttk{p},\,j\ttk{r}\}$
& $\{(ip,j,\alpha_{ij}) \mid i\ttk{p},\,j\ttk{q}\}$  \\
\tn & $\{(i+\alpha_{ij}p,\alpha_i,j) \mid i\ttk{p},\,j\ttk{r}\}$
& $\{(\alpha_i+jp,i,0) \mid i\ttk{q},\,j\ttk{p} \}$  \\
\tn & $\{(i+\alpha_{ij}p,\alpha_i,j) \mid i\ttk{p},\,j\ttk{r}\}$
& $\{(jp,i,\alpha_i) \mid i\ttk{q},\,j\ttk{p} \}$  \\
\tn & $\{(i+\alpha_ip,\alpha_{ij},j) \mid i\ttk{p},\,j\ttk{r}\}$
& $\{(ip,j,\alpha_i) \mid i\ttk{p},\,j\ttk{q} \}$  \\
\tn & $\{(\alpha_i+jp,\alpha_{ij},i) \mid i\ttk{p},\,j\ttk{r}\}$
& $\{(i+\alpha_ip,j,0) \mid i\ttk{p},\,j\ttk{q} \}$  \\
\tn & $\{(i+\alpha_{ij}p,\beta_{ij},j) \mid i\ttk{p},\,j\ttk{r} \}$
& $\{(ip,j,0) \mid i\ttk{p},\,j\ttk{r}\}$  \\
\tn & $\{(\alpha_{ij}+kp,i,j) \mid i\ttk{q},\,j\ttk{r},\,k\ttk{p}\}$
& $\{(i+\alpha_ip,0,0) \mid i\ttk{p}\}$  \\
\tn & $\{(i+\alpha_{ij}p,k,j) \mid i\ttk{p},\,j\ttk{r},\,k\ttk{q}\}$
& $\{(ip,\alpha_i,0) \mid i\ttk{p}\}$  \\
\tn & $\{(i+\alpha_{ij}p,j,k) \mid i\ttk{p},\,j\ttk{q},\,k\ttk{r}\}$
& $\{(ip,0,\alpha_i) \mid i\ttk{p}\}$  \\
\tn & $\{(\alpha_i+pj,k,i) \mid i\ttk{r},\,j\ttk{p},\,k\ttk{q}\}$
& $\{(i+\alpha_ip,\alpha_i,0) \mid i\ttk{p}\}$  \\
\tn & $\{(\alpha_i+jp,i,k) \mid i\ttk{q},\,j\ttk{p},\,k\ttk{r}\}$
& $\{(i+\alpha_ip,0,\alpha_i) \mid i\ttk{p}\}$  \\
\tn & $\{(i+\alpha_i p,j,k) \mid i\ttk{p},\,j\ttk{q},\,k\ttk{r}\}$
& $\{(ip,\alpha_i,\alpha_i) \mid i\ttk{p}\}$  \\
\tn & $\{(i+\alpha_{ijk}p,j,k) \mid i\ttk{p},\,j\ttk{q},\,k\ttk{r}\}$
& $\{(ip,0,0) \mid i\ttk{p}\}$  \\
\hline
\end{longtable}

\begin{proof}
    Suppose $C$ is a perfect code in $\Cay(G,S)$. Then $G=\so\oplus C$ and so $|\so|\cdot |C|=|G|= p^2qr$. 
    Since $G$ is a connected non-complete graph, $1<|\so|<p^2qr$.
    
    \smallskip
    \textsf{Case 1.} $\so$ is aperiodic.
    \smallskip

    Since $G$ is Haj\'os and $G=\so\oplus C$, in this case $C$ must be periodic, that is, $\lc$ is nontrivial.
    Write $C=\lc\oplus D$ for some subset $D$ of $G$ whose existence is ensured by Lemma \ref{period}.
    Then $|\lc|\cdot |D|=|C|$ and $2\leq |\lc|\leq |C|$.
    If $|\so|$ is a prime, then $|\lc|=|C|$ by Lemma \ref{prime deg} part (c).
    If $|\so|$ is not a prime, then $|\lc|$ can be any divisor of $|C|$.
    Set $$ A=(\so+\lc)/\lc\quad\text{and}\quad B=(D+\lc)/\lc.$$
    Since $C$ is periodic, by Lemma \ref{quotient}, we have
    $$G/\lc=A\oplus B$$ and $B$ is aperiodic in $G/\lc$.
    Since $G/\lc$ is isomorphic to a subgroup of $G$ and subgroups of Haj\'os group are also Haj\'os groups, $A$ is periodic in $G/\lc$.
    We also know that $A$ contains $\lc$ and $A$ generates $G/\lc$ by Lemma \ref{generate-zero}.
    Furthermore, $|A|=|\so|$ and $|B|=|D|$.
    Note that if $|\lc|=|C|$, then $C=\lc$ as $0\in C$.
    This means we can take $D=\{0\}$ and hence $B=\lc/\lc$ and $G/\lc=A$.
    We use $C$ to get $A$ and then use Lemma \ref{d_i+l_i} to obtain $\so$.
    If $|\lc|<|C|$, then this factorization enables us to make use of previously established theorems to obtain the pair $(\so, C)$ via $(A,B)$.

    As in Table \ref{tap.p^2.q^2}, the first columns of Table \ref{tap.p^2.q.r} are $\left(|\so|,|C|,|\lc|\right)$ when $|\so|$ and $|C|$ are not relatively prime.
    The remaining columns contains $C$, $A$, and $\so$ when $|\lc|=|C|$ or $L_C$, $A$, $B$, $\so$, and $C$ when $|\lc|<|C|$; which previous theorem is used to obtain $(A,B)$ when $|\lc|<|C|$; and which row in Table \ref{t.p^2.q.r} the result corresponds to.
    
\begin{longtable}{|c|l|c|c|}
\caption{The case when $G=\Z_{p^2}\x\Z_{q}\x\Z_r $ and $\so$ is aperiodic}
\label{tap.p^2.q.r}
\\
\flap

& $C=\{(ip,j,k) \mid i\ttk{p},\,j\ttk{q},\,k\ttk{r}\} $ & & \\
$(p,pqr,pqr)$ 
& $A=\{(i,0,0)+\lc \mid i\ttk{p} \} $
& & row 1 \\
& $\so=\{(i+\alpha_ip,\alpha_i,\alpha_i) \mid i\ttk{p} \} $ & & \\ \hline

& $\lc=\<{p\e1} $ & & \\
& $A=\{(i,j,\alpha_i)+\lc \mid i\ttk{p},\,j\ttk{q}\} $ & & \\
$(pq,pr,p)$
& $B=\{(\alpha_i,0,i)+\lc \mid i\ttk{r} \} $ 
& \ref{p.q.2} \row4 & row 4 \\
& $\so=\{(i+\alpha_{ij}p,j,\alpha_i) \mid i\ttk{p},\,j\ttk{q}\} $ & & \\
& $C=\{(\alpha_i+jp,0,i) \mid i\ttk{r},\,j\ttk{p} \} $ & & \\ \hline

& $\lc=\<{p\e1} $ & & \\
& $A=\{(i,j,\alpha_i)+\lc \mid i\ttk{p},\,j\ttk{q}\} $ & & \\
$(pq,pr,p)$
& $B=\{(0,\alpha_i,i)+\lc \mid i\ttk{r} \} $ 
& \ref{p.q.2} \row5 & row 5 \\
& $\so=\{(i+\alpha_{ij}p,j,\alpha_i) \mid i\ttk{p},\,j\ttk{q}\} $ & & \\
& $C=\{(jp,\alpha_i,i) \mid i\ttk{r},\,j\ttk{p} \} $ & & \\ \hline

& $\lc=\<{\e3} $ & & \\
& $A=\{(i+\alpha_ip,j,0) +\lc \mid i\ttk{p},\,j\ttk{q}\} $ & & \\
$(pq,pr,r)$ 
& $B=\{(ip,\alpha_i,0)+\lc \mid i\ttk{p} \} $ 
& \ref{p^2.2} \row5 & row 6 \\
& $\so=\{(i+\alpha_ip,j,\alpha_{ij}) \mid i\ttk{p},\,j\ttk{q}\} $ & & \\
& $C=\{(ip,\alpha_i,j) \mid i\ttk{p},\,j\ttk{r} \} $ & & \\ \hline

& $\lc=\<{\e3} $ & & \\
& $A=\{(\alpha_i+jp,i,0) +\lc \mid i\ttk{q},\,j\ttk{p}\} $ & & \\
$(pq,pr,r)$ 
& $B=\{(i+\alpha_ip,0,0)+\lc \mid i\ttk{p} \} $ 
& \ref{p^2.2} \row6 & row 7 \\
& $\so=\{(\alpha_i+jp,i,\alpha_{ij}) \mid i\ttk{p},\,j\ttk{q}\} $ & & \\
& $C=\{(i+\alpha_ip,0,j) \mid i\ttk{p},\,j\ttk{r} \} $ & & \\ \hline

& $C=\{(ip,0,j) \mid i\ttk{p},\,j\ttk{r}\} $ & & \\
$(pq,pr,pr)$ 
& $A=\{(i,j,0)+\lc \mid i\ttk{p},\,j\ttk{q} \} $ 
& & row 8 \\
& $\so=\{(i+\alpha_{ij}p,j,\beta_{ij}) \mid i\ttk{p},\,j\ttk{q} \} $ & & \\ \hline

& $\lc=\<{p\e1} $ & & \\
& $A=\{(i,\alpha_i,j)+\lc \mid i\ttk{p},\,j\ttk{r}\} $ & & \\
$(pr,pq,p)$
& $B=\{(\alpha_i,i,0)+\lc \mid i\ttk{q} \} $ 
& \ref{p.q.2} \row6 & row 11 \\
& $\so=\{(i+\alpha_{ij}p,\alpha_i,j) \mid i\ttk{p},\,j\ttk{r}\} $ & & \\
& $C=\{(\alpha_i+jp,i,0) \mid i\ttk{q},\,j\ttk{p} \} $ & & \\ \hline

& $\lc=\<{p\e1} $ & & \\
& $A=\{(i,\alpha_i,j)+\lc \mid i\ttk{p},\,j\ttk{r}\} $ & & \\
$(pr,pq,p)$
& $B=\{(0,i,\alpha_i)+\lc \mid i\ttk{q} \} $ 
& \ref{p.q.2} \row7 & row 12 \\
& $\so=\{(i+\alpha_{ij}p,\alpha_i,j) \mid i\ttk{p},\,j\ttk{r}\} $ & & \\
& $C=\{(jp,i,\alpha_i) \mid i\ttk{q},\,j\ttk{p} \} $ & & \\ \hline

& $\lc=\<{\e2} $ & & \\
& $A=\{(i+\alpha_ip,0,j) +\lc \mid i\ttk{p},\,j\ttk{r}\} $ & & \\
$(pr,pq,q)$ 
& $B=\{(ip,0,\alpha_i)+\lc \mid i\ttk{p} \} $ 
& \ref{p^2.2} \row5 & row 13 \\
& $\so=\{(i+\alpha_ip,\alpha_{ij},j) \mid i\ttk{p},\,j\ttk{r}\} $ & & \\
& $C=\{(ip,j,\alpha_i) \mid i\ttk{p},\,j\ttk{q} \} $ & & \\ \hline

& $\lc=\<{\e2} $ & & \\
& $A=\{(\alpha_i+jp,0,i) +\lc \mid i\ttk{r},\,j\ttk{p}\} $ & & \\
$(pr,pq,q)$ 
& $B=\{(i+\alpha_ip,0,0)+\lc \mid i\ttk{p} \} $ 
& \ref{p^2.2} \row6 & row 14 \\
& $\so=\{(\alpha_i+jp,\alpha_{ij},i) \mid i\ttk{p},\,j\ttk{r}\} $ & & \\
& $C=\{(i+\alpha_ip,j,0) \mid i\ttk{p},\,j\ttk{q} \} $ & & \\ \hline

& $C=\{(ip,j,0) \mid i\ttk{p},\,j\ttk{r}\} $ & & \\
$(pr,pq,pq)$ 
& $A=\{(i,0,j)+\lc \mid i\ttk{p},\,j\ttk{q} \} $ 
& & row 15 \\
& $\so=\{(i+\alpha_{ij}p,\beta_{ij},j) \mid i\ttk{p},\,j\ttk{r} \} $ & & \\ \hline

& $C=\{(ip,0,0) \mid i\ttk{p}\} $ & & \\
$(pqr,p,p)$ 
& $A=\{(i,j,k)+\lc \mid i\ttk{p},\,j\ttk{q},\,k\ttk{r}\} $ 
& & row 22 \\
& $\so=\{(i+\alpha_{ijk}p,j,k) \mid i\ttk{p},\,j\ttk{q},\,k\ttk{r}\} $ & & \\ \hline

\end{longtable}

    \smallskip
    \textsf{Case 2.} $\so$ is periodic.
    \smallskip

    In this case we have $|\ls|\geq 2$ and $\so=D\oplus \ls$ for some subset $D$ of $G$ by Lemma \ref{period}.
    So $|D|\cdot|\ls|=|\so|$ and $1<|\ls|<|\so|$ by Lemma \ref{prime deg} part (b).
    Set $$X=(D+\ls)/\ls\quad\text{and}\quad Y=(C+\ls)/\ls.$$
    Since $\so$ is periodic, by Lemma \ref{quotient}, we have
    $$G/\lc=X\oplus Y$$ and $X$ is aperiodic in $G/\ls$.
    We also know that $X$ contains $\ls$ and $X$ generates $G/\ls$ by Lemma \ref{generate-zero}.
    This factorization enables us to make use of Lemma \ref{d_i+l_i} and previously established theorems to obtain the pair $(\so, C)$ via $(X,Y)$.
    
    As in Table \ref{tp.p^2.q^2}, the first columns of table \ref{tp.p^2.q.r} contains all possible values for $\left(|\so|,|C|,|\ls|\right)$ when $|\so|$ and $|C|$ are not relatively prime.
    The next three columns are $\ls$, $X$, $Y$, $\so$, and $C$; which previous theorem is used to obtain $(X,Y)$; and which row in Table \ref{t.p^2.q.r} the result corresponds to.

\begin{longtable}{|c|l|c|c|}
\caption{The case when $G=\Z_{p^2}\x\Z_{q}\x\Z_r $ and $\so$ is periodic}
\label{tp.p^2.q.r}
\\
\flp

& $\ls=\<{p\e1} $ & & \\
& $X=\{(\alpha_i,i,\beta_i)+\ls \mid i\ttk{q}\} $ & & \\
$(pq,pr,p)$
& $Y=\{(i,0,j)+\ls \mid i\ttk{p},\,j\ttk{r}\} $
& \ref{p.q.2} \row2 & row 2 \\
& $\so=\{(\alpha_i+jp,i,\beta_i) \mid i\ttk{q},\,j\ttk{p}\} $ & & \\
& $C=\{(i+\alpha_{ij}p,0,j) \mid i\ttk{p},\,j\ttk{r}\} $ & & \\ \hline

& $\ls=\<{\e2} $ & & \\
& $X=\{(i+\alpha_ip,0,\alpha_i)+\ls \mid i\ttk{p}\} $ & & \\
$(pq,pr,q)$
& $Y=\{(ip,0,j) +\ls \mid i\ttk{p},\,j\ttk{r}\} $ 
& \ref{p^2.2} \row1 & row 3 \\
& $\so=\{(i+\alpha_ip,j,\alpha_i) \mid i\ttk{p},\,j\ttk{q}\} $ & & \\
& $C=\{(ip,\alpha_{ij},j) \mid i\ttk{p},\,j\ttk{r}\} $ & & \\ \hline

& $\ls=\<{p\e1} $ & & \\
& $X=\{(\alpha_i,\beta_i,i)+\ls \mid i\ttk{r}\} $ & & \\
$(pr,pq,p)$
& $Y=\{(i,j,0)+\ls \mid i\ttk{p},\,j\ttk{q}\} $
& \ref{p.q.2} \row2 & row 9 \\
& $\so=\{(\alpha_i+jp,\beta_i,i) \mid i\ttk{r},\,j\ttk{p}\} $ & & \\
& $C=\{(i+\alpha_{ij}p,j,0) \mid i\ttk{p},\,j\ttk{q}\} $ & & \\ \hline

& $\ls=\<{\e3} $ & & \\
& $X=\{(i+\alpha_ip,\alpha_i,0)+\ls \mid i\ttk{p}\} $ & & \\
$(pr,pq,r)$
& $Y=\{(ip,j,0) +\ls \mid i\ttk{p},\,j\ttk{q}\} $ 
& \ref{p^2.2} \row1 & row 10 \\
& $\so=\{(i+\alpha_ip,\alpha_i,j) \mid i\ttk{p},\,j\ttk{r}\} $ & & \\
& $C=\{(ip,j,\alpha_{ij}) \mid i\ttk{p},\,j\ttk{q}\} $ & & \\ \hline

& $\ls=\<{p\e1} $ & & \\
& $X=\{(\alpha_{ij},i,j)+\ls \mid i\ttk{q},\,j\ttk{r}\} $ & & \\
$(pqr,p,p)$
& $Y=\{(i,0,0)+\ls \mid i\ttk{p}\} $ 
& \ref{p.q.2} \row{12} & row 16 \\
& $\so=\{(\alpha_{ij}+kp,i,j) \mid i\ttk{q},\,j\ttk{r},\,k\ttk{p}\} $ & & \\
& $C=\{(i+\alpha_ip,0,0) \mid i\ttk{p}\} $ & & \\ \hline

& $\ls=\<{\e2} $ & & \\
& $X=\{(i+\alpha_{ij}p,0,j)+\ls \mid i\ttk{p},\,j\ttk{r}\} $ & & \\
$(pqr,p,q )$ 
& $Y=\{(ip,0,0)+\ls \mid i\ttk{p}\} $ 
& \ref{p^2.2} \row7 & row 17 \\
& $\so=\{(i+\alpha_{ij}p,k,j) \mid i\ttk{p},\,j\ttk{r},\,k\ttk{q}\} $ & & \\
& $C=\{(ip,\alpha_i,0) \mid i\ttk{p}\} $ & & \\ \hline

& $\ls=\<{\e3} $ & & \\
& $X=\{(i+\alpha_{ij}p,j,0)+\ls \mid i\ttk{p},\,j\ttk{q}\} $ & & \\
$(pqr,p,r )$ 
& $Y=\{(ip,0,0)+\ls \mid i\ttk{p}\} $ 
& \ref{p^2.2} \row7 & row 18 \\
& $\so=\{(i+\alpha_{ij}p,j,k) \mid i\ttk{p},\,j\ttk{q},\,k\ttk{r}\} $ & & \\
& $C=\{(ip,0,\alpha_i) \mid i\ttk{p}\} $ & & \\ \hline

& $\ls=\<{p\e1+\e2} $ & & \\
& $X=\{(\alpha_i,0,i)+\ls \mid i\ttk{r}\} $ & & \\
$(pqr,p,pq)$ 
& $Y=\{(i,0,0)+\ls \mid i\ttk{p}\} $ 
& \ref{p.q} \row2 & row 19 \\
& $\so=\{(\alpha_i+pj,k,i) \mid i\ttk{r},\,j\ttk{p},\,k\ttk{q}\} $ & & \\
& $C=\{(i+\alpha_ip,\alpha_i,0) \mid i\ttk{p}\} $ & & \\ \hline

& $\ls=\<{p\e1+\e3} $ & & \\
& $X=\{(\alpha_i,i,0)+\ls \mid i\ttk{q}\} $ & & \\
$(pqr,p,pr)$ 
& $Y=\{(i,0,0)+\ls \mid i\ttk{p}\} $ 
& \ref{p.q} \row2 & row 20 \\
& $\so=\{(\alpha_i+jp,i,k) \mid i\ttk{q},\,j\ttk{p},\,k\ttk{r}\} $ & & \\
& $C=\{(i+\alpha_ip,0,\alpha_i) \mid i\ttk{p}\} $ & & \\ \hline

& $\ls=\<{\e2+\e3} $ & & \\
& $X=\{(i+\alpha_i p,0,0)+\ls \mid i\ttk{p}\} $ & & \\
$(pqr,p,qr)$ 
& $Y=\{(ip,0,0)+\ls \mid i\ttk{p}\} $ 
& \ref{p^2} & row 21 \\
& $\so=\{(i+\alpha_i p,j,k) \mid i\ttk{p},\,j\ttk{q},\,k\ttk{r}\} $ & & \\
& $C=\{(ip,\alpha_i,\alpha_i) \mid i\ttk{p}\} $ & & \\ \hline

\end{longtable}
\end{proof}

\noindent{\bf Acknowledgements}~~The first author was supported by the Melbourne Research Scholarship provided by The University of Melbourne. The second and third authors were supported by a Discovery Project (DP250104965) of the Australian Research Council.

\baselineskip 12pt


\begin{thebibliography}{99}
 
	\bibitem{biggs} N. Biggs, Perfect codes in graphs, J. Combin. Theory Ser. B, 15 (1973), 289--296.

        \bibitem{camyapzhou} P. J. Cameron, R. S. Yap and S. Zhou, Perfect codes in Cayley graphs of abelian groups, Preprint, \url{https://arxiv.org/abs/2507.11871}.
        
        \bibitem{cwx20} J. Chen, Y. Wang and B. Xia, Characterization of subgroup perfect codes in Cayley graphs, Discrete Math., 343 (2020), Paper No. 111813.

    \bibitem{gag0} N. G. de Bruijn, On the factorisation of cyclic groups, lndag. Math. Kon. Ned, Akad. Wetensch. Amsterdam, 15 (1953), 370--377.
 
	\bibitem{eds1} I. Dejter and O. Serra, Efficient dominating sets in Cayley graphs, Discrete Appl. Math., 129 (2003), 319--328.
	
	\bibitem{eds2} Y. P. Deng, Efficient dominating sets in circulant graphs with domination number prime, Inform. Process. Lett., 114 (2014), 700--702.
	
	\bibitem{eds3} Y. P. Deng, Y. Q. Sun, Q. Liu and H. C. Wang, Efficient dominating sets in circulant graphs, Discrete Math., 340(7) (2017), 1503--1507.
	
	\bibitem{Dinitz06} M. Dinitz, Full rank tilings of finite abelian groups, SIAM J. Discrete Math., 20 (2006), 160--170.
	
	
	\bibitem{22} R. Feng, H. Huang, and S. Zhou, Perfect codes in circulant graphs, Discrete Math., 340 (2017), 1522--1527.
	
	\bibitem{kls22}
    Y. S. Kwon, J. Lee and M. Y. Sohn, Classification of efficient dominating sets of circulant
graphs of degree $5$, Graphs Combin., 38(4) (2022), Paper No. 120.
	
	\bibitem{Heden1} O. Heden, A survey of perfect codes, Adv. Math. Commun., 2 (2008), 223--247.

 
	\bibitem{subgrup} H. Huang, B. Xia, and S. Zhou, Perfect codes in Cayley graph, SIAM J. Discrete Math., 32 (2018), 548--559.
	
	\bibitem{krat} J. Kratochv\'il, Perfect codes over graphs, J. Combin. Theory Ser. B, 40 (1986), 224--228.
	
	\bibitem{kak23}
    Y. Khaefi, Z. Akhlaghi and B. Khosravi, On the subgroup perfect codes in Cayley graphs, Des. Codes Cryptogr., 91 (2023), 55--61.

        \bibitem{MBG}
    C.~Mart\'{i}nez, R.~Beivide and E.~Gabidulin, Perfect codes for metrics induced by circulant graphs, IEEE Trans. Inform. Theory, 53 (2007), 3042--3052.

	\bibitem{ed1} N. Obradovi\'c, J. Peters, and G. Ru\v{z}i\'c, Efficient domination in circulant graphs with two chord lengths, Inform. Process. Lett., 102 (2007), 253--258.

        \bibitem{gagpp} L. R\'edei and Zwei, Liickensatze iiber Polynome in endlichen Primkorpern mit Anwendungen auf die endlichen Abelschen Gruppen und die Gaussischen Summen. Acta Math., 79 (1947), 273--290.
 
	\bibitem{ed2} K. R. Kumar and G. MacGillivray, Efficient domination in circulant graphs, Discrete Math., 313 (2013), 767--771.

        \bibitem{gag1} A. D. Sands, On the factorisation of finite abelian groups, Acta Math. Acad. Sci. Hung., g (1957), pp. 65--86.

        \bibitem{gag2} A. D. Sands, On the factorisation of finite abelian groups, II, Acta Math. Acad. Sci. Hung., 13 (1962), no.1-2, 153--169.

        \bibitem{gag3} A. D. Sands, The factorization of abelian groups, The Quarterly Journal of Mathematics, 10 (1959), 81--91. 


        \bibitem{book1} S. Szab\'o and A. D. Sands, Factoring Groups into Subsets, CRC Press, Taylor and Francis Group, (2009).
        
        \bibitem{cm13} T. Tamizh Chelvam and S. Mutharasu, Subgroups as efficient dominating sets in Cayley graphs, Discrete  Appl. Math., 161 (2013), 1187--1190.
        
         \bibitem{Tiet73}
        A. Tiet\"{a}v\"{a}inen, On the nonexistence of perfect codes over finite fields, SIAM J. Appl. Math., 24 (1973), 88--96.
        
        \bibitem{vL} J. H. van Lint, A survey of perfect codes, Rocky Mountain J. Math., 5 (1975), 199--224.

        \bibitem{wwz20} G. L. Walls, K. Wang and S. Zhou, Subgroup perfect codes in Cayley graphs, SIAM J. Discrete Math., 34 (2020), 1909--1921.

        \bibitem{wsxz24} X. Wang, O. Serra, S-J. Xu and S. Zhou, Perfect codes in circulant graphs of degree $p^l-1$, submitted, \url{https://arxiv.org/abs/2403.02205}.

        \bibitem{zz21} J. Zhang and S. Zhou, On subgroup perfect codes in Cayley graphs, European J. Combin., 91 (2021), Paper No. 103228.

        \bibitem{zz21a} J. Zhang and S. Zhou, Corrigendum to ``On subgroup perfect codes in Cayley graphs'' [European J. Combin. 91 (2021) 103228], European J. Combin., 101 (2022), Paper No. 103461.
        
        \bibitem{20} S. Zhou, Cyclotomic graphs and perfect codes, Journal of Pure and Applied Algebra, 223 (2019), 931--947.
	
\end{thebibliography}
\end{document}